\newcommand{\ubar}[1]{\underaccent{\bar}{#1}}
\def\dfrac{\displaystyle\frac}
\def\dsum{\displaystyle\sum}
\newtheorem{prop}{Proposition}
\newtheorem{theo}[prop]{Theorem}
\newtheorem{lemm}[prop]{Lemma}
\newtheorem{coro}[prop]{Corollary}
\newtheorem{rmk}[prop]{Remark}
\newtheorem{defi}[prop]{Definition}
\newtheorem{claim}{Claim}
\newcommand{\be}{\begin{equation}}
\newcommand{\ee}{\end{equation}}
\newcommand{\lt}{\left}
\newcommand{\rt}{\right}
\newcommand{\ju}[2]{\begin{array}{#1}#2\end{array}}
\renewcommand{\leq}{\leqslant}
\renewcommand{\geq}{\geqslant}
\newcommand{\td}{\tilde}
\newcommand{\p}{\partial}
\newcommand{\la}{\kappa}
\newcommand{\R}{\mathbb{R}}
\newcommand{\M}{\mathcal{M}}
\newcommand{\e}{e^*}
\newcommand{\us}{u^*}
\newcommand{\ur}{u^r}
\newcommand{\urs}{u^{r*}}
\newcommand{\uri}{u^{r_i}}
\newcommand{\uus}{\bar{u}_0^*}
\newcommand{\lus}{\ubar{u}^*}
\newcommand{\lu}{\ubar{u}}
\newcommand{\uu}{\bar{u}_0}
\newcommand{\ga}{\gamma}
\newcommand{\gas}{\gamma^*}
\newcommand{\vp}{\varphi}
\newcommand{\T}{\partial}
\newcommand{\ta}{\tilde{a}}
\newcommand{\ba}{\bar{a}}
\newcommand{\ha}{\hat{a}}
\newcommand{\w}{w^*}
\numberwithin{equation}{section}
\begin{document}
\setlength{\baselineskip}{1.2\baselineskip}

\title[Constant Hessian curvature hypersurface in the Minkowski space]
{entire spacelike hypersurfaces with constant $\sigma_{n-1}$ curvature in Minkowski space}

\author{Changyu Ren}
\address{School of Mathematical Science, Jilin University, Changchun, China}
\email{rency@jlu.edu.cn}
\author{Zhizhang Wang}
\address{School of Mathematical Science, Fudan University, Shanghai, China}
\email{zzwang@fudan.edu.cn}
\author{Ling Xiao}
\address{Department of Mathematics, University of Connecticut,
Storrs, Connecticut 06269}
\email{ling.2.xiao@uconn.edu}
\thanks{Research of the first author is supported by NSFC Grant No. 11871243 and the second author is supported  by NSFC Grant No.11871161 and 11771103.}

\begin{abstract}
We prove that, in Minkowski space, if a spacelike, $(n-1)$-convex hypersurface $\M$ with constant $\sigma_{n-1}$ curvature has bounded
principal curvatures, then $\M$ is convex. Moreover, if $\M$ is not strictly convex, after an $\R^{n,1}$ rigid motion, $\M$ splits as a product $\M^{n-1}\times\R.$ We also construct nontrivial examples of strictly convex, spacelike hypersurface $\M$ with constant $\sigma_{n-1}$ curvature and bounded
principal curvatures.
\end{abstract}

\maketitle

\section{Introduction}
\label{int}

Let $\R^{n, 1}$ be the Minkowski space with the Lorentzian metric
\[ds^2=\sum_{i=1}^{n}dx_{i}^2-dx_{n+1}^2.\]
In this paper, we will study spacelike hypersurfaces with positive constant
$\sigma_{n-1}$ curvature in Minkowski space $\R^{n, 1}$. Any such hypersurface can be written locally as the graph of a function
$x_{n+1}=u(x), x\in\R^n$ satisfying the spacelike condition
\be\label{int1.1}
|Du|<1.
\ee
Before stating our main results, we need the following definition:
\begin{defi}
\label{intdef1}
A $C^2$ regular hypersurface $\M\subset \R^{n, 1}$ is $k$-convex,
if the principal curvatures of $\M$ at $X\in\M$ satisfy $\kappa[X]\in\Gamma_k$ for all $X\in\M$, where $\Gamma_k$
is the G\r{a}rding cone
\[\Gamma_k=\{\la\in\R^n|\sigma_m(\la)>0, m=1, \cdots, k\}.\]
\end{defi}

We will investigate the $(n-1)$-convex, spacelike hypersurface $\mathcal{M}_u:=\{(x, u(x))| x\in\R^{n}\}$ satisfying
\be\label{2.1}
\sigma_{n-1}(\kappa[\M_u])=\sigma_{n-1}(\kappa_1,\cdots,\kappa_n)=1,
\ee
where $\kappa[\M_u]=(\kappa_1, \cdots, \kappa_n)$ are the principal curvatures of $\M_u$ and
$\sigma_{n-1}$ is the $(n-1)$-th elementary symmetric polynomial, i.e.,
\[\sigma_{n-1}(\kappa)=\sum\limits_{1\leq i_1<\cdots< i_{n-1}\leq n}\la_{i_1}\cdots\la_{i_{n-1}}.\]

In contrast to the Euclidean case, where the existence of an entire zero mean curvature graph implies that the graph is a hyperplane only for dimensions $n\leq7,$   Cheng-Yau \cite{CY} showed that an entire spacelike maximal hypersurface (zero mean curvature) in Minkowski space is a hyperplane for all dimensions. This raised the question of whether the only entire spacelike hypersurface of constant mean curvature (CMC) in Minkowski space is the hyperboloid. In \cite{T}, Treibergs answered this question by showing that for an arbitrary $C^2$ perturbation of the light cone in Minkowski space, one can construct a spacelike CMC hypersurface which is asymptotic to this perturbation. Moreover, Treibergs also showed that every entire spacelike CMC hypersurface is convex and has bounded principal curvatures. Later, Choi-Treibergs \cite{CT} further studied the Guass map of the CMC hypersurfaces.
They proved that the Gauss map of a spacelike CMC hypersurface in Minkowski space is a harmonic map to hyperbolic space. Furthermore, they showed, given an arbitrary closed set in the ideal boundary at infinity of hyperbolic space, there are many complete entire spacelike CMC hypersurfaces whose Gauss maps are diffeomorphisms onto the interior of the convex hull of the corresponding set in the unit ball.

It may be traced back to Liebmann \cite{Lie} who showed that every compact embedded 2-dimensional surface with constant Gauss curvature is a sphere. Hsiung \cite{Hsi} extended this result to all dimensions. Later, a classical result of Aleksandrov \cite{Ale} states that a compact embedded hypersurface with constant mean curvature is a sphere.  One can investigate what happens if we replace the `` constant mean curvature'' by `` constant $\sigma_k$ curvature.''
In case $k=2,$ Cheng-Yau \cite{CY3} showed the Aleksandrov result still holds. In \cite{Ros}, A. Ros extended Cheng-Yau's result, showing that the sphere is the only embedded compact  $k$-convex hypersurface with constant $\sigma_k$ curvature. Around the same time, Ecker-Huisken \cite{EH} proved a similar result using different approaches.

Inspired by the similarity of constant $\sigma_k$ curvature hypersurfaces and CMC hypersurfaces in Euclidean space, it is natural to ask, do constant $\sigma_k$ curvature hypersurfaces also share similar properties as CMC hypersurfaces in Minkowski space? More specifically, can we show that every entire spacelike constant $\sigma_k$ curvature hypersurface is convex and has bounded principal curvatures?

In \cite{GJS}, Guan-Jian-Schoen considered the existence and regularity of entire spacelike constant Gauss curvature hypersurfaces with prescribed tangent cone at infinity. In addition, they showed that there do exist entire spacelike constant Gauss curvature hypersurfaces with unbounded principal curvatures. Despite so, there still are some nice properties for entire spacelike constant Gauss curvature hypersurfaces. In \cite{LA}, by studying the Legendre transform of entire spacelike constant Gauss curvature hypersurfaces, Anmin Li showed that one can construct spacelike constant Gauss curvature hypersurfaces with bounded principal curvatures
whose Guass map image is the unit ball.

Due to technical reasons, the study of entire spacelike constant $\sigma_k$ curvature hypersurface remains wide open.
The main difficulties are the following: first, we can't show that the entire spacelike constant
$\sigma_k$ curvature hypersurfaces are convex; second, the principal curvatures of the entire spacelike constant $\sigma_k$ curvature hypersurfaces
are not necessarily bounded; lastly, in the process of constructing entire spacelike constant $\sigma_k$ curvature hypersurfaces, we need to solve
a corresponding Dirichlet problem (see \cite{BS} Appendix B for example). Unfortunately, we do not have the existence result for such Dirichlet problems in general. In this paper, we will overcome these difficulties and study the convexity and existence of the entire spacelike constant $\sigma_{n-1}$ curvature hypersurfaces.

We will divide this paper into two parts. In the first part, we will prove that every entire spacelike constant $\sigma_{n-1}$ curvature hypersurface with bounded principal curvatures is convex. In the second part, we will construct nontrivial examples of strictly convex, spacelike hypersurfaces that have bounded principal curvatures and satisfy equation \eqref{2.1}. In particular, we will prove
\begin{theo}
\label{intth1.1}
Let $\M$ be an $(n-1)$-convex, spacelike hypersurface with bounded principal curvatures, and $\M$ satisfies equation
\eqref{2.1}. Then $\M$ is convex. Moreover, if $\M$ is not strictly convex, then after an $\R^{n,1}$ rigid motion, $\R^{n, 1}$ splits as a product
$\R^{n-1,1}\times\R$ such that $\M$ also splits as a product $\M^{n-1}\times\R.$ Here $\M^{n-1}\subset\R^{n-1,1}$ is a strictly convex, $(n-1)$-dimensional graph whose Gauss curvature is equal to $1$.
\end{theo}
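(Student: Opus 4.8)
The plan is to establish the two assertions in turn --- convexity first, then the splitting --- the common tool being a strong maximum principle for a symmetric function of the shape operator of $\M$. This is available because $\sigma_{n-1}^{1/(n-1)}$ is concave on $\Gamma_{n-1}$ and, after normalizing the tangent plane at a point, equation \eqref{2.1} is uniformly elliptic on a ball of a fixed size: the bound on the principal curvatures makes the spacelike condition $|Du|<1$ quantitative there.

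For convexity, order the principal curvatures $\kappa_1\ge\cdots\ge\kappa_n$ and put $c_0:=\inf_{\M}\kappa_n$, finite by hypothesis; the goal is $c_0\ge0$. Suppose $c_0<0$ and pick $p_k\in\M$ with $\kappa_n(p_k)\to c_0$. Apply $\R^{n,1}$ rigid motions carrying $p_k$ to the origin with vertical normal there; by the bound $|\kappa_i|\le C$, on a fixed ball $B_\rho(0)$ ($\rho=\rho(C)$) one gets graphs $v_k$ with $v_k(0)=0$, $Dv_k(0)=0$, $\|v_k\|_{C^{1,1}(B_\rho)}\le C'$, hence $|Dv_k|<1$ on $B_\rho$. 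Evans--Krylov and Schauder then yield a subsequence converging in $C^\infty_{\mathrm{loc}}$ to a spacelike hypersurface $\M_\infty$ solving \eqref{2.1}; since $\sigma_{n-1}=1$ together with $\kappa\in\overline{\Gamma_{n-1}}$ forces $\kappa\in\Gamma_{n-1}$ (by the Newton--Maclaurin inequalities), $\M_\infty$ is $(n-1)$-convex, and by construction $\kappa_n\ge c_0$ on $\M_\infty$ with equality at the interior point $0$. Differentiating \eqref{2.1} twice and invoking concavity of $\sigma_{n-1}^{1/(n-1)}$ and $(n-1)$-convexity, the appropriate quotient $\sigma_{\ell+1}/\sigma_\ell$ of the eigenvalues of the shifted operator $h-c_0g$ (with $\ell$ its rank at $0$) obeys $L(\sigma_{\ell+1}/\sigma_\ell)\ge -C\big(\sigma_{\ell+1}/\sigma_\ell+|\nabla(\sigma_{\ell+1}/\sigma_\ell)|\big)$; nonnegative and vanishing at $0$, it vanishes identically, so $h-c_0g$ has constant rank $\ell$ with $1\le\ell\le n-1$ ($\ell=0$ would give $h=c_0g$, hence $\sigma_1=nc_0<0$), and its kernel $E$ --- the $c_0$-eigenspace of $h$ --- is a smooth parallel distribution with $h(E,E^\perp)=0$. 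By Moore's lemma a neighbourhood of $0$ in $\M_\infty$ is an extrinsic product $A\times B$ with $TA=E$, $TB=E^\perp$, and the rank-one normal bundle forces one of $A,B$ to be open (totally geodesic) in its ambient subspace: if $A$ is, the principal curvatures along $E$ vanish, contradicting that they equal $c_0\ne0$; if $B$ is, the principal curvatures along $E^\perp$ vanish, so every principal curvature of $\M_\infty$ is $\le0$ with at least one equal to $c_0<0$, whence $\sigma_1<0$ against $(n-1)$-convexity. Thus $c_0\ge0$: $\M$ is convex.

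For the splitting, assume $\M$ is convex but not strictly convex, so $\kappa_n(p_0)=0$ at some $p_0$. Running the same maximum-principle argument globally on the connected complete $\M$ gives $\kappa_n\equiv0$, and the constant rank theorem makes the rank of the second fundamental form $h$ constant; since $\sigma_{n-1}(\kappa[\M])=1\ne0$ forces this rank $\ge n-1$ while non-strict convexity forces it $\le n-1$, the rank is exactly $n-1$: $\kappa_1\ge\cdots\ge\kappa_{n-1}>0=\kappa_n$ everywhere, with the kernel line field $E=\R e_n$ smooth and parallel and $h(e_n,\cdot)=0$. The Gauss formula then gives $\bar\nabla_Xe_n=0$ for every $X$, where $\bar\nabla$ is the flat connection of $\R^{n,1}$; hence $e_n$ equals a fixed spacelike unit vector $a$, and each integral curve of $e_n$ is an ambient straight line contained in $\M$ (complete, as $\M$ is complete), so $\M=\M+\R a$. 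After an $\R^{n,1}$ rigid motion sending $a$ to the $n$-th (spacelike) coordinate direction, $\R^{n,1}=\R^{n-1,1}\times\R$ and translation invariance gives $\M=\M^{n-1}\times\R$ with $\M^{n-1}=\M\cap\{x_n=0\}\subset\R^{n-1,1}$; writing $\M=\{x_{n+1}=u(x)\}$, invariance forces $u$ independent of $x_n$, so $\M^{n-1}$ is an entire spacelike graph over $\R^{n-1}$. Its unit normal is $\nu\perp a$, so $\nu\in\R^{n-1,1}$, and since $h(e_n,\cdot)=0$ the second fundamental form of $\M^{n-1}$ is the restriction of that of $\M$; its principal curvatures are therefore $\kappa_1,\dots,\kappa_{n-1}>0$, so $\M^{n-1}$ is strictly convex, with Gauss curvature $\kappa_1\cdots\kappa_{n-1}=\sigma_{n-1}(\kappa_1,\dots,\kappa_{n-1},0)=\sigma_{n-1}(\kappa[\M])=1$.

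The main obstacle is the convexity step: producing the limit $\M_\infty$ without any a priori uniform bound $|Du|\le1-\delta$ --- this is precisely where boundedness of the principal curvatures is used, through the normalized local graph representation --- and establishing the differential inequality for $\sigma_{\ell+1}/\sigma_\ell$ with no zeroth-order obstruction along $\{\kappa_n=c_0\}$, i.e.\ performing the constant-rank computation for $\sigma_{n-1}$ on a spacelike hypersurface of Minkowski space with the third-order terms absorbed via concavity of $\sigma_{n-1}^{1/(n-1)}$. Granting this, the splitting step is comparatively formal.
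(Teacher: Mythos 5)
Your argument stands or falls on one assertion: that for the shifted tensor $W=h-c_0g$ the quotient $\sigma_{\ell+1}/\sigma_\ell$ of its eigenvalues satisfies $L(\sigma_{\ell+1}/\sigma_\ell)\ge -C\big(\sigma_{\ell+1}/\sigma_\ell+|\nabla(\sigma_{\ell+1}/\sigma_\ell)|\big)$, which you present as a consequence of ``differentiating \eqref{2.1} twice and invoking concavity of $\sigma_{n-1}^{1/(n-1)}$.'' That is a genuine gap, not a routine step. Concavity only controls the sign of $F^{pq,rs}h_{pqi}h_{rsi}$ as a whole; in the constant-rank/convexity computation the third-order terms enter through much finer combinations, and the entire difficulty is to show that the resulting quadratic form in the $h_{ppi}$'s has a sign. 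This is exactly what Theorem \ref{ceth1.1} accomplishes for the test quantity $\sigma_n(h)$, and its proof occupies Sections 3--5 (reduction to the Hadamard product of $T$ and $S$, then a long case analysis for the semi-positivity of $S$); nothing of that sort is available off the shelf for your shifted tensor. The microscopic convexity/constant rank theorems you implicitly lean on (Caffarelli--Guan--Ma, Bian--Guan type) are formulated for the unshifted semidefinite tensor ($D^2u$ or $h$) and require structural hypotheses on the equation written in that variable; in terms of $W$ your equation reads $\sigma_{n-1}(\lambda(W)+c_0\mathbf{1})=1$ with $c_0<0$, and the needed structure condition is neither verified nor obviously true on the relevant (non-convex) range of eigenvalues. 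The same missing lemma reappears in your splitting step: ``running the same maximum-principle argument globally gives $\kappa_n\equiv 0$'' is, at $c_0=0$, precisely the strong maximum principle that the paper extracts from \eqref{ce1.1}, which you have not established. So the key analytic input of the theorem is asserted rather than proved, and it is at least as hard as the part of the paper it would replace.

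Two secondary points. First, even granting constant rank, the claim that the kernel line field is parallel in every direction (so that $\bar\nabla_Xe_n=\nabla_Xe_n=0$ and $e_n$ is a fixed ambient vector) does not follow from constant rank alone: relative nullity distributions are totally geodesic along themselves but not automatically parallel (tangent developables are the standard example), so parallelism must come out of the PDE analysis --- again part of the unproven machinery; the paper instead shows the integral curve of the null direction lies in a fixed hyperplane, argues it is a straight line using convexity together with $\sigma_{n-1}=1$, and then invokes a splitting theorem. Second, completeness of $\M$ is used twice in your proposal (to have a definite piece of hypersurface around $p_k$ in the blow-up, and to conclude the null integral curves are entire lines) but it is not a hypothesis; it has to be derived from the bounded principal curvatures as in Proposition \ref{bppro1.1} and Corollary \ref{bpcor1.1}. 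These last two items are repairable, but the missing differential inequality for the shifted second fundamental form is the core of the theorem and cannot be waved through.
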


\begin{rmk}
One may compare this theorem with constant rank theorems in Euclidean space (see \cite{CGM} and \cite{GM} for example). Recall that constant rank theorems
in Euclidean space only study compact hypersurfaces, which are essentially assuming the principal curvatures are bounded.
In this sense, our theorem is similar to constant rank theorems in Euclidean space. On the other hand, we don't need to assume the convexity of $\M$ at any point. Therefore, our theorem is much stronger than constant rank theorems.
\end{rmk}

In order to construct entire, spacelike, constant $\sigma_{n-1}$ curvature hypersurfaces with bounded principal curvatures, we will
use Anmin Li's idea (see \cite{LA}) and consider the Legendre transform of the solution to equation \eqref{2.1}. We will show in Section \ref{gg} that the study of complete, spacelike, convex hypersurfaces $\mathcal{M}_u$ with bounded principal
curvatures and satisfying $\sigma_{n-1}(\la[\mathcal{M}_u])=1$ can be reduced to the study of the following equation:
\be\label{int1.2}
\left\{
\begin{aligned}
F(\w\gas_{ik}\us_{kl}\gas_{lj})&=1,\,\,\mbox{in $B_1$}\\
\us&=\vp,\,\,\mbox{on $\partial B_1$,}
\end{aligned}
\right.
\ee
where $B_1=\{\xi\in\R^n| |\xi|<1\},$ $\us$ is the Legendre transform of $u,$ $\vp\in C^2(\partial B_1),$
$$\w=\sqrt{1-|\xi|^2},\ \ \gas_{ik}=\delta_{ik}-\frac{\xi_i\xi_k}{1+\w},\ \ \us_{kl}=\frac{\T^2\us}{\T\xi_k\T\xi_l},$$ and $$F(\w\gas_{ik}\us_{kl}\gas_{lj})=\lt(\frac{\sigma_n}{\sigma_1}(\la^*[\w\gas_{ik}\us_{kl}\gas_{lj}])\rt)^{\frac{1}{n-1}}.$$
Here, $\la^*[\w\gas_{ik}\us_{kl}\gas_{lj}]=(\la^*_1, \cdots, \la^*_n)$ are the eigenvalues of the matrix $(\w\gas_{ik}\us_{kl}\gas_{lj})$.

\begin{theo}
\label{intth1.2}
Given a $C^2$ function $\varphi$ on $B_1$, there is a unique strictly convex solution $\us\in C^{\infty}(B_1)\cap C^0(\bar{B}_1)$ to the equation
\eqref{int1.2}. Moreover, the Legendre transform of $\us,$ which we will denote by $u$ satisfies
\[\sigma_{n-1}(\la[\M_u])=1\,\,\mbox{and $\la[\M_u]\leq C$}.\]
Here, $\M_u=\{(x, u(x)) |\, x\in\R^n\}$ is the spacelike graph of $u,$ $\la[\M_u]$ denotes the principal curvatures of $\M_u,$ and the constant $C$ only depends on $|\varphi|_{C^2}$.
 \end{theo}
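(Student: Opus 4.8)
The plan is to solve the Dirichlet problem \eqref{int1.2} by the continuity method, interpolating between $\varphi$ and a known subsolution (for instance a suitable multiple of $|\xi|^2$ plus a constant, adjusted so that it lies below all candidate solutions and has the correct convexity). Writing $F$ as the $(n-1)$-st root of $\sigma_n/\sigma_1$, the operator is concave on the positive cone and degenerate-elliptic in the usual Krylov–Evans sense; a solution exists once we have \emph{a priori} estimates up to second order, after which Evans–Krylov theory and Schauder estimates give $C^\infty$ interior regularity, and uniqueness follows from the comparison principle for this concave elliptic operator. So the entire proof reduces to the chain of estimates $|\us|_{C^0}$, $|D\us|_{C^0}$, $|D^2\us|_{C^0(\bar B_1)}$.

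First I would establish the $C^0$ bound: the prescribed boundary data $\varphi$ together with a barrier construction — using on one side the subsolution mentioned above and on the other side the solution of the corresponding linear problem or a large paraboloid — pins $\us$ between two explicit functions, giving $\sup_{\bar B_1}|\us|\le C(|\varphi|_{C^2})$. Next comes the gradient estimate. Here one expects the boundary gradient estimate to come from the same barriers (the domain $B_1$ is uniformly convex, which is exactly the situation where local barriers at $\partial B_1$ control $D\us$ on the boundary), and the interior gradient estimate to follow either from convexity of $\us$ (which forces $D\us$ on the interior to be controlled by its boundary values, since a convex function attains the extremes of its gradient at the boundary) or from a standard maximum-principle argument applied to $|D\us|^2$. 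The factor $\w=\sqrt{1-|\xi|^2}$ degenerating at $\partial B_1$ needs watching, but since it multiplies the Hessian and we are on a fixed ball, it only affects constants.

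The main obstacle will be the second-order estimate, in particular the \emph{boundary} estimate for $D^2\us$. The interior $C^2$ bound should follow from a Pogorelov-type computation: differentiate the equation twice, use the concavity of $F=(\sigma_n/\sigma_1)^{1/(n-1)}$ and the $(n-1)$-convexity (equivalently, in the dual picture, strict convexity of $\us$) to absorb bad terms, and test with a function of the form $\log\la_{\max}(D^2\us)+$ (lower order) cut off against $\w$ or against $(1-|\xi|^2)$. For the boundary, one separates tangential-tangential, tangential-normal, and normal-normal second derivatives: the tangential-tangential ones are read off from $\varphi|_{\partial B_1}$, the mixed ones are handled by differentiating the equation in tangential directions and constructing a barrier, and the double-normal derivative is the delicate part — one must bound it from above using the structure of $\sigma_n/\sigma_1$ and the already-controlled quantities, typically by showing the relevant "normal" curvature cannot blow up without violating the equation. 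Once $|D^2\us|_{C^0(\bar B_1)}\le C(|\varphi|_{C^2})$ is in hand, the equation becomes uniformly elliptic, openness and closedness of the continuity set are routine, and one obtains the unique strictly convex $\us\in C^\infty(B_1)\cap C^0(\bar B_1)$. Finally, translating back through the Legendre transform as set up in Section \ref{gg}, the bound $\la[\M_u]\le C$ is precisely the dual of $D^2\us$ being bounded below away from zero, and $\sigma_{n-1}(\la[\M_u])=1$ is the equation \eqref{int1.2} rewritten; the constant $C$ depends only on $|\varphi|_{C^2}$ because every estimate above did.
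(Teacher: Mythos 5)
Your plan has a genuine gap: you treat the degeneracy of $\w=\sqrt{1-|\xi|^2}$ at $\partial B_1$ as a harmless constant, but it is the heart of the problem. The equation forces $\w\gas_{ik}\us_{kl}\gas_{lj}$ to have eigenvalues of fixed size, so as $\w\to 0$ the Hessian of $\us$ must blow up; likewise $D\us(\xi)=x$ sweeps out all of $\R^n$, so $|D\us|\to\infty$ at $\partial B_1$ (already for the model solution $\us=-\sqrt{1-|\xi|^2}$). Hence the estimates your continuity method rests on --- $|D\us|_{C^0(\bar B_1)}$ and $|D^2\us|_{C^0(\bar B_1)}$, including the tangential/mixed/double-normal boundary analysis at $\partial B_1$ --- are simply false for the actual solution, which is only claimed to be $C^0(\bar B_1)\cap C^\infty(B_1)$. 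This is why the paper does not solve \eqref{int1.2} directly: it solves the non-degenerate approximate problems \eqref{cs1.2} on $B_r$, $r<1$ (where boundary $C^2$ estimates in the hyperbolic model and a global $C^2$ bound via Theorem \ref{ceth1.1} are available), and then lets $r\to 1$ using estimates that are compatible with the boundary degeneracy: bounds on angular derivatives $|\T\us|$, on the weighted quantity $\sqrt{1-|\xi|^2}\,|\T^2\us|$, and Pogorelov-type interior $C^2$ estimates on the Legendre-transformed hypersurface side, all independent of $r$. Strict convexity of the limit is then deduced with the help of Theorem \ref{intth1.1}, not from uniform ellipticity on $\bar B_1$.

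A second, related gap is your last step: the bound $\la[\M_u]\le C$ is not ``precisely the dual'' of an already-established lower bound on $D^2\us$; near $\partial B_1$ no such uniform lower bound comes out of your scheme, and in the paper this curvature bound requires a separate global Pogorelov-type maximum-principle argument on $\M_u$ (Section \ref{ub}), using the two-sided bound on $u\sqrt{1-|Du|^2}$ from Lemma \ref{cvlem1.6} and the machinery of \cite{LRW}. So the portion of the theorem you treat as automatic bookkeeping is in fact a substantial part of the proof.
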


We can also state the above theorem using geometric terminologies:
\begin{coro}
For any given $C^2$ function $\varphi$ defined on the $(n-1)$-dimensional sphere, there exits an unique entire graphical hypersurface $\M=\{(x, u(x))| x\in\R^n\}$ with constant $\sigma_{n-1}$ curvature and bounded principal curvatures. Moreover, the ideal boundary of $\M$ is the sphere, and on the ideal boundary, the Legendre transform of $u$ is equal to $\varphi$.
 \end{coro}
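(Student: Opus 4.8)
The corollary is simply the geometric reformulation of Theorem \ref{intth1.2}, and the plan is to deduce it from that theorem together with the correspondence of Section \ref{gg} between solutions of \eqref{int1.2} and complete spacelike convex hypersurfaces. Identifying the $(n-1)$-sphere with $\partial B_1$, a prescribed $\varphi\in C^2(S^{n-1})$ is a $C^2$ function on $\partial B_1$, so Theorem \ref{intth1.2} provides a unique strictly convex $\us\in C^\infty(B_1)\cap C^0(\bar B_1)$ solving \eqref{int1.2} with $\us|_{\partial B_1}=\varphi$, together with the statement that its Legendre transform $u$ satisfies $\sigma_{n-1}(\la[\M_u])=1$ and $\la[\M_u]\le C$. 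First I would note that $u$ is entire: since $|\xi|<1$ on $B_1$ and $\us$ is bounded below on $\bar B_1$, the Legendre transform $u(x)=\sup_{\xi\in B_1}(\langle x,\xi\rangle-\us(\xi))$ is finite and convex on all of $\R^n$, and by the reduction of Section \ref{gg} --- in which the fact that $\us$ solves the equation on the \emph{whole} ball $B_1$ is exactly what makes $Du\colon\R^n\to B_1$ a diffeomorphism --- the graph $\M_u=\{(x,u(x))\mid x\in\R^n\}$ is a complete spacelike hypersurface. Being convex with $\la[\M_u]\le C$ it has bounded principal curvatures, and $\sigma_{n-1}(\la[\M_u])=1$; this gives the existence half of the corollary.

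It remains to interpret the boundary data geometrically. Under the correspondence of Section \ref{gg} the Legendre variable $\xi$ is the Euclidean image $Du$ of the Gauss map of $\M_u$ (equivalently, the hyperbolic Gauss map pushed to the Klein ball $B_1$), so the domain of $\us$ records exactly the set of null directions approached by $\M$ at infinity, i.e.\ its ideal boundary. Since $\us$ solves \eqref{int1.2} on all of $B_1$, the image of $Du$ is the full ball and the ideal boundary of $\M$ is the entire sphere $\partial B_1=S^{n-1}$. Moreover $\us$ is by construction the Legendre transform of $u$, and its continuity up to $\bar B_1$ furnished by Theorem \ref{intth1.2} gives it a boundary trace on $S^{n-1}$ equal to $\varphi$; this is precisely the claim that the Legendre transform of $u$ agrees with $\varphi$ on the ideal boundary. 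Uniqueness then follows because the Legendre transform is an involution on this class of convex functions: any entire spacelike graph with bounded principal curvatures, constant $\sigma_{n-1}$ curvature $1$, and ideal boundary data $\varphi$ yields, via Section \ref{gg}, a strictly convex solution of \eqref{int1.2} with boundary value $\varphi$, which by the uniqueness in Theorem \ref{intth1.2} is $\us$, so the graph is $\M_u$.

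All the real analytic work is contained in Theorem \ref{intth1.2} --- the a priori $C^0$, $C^1$ and $C^2$ estimates and the continuity method applied to the degenerate equation \eqref{int1.2}. For the corollary itself the delicate point is the dictionary of Section \ref{gg}: that a strictly convex solution on the entire ball corresponds to a \emph{complete, entire} spacelike graph, and that the prescribed boundary value $\varphi$ really encodes the asymptotics of $\M$ near infinity. I expect verifying this last matching --- the continuity of the Legendre correspondence up to $\partial B_1$, so that $\us|_{\partial B_1}=\varphi$ genuinely describes $\M$ near its ideal boundary --- to be the main obstacle.
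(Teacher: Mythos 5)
Your proposal is correct and follows exactly the route the paper intends: the corollary is stated as a direct geometric reformulation of Theorem \ref{intth1.2} via the Legendre transform/Gauss map dictionary of Section \ref{gg} (with $Du(\R^n)=B_1$ supplied by Lemma \ref{cvlem1.10} and continuity of $\us$ up to $\bar B_1$ giving the boundary trace $\varphi$), and the paper offers no further argument beyond this translation. The only point you gloss, which the paper also leaves implicit, is that in the uniqueness direction a competing hypersurface must first be known to be strictly convex with Gauss map image all of $B_1$ before its Legendre transform qualifies as a solution of \eqref{int1.2}.
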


\begin{rmk}
We can generalize the result of Theorem \ref{intth1.2} to spacelike constant $\sigma_k$ curvature hypersurfaces for all $1\leq k\leq n$.
We will include this result in an upcoming paper, where we will focus on studying properties of spacelike constant $\sigma_k$ curvature hypersurfaces for all
$1\leq k\leq n$.
\end{rmk}
An outline of the paper is as follows. In Section 2, we introduce some basic formulas, notations, as well as properties of
the $k$-th elementary symmetric function that will be used in later sections. Sections 3, 4, 5, and 6 are devoted to proving Theorem \ref{intth1.1}. We will see (for details see Section 6 Lemma \ref{bplem1.1}) that the key step in proving Theorem \ref{intth1.1} is to prove Theorem \ref{ceth1.1} (see Section 3), which is carried out in Sections 3, 4, and 5. More specifically, in Section 3, we reduce the proof of Theorem \ref{ceth1.1} to the proof of the semi-positivity of a symmetric matrix $S$ (see the last 2 paragraphs of Section 3 for the definition of $S$). In Sections 4 and 5, we show that $S$ is indeed semi-positive. Since these two sections involve very delicate and complicated calculations, first-time readers may want to skip this part. We prove the splitting theorem and complete the proof of Theorem \ref{intth1.1} in Section 6. Sections 7, 8, 9 10, and 11 are devoted to proving Theorem \ref{intth1.2}. In this part, we use Legendre transform to construct many examples of strictly convex solutions with bounded principal curvatures to equation \eqref{2.1}. In particular, in Section 7, we investigate spacelike hypersurfaces under the Gauss map and the Legendre transform respectively. The reason we look at two models in Section 7 is that each model has its own advantages in the study of the corresponding Dirichlet problem (see Section 8 and 9) and convergence result (see Section 10). We prove that the solution to equation \eqref{int1.2} exists in Section 8, 9 and 10. In Section 11, we show that the Legendre transform of this solution satisfies equation \eqref{2.1} and has bounded principal curvatures.
This completes the proof of Theorem \ref{intth1.2}.

\section*{Acknowledgements}
Part of this work was done while Z.W. was visiting Jilin University, and he would like to thank their hospitality. Part of this work was done while L.X. was visiting the School of Mathematical Science at Fudan University, and she gratefully acknowledges their
hospitality. L.X. would also like to thank Matthew McGonagle for helpful conversations on various aspects of this work.
\bigskip

\section{Preliminaries}
\label{pre}
We first recall some basic formulas for the geometric quantities of spacelike hypersurfaces
in Minkowski space $\R^{n,1},$ which is $\R^{n+1}$ endowed with the Lorentzian metric
$$ds^2=dx_1^2+\cdots dx_{n}^2-dx_{n+1}^2.$$
Throughout this paper, $\lt<\cdot, \cdot\rt>$ denotes the inner product in $\R^{n,1}$. The corresponding Levi-Civita
connection is denoted by $\bar{\nabla}.$

A spacelike hypersurface $\M$ in $\R^{n, 1}$ is a codimension-one submanifold whose induced metric
is Riemannian. Locally $\M$ can be written as a graph
\[\M_u=\{X=(x, u(x))| x\in\R^n\}\]
satisfying the spacelike condition \eqref{int1.1}. Let
$E=(0, \cdots, 0, 1),$ then the height function of $\M$ is $u(x)=-\lt<X, E\rt>.$ It's easy to see that the induced metric and second fundamental form of $\M$ are given
by
$$g_{ij}=\delta_{ij}-D_{x_i}uD_{x_j}u, \ \  1\leq i,j\leq n,$$
and
\[h_{ij}=\frac{u_{x_ix_j}}{\sqrt{1-|Du|^2}},\]
while the timelike unit normal vector field to $\M$ is
\[\nu=\frac{(Du, 1)}{\sqrt{1-|Du|^2}},\]
where $Du=(u_{x_1}, \cdots, u_{x_n})$ and $D^2u=\lt(u_{x_ix_j}\rt)$ denote the ordinary gradient and Hessian of $u$,
respectively.

One important example of the spacelike hypersurface with constant mean curvature is the hyperboloid
\[u(x)=\lt(\frac{n^2}{H^2}+\sum_{i=1}^{n}x_i^2\rt)^{1/2},\]
which is umbilic, i.e., it satisfies  $\la_1=\la_2=\cdots=\la_n=\frac{H}{n}.$ Other examples of spacelike CMC hypersurfaces include hypersurfaces of revolution,
in which case the graph takes the form $u(x)=\sqrt{f(x_1)^2+|\bar{x}|^2},\,\,x=(x_1, \bar{x})=(x_1, \cdots, x_n)\in\R^n,$ where $f$ is a function only depending on $x_1$.

Now, let $\{\tau_1,\tau_2,\cdots,\tau_n\}$ be a local orthonormal frame on $T\M$. We will use $\nabla$ to denote
the induced Levi-Civita connection on $\M.$ For a function $v$ on $\M$, we denote $v_i=\nabla_{\tau_i}v,$ $v_{ij}=\nabla_{\tau_i}\nabla_{\tau_j}v,$ etc.
In particular, we have
\[|\nabla u|=\sqrt{g^{ij}u_{x_i}u_{x_j}}=\frac{|Du|}{\sqrt{1-|Du|^2}}.\]

We also need the following well known fundamental equations for a hypersurface $\M$ in $\R^{n, 1}:$
\begin{equation}\label{Gauss}
\begin{array}{rll}
X_{ij}=& h_{ij}\nu\quad {\rm (Gauss\ formula)}\\
(\nu)_i=&h_{ij}\tau_j\quad {\rm (Weigarten\ formula)}\\
h_{ijk}=& h_{ikj}\quad {\rm (Codazzi\ equation)}\\
R_{ijkl}=&-(h_{ik}h_{jl}-h_{il}h_{jk})\quad {\rm (Gauss\ equation)},\\
\end{array}
\end{equation}
where $R_{ijkl}$ is the $(4,0)$-Riemannian curvature tensor of $\M$, and the derivative here is covariant derivative with respect to the metric on $\M$.
It is clear that the Gauss formula and the Gauss equation in \eqref{Gauss} are different from those in Euclidean space.
Therefore, the Ricci identity becomes,
\begin{equation}\label{a1.2}
\begin{array}{rll}
h_{ijkl}=& h_{ijlk}+h_{mj}R_{imlk}+h_{im}R_{jmlk}\\
=& h_{klij}-(h_{mj}h_{il}-h_{ml}h_{ij})h_{mk}-(h_{mj}h_{kl}-h_{ml}h_{kj})h_{mi}.\\
\end{array}
\end{equation}

Although in this paper we only study constant $\sigma_{n-1}$ curvature hypersurface, we will need to use other elementary sysmetric polynomials in the process.
In the following, we will introduce notations and properties for general curvature functions.

Recall that the $k$-th elementary symmetric polynomial is
defined by,
$$
\sigma_k(\kappa)=\dsum_{1\leq i_1<\cdots<i_k\leq
n}\kappa_{i_1}\cdots \kappa_{i_k},
$$
where $\kappa=(\kappa_1,\kappa_2,\cdots,\kappa_n)\in\mathbb{R}^n$ and $1\leq k\leq n.$
We also set  $\sigma_0(\kappa)=1$ and $\sigma_k(\kappa)=0$ for
$k>n$. It's well known that, a suitable domain of definition for
$\sigma_k$ is the G\r{a}rding cone $\Gamma_k$ (see \cite{CNS3}).
By the definition of $\Gamma_k$, we can see that
$$
\Gamma_n\subset\cdots\subset\Gamma_k\cdots\subset\Gamma_1.
$$
Moreover, Korevaar \cite{Kor} showed that the G\r{a}rding cone $\Gamma_k$ can also be
characterized as
\begin{eqnarray}\label{Gamma}
&\\
&\left\{\kappa\in
\R^n;\sigma_k(\kappa)>0,\frac{\partial\sigma_k(\kappa)}{\partial\kappa_{i_1}}>0,
\cdots,\frac{\partial^k\sigma_k(\kappa)}{\partial\kappa_{i_1}\cdots\partial
\kappa_{i_k}}>0, \text{ for all } 1\leq i_{1}<\cdots<i_{k}\leq n\right\}.\nonumber
\end{eqnarray}
This characterization will be used throughout this paper. It's particularly useful in analyzing equation \eqref{3.9}.

Let $\mathcal{S}$ be the vector space of $n \times n$ symmetric matrices and
\[\mathcal{S}_K=\{W\in\mathcal{S}: \la[W]\in K\},\]
where $\la[W] = (\la_1, \cdots, \la_n)$ denotes the eigenvalues of $W,$ and $K$ is the admissable set, for example, $\Gamma_k$.
We let $\kappa[W]$ represent the
eigenvalues of the matrix $W=(w_{ij})$. Define a function $F$ by
$$f\left(\kappa [W]\right)=F(W).$$
Throughout this paper we denote,
$$F^{pq}=\frac{\p F}{\p w_{pq}}, \text{ and  } F^{pq,rs}=\frac{\p^2 F}{\p w_{pq}\p w_{rs}}.$$
The matrix $\lt(F^{ij}(W)\rt)$ is symmetric and has eigenvalues $f_1, \cdots, f_n,$
where $f_i=\frac{\partial f}{\partial\la_i},$ $1\leq i\leq n.$
Moreover, if $f$ is a concave function in $K,$ then $F$ is concave as well. That is,
\[F^{ij,kl}(W)\xi_{ij}\xi_{kl}\leq 0, \forall \lt(\xi_{ij}\rt)\in\mathcal{S}, W\in\mathcal{S}_K.\]
In particular, we should keep in mind that both
$\sigma_k^{1/k}(\kappa)$  and $\left(\frac{\sigma_k(\kappa)}{\sigma_l(\kappa)}\right)^{1/(k-l)},$ $l<k,$ are concave functions
in $\Gamma_k,$ for $1\leq k\leq n.$ Let's recall the following well known Lemma (see \cite{Ball}) which will be needed in the proof of Theorem \ref{intth1.2}.

\begin{lemm}\label{prelm1.1}
Denote $Sym(n)$ the set of all $n \times n$ symmetric matrices. Let $F$ be a $C^2$
symmetric function defined in some open subset $\Psi\subset Sym(n).$ At any diagonal matrix
$W\in\Psi$ with distinct eigenvalues, let $\ddot{F}(B, B)$ be the second derivative of $C^2$
symmetric function $F$ in direction $B\in Sym(n)$, then
\[\ddot{F}(B, B)=\sum_{j, k=1}^nf_{jk}B_{jj}B_{kk}+2\sum_{j<k}\frac{f_j-f_k}{\la_j-\la_k}B_{jk}^2,\]
where $f_j=\frac{\partial f}{\partial\la_j}$ and $f_{jk}=\frac{\partial^2 f}{\partial\la_j\partial\la_k}.$
\end{lemm}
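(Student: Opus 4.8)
The plan is to use the standard perturbation technique for symmetric functions of eigenvalues, reducing the computation of $\ddot F(B,B)$ to an explicit differentiation of eigenvalues with respect to a one-parameter family of symmetric matrices. First I would fix the diagonal matrix $W = \mathrm{diag}(\la_1,\dots,\la_n)$ with all $\la_j$ distinct, and consider the curve $W(t) = W + tB$ for $t$ near $0$. Since $W$ has distinct eigenvalues, standard perturbation theory guarantees that the eigenvalues $\la_j(t)$ of $W(t)$ are smooth functions of $t$ in a neighborhood of $0$, and likewise one can choose a smooth orthonormal frame of eigenvectors. Then $F(W(t)) = f(\la_1(t),\dots,\la_n(t))$, and $\ddot F(B,B) = \frac{d^2}{dt^2}\big|_{t=0} f(\la(t))$, which by the chain rule equals $\sum_j f_j \ddot\la_j(0) + \sum_{j,k} f_{jk}\dot\la_j(0)\dot\la_k(0)$.

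The key step is then to compute $\dot\la_j(0)$ and $\ddot\la_j(0)$ in terms of the entries $B_{jk}$. Writing $\la_j(t)$ via the eigenvalue equation $W(t)v_j(t) = \la_j(t) v_j(t)$ with $v_j(t)$ a unit eigenvector and $v_j(0) = e_j$, the first-order perturbation formula gives $\dot\la_j(0) = \langle B e_j, e_j\rangle = B_{jj}$. Differentiating once more and using that the eigenvectors stay orthonormal, the standard second-order perturbation formula yields
\[
\ddot\la_j(0) = 2\sum_{k \neq j} \frac{\langle B e_j, e_k\rangle^2}{\la_j - \la_k} = 2\sum_{k\neq j}\frac{B_{jk}^2}{\la_j - \la_k},
\]
where the distinctness of eigenvalues is exactly what makes these denominators nonzero. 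Substituting $\dot\la_j(0) = B_{jj}$ and this expression for $\ddot\la_j(0)$ into the chain-rule formula gives
\[
\ddot F(B,B) = \sum_{j,k} f_{jk} B_{jj}B_{kk} + 2\sum_j f_j \sum_{k\neq j}\frac{B_{jk}^2}{\la_j-\la_k}.
\]
Finally I would symmetrize the last double sum over the pair $(j,k)$: the coefficient of $B_{jk}^2$ (for $j\neq k$, noting $B_{jk}=B_{kj}$) becomes $\frac{f_j}{\la_j - \la_k} + \frac{f_k}{\la_k - \la_j} = \frac{f_j - f_k}{\la_j - \la_k}$, so collecting over unordered pairs $j<k$ recovers the claimed formula $\ddot F(B,B) = \sum_{j,k} f_{jk} B_{jj}B_{kk} + 2\sum_{j<k}\frac{f_j - f_k}{\la_j - \la_k}B_{jk}^2$.

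The main obstacle — really the only subtle point — is justifying the smoothness of the eigenvalue branches and the validity of the second-order perturbation formula for $\ddot\la_j(0)$; this is where the hypothesis of distinct eigenvalues is essential, since it both guarantees analytic dependence of the spectral data on $t$ and keeps the resolvent-type denominators $\la_j - \la_k$ away from zero. Once that analytic input is in hand, the rest is a routine chain-rule computation with a symmetrization at the end. I would cite \cite{Ball} (or standard perturbation theory) for the perturbation formulas rather than reproving them in detail.
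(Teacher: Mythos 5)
Your proposal is correct. Note, however, that the paper does not prove this lemma at all: it is quoted as a known result with a citation to \cite{Ball}, so there is no ``paper proof'' to match. Your route---differentiating $t\mapsto f(\la(W+tB))$ along the curve $W(t)=W+tB$, using the first- and second-order eigenvalue perturbation formulas $\dot\la_j(0)=B_{jj}$ and $\ddot\la_j(0)=2\sum_{k\neq j}B_{jk}^2/(\la_j-\la_k)$ (valid precisely because the eigenvalues of $W$ are distinct, so the branches are smooth and the denominators are nonzero), and then symmetrizing $\frac{f_j}{\la_j-\la_k}+\frac{f_k}{\la_k-\la_j}=\frac{f_j-f_k}{\la_j-\la_k}$---is the standard direct derivation and is sound; the chain-rule bookkeeping and the final collection over unordered pairs $j<k$ are all correct. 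What Ball's paper buys beyond this is a more general differentiability theory for symmetric functions of matrices (including behavior without the distinct-eigenvalue hypothesis), but for the statement as used in the paper, which only evaluates $\ddot F(B,B)$ at a diagonal matrix with distinct eigenvalues, your elementary perturbation argument fully suffices, provided you either prove or properly cite the second-order eigenvalue perturbation formula as you indicate.
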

From the discussion above, we can see that the definition of the $k$-th elementary symmetric polynomial can
be extended to symmetric matrices. Suppose $W$ is an $n\times n$
symmetric matrix and $\kappa[W]\subset\Gamma_k$. We
define
$$\sigma_k(W)=\sigma_k(\kappa[W]).$$

In the following, we list some algebraic identities and properties of $\sigma_k$ that will be used later.
For $1\leq l\leq n$, we define
$\sigma_l(\kappa|a)$ the $l$-th elementary
symmetric polynomial of $\kappa_1,\kappa_2,\cdots,\kappa_n$ with
$\kappa_a=0$, $\sigma_l(\kappa|ab)$ the $l$-th elementary
symmetric polynomial of $\kappa_1,\kappa_2,\cdots,\kappa_n$ with
$\kappa_a=\kappa_b=0,$ and similarly, we can define $\sigma_l(\kappa|abc\cdots).$   Thus, we have

\par
\noindent (i) $\sigma^{pp}_k(\kappa):=\dfrac{\partial \sigma_k(\kappa)}{\partial
\kappa_p}=\sigma_{k-1}(\kappa|p)$ for any $p=1,\cdots,n$;
\par
\noindent (ii)
$\sigma^{pp,qq}_k(\kappa):=\dfrac{\partial^2 \sigma_k(\kappa)}{\partial
\kappa_p\partial \kappa_q}=\sigma_{k-2}(\kappa|pq)$  for any
$p,q=1,\cdots,n$ and $\sigma^{pp,pp}_k(\kappa)=0$;

\par
\noindent (iii)
$\sigma_k(\kappa)=\kappa_i\sigma_{k-1}(\kappa|i)+\sigma_k(\kappa|i)$ for any fixed $1\leq i\leq n$;
\par
\noindent (iv)
$\dsum_{i=1}^n\kappa_i\sigma_{k-1}(\kappa|i)=k\sigma_k(\kappa)$.\\

\noindent Moreover, for a Codazzi  tensor $W=(w_{ij})$, if $W$ is
diagonal, then we have

\noindent (v)
$-\sum_{p,q,r,s}\sigma^{pq,rs}_kw_{pql}w_{rsl}=\sum_{p,q}\sigma_k^{pp,qq}w_{pql}^2-\sum_{p,q}\sigma^{pp,qq}_kw_{ppl}w_{qql}$,\\
\noindent where $w_{pql}$ is the covariant derivative of $w_{pq}$
and $\sigma_k^{pq,rs}=\frac{\p^2 \sigma_k(W)}{\p w_{pq}\p w_{rs}}$.
The definition of the Codazzi tensor can be found in \cite{GRW}.

\noindent For $\kappa\in \Gamma_k$, if we assume
$\kappa_1\geq\cdots\geq\kappa_n$, then we have
\par
\noindent (vi) $\sigma_{k-1}(\kappa|n)\geq \cdots\geq
\sigma_{k-1}(\kappa|1)>0$;
\par
\noindent (vii) $\kappa_1\sigma_{k-1}(\kappa|1)\geq C_{n,k}\sigma_k(\kappa)$,
\par\noindent
where $C_{n,k}$ is a positive constant depending only on $n,k$.
Details of the proof of these formulas can be found in \cite{HS} and
\cite{Wxj}.

\bigskip

\section{Convexity estimates of the hypersurface}
\label{ce}

In this section, we will start to study the convexity of the spacelike hypersurface $\M_u=\{(x, u(x))| x\in\R^n, |Du|<1\}$ that satisfies
the following conditions:
\begin{itemize}
\item $\kappa[\M_u]\in\Gamma_{n-1};$
\item $\sigma_{n-1}(\kappa[\M_u])=1.$
\end{itemize}
If a spacelike hypersurface $\M_u$ satisfies these conditions, then we say $\M_u$ is \textit{admissible}.
Note that here we don't require the principal curvatures of $\M_u$ are bounded.

Next, we will state one of our main theorems. This theorem plays a key role in the proof
of Theorem \ref{intth1.1}.

\begin{theo}
\label{ceth1.1}
Let $\M_u$ be an admissible hypersurface and $\kappa[\M_u]$ be its principal curvatures, then we have
\be\label{ce1.1}
\sigma_{n-1}^{ij}(\sigma_n(\kappa[\M_u]))_{ij}\leq\sigma_1\sigma_{n-1}\sigma_n-n^2\sigma_n^2.
\ee
\end{theo}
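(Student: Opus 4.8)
The plan is to compute $\sigma_{n-1}^{ij}(\sigma_n)_{ij}$ directly by differentiating the equation $\sigma_{n-1}(\kappa[\M_u])=1$ twice and feeding the result into the expression $\sigma_{n-1}^{ij}(\sigma_n)_{ij}$, working at a point where the second fundamental form $(h_{ij})$ is diagonalized. First I would write $\sigma_n = \det(h_{ij})$ and use the identities $(\sigma_n)_i = \sigma_n^{pq} h_{pqi}$ and $(\sigma_n)_{ij} = \sigma_n^{pq} h_{pqij} + \sigma_n^{pq,rs} h_{pqi} h_{rsj}$, where all derivatives are covariant. The key to bringing the admissibility condition into play is the Ricci identity \eqref{a1.2}: it lets me replace the fourth-derivative term $h_{pqij}$ by $h_{ijpq}$ plus curvature correction terms that are quadratic in $h$ (and these produce the terms $\sigma_1\sigma_{n-1}\sigma_n$ and $-n^2\sigma_n^2$ on the right-hand side after contraction with $\sigma_{n-1}^{ij}$ and $\sigma_n^{pq}$). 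Simultaneously I differentiate $\sigma_{n-1}(\kappa)=1$ twice: once gives $\sigma_{n-1}^{ij} h_{iji} = 0$ for each fixed index (after Codazzi), i.e.\ $\sigma_{n-1}^{pq} h_{pqi}=0$; the second differentiation gives $\sigma_{n-1}^{pq} h_{pqij} + \sigma_{n-1}^{pq,rs} h_{pqi} h_{rsj} = 0$, which I solve for the $\sigma_{n-1}^{pq} h_{pqij}$ trace term.

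The mechanism is then the following. Contracting $(\sigma_n)_{ij}$ with $\sigma_{n-1}^{ij}$ and using the Ricci identity to commute derivatives in $h_{pqij}$, I get a term $\sigma_{n-1}^{ij}\sigma_n^{pq} h_{ijpq}$ — but $\sigma_n^{pq} h_{ijpq}$ is, by the second differentiation of the equation applied in the $\tau_i,\tau_j$ directions, essentially $(\sigma_n)$-analogue controlled by $\sigma_n^{pq,rs}h_{ijp}h_{ijq}$-type corrections; more precisely I will need the concavity-type inequality relating the two second-order terms $\sigma_{n-1}^{pq,rs}h_{pqi}h_{rsj}$ and $\sigma_n^{pq,rs}h_{pqi}h_{rsj}$. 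Using property (v) from Section \ref{pre} at a diagonal point, both of these collapse to sums of the form $\sum_{p\ne q}\sigma_{k-2}(\kappa|pq)(h_{ppi}h_{qqi}-h_{pqi}^2)$; the crucial step is a pointwise algebraic inequality, with the first derivative constraint $\sigma_{n-1}^{pp}h_{ppi}=0$ at hand, showing that the $\sigma_n$-version of this quadratic form is bounded above by a multiple (with the right structural constant) of the $\sigma_{n-1}$-version, which vanishes by the first-order constraint — or rather that the sum over $i$ of these terms, weighted by $\sigma_{n-1}^{ii}$, is nonpositive up to the curvature terms. The curvature terms from \eqref{a1.2}, namely the contractions of $-(h_{mj}h_{il}-h_{ml}h_{ij})h_{mk}$ type expressions with $\sigma_{n-1}^{ij}\sigma_n^{kl}$, are then evaluated using the Newton identities (iii), (iv): $\sum \kappa_i \sigma_{n-1}(\kappa|i) = (n-1)\sigma_{n-1}$, $\sigma_n^{pp}\kappa_p = \sigma_n$ (no sum, i.e.\ $\kappa_p\sigma_{n-1}(\kappa|p)=\sigma_n$ since $\sigma_n(\kappa|p)=0$), and $\sigma_1 \sigma_n - \text{(trace terms)}$, which is exactly where $\sigma_1\sigma_{n-1}\sigma_n$ and $n^2\sigma_n^2$ come from.

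The main obstacle I expect is controlling the third-derivative (gradient-of-second-fundamental-form) terms: after all commutations one is left with a quadratic form $Q$ in the quantities $h_{pqi}$ with coefficients built from $\sigma_{n-1}^{ij}$, $\sigma_n^{pq}$, $\sigma_{n-1}^{pq,rs}$, $\sigma_n^{pq,rs}$, and one must show $Q \le 0$ (so that it can simply be discarded, giving the stated inequality). This is precisely the reduction described in the paper's own outline — "reduce the proof of Theorem \ref{ceth1.1} to the proof of the semi-positivity of a symmetric matrix $S$" — so I anticipate that the honest content of the proof is isolating this matrix $S$ (whose entries are indexed by the pairs $(p,i)$ with coefficients involving $\sigma_{n-1}(\kappa|\cdot)$ and $\sigma_n(\kappa|\cdot)=\kappa$-products) and that establishing $S \ge 0$ requires the delicate case analysis deferred to Sections 4 and 5, using the Gårding cone characterization \eqref{Gamma} of $\Gamma_{n-1}$ heavily (positivity of all the partial derivatives $\sigma_{n-1}(\kappa|i)$, $\sigma_{n-1}(\kappa|ij)$, etc.) together with the ordering properties (vi), (vii). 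I would handle the "easy" terms (curvature corrections and first-order-constraint cancellations) first to pin down the exact right-hand side, then state the needed semi-positivity of $S$ as the remaining claim.
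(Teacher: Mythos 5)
Your reduction is exactly the paper's own route in Section 3: take $\phi=\sigma_n(h)$, contract $\phi_{ii}$ with $\sigma_{n-1}^{ii}$, differentiate the constraint $\sigma_{n-1}=1$ twice to trade away $\sigma_{n-1}^{pq}h_{pqij}$, commute fourth derivatives with the Minkowski Ricci identity so that the quadratic-in-$h$ corrections contract (via the Newton-type identities) to precisely $\sigma_1\sigma_{n-1}\sigma_n-n^2\sigma_n^2$, and reduce everything to the nonpositivity of a quadratic form in the third-order quantities $h_{pqi}$. Up to that point your outline is sound.

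But the statement you are asked to prove lives almost entirely in the step you defer. After the terms $h_{pqi}^2$ with $i,p,q$ pairwise distinct are handled by symmetrizing in $i,p,q$ and invoking $\sigma_2(\la_i,\la_p,\la_q)>0$ (a step you do not make explicit), one is left, for each fixed $i$, with a quadratic form in $\{h_{jji}\}_{j\neq i}$ alone; the first-order constraint is used to eliminate $h_{iii}$, and the resulting $(n-1)\times(n-1)$ coefficient matrix is written as a Hadamard product $R=T\circ S$, with $T\geq0$ coming from $\sigma_{n-1}(\la|1)\sigma_{n-3}(\la|1pq)=\sigma_{n-2}(\la|1p)\sigma_{n-2}(\la|1q)$ and the Schur product theorem reducing everything to $S\geq0$. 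Your proposal neither identifies this structure correctly (the form is in the single family $\{h_{jji}\}_{j\neq i}$ for each fixed $i$, not indexed by pairs $(p,i)$), nor offers any argument for $S\geq0$ — and there is no soft reason for it: your suggested mechanism, that the ``$\sigma_n$-version'' of the second-derivative quadratic is dominated by the ``$\sigma_{n-1}$-version'' which vanishes by the gradient constraint, is not what happens and would not by itself give the sign. In the paper the positivity of $S$ is the real content, requiring a case split on the smallest curvature: for $\la_1\leq0$ a substitution $\mu_j=1/\la_j$ together with a quadratic-form inequality from \cite{RW}, and for $\la_1>0$ a decomposition $S=A+B+\sigma_{n-2}(\la|1)Id$ with $\mathrm{rank}(A)\leq2$ and an explicit computation of all sums of principal minors. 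Since none of this (nor any substitute argument) appears in your proposal, the inequality \eqref{ce1.1} is not established; what you have is the correct set-up plus an accurate guess about where the difficulty sits.
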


Since the proof of Theorem \ref{ceth1.1} is very complicated, we will split it into 3 sections. In this section, we will simplify our
equation and reduce the proof of this theorem into the proof of the semi-positivity of a matrix. In the next two sections, we will confirm that the matrix we obtain here is indeed semi-positive.

\begin{lemm}
\label{celem1.1}
Inequality \eqref{ce1.1} holds if the following inequality holds on $\M_u$:
\be\label{ce1.2}
\begin{aligned}
&\sum_{j\neq1}\sigma^2_{n-2}(\la|1j)\lt[2\sigma_{n-2}(\la|1)+2\sigma_{n-2}(\la|j)\rt]h^2_{jj1}\\
+&\sum_{p,q\neq1;p\neq q}\sigma_{n-1}(\la|1)\sigma_{n-3}(\la|1pq)
\lt[\sigma_{n-2}(\la|1)+(\la_p+\la_q-2\la_1)\sigma_{n-3}(\la|1pq)\rt]h_{pp1}h_{qq1}\geq 0.
\end{aligned}
\ee
where $h_{ijk}$ is the covariant derivative of the second fundamental form $h_{ij}$ .
\end{lemm}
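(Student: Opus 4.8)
The plan is to start from the equation $\sigma_{n-1}(\kappa[\M_u]) = 1$ and differentiate twice, working at a point $X_0 \in \M_u$ where we may choose an orthonormal frame diagonalizing $h_{ij}$, so that $h_{ij} = \kappa_i \delta_{ij}$. First I would compute $\sigma_{n-1}^{ij}(\sigma_n)_{ij}$ by expanding $\sigma_n = \sigma_n(\kappa[\M_u])$ as a function of the second fundamental form and applying the chain rule; this produces a first-derivative term $\sigma_n^{pq,rs} h_{pqi}h_{rsj}$-type contraction together with a term involving $\sigma_n^{pq}h_{pqij}$. To handle the second-order term I would use the Ricci identity \eqref{a1.2} to commute the covariant derivatives on $h_{pqij}$, so that $h_{pqij}$ is converted into $h_{ijpq}$ plus curvature-correction terms that, via the Gauss equation in \eqref{Gauss}, become quartic expressions in $\kappa$. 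Then I differentiate the constraint $\sigma_{n-1}(\kappa[\M_u]) \equiv 1$ twice to express $\sigma_{n-1}^{ij}h_{ijpq}$ in terms of $\sigma_{n-1}^{ij,kl}h_{ijp}h_{klq}$; substituting the characterization of these second derivatives via identity (v) and the formulas (i)--(iv) for $\sigma_k$ in the preliminaries reduces everything to a sum over the frame indices of gradient-of-second-fundamental-form terms $h_{pqr}$ weighted by $\sigma$'s evaluated with one or more indices deleted.

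The next step is the bookkeeping: after collecting all terms, one wants to show that what remains of the "bad" (uncontrolled) part of the expression is controlled by the right-hand side $\sigma_1\sigma_{n-1}\sigma_n - n^2\sigma_n^2$. I would organize the $h_{pqr}$ terms by how many of the indices $p,q,r$ coincide. The diagonal-gradient terms $h_{ppi}$ and the purely off-diagonal terms $h_{jji}$ with $i \ne j$ are the ones that cannot be absorbed into the right-hand side directly, and everything else (e.g., $h_{ijk}$ with three distinct indices, or terms that pair up against $\sigma$'s with favorable signs) either cancels or contributes with the correct sign. At this point the claim is that inequality \eqref{ce1.1} will follow \emph{provided} the leftover collection of terms indexed by the distinguished direction (the proof will single out one index, labeled $1$, presumably the direction of the smallest principal curvature where convexity could fail) is nonnegative — and that leftover collection is exactly the left-hand side of \eqref{ce1.2}. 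So the lemma is precisely the statement that the reduction goes through, deferring the genuine inequality to the quadratic-form analysis in Sections 4--5.

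Concretely, I expect the argument to run: expand $\sigma_{n-1}^{ij}(\sigma_n)_{ij}$; use $\sigma_n = \sigma_1\sigma_{n-1} - \sum_{i}\kappa_i^2\sigma_{n-2}(\kappa|i) + \cdots$ type identities (or more cleanly, repeatedly apply (iii) to peel off indices) to rewrite $\sigma_n^{pq}$, $\sigma_n^{pq,rs}$, $\sigma_{n-1}^{ij}$, and $\sigma_{n-1}^{ij,kl}$ in terms of $\sigma$-symbols with deleted indices; combine with the twice-differentiated constraint; and track the quartic Gauss-equation terms carefully to see that they assemble into exactly $\sigma_1\sigma_{n-1}\sigma_n - n^2\sigma_n^2$ (this is where the specific structure $\sigma_{n-1} = 1$ and the special exponent $k = n-1$ matter — for instance, $n^2\sigma_n^2$ should emerge from $(\sum_i \kappa_i \sigma_{n-1}(\kappa|i))^2$-type squares via (iv) with $k = n$). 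The residual sign-indefinite part is then identified with the left side of \eqref{ce1.2}.

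The main obstacle, as I see it, is not any single computation but the sheer combinatorial volume of keeping every term's $\sigma$-coefficient and sign correct through the Ricci-identity commutation and the double differentiation of the constraint — in particular making sure that no $h_{pqr}$ term with mixed indices that one *hopes* to discard actually survives with the wrong sign, and that the quartic curvature terms really do collapse to the clean right-hand side rather than leaving an extra uncontrolled piece. A secondary subtlety is the choice of the distinguished index $1$: the reduction must be arranged so that the direction singled out in \eqref{ce1.2} is chosen \emph{after} the expansion (so that one is free to specialize to, say, $\kappa_1 = \min_i \kappa_i$), and one must check the reduction is insensitive to that choice. Granting Sections 4--5, which establish \eqref{ce1.2} as a consequence of the G\r{a}rding-cone characterization \eqref{Gamma} applied to the admissibility hypothesis $\kappa[\M_u] \in \Gamma_{n-1}$, the proof of \eqref{ce1.1} is complete.
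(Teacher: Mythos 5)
Your strategy is in outline the same as the paper's: contract the second derivatives of $\sigma_n$ with $\sigma_{n-1}^{ii}$, commute derivatives via the Ricci identity \eqref{a1.2}, substitute the twice-differentiated constraint, and observe that the quartic curvature terms collapse to $\sigma_1\sigma_{n-1}\sigma_n-n^2\sigma_n^2$ (your accounting of where $n^2\sigma_n^2$ comes from is correct). However, the entire content of this lemma is the identification of the residual gradient terms with the specific quadratic form in \eqref{ce1.2}, and your proposal defers exactly that step (``the residual sign-indefinite part is then identified with the left side of \eqref{ce1.2}'') without supplying the two ideas that make it work. First, the terms $h_{pqi}^2$ with $i,p,q$ pairwise distinct do not individually ``contribute with the correct sign'': their coefficient is $\sigma_{n-3}^2(\la|ipq)\big(\la_i\la_p+\la_i\la_q-\la_p\la_q\big)$, whose second factor can be negative when one entry of $\la$ is negative (which is allowed in $\Gamma_{n-1}$). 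One must use the Codazzi symmetry of $h_{pqi}$ to sum over the cyclic rotations of $(i,p,q)$, so that the coefficients add up to $\sigma_{n-3}^2(\la|ipq)\,\sigma_2(\la_i,\la_p,\la_q)>0$; this is where the admissibility hypothesis and the characterization \eqref{Gamma} already enter in the reduction itself, not only in Sections 4--5.

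Second, and more seriously, the quadratic form in \eqref{ce1.2} involves only $h_{jj1}$ with $j\neq 1$. To arrive at it one must use the \emph{once}-differentiated equation $\sigma_{n-1}^{ii}h_{ii1}=0$ to solve for $h_{111}=-\sum_{j\neq1}\big(\sigma_{n-1}^{jj}/\sigma_{n-1}^{11}\big)h_{jj1}$ and substitute; the coefficients in \eqref{ce1.2} (the factor $\sigma_{n-1}(\la|1)\sigma_{n-3}(\la|1pq)$ and the $-2\la_1$ term) arise precisely from this elimination combined with identities such as $\sigma_{n-1}^{11}\sigma_n^{11,jj}-\sigma_n^{11}\sigma_{n-1}^{11,jj}=\sigma_{n-2}^2(\la|1j)$. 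Your sketch only invokes the twice-differentiated constraint; as outlined, the residual form would still contain $h_{111}$ (and, for general $i$, $h_{iii}$), so it could not reduce to \eqref{ce1.2}. A smaller point: the reduction must hold for every fixed index $i$ (one needs $L_i\geq 0$ for all $i$), so no choice of a distinguished minimal-curvature direction is involved; ``$i=1$'' is pure relabeling. As it stands, then, the proposal is a correct plan with genuine gaps at precisely the steps this lemma is responsible for.
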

\begin{proof}
For an arbitrary $X_0\in\M_u,$ we can choose an orthonormal local frame $\tau_1, \cdots, \tau_n$ around $X_0$ on $T\M_u,$
such that at $X_0,$
\[h_{ij}=\kappa_i\delta_{ij},\]
where $\la_1, \cdots , \la_n$ are the principal curvatures of $\M_u$ at $X_0$. Our calculation below is done at the point $X_0.$
We will consider the test function
$$\phi=\sigma_n(h).$$
Differentiating $\phi$ twice, we get
\begin{equation}
\phi_i=\sigma_n^{jj}h_{jji}
\end{equation}
and
\begin{equation}
\phi_{ii}=\sigma_n^{jj}h_{jjii}+\sigma_n^{pq,rs}h_{pqi}h_{rsi}.
\end{equation}
Contracting with $\sigma_{n-1}^{ii}$ on both sides we have,
\begin{equation}
\sigma_{n-1}^{ii}\phi_{ii}=\sigma_{n-1}^{ii}\sigma_n^{jj}h_{jjii}+\sigma_{n-1}^{ii}\sigma_n^{pq,rs}h_{pqi}h_{rsi}.
\end{equation}

Now, let's differentiate equation \eqref{2.1} twice, then we obtain
\begin{equation}\label{3.5}
\sigma_{n-1}^{ii}h_{iij}=0,
\end{equation}
and
\begin{equation}
\sigma_{n-1}^{ii}h_{iijj}+\sigma_{n-1}^{pq,rs}h_{pqj}h_{rsj}= 0.
\end{equation}
By \eqref{a1.2}, we can see that at $X_0$ we have
$$h_{jjii}=h_{iijj}+h_{ii}^2h_{jj}-h_{ii}h_{jj}^2.$$
Thus, we get
\begin{eqnarray}\label{3.7}
&&\sigma_{n-1}^{ii}\phi_{ii}\\
&=&-\sigma_n^{jj}\sigma_{n-1}^{pq,rs}h_{pqj}h_{rsj}+\sigma_{n-1}^{ii}\sigma_n^{pq,rs}h_{pqi}h_{rsi}+\sigma_{n-1}^{ii}\sigma_n^{jj}(h^2_{ii}h_{jj}-h_{ii}h^2_{jj})\nonumber\\
&=&(\sigma_{n-1}^{ii}\sigma_{n}^{pp,qq}-\sigma_{n}^{ii}\sigma_{n-1}^{pp,qq})h_{ppi}h_{qqi}-(\sigma_{n-1}^{ii}\sigma_{n}^{pp,qq}-\sigma_{n}^{ii}\sigma_{n-1}^{pp,qq})h_{pqi}^2\nonumber\\
&&+\sigma_1\sigma_{n-1}\sigma_n-n^2\sigma_n^2,\nonumber
\end{eqnarray}
where we used $\sigma_{n-1}^{pq,rs}h_{pqi}h_{rsi}=\sigma_{n-1}^{pp, qq}h_{ppi}h_{qqi}-\sigma_{n-1}^{pp, qq}h_{pqi}^2.$

In order to prove inequality \eqref{ce1.1} we only need to prove
\begin{equation}\label{3.8}
(\sigma_{n-1}^{ii}\sigma_{n}^{pp,qq}-\sigma_{n}^{ii}\sigma_{n-1}^{pp,qq})h_{ppi}h_{qqi}
-(\sigma_{n-1}^{ii}\sigma_{n}^{pp,qq}-\sigma_{n}^{ii}\sigma_{n-1}^{pp,qq})h_{pqi}^2\leq 0.
\end{equation}

First, when the indices $i, p, q$ are not equal to each other, by a straightforward calculation we get,
\begin{eqnarray}\label{3.9}
&&\sigma_{n-1}^{ii}\sigma_{n}^{pp,qq}-\sigma_{n}^{ii}\sigma_{n-1}^{pp,qq}\\
&=&\sigma_{n-2}(\la |i)\sigma_{n-2}(\la | pq)-\sigma_{n-1}(\la | i)\sigma_{n-3}(\la |pq)\nonumber\\
&=&[\la_p\la_q\sigma_{n-4}(\la |ipq)+(\la_p+\la_q)\sigma_{n-3}(\la |ipq)]\la_i\sigma_{n-3}(\la | ipq)\nonumber\\
&&-\la_p\la_q\sigma_{n-3}(\la |ipq)[\la_i\sigma_{n-4}(\la | ipq)+\sigma_{n-3}(\la | ipq)]\nonumber\\
&=&\sigma_{n-3}^2(\la|ipq)(\la_i\la_p+\la_i\la_q-\la_p\la_q)\nonumber.
\end{eqnarray}

Since $\la\in\Gamma_{n-1},$ by \eqref{Gamma} we have
\[\sigma_2(\la_i, \la_p, \la_q)=\la_i\la_p+\la_i\la_q+\la_p\la_q>0.\]
Also note that
\[\sigma_{n-3}(\la|ipq)=\sigma_{n-3}(\la|piq)=\sigma_{n-3}(\la|iqp).\]
Therefore, by rotating $i, p, q$ and summing them up we obtain
\begin{equation}\label{3.11}
-\dsum_{i\neq p\neq
q}(\sigma_{n-1}^{ii}\sigma_{n}^{pp,qq}-\sigma_{n}^{ii}\sigma_{n-1}^{pp,qq})h_{pqi}^2\leq 0.
\end{equation}

In view of \eqref{3.8}, now we only need to prove, for any fixed $i,$ $1\leq i\leq n,$
\begin{equation}
\label{ce1.2*}
L_i:=2\dsum_{j\neq i}(\sigma_{n-1}^{jj}\sigma_{n}^{ii,jj}-\sigma_{n}^{jj}\sigma_{n-1}^{ii,jj})h_{jji}^2-\dsum_{p\neq q}
(\sigma_{n-1}^{ii}\sigma_{n}^{pp,qq}-\sigma_{n}^{ii}\sigma_{n-1}^{pp,qq})h_{ppi}h_{qqi}\geq
0.
\end{equation}

Next, without loss of generality, we will consider the case when $i=1.$ Namely, we will show that $L_1\geq 0.$

From equation \eqref{3.5},we have
\be\label{ce1.3}
h_{111}=-\dsum_{j\neq1}\frac{\sigma_{n-1}^{jj}}{\sigma_{n-1}^{11}}h_{jj1}.
\ee
Plugging it into equation \eqref{ce1.2*} we get,
\begin{align*}
L_1=&2\dsum_{j\neq 1}(\sigma_{n-1}^{jj}\sigma_{n}^{11,jj}-\sigma_{n}^{jj}\sigma_{n-1}^{11,jj})h_{jj1}^2-\dsum_{p\neq q,p,q\neq 1}(\sigma_{n-1}^{11}\sigma_{n}^{pp,qq}-\sigma_{n}^{11}\sigma_{n-1}^{pp,qq})h_{pp1}h_{qq1}\\
&+\dsum_{q\neq 1}(\sigma_{n-1}^{11}\sigma_{n}^{11,qq}-\sigma_{n}^{11}\sigma_{n-1}^{11,qq})\dsum_{r\neq 1}\frac{\sigma_{n-1}^{rr}}{\sigma_{n-1}^{11}} h_{rr1}h_{qq1}\\
&+\dsum_{p\neq 1}(\sigma_{n-1}^{11}\sigma_{n}^{11,pp}-\sigma_{n}^{11}\sigma_{n-1}^{11,pp})\dsum_{s\neq 1}\frac{\sigma_{n-1}^{ss}}{\sigma_{n-1}^{11}} h_{pp1}h_{ss1}\\
=&\dsum_{j\neq
1}\left(4\sigma_{n-1}^{jj}\sigma_{n}^{11,jj}-2\sigma_{n}^{jj}\sigma_{n-1}^{11,jj}
-2\sigma_{n}^{11}\sigma_{n-1}^{11,jj}\frac{\sigma_{n-1}^{jj}}{\sigma_{n-1}^{11}}\right)h_{jj1}^2\\
&+\dsum_{p\neq q, p,q\neq 1}\Big[\sigma_{n-1}^{pp}\sigma_{n}^{11,qq}+\sigma_{n-1}^{qq}\sigma_{n}^{11,pp}-\sigma_{n-1}^{11}\sigma_{n}^{pp,qq}+\sigma_{n}^{11}\sigma_{n-1}^{pp,qq}\\
&\left.-\frac{\sigma_{n}^{11}}{\sigma_{n-1}^{11}}\left(\sigma_{n-1}^{11,pp}\sigma_{n-1}^{qq}+\sigma_{n-1}^{11,qq}\sigma_{n-1}^{pp}\right)
\right]h_{pp1}h_{qq1}.
\end{align*}
Thus, we have
\be\label{ce1.4}
\begin{aligned}
\sigma_{n-1}^{11}L_1=&\dsum_{j\neq
1}\left(4\sigma_{n-1}^{11}\sigma_{n-1}^{jj}\sigma_{n}^{11,jj}-2\sigma_{n-1}^{11}\sigma_{n}^{jj}\sigma_{n-1}^{11,jj}
-2\sigma_{n}^{11}\sigma_{n-1}^{jj}\sigma_{n-1}^{11,jj}\right)h_{jj1}^2\\
&+\dsum_{p\neq q,p,q\neq 1}\left(\sigma_{n-1}^{11}\sigma_{n-1}^{pp}\sigma_{n}^{11,qq}+\sigma_{n-1}^{11}\sigma_{n-1}^{qq}\sigma_{n}^{11,pp}
+\sigma_{n-1}^{11}\sigma_{n}^{11}\sigma_{n-1}^{pp,qq}\right.\\
&\left.-\sigma_{n-1}^{11}\sigma_{n-1}^{11}\sigma_{n}^{pp,qq}
-\sigma_{n}^{11}\sigma_{n-1}^{11,pp}\sigma_{n-1}^{qq}-\sigma_{n}^{11}\sigma_{n-1}^{11,qq}\sigma_{n-1}^{pp} \right) h_{pp1}h_{qq1}\\
\end{aligned}
\ee
Finally, we want to simplify equation \eqref{ce1.4}. It is straightforward to verify

\begin{align*}
&\sigma_{n-1}^{jj}\sigma_{n}^{11,jj}-\sigma_{n}^{jj}\sigma_{n-1}^{11,jj}\\
=&\sigma_{n-2}(\la |j)\sigma_{n-2}(\la |1j)-\sigma_{n-1}(\la |j)\sigma_{n-3}(\la |1j)\\
=&\big(\la_1\sigma_{n-3}(\la |1j)+\sigma_{n-2}(\la |1j)\big)\sigma_{n-2}(\la |1j)-\la_1\sigma_{n-2}(\la |1j)\sigma_{n-3}(\la |1j)\\
=&\sigma_{n-2}^2(\la |1j).
\end{align*}
Similarly, we get
\be\label{a3.14}
\begin{aligned}
&\sigma_{n-1}^{11}\sigma_{n}^{11,jj}-\sigma_{n}^{11}\sigma_{n-1}^{11,jj}=\sigma_{n-2}^2(\la
|1j).
\end{aligned}
\ee
Therefore,
\be\label{a3.15}
\begin{aligned}
&4\sigma_{n-1}^{11}\sigma_{n-1}^{jj}\sigma_{n}^{11,jj}-2\sigma_{n-1}^{11}\sigma_{n}^{jj}\sigma_{n-1}^{11,jj}
-2\sigma_{n}^{11}\sigma_{n-1}^{jj}\sigma_{n-1}^{11,jj}\\
&=2\sigma^2_{n-2}(\la |1j)\big(\sigma_{n-2}(\la |1)+\sigma_{n-2}(\la
|j)\big).
\end{aligned}
\ee
Moreover, by \eqref{a3.14} we obtain
\begin{eqnarray}\label{ce1.5}
&&\sigma_{n-1}^{11}\sigma_{n-1}^{pp}\sigma_{n}^{11,qq}-\sigma_{n}^{11}\sigma_{n-1}^{11,qq}\sigma_{n-1}^{pp}\\
&=&\sigma_{n-1}^{pp}\lt(\sigma_{n-1}^{11}\sigma_n^{11, qq}-\sigma_n^{11}\sigma_{n-1}^{11, qq}\rt)\nonumber\\
&=&\sigma_{n-2}(\la |p)\la_p^2\sigma_{n-3}^2(\la |1pq)\nonumber\\
&=&[\sigma_{n-1}-\sigma_{n-1}(\la |p)]\la_p\sigma_{n-3}^2(\la |1pq)\nonumber\\
&=&\la_p\sigma_{n-1}\sigma_{n-3}^2(\la
|1pq)-\sigma_{n}\sigma_{n-3}^2(\la |1pq).\nonumber
\end{eqnarray}
Similarly, we have
\begin{eqnarray}\label{ce1.5.1}
\sigma_{n-1}^{11}\sigma_{n-1}^{qq}\sigma_{n}^{11,pp}-\sigma_{n}^{11}\sigma_{n-1}^{11,pp}\sigma_{n-1}^{qq}=&\la_q\sigma_{n-1}\sigma_{n-3}^2(\la
|1pq)-\sigma_{n}\sigma_{n-3}^2(\la |1pq).
\end{eqnarray}
Finally, we compute
\begin{eqnarray}\label{ce1.5.2}
&&\sigma_{n-1}^{11}\sigma_{n}^{11}\sigma_{n-1}^{pp,qq}
-\sigma_{n-1}^{11}\sigma_{n-1}^{11}\sigma_{n}^{pp,qq}\\
&=&\sigma_{n-2}(\la |1)\big(\sigma_{n-1}(\la |1)\sigma_{n-3}(\la |pq)-\sigma_{n-2}(\la |1)\sigma_{n-2}(\la |pq)\big)\nonumber\\
&=&\sigma_{n-2}(\la |1)\big\{\la_p\la_q\sigma_{n-3}(\la
|1pq)\big[\la_1\sigma_{n-4}(\la |1pq)+\sigma_{n-3}(\la
|1pq)\big]\nonumber\\
&&-\big[\la_p\la_q\sigma_{n-4}(\la
|1pq)+(\la_p+\la_q)\sigma_{n-3}(\la |1pq)\big]\la_1\sigma_{n-3}(\la
|1pq)\big\}\nonumber\\
&=&\sigma_{n-2}(\la |1)\big[\la_p\la_q\sigma_{n-3}^2(\la
|1pq)-\la_1(\la_p+\la_q)\sigma_{n-3}^2(\la |1pq)\big]\nonumber\\
&=&\sigma_{n-2}(\la |1)\sigma_{n-1}(\la|1)\sigma_{n-3}(\la
|1pq)-\big(\sigma_{n-1}-\sigma_{n-1}(\la|1)\big)(\la_p+\la_q)\sigma_{n-3}^2(\la
|1pq)\nonumber.
\end{eqnarray}

Thus, we conclude
\begin{eqnarray}\label{ce1.5}
&&\sigma_{n-1}^{11}\sigma_{n-1}^{pp}\sigma_{n}^{11,qq}+\sigma_{n-1}^{11}\sigma_{n-1}^{qq}\sigma_{n}^{11,pp}+\sigma_{n-1}^{11}\sigma_{n}^{11}\sigma_{n-1}^{pp,qq}
-\sigma_{n-1}^{11}\sigma_{n-1}^{11}\sigma_{n}^{pp,qq}\\
&&-\sigma_{n}^{11}\sigma_{n-1}^{11,pp}\sigma_{n-1}^{qq}-\sigma_{n}^{11}\sigma_{n-1}^{11,qq}\sigma_{n-1}^{pp}\nonumber\\
&=&\sigma_{n-1}(\la |1)\sigma_{n-3}(\la |1pq)\big(\sigma_{n-2}(\la
|1)+(\la_p+\la_q-2\la_1)\sigma_{n-3}(\la |1pq)\big)\nonumber.
\end{eqnarray}

Substituting equation \eqref{a3.15} and \eqref{ce1.5} into
\eqref{ce1.4} we obtain,
\begin{eqnarray}\label{3.15}
&&\sigma_{n-1}^{11}L_1\nonumber\\
&=&\dsum_{j\neq 1}\sigma_{n-2}^2(\la|1j)\Big[2\sigma_{n-2}(\la|1)+2\sigma_{n-2}(\la|j)\Big] h_{jj1}^2\nonumber\\
&&+\dsum_{p\neq q,p,q\neq 1}\sigma_{n-1}(\la|1)\sigma_{n-3}(\la
|1pq)\Big[\sigma_{n-2}(\la|1)+(\la_p+\la_q-2\la_1)\sigma_{n-3}(\la
|1pq)\Big]h_{pp1}h_{qq1}\nonumber.
\end{eqnarray}
This completes the proof of Lemma \ref{celem1.1}.
\end{proof}

Now notice that if we can prove the $(n-1)\times(n-1)$ matrix $R=(r_{pq}),$ where
$$r_{pq}=\left\{\begin{matrix}\sigma_{n-2}^2(\la|1p)[2\sigma_{n-2}(\la|1)+2\sigma_{n-2}(\la|p)]& \text{ for } p=q\\
\sigma_{n-1}(\la|1)\sigma_{n-3}(\la
|1pq)[\sigma_{n-2}(\la|1)+(\la_p+\la_q-2\la_1)\sigma_{n-3}(\la
|1pq)]& \text{ for }p\neq q\end{matrix}\right.$$
is semi-positive definite, then we will be done.

Observe that $R$ can be written as the Hadamard product of matrix $T=(t_{pq})$ and $S=(s_{pq}),$ where
$$t_{pq}=\left\{\begin{matrix}\sigma_{n-2}^2(\la | 1p)& \text{ for } p=q\\
\sigma_{n-1}(\la |1)\sigma_{n-3}(\la |1pq)& \text{ for }p\neq q\end{matrix}\right.$$
and
$$s_{pq}=\left\{\begin{matrix}2\sigma_{n-2}(\la|1)+2\sigma_{n-2}(\la|p)&\text{ for } p=q \\
\sigma_{n-2}(\la|1)+(\la_p+\la_q-2\la_1)\sigma_{n-3}(\la |1pq)&\text{ for } p \neq q. \end{matrix}\right.$$
Since when $p\neq q$ we have,
$$\sigma_{n-1}(\la |1)\sigma_{n-3}(\la |1pq)=\sigma_{n-2}(\la |1p)\sigma_{n-2}(\la |1q).$$ This implies the nonnegativity
of the quadratic form that matrix $T$ corresponding to,
\begin{eqnarray}
&&\sum_{j\neq 1}\sigma^2_{n-2}(\la |1j)h_{jj1}^2+\sum_{p\neq q, p,q\neq 1}\sigma_{n-1}(\la |1)\sigma_{n-3}(\la |1pq)h_{pp1}h_{qq1}\\
&=&\left[\sum_{j\neq 1}\sigma_{n-2}(\la |1j)h_{jj1}\right]^2\geq
0\nonumber.
\end{eqnarray}
Thus, the matrix $T$ is a semi-positive definite matrix.
By the Schur product Theorem, if we can prove the matrix $S$ is also a semi-positive definite matrix, then we would obtain $R$ is semi-positive definite.
This would complete the proof of Theorem \ref{ceth1.1}.

We will devote the next two sections to proving the matrix $S$ is semi-positive. In particular,
in Section 4, we will show $S$ is semi-positive by assuming $\la_1\leq 0;$
while in Section 5, we will prove the case when $\la_1>0.$

\bigskip

\section{ The case when $\la_1\leq 0$}
\label{kp}
Let
\be\label{kp1.1}
\begin{aligned}
Q_S:&=\sum_{j\neq 1}\lt[2\sigma_{n-2}(\la|1)+2\sigma_{n-2}(\la|j)\rt]\xi_j^2\\
&+\sum_{p\neq q, p, q\neq1}\lt[\sigma_{n-2}(\la|1)+(\la_p+\la_q-2\la_1)\sigma_{n-3}(\la|1pq)\rt]\xi_p\xi_q\\
\end{aligned}
\ee
be the quadratic form of $S.$ In this section, we will prove $Q_S\geq 0$ for $\la_1\leq 0.$
\begin{lemm}
\label{kplem1.1}
If $\la_1\leq0$, then we have $Q_S\geq 0.$ Therefore, the matrix $S$ is a semi-positive definite matrix.
\end{lemm}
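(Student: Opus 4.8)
The plan is to show $Q_S\ge 0$ when $\la_1\le 0$ by exhibiting $Q_S$ as a sum of manifestly nonnegative terms. First I would separate the ``diagonal'' contribution $2\sigma_{n-2}(\la|1)\sum_{j\ne1}\xi_j^2 + 2\sum_{j\ne1}\sigma_{n-2}(\la|j)\xi_j^2$ from the ``off-diagonal'' part $\sigma_{n-2}(\la|1)\sum_{p\ne q}\xi_p\xi_q + \sum_{p\ne q}(\la_p+\la_q-2\la_1)\sigma_{n-3}(\la|1pq)\xi_p\xi_q$. Combining the two pieces carrying the common factor $\sigma_{n-2}(\la|1)$ gives $\sigma_{n-2}(\la|1)\bigl(\sum_{j\ne1}\xi_j\bigr)^2$, which is $\ge 0$ since $\la\in\Gamma_{n-1}$ forces $\sigma_{n-2}(\la|1)>0$ (this is property (i) together with $\Gamma_{n-1}\subset\Gamma_{n-2}$, or directly from the characterization \eqref{Gamma}). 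So the crux reduces to showing
\[
\widetilde Q:=2\sum_{j\ne1}\sigma_{n-2}(\la|j)\xi_j^2+\sum_{p\ne q,\;p,q\ne1}(\la_p+\la_q-2\la_1)\sigma_{n-3}(\la|1pq)\xi_p\xi_q\ \ge\ 0.
\]

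Next I would try to write $\la_p+\la_q-2\la_1=(\la_p-\la_1)+(\la_q-\la_1)$ and symmetrize, rewriting the off-diagonal sum as $2\sum_{p\ne q}(\la_p-\la_1)\sigma_{n-3}(\la|1pq)\xi_p\xi_q$. Using the identity (iii), $\sigma_{n-2}(\la|1)=\la_q\sigma_{n-3}(\la|1q)+\sigma_{n-2}(\la|1q)$ and similar reindexings, I expect to be able to recognize $\sum_{q\ne1,p}(\la_p-\la_1)\sigma_{n-3}(\la|1pq)$ in terms of $\sigma_{n-2}(\la|1p)$ minus lower terms, so that the whole of $\widetilde Q$ collapses into a combination of perfect squares of the form $\sum_p a_p\bigl(\sum_{q} c_{pq}\xi_q\bigr)^2$ with coefficients that are products of $\sigma_m(\la|\cdots)$ with nonnegative indices. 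The sign input $\la_1\le 0$ enters precisely here: every factor $(\la_p-\la_1)$ is then $\ge \la_p$, and more importantly $-2\la_1\ge 0$, so the ``extra'' diagonal-type terms it produces add rather than subtract; combined with $\la\in\Gamma_{n-1}$ (hence all $\sigma_m(\la|S)>0$ for $|S|\le 1$ and $m\le n-1-|S|$, by \eqref{Gamma}), this keeps all square-coefficients nonnegative.

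If the clean ``sum of squares'' regrouping resists, the fallback is a direct quadratic-form estimate: bound the off-diagonal bilinear form by Cauchy–Schwarz, $\bigl|\sum_{p\ne q}(\la_p+\la_q-2\la_1)\sigma_{n-3}(\la|1pq)\xi_p\xi_q\bigr|\le \sum_{p\ne q}(\la_p+\la_q-2\la_1)\sigma_{n-3}(\la|1pq)\tfrac12(\xi_p^2+\xi_q^2)$, valid because for $\la_1\le 0$ the coefficients $(\la_p+\la_q-2\la_1)\sigma_{n-3}(\la|1pq)$ are nonnegative (note $\la_p+\la_q\ge \la_p+\la_q+\la_1-\la_1 \ge -\la_1\ge0$ cannot be claimed outright, but $\la_p+\la_q-2\la_1$ need not be $\ge0$ unless one orders the $\la_i$; so I would first fix the ordering $\la_1\ge\cdots\ge\la_n$ is the wrong one — rather use that $\la_1$ being the variable singled out in Section 3 is arbitrary, and in the $\la_1\le0$ branch one may additionally assume $\la_1=\min_i\la_i$ after relabeling, making $\la_p+\la_q-2\la_1\ge0$ automatic). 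Then collect the diagonal coefficient of $\xi_j^2$ and show it is nonnegative using $\sigma_{n-2}(\la|j)=\sum_{q\ne j}\la_q\sigma_{n-3}(\la|jq)+\cdots$ and property (iv)-type summation identities. I expect the main obstacle to be the bookkeeping in the regrouping step — matching the double sum $\sum_{p\ne q}(\la_p-\la_1)\sigma_{n-3}(\la|1pq)\xi_p\xi_q$ against $\sum_j\sigma_{n-2}(\la|j)\xi_j^2$ so that the cross terms cancel exactly — rather than any genuinely hard inequality; once the combinatorial identities (i)–(iv) are applied in the right order, the sign hypothesis $\la_1\le0$ does the rest.
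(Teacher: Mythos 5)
There is a genuine gap, and it appears already in your first reduction. After extracting $\sigma_{n-2}(\la|1)\bigl(\sum_{j\neq 1}\xi_j\bigr)^2$ you discard the leftover diagonal piece $\sigma_{n-2}(\la|1)\sum_{j\neq1}\xi_j^2$ and claim the crux is $\widetilde Q\geq 0$; but $\widetilde Q\geq0$ is simply false, so no amount of regrouping can complete this route. Concretely, take $n=3$ and $\la=(-1,\tfrac32,5)$, which satisfies $\la\in\Gamma_2$, $\sigma_2(\la)=1$, $\la_1\leq0$. Then $\widetilde Q$ has matrix
$\left(\begin{smallmatrix} 2\sigma_1(\la|2) & (\la_2+\la_3-2\la_1) \\ (\la_2+\la_3-2\la_1) & 2\sigma_1(\la|3) \end{smallmatrix}\right)
=\left(\begin{smallmatrix} 8 & 8.5 \\ 8.5 & 1 \end{smallmatrix}\right)$, whose determinant is negative, so $\widetilde Q$ is indefinite; the term you dropped ($6.5(\xi_2^2+\xi_3^2)$ here) is exactly what restores semi-positivity. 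Your fallback fails as well: the termwise Cauchy--Schwarz bound amounts to proving diagonal dominance, and $S$ is not diagonally dominant. In the same example $S=\left(\begin{smallmatrix}21&15\\15&14\end{smallmatrix}\right)$ (PSD, det $=69$), but the second row has diagonal $14$ against off-diagonal $15$; in general the deficit $-(n-4)\sigma_{n-2}(\la|1)+(4\la_1-\la_j)\sigma_{n-3}(\la|1j)-(n-4)\sigma_{n-2}(\la|1j)$ is negative for $\la_1\leq0$, so this estimate cannot work in any dimension. The sum-of-squares "main route" is only stated as a hope and is never carried out, so neither branch of the proposal constitutes a proof. (A minor further point: you cannot ``relabel so that $\la_1=\min_i\la_i$'', since the index $1$ is fixed by the differentiation direction in Section 3; luckily $\la\in\Gamma_{n-1}$ and $\la_1\leq0$ already force $\la_2,\dots,\la_n>0$, so $\la_1$ is automatically the minimum.)

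For comparison, the paper's proof is genuinely different in its key ingredients: using the normalization $\sigma_{n-1}(\la)=1$ it eliminates $\la_1$ via $\la_1=\frac{1-\sigma_{n-1}(\la|1)}{\sigma_{n-2}(\la|1)}$, substitutes the curvature radii $\mu_j=1/\la_j$ ($j\geq2$), splits off a sub-quadratic-form $\sum_{j\neq1}\sigma_{n-3}(\la|1j)\xi_j^2-\sum_{p,q\neq1}\sigma_{n-3}(\la|1pq)\xi_p\xi_q\geq0$ handled by Lemma 10 of \cite{RW}, and then exhibits the remaining form explicitly as a sum of squares, ending with $\tfrac12\sum_{p\neq q}(\mu_p\xi_q+\mu_q\xi_p)^2\geq0$. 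None of these steps (the $\mu$-substitution, the use of the equation/normalization, the external lemma, the explicit completion of squares) appears in your proposal, and without something of this kind the positivity does not follow.
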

\begin{proof}
Since $\la\in \Gamma_{n-1}$ and we assumed $\la_1\leq 0,$ it is clear that in this case we have $\la_2,\cdots, \la_n>0$. Thus,
we can define
$$\mu_2=\frac{1}{\la_2},\mu_3=\frac{1}{\la_3},\cdots,\mu_n=\frac{1}{\la_n}.$$

We can rewrite equation \eqref{2.1} as follows:
$$1=\la_1\sigma_{n-2}(\la |1)+\sigma_{n-1}(\la |1).$$
This gives
\be\label{kp1.2}
\la_1=\frac{1-\sigma_{n-1}(\la|1)}{\sigma_{n-2}(\la|1)}.
\ee
Moreover, for any given $j\neq 1$ we have,
$$\sigma_{n-2}(\la |1)=\la_j\sigma_{n-3}(\la |1j)+\sigma_{n-2}(\la |1j),$$
and
$$\sigma_{n-1}(\la| 1)=\la_j\sigma_{n-2}(\la |1j).$$
Therefore, we get
\be\label{kp1.3}
\sigma_{n-3}(\la|1j)=\frac{1}{\la_j}\sigma_{n-2}(\la|1)-\frac{1}{\la_j^2}\sigma_{n-1}(\la|1).
\ee
Applying \eqref{kp1.2} and \eqref{kp1.3} we can derive the following equalities,
\be\label{kp1.4}
\begin{aligned}
&2\sigma_{n-2}(\la|1)+2\sigma_{n-2}(\la|j)\\
=&2\sigma_{n-2}(\la|1)+2\la_1\sigma_{n-3}(\la|1j)+2\sigma_{n-2}(\la|1j)\\
=&2\sigma_{n-2}(\la|1)-2\frac{\sigma_{n-1}(\la|1)}{\sigma_{n-2}(\la|1)}\sigma_{n-3}(\la|1j)+2\sigma_{n-2}(\la|1j)+\dfrac{2\sigma_{n-3}(\la|1j)}{\sigma_{n-2}(\la|1)}\\
=&2\sigma_{n-2}(\la|1)-2\frac{\sigma_{n-1}(\la|1)}{\sigma_{n-2}(\la|1)}\left(\frac{1}{\la_j}\sigma_{n-2}(\la |1)-\frac{1}{\la_j^2}\sigma_{n-1}(\la |1)\right)+2\sigma_{n-2}(\la|1j)+\dfrac{2\sigma_{n-3}(\la|1j)}{\sigma_{n-2}(\la|1)}\\
=&2\sigma_{n-2}(\la|1)+2\frac{\sigma_{n-1}^2(\la|1)}{\la_j^2\sigma_{n-2}(\la|1)}-2\frac{\sigma_{n-1}(\la |1)}{\la_j}+2\sigma_{n-2}(\la |1j)+\dfrac{2\sigma_{n-3}(\la|1j)}{\sigma_{n-2}(\la|1)}\\
=&2\frac{\sigma_1(\mu)}{\sigma_{n-1}(\mu)}+2\frac{\mu_j^2}{\sigma_{1}(\mu)\sigma_{n-1}(\mu)}+\dfrac{2\sigma_{n-3}(\la|1j)}{\sigma_{n-2}(\la|1)}\\
=&\frac{2\sigma_{1}^2(\mu)+2\mu_j^2}{\sigma_1(\mu)\sigma_{n-1}(\mu)}+\dfrac{2\sigma_{n-3}(\la|1j)}{\sigma_{n-2}(\la|1)},
\end{aligned}
\ee
and
\be\label{kp1.5}
\begin{aligned}
&\sigma_{n-2}(\la|1)+(\la_p+\la_q-2\la_1)\sigma_{n-3}(\la|1pq)\\
=&\sigma_{n-2}(\la|1)+\left(\la_p+\la_q+2\frac{\sigma_{n-1}(\la|1)}{\sigma_{n-2}(\la|1)}\right)\sigma_{n-3}(\la|1pq)-\dfrac{2\sigma_{n-3}(\la|1pq)}{\sigma_{n-2}(\la|1)}\\
=&\frac{\sigma_{1}(\mu)}{\sigma_{n-1}(\mu)}+\left(\frac{1}{\mu_p}+\frac{1}{\mu_q}+2\frac{1}{\sigma_1(\mu)}\right)\frac{\sigma_{n-1}(\la |1)}{\la_p\la_q}-\dfrac{2\sigma_{n-3}(\la|1pq)}{\sigma_{n-2}(\la|1)}\\
=&\frac{\sigma_{1}(\mu)}{\sigma_{n-1}(\mu)}+\frac{(\mu_p+\mu_q)\sigma_1(\mu)+2\mu_p\mu_q}{\mu_p\mu_q\sigma_1(\mu)}\frac{1}{\la_p\la_q\sigma_{n-1}(\mu)}-\dfrac{2\sigma_{n-3}(\la|1pq)}{\sigma_{n-2}(\la|1)}\\
=&\frac{\sigma_1^2(\mu)+(\mu_p+\mu_q)\sigma_1(\mu)+2\mu_p\mu_q}{\sigma_1(\mu)\sigma_{n-1}(\mu)}-\dfrac{2\sigma_{n-3}(\la|1pq)}{\sigma_{n-2}(\la|1)}.
\end{aligned}
\ee
Now, let's consider the quadratic form $Q_S$ that is corresponding to the matrix $S$,
\begin{eqnarray}
Q_S&=&\dsum_{j\neq 1}\Big[2\sigma_{n-2}(\la|1)+2\sigma_{n-2}(\la|j)\Big] \xi_j^2\nonumber\\
&&+\dsum_{p\neq
q, p,q\neq1 }\Big[\sigma_{n-2}(\la|1)+(\la_p+\la_q-2\la_1)\sigma_{n-3}(\la |1pq)\Big]\xi_{p}\xi_{q}\nonumber.
\end{eqnarray}
By equations \eqref{kp1.4}, \eqref{kp1.5}, and the Lemma 10 in \cite{RW}, we
obtain
\begin{eqnarray}\label{4.2}
&&\sigma_1(\mu)\sigma_{n-1}(\mu)Q_S\\
&\geq &\dsum_{j\neq 1}\Big[2\sigma_{1}^2(\mu)+2\mu_j^2\Big] \xi_j^2+\dsum_{p\neq q,p,q\neq 1}\Big[\sigma_1^2(\mu)+(\mu_p+\mu_q)\sigma_1(\mu)+2\mu_p\mu_q\Big]\xi_{p}\xi_{q}\nonumber\\
&=&\sigma_1^2(\mu)\dsum_{j\neq 1}\xi_j^2+\sigma_1^2(\mu)\left(\sum_{j\neq 1}\xi_j\right)^2+2\left(\sum_{j\neq 1}\mu_j\xi_j\right)^2+2\sigma_1(\mu)\sum_{p\neq 1}\mu_p\xi_p\sum_{q\neq
1,p}\xi_q,\nonumber
\end{eqnarray}
where we used Lemma 10 in \cite{RW} to show
$$\sum_{j\neq 1}\sigma_{n-3}(\la|1j)\xi_j^2-\sum_{p,q\neq 1}\sigma_{n-3}(\la|1pq)\xi_p\xi_q\geq 0.$$
It is easy to see that
\be\label{kp1.6}
\begin{aligned}
\dsum_{p\neq 1}\mu_p\xi_p\dsum_{q\neq 1,
p}\xi_q=\dsum_{p\neq 1}\mu_p\xi_p\dsum_{q\neq 1}\xi_q-\dsum_{p\neq 1}\mu_p\xi_p^2.
\end{aligned}
\ee
Moreover, we have
\be\label{kp1.7}
\begin{aligned}
&\sigma_1^2(\mu)\left(\dsum_{j\neq 1}\xi_j\right)^2+\left(\dsum_{j\neq1}\mu_j\xi_j\right)^2+2\sigma_1(\mu)\dsum_{p\neq 1}\mu_p\xi_p\dsum_{q\neq 1}\xi_q\\
=&\left(\sigma_1(\mu)\dsum_{j\neq 1}\xi_j+\dsum_{j\neq
1}\mu_j\xi_j\right)^2.
\end{aligned}
\ee
Combining \eqref{kp1.6} and \eqref{kp1.7} with \eqref{4.2}, we get
\begin{eqnarray}\label{4.3}
&&\sigma_1(\mu)\sigma_{n-1}(\mu)Q_S\\
&\geq&\sigma_1^2(\mu)\dsum_{j\neq 1}\xi_j^2+\left(\dsum_{j\neq 1}\mu_j\xi_j\right)^2-2\sigma_1(\mu)\dsum_{j\neq 1}\mu_j\xi_j^2+\left(\sigma_1(\mu)\dsum_{j\neq 1}\xi_j+\dsum_{j\neq 1}\mu_j\xi_j\right)^2\nonumber\\
&\geq&\sigma_1^2(\mu)\dsum_{j\neq 1}\xi_j^2+\left(\dsum_{j\neq 1}\mu_j\xi_j\right)^2-2\sigma_1(\mu)\dsum_{j\neq 1}\mu_j\xi_j^2\nonumber\\
&=&\dsum_{j\neq 1}\sigma^2_1(\mu)\xi_j^2-2\dsum_{j\neq1}\sigma_1(\mu)\xi_j\mu_j\xi_j+\dsum_{j\neq1}\mu_j^2\xi_j^2+\dsum_{p\neq q,p,q\neq 1}\mu_p\xi_p\mu_q\xi_q\nonumber\\
&=&\dsum_{j\neq 1}\left(\sigma_1(\mu)-\mu_j\right)^2\xi_j^2+\dsum_{p\neq q,p,q\neq 1}\mu_p\xi_p\mu_q\xi_q\nonumber\\
&\geq&\dsum_{j\neq 1}\left(\dsum_{s\neq 1,j}\mu_s^2\right)\xi_j^2+\dsum_{p\neq q,p,q\neq 1}\mu_p\xi_p\mu_q\xi_q\nonumber\\
&=&\sum_{p\neq q,p,q\neq 1}\mu_p^2\xi_q^2+\sum_{p\neq q,p,q\neq 1}\mu_p\xi_q\mu_q\xi_p\nonumber\\
&=&\frac{1}{2}\sum_{p\neq q,p,q\neq 1}\left(\mu_p\xi_q+\mu_q\xi_p\right)^2\nonumber\\
&\geq&0.\nonumber
\end{eqnarray}
Since $\sigma_1(\mu)\sigma_{n-1}(\mu)$ is positive, we have $Q_S\geq 0$. This completes the proof of Lemma \ref{kplem1.1}.
\end{proof}

\section{The case when $\la_1> 0$}
\label{ku}
In this section, we will prove the following Lemma and complete the proof of Theorem \ref{ceth1.1}.
\begin{lemm}
\label{kulem1.1}
If $\la_1>0$, then for any $1\leq m\leq n-1,$ the sum of all $m$-th principal minors of matrix $S$ is nonnegative. Therefore, the matrix $S$ is a semi-positive definite matrix.
\end{lemm}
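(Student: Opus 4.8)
The plan rests on the elementary characterization of positive semidefiniteness by principal minors: a real symmetric matrix is positive semidefinite if and only if, for every $m$, the sum of its $m\times m$ principal minors is nonnegative (equivalently, the coefficients of its characteristic polynomial have the sign pattern of a polynomial with nonnegative roots). Thus the assertion of the Lemma is \emph{exactly} the statement that $S$ is positive semidefinite, and all the work lies in a sign analysis of $E_m(S):=\sum_{|I|=m}\det S_I$, where $I$ runs over the $m$-element subsets of $\{2,\dots,n\}$ and $S_I$ is the corresponding principal submatrix. The case $m=1$ is immediate: by the G\r{a}rding characterization \eqref{Gamma} one has $\sigma_{n-2}(\la|p)>0$ for every $p$, so every diagonal entry $s_{pp}=2\sigma_{n-2}(\la|1)+2\sigma_{n-2}(\la|p)$ is positive and $E_1(S)>0$. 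The content is at $m\ge 2$.

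Before computing I would assemble the facts that will control signs. From \eqref{Gamma}, $\sigma_{n-2}(\la|p)>0$ for all $p$ and $\sigma_{n-3}(\la|pq)>0$ for all $p\ne q$; from \eqref{2.1} rewritten as in \eqref{kp1.2}, $1-\sigma_{n-1}(\la|1)=\la_1\sigma_{n-2}(\la|1)>0$ in the case $\la_1>0$; and, since $\la\in\Gamma_{n-1}$, at most one of $\la_1,\dots,\la_n$ is nonpositive (this is how Section~\ref{kp} gets $\la_2,\dots,\la_n>0$ from $\la_1\le 0$), so in our case at most one of $\la_2,\dots,\la_n$ is nonpositive. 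Setting $b:=\sigma_{n-1}(\la|1)$ and $\nu_p:=\sigma_{n-2}(\la|1p)$, the identities $\sigma_{n-2}(\la|p)=\sigma_{n-2}(\la|1)+(\la_1-\la_p)\sigma_{n-3}(\la|1p)$, $\la_p\nu_p=b$, and $\sigma_{n-1}(\la|1)\sigma_{n-3}(\la|1pq)=\nu_p\nu_q$ (the last already used for the matrix $T$ in Section~\ref{ce}) bring the off-diagonal entries of $S$ to the compact form $s_{pq}=\sigma_{n-2}(\la|1)+\nu_p+\nu_q-\frac{2\la_1}{b}\nu_p\nu_q$, and a short computation then writes $S=D+(S-D)$ with $D=\sigma_{n-2}(\la|1)\,\mathrm{diag}\big((\la_p+2\la_1)/\la_p\big)_{p\ne 1}$ diagonal and $S-D$ of rank at most two, namely
\[
S-D=\sigma_{n-2}(\la|1)\,\mathbf 1\mathbf 1^{T}+\mathbf 1\nu^{T}+\nu\mathbf 1^{T}-\frac{2\la_1}{b}\,\nu\nu^{T},
\qquad \mathbf 1=(1,\dots,1),\ \ \nu=(\nu_p)_{p\ne 1};
\]
the boundary case $b=\sigma_{n-1}(\la|1)=0$ is recovered afterward by continuity.

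The key point is that $S-D$ has rank at most $2$. Hence $\det S_I$ — and therefore $E_m(S)$ — can be expanded (by Cauchy--Binet, or iterated use of the matrix--determinant lemma) into a closed-form expression in which only the $m\times m$, $(m-1)\times(m-1)$ and $(m-2)\times(m-2)$ principal minors of the diagonal matrix $D$ occur, weighted by $2\times 2$ symmetric functions of $(\nu_p)$ and $(\la_p)$ over the complementary indices. Translating the $\nu_p$ back via $\nu_p=b/\la_p$ puts $E_m(S)$ into the same algebraic world as the Section~\ref{kp} computation, and the task becomes to recognize it as a manifestly nonnegative combination, using $\sigma_{n-2}(\la|1)>0$, $\la_1>0$, $1-\sigma_{n-1}(\la|1)>0$, the Newton--Maclaurin inequalities for the elementary symmetric functions of $\la_2,\dots,\la_n$ (or of their reciprocals), and Lemma~10 of \cite{RW}. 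I would organize this by first treating the subcase where all of $\la_2,\dots,\la_n$ are positive — there $b>0$, the reciprocals $1/\la_p$ are positive, and each $E_m(S)$ should come out as a sum of squares just as $Q_S$ did in Lemma~\ref{kplem1.1} — and then the subcase where exactly one of them, say $\la_n$, is nonpositive, by isolating the row and column indexed by $n$, expanding $\det S_I$ along it when $n\in I$, and reducing to the all-positive computation for the remaining indices plus explicitly signed lower-order terms.

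The main obstacle is this last subcase. When one eigenvalue among $\la_2,\dots,\la_n$ is nonpositive we have $b=\sigma_{n-1}(\la|1)\le 0$, the reciprocals $1/\la_p$ change sign, $Q_S$ is no longer a sum of squares, and the principal-minor criterion genuinely cannot be circumvented; the positivity of $S$ then has to come from the diagonal part $D$ dominating the rank-$2$ indefinite part $S-D$ in the right directions. Carrying through the sign analysis of all the determinant contributions simultaneously and uniformly in $m$, while keeping the (substantial) algebra organized, is the delicate part, and it is what accounts for the length of the remainder of this section.
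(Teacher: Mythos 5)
Your setup is sound, and the first steps check out: for a real symmetric matrix, nonnegativity of the sums $E_m(S)$ of all $m\times m$ principal minors for every $m$ is indeed equivalent to positive semidefiniteness; the case $m=1$ is immediate from the positivity of the diagonal; and your decomposition is algebraically correct --- with $\nu_p=\sigma_{n-2}(\la|1p)$ and $b=\sigma_{n-1}(\la|1)$ one has $s_{pq}=\sigma_{n-2}(\la|1)+\nu_p+\nu_q-\tfrac{2\la_1}{b}\nu_p\nu_q$ for $p\neq q$ and $D_{pp}=\sigma_{n-2}(\la|1)(\la_p+2\la_1)/\la_p$, so $S$ is diagonal plus rank two. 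Note, though, that this is essentially a repackaging of the decomposition the paper itself uses, $S=A+B+\sigma_{n-2}(\la|1)\,Id_{n-1}$ with $A$ of rank at most two: you have absorbed the diagonal of $B$ into your $D$ and its off-diagonal part into the rank-one piece $-\tfrac{2\la_1}{b}\nu\nu^{T}$.

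The genuine gap is that the assertion to be proved, $E_m(S)\geq 0$ for $2\leq m\leq n-1$, is never actually established. You reduce it to ``expand $\det S_I$ using the rank-two structure and recognize the result as a manifestly nonnegative combination,'' but that recognition is exactly the content of the paper's proof: the closed-form evaluation of the pure and mixed minors (the analogues of Lemmas \ref{lemm4}--\ref{lemm10}, which rest on Lemma \ref{kulem1.2} and the identity of Lemma \ref{B}), the summation over all index sets that turns $\sum_{I_k}D_S(I_k)$ into a polynomial in $2\la_1$ whose coefficients have the form $P(s)\sigma_{n-2}(\la|1)\sigma_{n-s-2}(\la|1)+Q(s)\sigma_{n-1}(\la|1)\sigma_{n-s-3}(\la|1)$, and the final sign analysis $P(s)\geq 0$, $2P(s+1)-Q(s)\geq 0$, $2(k-1)(2n+2-k)-Q(k-3)\geq 0$ combined with $\la_1\sigma_{n-2}(\la|1)=1-\sigma_{n-1}(\la|1)\geq-\sigma_{n-1}(\la|1)$ when $\sigma_{n-1}(\la|1)<0$. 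You explicitly defer this (``the delicate part''), and precisely in the problematic subcase --- one of $\la_2,\dots,\la_n$ nonpositive, hence $b\leq 0$ --- neither $D$ nor the rank-two part is separately semidefinite, so no soft argument closes it. Even your ``easy'' subcase is not automatic: when all $\la_p>0$ the rank-two correction still contains the negative semidefinite term $-\tfrac{2\la_1}{b}\nu\nu^{T}$, so the claim that each $E_m(S)$ ``comes out as a sum of squares'' needs proof; in the paper this case, too, goes through the full minor computation and is settled only at the end by the nonnegativity of the coefficients. As it stands, the proposal is a correct plan whose decisive computation --- the heart of the lemma --- is missing.
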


Before starting the proof of Lemma \ref{kulem1.1}, we want to recall an important Lemma from \cite{RW}. We will use this Lemma many times throughout this section.
\begin{lemm}(Lemma 9 in \cite{RW})
\label{kulem1.2}
Suppose $2\leq i_1<i_2<\cdots<i_m\leq n$ are $m$ ordered indices. Let $D_m(i_1\cdots i_m)$ denote the
$m$-th principal minor of the matrix $(c_{pq})_{2\leq p, q\leq n},$ where
\[c_{pq}=\left\{\begin{matrix}\sigma_{n-3}(\la|1p)&\text{ for } p=q\\ -\sigma_{n-3}(\la|1pq)&\text{ for } p\neq q.\end{matrix}\right.\]
Then we have
\be\label{ku1.1}
\begin{aligned}
D_m(i_1\cdots i_m)&=\det\left[\ju{cccc}{c_{i_1i_1}&c_{i_1i_2}&\cdots&c_{i_1i_m}\\
c_{i_2i_1}&a_{i_2i_2}&\cdots&c_{i_2i_m}\\
\vdots&\vdots&\ddots&\vdots\\
c_{i_mi_1}&c_{i_mi_2}&\cdots&c_{i_mi_m}}\right]\\
&=\sigma_{n-2}^{m-1}(\la|1)\sigma_{n-(m+2)}(\la|1i_1\cdots i_m).
\end{aligned}
\ee
Moreover, after deleting the $l$-th row and $k$-th column, where $l\neq k,$ we get,
\be\label{ku1.2}
\begin{aligned}
B_{m-1}=&\det\left[\ju{ccccccc}{c_{i_1i_1}&c_{i_1i_2}&\cdots&c_{i_2i_{k-1}}&c_{i_2i_{k+1}}&\cdots&c_{i_2i_m}\\
c_{i_2i_1}&b_{i_2i_2}&\cdots&c_{i_2i_{k-1}}&c_{i_2i_{k+1}}&\cdots&c_{i_2i_m}\\
\vdots&\vdots&\ddots&\vdots&\vdots&\ddots&\vdots\\
c_{i_{l-1}i_1}&c_{i_{l-1}i_2}&\cdots&c_{i_{l-1}i_{k-1}}&c_{i_{l-1}i_{k+1}}&\cdots&c_{i_{l-1}i_m}\\
c_{i_{l+1}i_1}&c_{i_{l+1}i_2}&\cdots&c_{i_{l+1}i_{k-1}}&c_{i_{l+1}i_{k+1}}&\cdots&c_{i_{l+1}i_m}\\\vdots&\vdots&\ddots&\vdots&\vdots&\ddots&\vdots\\
c_{i_mi_1}&c_{i_mi_2}&\cdots&c_{i_mi_{k-1}}&c_{i_mi_{k+1}}&\cdots&c_{i_mi_m}}\right]\\
&=(-1)^{l+k}\sigma_{n-2}^{m-2}(\la|1)\sigma_{n-m-1}(\la|1i_1\cdots i_m).
\end{aligned}
\ee
\end{lemm}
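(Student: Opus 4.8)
The plan is to render the $(n-1)\times(n-1)$ matrix $C=(c_{pq})_{2\le p,q\le n}$ completely explicit on the dense open set where all the curvatures $\la_2,\dots,\la_n$ are nonzero, as a rank-one perturbation of a diagonal matrix; since both sides of \eqref{ku1.1} and \eqref{ku1.2} are polynomials in $\la$, proving the identities there proves them everywhere. Once the explicit form is available, the principal-minor identity \eqref{ku1.1} is a direct application of the matrix determinant lemma, and the reduced-minor identity \eqref{ku1.2} follows from the adjugate version of the Sherman--Morrison formula. To get the explicit form, I would use that $\sigma_{n-1}(\la|1)$, being the top elementary symmetric function of the $n-1$ curvatures $\{\la_j:j\ne1\}$, equals their product; this gives $\sigma_{n-2}(\la|1p)=\sigma_{n-1}(\la|1)/\la_p$ and $\sigma_{n-3}(\la|1pq)=\sigma_{n-1}(\la|1)/(\la_p\la_q)$ for $p\ne q$, while the recursion $\sigma_{n-2}(\la|1)=\la_p\sigma_{n-3}(\la|1p)+\sigma_{n-2}(\la|1p)$ gives $\sigma_{n-3}(\la|1p)=\sigma_{n-2}(\la|1)/\la_p-\sigma_{n-1}(\la|1)/\la_p^{2}$. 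Writing $a=\sigma_{n-2}(\la|1)$ and $b=\sigma_{n-1}(\la|1)$, these three relations say precisely that $c_{pq}=a\delta_{pq}/\la_p-b/(\la_p\la_q)$, i.e.\ $C=aD-b\,vv^{T}$ with $D=\mathrm{diag}(\la_2^{-1},\dots,\la_n^{-1})$ and $v=(\la_2^{-1},\dots,\la_n^{-1})^{T}$; restricting to the rows and columns in $I=\{i_1,\dots,i_m\}$ yields $C_I=aD_I-b\,v_Iv_I^{T}$ with $D_I,v_I$ the corresponding truncations.

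For \eqref{ku1.1}, the matrix determinant lemma gives $\det C_I=a^{m}(\det D_I)\bigl(1-\tfrac{b}{a}v_I^{T}D_I^{-1}v_I\bigr)=a^{m-1}\bigl(\prod_s\la_{i_s}^{-1}\bigr)\bigl(a-b\sum_s\la_{i_s}^{-1}\bigr)$, since $v_I^{T}D_I^{-1}v_I=\sum_s\la_{i_s}^{-1}$. It then remains to compute $a-b\sum_s\la_{i_s}^{-1}$ in closed form. Set $J=\{1,i_1,\dots,i_m\}$. Expanding $\sigma_{n-2}(\la|1)$ and $\sigma_{n-1}(\la|1)$ as polynomials in the ``removed'' variables $\la_{i_1},\dots,\la_{i_m}$, with coefficients among the symmetric functions of the remaining $n-1-m$ curvatures, only the two highest-degree terms survive by a dimension count, so that $a=e_{m-1}\sigma_{n-1-m}(\la|J)+e_m\sigma_{n-2-m}(\la|J)$ and $b=e_m\sigma_{n-1-m}(\la|J)$, where $e_t=e_t(\la_{i_1},\dots,\la_{i_m})$. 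Since $e_{m-1}=e_m\sum_s\la_{i_s}^{-1}$, the factor $a-b\sum_s\la_{i_s}^{-1}$ collapses to $e_m\sigma_{n-2-m}(\la|J)$, and therefore $D_m(i_1\cdots i_m)=\det C_I=a^{m-1}\sigma_{n-2-m}(\la|J)=\sigma_{n-2}^{m-1}(\la|1)\,\sigma_{n-(m+2)}(\la|1i_1\cdots i_m)$, which is \eqref{ku1.1}.

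For \eqref{ku1.2}, the minor $B_{m-1}$ obtained by deleting the $l$-th row and $k$-th column of $C_I$ ($l\ne k$) equals $(-1)^{l+k}$ times the $(k,l)$-entry of $\mathrm{adj}(C_I)$. From $C_I=aD_I-b\,v_Iv_I^{T}$, the adjugate version of Sherman--Morrison (valid where $aD_I$ is invertible, which holds generically) gives $\mathrm{adj}(C_I)=\bigl(1-\tfrac{b}{a}v_I^{T}D_I^{-1}v_I\bigr)\mathrm{adj}(aD_I)+\tfrac{b}{a^{2}}\det(aD_I)\,D_I^{-1}v_Iv_I^{T}D_I^{-1}$. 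Its off-diagonal $(k,l)$-entry is $\tfrac{b}{a^{2}}\det(aD_I)=a^{m-2}b\prod_s\la_{i_s}^{-1}$, because $\mathrm{adj}(aD_I)$ is diagonal and $(D_I^{-1}v_I)_k=(v_I^{T}D_I^{-1})_l=1$. Using $b=e_m\sigma_{n-1-m}(\la|J)$ and $\prod_s\la_{i_s}^{-1}=e_m^{-1}$, this entry equals $a^{m-2}\sigma_{n-1-m}(\la|J)$, and hence $B_{m-1}=(-1)^{l+k}\sigma_{n-2}^{m-2}(\la|1)\,\sigma_{n-m-1}(\la|1i_1\cdots i_m)$, which is \eqref{ku1.2}. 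The identities for general $\la$ then follow by polynomial continuation.

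The points requiring care rather than new ideas are two. First, one must check that the expansions of $\sigma_{n-2}(\la|1)$ and $\sigma_{n-1}(\la|1)$ in the removed variables truncate to exactly the two displayed terms; this is a counting argument using that the complementary index set has only $n-1-m$ elements, so the symmetric functions of those curvatures of order exceeding $n-1-m$ vanish. Second — and this is the fiddliest step — one must pin down the sign $(-1)^{l+k}$ in \eqref{ku1.2} by carefully tracking the positions of $i_l$ and $i_k$ inside the ordered index sets $I\setminus\{i_l\}$ and $I\setminus\{i_k\}$. Neither is a genuine obstacle: the whole argument rests on the single observation $C=aD-b\,vv^{T}$, after which everything reduces to elementary linear algebra and standard recursions for elementary symmetric functions.
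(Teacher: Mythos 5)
Your argument is correct, but it cannot be compared line-by-line with a proof in this paper: the authors do not prove Lemma \ref{kulem1.2} here at all, they simply quote it as Lemma 9 of \cite{RW}, where it is established by direct determinant manipulations and induction with the standard recursions for the $\sigma_k$'s. Your route is genuinely different and self-contained: the single observation that, when $\la_j\neq 0$ for $j\neq 1$, one has $c_{pq}=a\,\delta_{pq}/\la_p-b/(\la_p\la_q)$ with $a=\sigma_{n-2}(\la|1)$, $b=\sigma_{n-1}(\la|1)$, i.e.\ $C=aD-b\,vv^{T}$, reduces \eqref{ku1.1} to the matrix determinant lemma and \eqref{ku1.2} to the adjugate form of Sherman--Morrison, after which the truncation $a=e_{m-1}\sigma_{n-1-m}+e_m\sigma_{n-2-m}$, $b=e_m\sigma_{n-1-m}$ (a correct degree count against the $n-1-m$ remaining variables) collapses $a-b\sum_s\la_{i_s}^{-1}$ to $e_m\sigma_{n-2-m}(\la|1i_1\cdots i_m)$; the genericity hypotheses ($\la_{i_s}\neq0$, $\sigma_{n-2}(\la|1)\neq0$) are then removed by polynomial continuation, exactly as you say. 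Two small remarks: the sign in \eqref{ku1.2} is less delicate than you suggest, since the deleted row and column are indexed \emph{within} the $m\times m$ submatrix, so $B_{m-1}=(-1)^{l+k}\operatorname{adj}(C_I)_{kl}$ is automatic and no re-ordering of index sets is needed; and your formulas also handle the boundary case $m=n-1$ consistently with the convention $\sigma_{-1}=0$. What your approach buys is a short, conceptual proof usable without consulting \cite{RW}; what the computational proof in \cite{RW} buys is that it never needs to invert anything or invoke density, working directly with polynomial identities.
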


Next, in order to simplify our calculations, we will decompose $S$ into three $(n-1)\times (n-1)$ matrices: $A=(a_{pq})_{2\leq p, q\leq n},$
$B=(b_{pq})_{2\leq p, q\leq n},$ and $C=\sigma_{n-2}(\la|1)Id_{n-1}.$ Here
$$
a_{pq}=\left\{\begin{matrix}2\sigma_{n-2}(\la|1p)+\sigma_{n-2}(\la|1)&\text{ for } p=q\\
(\la_p+\la_q)\sigma_{n-3}(\la|1pq)+\sigma_{n-2}(\la|1)&\text{ for } p\neq q\end{matrix}\right.,
$$

$$
b_{pq}=\left\{\begin{matrix}2\la_1\sigma_{n-3}(\la|1p)&\text{ for } p=q\\ -2\la_1\sigma_{n-3}(\la|1pq)&\text{ for } p\neq q\end{matrix}\right.,
$$
and $Id_{n-1}$ is the $(n-1)\times (n-1)$ identity matrix.
By the equality $$\sigma_{n-2}(\la |p)=\la_1\sigma_{n-3}(\la |1p)+\sigma_{n-2}(\la |1p),$$ we can see that
\be\label{ku1.3}
S=A+B+\sigma_{n-2}(\la|1)Id_{n-1}.
\ee

One of the key reasons that the above decomposition \eqref{ku1.3} can simplify our calculation is the following.
\begin{lemm}\label{lem2} The rank of the matrix $A$ is at most two.
\end{lemm}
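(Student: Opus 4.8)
The plan is to exhibit $A$ explicitly as a sum of two rank-one matrices, from which $\operatorname{rank} A \le 2$ is immediate. The key observation is that both the diagonal and off-diagonal entries of $A$ can be written in a unified way using the single-index quantities $\sigma_{n-2}(\la|1p)$ and $\sigma_{n-3}(\la|1p)$. Concretely, I would first record the identity
\[
\sigma_{n-2}(\la|1p) = \la_q\,\sigma_{n-3}(\la|1pq) + \sigma_{n-2}(\la|1pq)
\quad\text{and}\quad
\sigma_{n-3}(\la|1p) = \la_q\,\sigma_{n-4}(\la|1pq) + \sigma_{n-3}(\la|1pq),
\]
valid for $p\neq q$, which come from peeling off the index $q$. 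From the second of these, $(\la_p+\la_q)\sigma_{n-3}(\la|1pq)$ can be rearranged. The cleaner route, though, is to notice directly that for $p \ne q$,
\[
(\la_p+\la_q)\sigma_{n-3}(\la|1pq) = \sigma_{n-2}(\la|1p) - \sigma_{n-2}(\la|1pq) + \sigma_{n-2}(\la|1q) - \sigma_{n-2}(\la|1pq),
\]
using the peeling identity twice. Since $\sigma_{n-2}(\la|1pq)$ is symmetric in $p,q$, write $\sigma_{n-2}(\la|1pq) =: e_{pq} = e_{qp}$, and set $d_p := \sigma_{n-2}(\la|1p)$. Then for $p\neq q$,
\[
a_{pq} = d_p + d_q - 2e_{pq} + \sigma_{n-2}(\la|1),
\]
and for $p=q$ one checks $a_{pp} = 2d_p + \sigma_{n-2}(\la|1) = d_p + d_p - 2e_{pp} + \sigma_{n-2}(\la|1)$ provided we define $e_{pp} := 0$ — which is consistent with the convention $\sigma_{k}(\la|1pp)$ being interpreted as deleting $p$ only once, i.e.\ $e_{pp}=0$. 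So in all cases
\[
a_{pq} = d_p + d_q + \sigma_{n-2}(\la|1) - 2e_{pq}.
\]

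Now the hard part: I need $e_{pq} = \sigma_{n-2}(\la|1pq)$ itself to contribute only a rank-one (or zero) piece, so that $A$ decomposes as $\mathbf{1}d^{T} + d\mathbf{1}^{T} + \sigma_{n-2}(\la|1)\,\mathbf{1}\mathbf{1}^{T} - 2E$ with $E = (e_{pq})$ of small rank. But $E$ is \emph{not} generally low rank. I expect the correct resolution is that the $\sigma_{n-2}(\la|1)\mathbf{1}\mathbf{1}^{T}$ term does not belong to $A$ at all — rereading the definition of $a_{pq}$, the diagonal is $2\sigma_{n-2}(\la|1p)+\sigma_{n-2}(\la|1)$ and the off-diagonal is $(\la_p+\la_q)\sigma_{n-3}(\la|1pq)+\sigma_{n-2}(\la|1)$, so the constant $\sigma_{n-2}(\la|1)$ appears in \emph{every} entry. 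Thus $A = \sigma_{n-2}(\la|1)\,\mathbf{1}\mathbf{1}^{T} + A'$ where $A'_{pq} = (\la_p+\la_q)\sigma_{n-3}(\la|1pq)$ off-diagonal and $A'_{pp} = 2\sigma_{n-2}(\la|1p)$. The claim then reduces to showing $\operatorname{rank} A' \le 1$. For that I would use the key algebraic identity that on the relevant range the matrix with entries $\sigma_{n-3}(\la|1pq)$ (off-diagonal) and $\sigma_{n-3}(\la|1p)$... — more precisely, I expect
\[
(\la_p+\la_q)\sigma_{n-3}(\la|1pq) = \sigma_{n-2}(\la|1p) + \sigma_{n-2}(\la|1q) - 2\,\mu\cdot(\text{something symmetric rank-one}),
\]
and that the genuinely obstructing symmetric term cancels because $\la\in\Gamma_{n-1}$ forces $\sigma_{n-2}(\la|1pq)$ to combine into $\sigma_{n-2}(\la|1p)\sigma_{n-2}(\la|1q)/\sigma_{n-1}(\la|1)$ — a relation already used in the text right after Lemma~\ref{celem1.1}. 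Using $\sigma_{n-1}(\la|1)\sigma_{n-3}(\la|1pq) = \sigma_{n-2}(\la|1p)\sigma_{n-2}(\la|1q)$, one should be able to verify $A'_{pq} = v_p + v_q$ for a suitable vector $v$ (with $v_p = \sigma_{n-2}(\la|1p)$ after absorbing constants), whence $A' = \mathbf{1}v^{T} + v\mathbf{1}^{T}$ has rank at most two, and combined with the $\mathbf{1}\mathbf{1}^{T}$ term $A$ still has rank at most two since $\mathbf{1}\mathbf{1}^T$, $\mathbf{1}v^T$, $v\mathbf{1}^T$ span a space of dimension $\le 2$ (it is $\{\mathbf 1, v\}\otimes\{\mathbf 1, v\}$ symmetrized).

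The main obstacle, therefore, is the bookkeeping needed to show that the off-diagonal expression $(\la_p+\la_q)\sigma_{n-3}(\la|1pq)$ and the diagonal expression $2\sigma_{n-2}(\la|1p)$ are simultaneously of the form $v_p + v_q$ (with $v_p = \sigma_{n-2}(\la|1p)$ on the diagonal giving $v_p+v_p = 2\sigma_{n-2}(\la|1p)$, consistent). I would carry this out by: (i) fixing the convention $e_{pp}=0$; (ii) applying the peeling identity $\sigma_{n-2}(\la|1p) = \la_q\sigma_{n-3}(\la|1pq) + \sigma_{n-2}(\la|1pq)$ and its $p\leftrightarrow q$ swap; (iii) adding the two to get $\sigma_{n-2}(\la|1p)+\sigma_{n-2}(\la|1q) = (\la_p+\la_q)\sigma_{n-3}(\la|1pq) + 2\sigma_{n-2}(\la|1pq)$, so that $a_{pq}$ off-diagonal equals $\sigma_{n-2}(\la|1p)+\sigma_{n-2}(\la|1q) - 2\sigma_{n-2}(\la|1pq) + \sigma_{n-2}(\la|1)$; (iv) observing that the \emph{stated} decomposition $S = A+B+\sigma_{n-2}(\la|1)\mathrm{Id}$ has already subtracted off the Hessian-type correction, so that what remains in $A$ genuinely is $\mathbf{1}v^{T}+v\mathbf{1}^{T}$ up to the constant matrix, and then reading off that three symmetric rank-one matrices built from only two vectors $\mathbf 1, v$ span at most a two-dimensional subspace of symmetric matrices — hence $\operatorname{rank} A \le 2$. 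If the term $\sigma_{n-2}(\la|1pq)$ does \emph{not} cancel cleanly, the fallback is to invoke Lemma~\ref{kulem1.2}: all $3\times 3$ minors $D_3$ of the $c$-matrix are given by an explicit formula, and one can show every $3\times 3$ minor of $A$ vanishes by expressing $A$'s minors in terms of those $D_3$'s together with the rank-one pieces, which forces $\operatorname{rank} A \le 2$ directly. I would present the explicit decomposition as the primary argument and mention the minor-vanishing computation only if the cancellation is less transparent than expected.
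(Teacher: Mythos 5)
Your overall strategy is the right one and is in fact the paper's: the proof consists of checking that every entry of $A$ has the form $a_{pq}=m_p+m_q$ with $m_j=\sigma_{n-2}(\la|1j)+\tfrac12\sigma_{n-2}(\la|1)$, which factors $A$ through two columns and gives rank at most two. But your proposal stalls at exactly the one identity that carries the content of the lemma, and the route you suggest for closing it does not work. After peeling you correctly arrive, for $p\neq q$, at $a_{pq}=\sigma_{n-2}(\la|1p)+\sigma_{n-2}(\la|1q)-2\sigma_{n-2}(\la|1pq)+\sigma_{n-2}(\la|1)$, and you then worry that the matrix $E=(\sigma_{n-2}(\la|1pq))$ is not low rank and hope the obstruction ``cancels'' via $\sigma_{n-1}(\la|1)\sigma_{n-3}(\la|1pq)=\sigma_{n-2}(\la|1p)\sigma_{n-2}(\la|1q)$. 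That identity concerns $\sigma_{n-3}(\la|1pq)$, not $\sigma_{n-2}(\la|1pq)$, and does not remove the extra term; as written you never establish the entrywise formula $a_{pq}=v_p+v_q+\sigma_{n-2}(\la|1)$, and your fallback (vanishing of all $3\times 3$ minors via Lemma \ref{kulem1.2}) is only a sketch, not an argument.

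The missing observation is elementary: $\sigma_{n-2}(\la|1pq)$ is the elementary symmetric polynomial of degree $n-2$ in only $n-3$ variables, hence it vanishes identically. Equivalently, $\sigma_{n-2}(\la|1p)$, $\sigma_{n-2}(\la|1q)$ and $\sigma_{n-3}(\la|1pq)$ are top-degree, i.e.\ full products of the remaining entries, so the peeling identities have no remainder: $\sigma_{n-2}(\la|1p)=\la_q\sigma_{n-3}(\la|1pq)$ and $\sigma_{n-2}(\la|1q)=\la_p\sigma_{n-3}(\la|1pq)$, whence $(\la_p+\la_q)\sigma_{n-3}(\la|1pq)=\sigma_{n-2}(\la|1p)+\sigma_{n-2}(\la|1q)$ exactly. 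With this, $E\equiv 0$, your formula $a_{pq}=v_p+v_q+\sigma_{n-2}(\la|1)$ holds both on and off the diagonal, and your concluding linear-algebra step (every column of $A$ lies in the span of $\mathbf 1$ and $v$) is correct and coincides with the paper's factorization of $A$ as a product of an $(n-1)\times 2$ and a $2\times(n-1)$ matrix. Note also that your intermediate reduction ``it suffices that $\operatorname{rank}A'\le 1$'' is both unnecessary and false in general; what you need, and what you in fact use at the end, is only that the column space of $A$ is contained in the span of $\mathbf 1$ and $v$.
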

\begin{proof}
For any $2\leq j\leq n$, let
$$m_j=\sigma_{n-2}(\la |1j)+\frac{\sigma_{n-2}(\la |1)}{2}.$$ Then, it is easy to see that $a_{pq}=m_p+m_q$.
Therefore, we have
$$A=\left[\begin{matrix}m_2&1\\ m_3&1\\ \cdots  \\ m_{n}&1 \end{matrix}\right]\left[\begin{matrix}1&1&\cdots&1\\ m_2&m_3&\cdots&m_n\end{matrix}\right].$$
Hence, the rank of $A$ is at most two.
\end{proof}

For our convenience, we will introduce some notations. We will denote the set of multiple indices
$$\mathcal{I}_k=\{(i_2,\cdots,i_k)|2\leq i_2<\cdots<i_k\leq n\},$$
and we will use $I_k$ to denote an element in $\mathcal{I}_k$. Moreover, if $I_k=(i_2,i_3,\cdots,i_k),$ then $|I_k|$ denotes the set $\{i_2,i_3,\cdots,i_k\}$. For example, we have $I_n=(2,\cdots,n)\in \mathcal{I}_n$ and $|I_n|=\{2, 3, \cdots, n\}.$
We also need the following definition.

\begin{defi} Suppose $A=(a_{pq}), B=(b_{pq})$ are two $(n-1)\times(n-1)$ matrices and $I_k=(i_2,i_3\cdots,i_k)\in\mathcal{I}_k$ is a multiple index. We define the following principal minors $D_A(I_k), D_B(I_k)$ of matrices $A$ and $B$ respectively:
\begin{align*}
D_A(I_k)=\det\left[\ju{cccc}{a_{i_2i_2}&a_{i_2i_3}&\cdots&a_{i_2i_k}\\
a_{i_3i_2}&a_{i_3i_3}&\cdots&a_{i_3i_k}\\
\vdots&\vdots&\ddots&\vdots\\
a_{i_ki_2}&a_{i_ki_3}&\cdots&a_{i_ki_k}}\right], \quad
D_B(I_k)=\det\left[\ju{cccc}{b_{i_2i_2}&b_{i_2i_3}&\cdots&b_{i_2i_k}\\
b_{i_3i_2}&b_{i_3i_3}&\cdots&b_{i_3i_k}\\
\vdots&\vdots&\ddots&\vdots\\
b_{i_ki_2}&b_{i_ki_3}&\cdots&b_{i_ki_k}}\right].
\end{align*}
For indices $i_l,i_p,i_q\in |I_k|$ and $i_p<i_q$, we also define the following ``mixed'' principal minors $D_{B,A}(I_k ;i_l)$, $D_{B,A}(I_k; i_pi_q)$ of $A,B$:
\begin{align*}
D_{B,A}(I_k;i_l)=&\det\left[\ju{cccccc}{b_{i_2i_2}&b_{i_2i_3}&\cdots&b_{i_2i_l}&\cdots&b_{i_2i_k}\\
b_{i_3i_2}&b_{i_3i_3}&\cdots&b_{i_3i_l}&\cdots&b_{i_3i_k}\\
\vdots&\vdots&\ddots&\vdots&\ddots&\vdots\\
a_{i_li_2}&a_{i_li_3}&\cdots&a_{i_li_l}&\cdots&a_{i_li_k}\\
\vdots&\vdots&\ddots&\vdots&\ddots&\vdots\\
b_{i_ki_2}&b_{i_ki_3}&\cdots&b_{i_ki_l}&\cdots&b_{i_ki_k}}\right].
\end{align*}
and
\begin{align*}
D_{B,A}(I_k; i_pi_q)&=\det\left[\ju{cccccccc}{b_{i_2i_2}&b_{i_2i_3}&\cdots&b_{i_2i_p}&\cdots &b_{i_2i_q}&\cdots&b_{i_2i_k}\\
b_{i_3i_2}&b_{i_3i_3}&\cdots&b_{i_3i_p}&\cdots&b_{i_3i_q}&\cdots&b_{i_3i_k}\\
\vdots&\vdots&\ddots&\vdots&\ddots&\vdots&\ddots&\vdots\\
a_{i_pi_2}&a_{i_pi_3}&\cdots&a_{i_pi_p}&\cdots&a_{i_pi_q}&\cdots&a_{i_pi_k}\\
\vdots&\vdots&\ddots&\vdots&\ddots&\vdots&\ddots&\vdots\\
a_{i_qi_2}&a_{i_qi_3}&\cdots&a_{i_qi_p}&\cdots&a_{i_qi_q}&\cdots&a_{i_qi_k}\\
\vdots&\vdots&\ddots&\vdots&\ddots&\vdots&\ddots&\vdots\\
b_{i_ki_2}&b_{i_ki_3}&\cdots&b_{i_ki_p}&\cdots&b_{i_ki_q}&\cdots&b_{i_ki_k}}\right].
\end{align*}
\end{defi}

Finally, we are ready to prove Lemma \ref{kulem1.1}. In the following, we will start with computing the principal minors of the matrix $A+B$. Then, we will compute the sum of all $m$-th principal minors of $S$ for any $1\leq m\leq n-1$.

By Lemma \ref{lem2} we know that the rank of $A$ is at most two. Thus, for $k\geq 4$ and any multiple index
$I_k=(i_2,\cdots,i_k)\in \mathcal{I}_k$, we have
\begin{eqnarray}\label{5.1}
D_{A+B}(I_k)=D_B(I_k)+\sum_{i_l\in |I_k|} D_{B,A}(I_k;i_l)+\sum_{i_p,i_q\in |I_k|,i_p<i_q}D_{B,A}(I_k;i_pi_q).
\end{eqnarray}
For a given $I_k=(i_2,\cdots,i_k)\in \mathcal{I}_k$ and any integer $2\leq s\leq k$, let's denote
$$\mathcal{J}_s(I_k)=\{(j_2,j_3,\cdots,j_s)| j_2<j_3<\cdots<j_s,\,\,\mbox{where}\,\,j_l\in |I_k| \text{ for } l=2,3,\cdots,s\}.$$
Then by a straightforward calculation we get,
\be\label{ku1.4}
\begin{aligned}
&\sum_{I_k\in\mathcal{I}_k}D_S(I_k)\\
=&\sum_{I_k\in\mathcal{I}_k}D_{A+B}(I_k)+\sum_{I_k\in\mathcal{I}_k}\dsum_{J_{k-1}\in \mathcal{J}_{k-1}(I_k)}
D_{A+B}(J_{k-1})\sigma_{n-2}^{1}(\la|1)\\
&+\sum_{I_k\in\mathcal{I}_k}\dsum_{J_{k-2}\in \mathcal{J}_{k-2}(I_k)}
D_{A+B}(J_{k-2})\sigma_{n-2}^{2}(\la|1)+\cdots+\sum_{I_k\in\mathcal{I}_k}\dsum_{J_{s}\in \mathcal{J}_{s}(I_k)}
D_{A+B}(J_{s})\sigma_{n-2}^{k-s}(\la|1)\\
&+\cdots+\sum_{I_k\in\mathcal{I}_k}\dsum_{J_{2}\in \mathcal{J}_{2}(I_k)}
D_{A+B}(J_{2})\sigma_{n-2}^{k-2}(\la|1)+\sum_{I_k\in\mathcal{I}_k}\sigma_{n-2}^{k-1}(\la|1)
\end{aligned}
\ee

From Lemma \ref{lemm4} to Lemma \ref{lemm10}, we will compute terms involved in
equations \eqref{5.1} and \eqref{ku1.4}.

\begin{lemm}\label{lemm4} For a given $I_k=(i_2,\cdots, i_k)\in \mathcal{I}_k$, we have
$$
D_B(I_k)=(2\la_1)^{k-1}\sigma_{n-2}^{k-2}(\la|1)\sigma_{n-k-1}(\la|1i_2\cdots
i_k).
$$
\end{lemm}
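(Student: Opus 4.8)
The plan is to compute $D_B(I_k)$ directly by recognizing the matrix $(b_{pq})_{p,q\in|I_k|}$ as $2\la_1$ times the submatrix of $(c_{pq})$ from Lemma~\ref{kulem1.2}. Indeed, comparing definitions, $b_{pq}=2\la_1 c_{pq}$ for all $p,q\in\{2,\dots,n\}$, since $c_{pp}=\sigma_{n-3}(\la|1p)$ and $c_{pq}=-\sigma_{n-3}(\la|1pq)$ for $p\neq q$, while $b_{pp}=2\la_1\sigma_{n-3}(\la|1p)$ and $b_{pq}=-2\la_1\sigma_{n-3}(\la|1pq)$. Hence the $(k-1)\times(k-1)$ determinant $D_B(I_k)$ equals $(2\la_1)^{k-1}$ times the corresponding principal minor $D_{k-1}(i_2\cdots i_k)$ of $(c_{pq})$.

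The next step is to invoke formula \eqref{ku1.1} of Lemma~\ref{kulem1.2} with $m=k-1$ and ordered indices $i_2<i_3<\cdots<i_k$. That formula gives
\[
D_{k-1}(i_2\cdots i_k)=\sigma_{n-2}^{(k-1)-1}(\la|1)\,\sigma_{n-((k-1)+2)}(\la|1i_2\cdots i_k)=\sigma_{n-2}^{k-2}(\la|1)\,\sigma_{n-k-1}(\la|1i_2\cdots i_k).
\]
Multiplying by $(2\la_1)^{k-1}$ yields exactly
\[
D_B(I_k)=(2\la_1)^{k-1}\sigma_{n-2}^{k-2}(\la|1)\,\sigma_{n-k-1}(\la|1i_2\cdots i_k),
\]
which is the claimed identity.

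There is essentially no obstacle here: the only thing to be careful about is the bookkeeping of the index shift when translating ``$m$-th principal minor of an $(n-1)\times(n-1)$ matrix'' (with $m=k-1$) into the right-hand side of \eqref{ku1.1}, and to note that scaling every entry of a $(k-1)\times(k-1)$ matrix by the constant $2\la_1$ scales its determinant by $(2\la_1)^{k-1}$. One should also record that the identity is stated for $k\geq 2$ (for $k=2$ it reads $D_B(i_2)=b_{i_2i_2}=2\la_1\sigma_{n-3}(\la|1i_2)$, consistent with $\sigma_{n-2}^0=1$ and $\sigma_{n-3}(\la|1i_2)$), and that Lemma~\ref{kulem1.2}'s hypothesis that $(c_{pq})$ is the relevant Codazzi-type matrix is already in force since $\la\in\Gamma_{n-1}$. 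This is a short lemma whose entire content is the reduction to Lemma~\ref{kulem1.2} via the scalar factor $2\la_1$.
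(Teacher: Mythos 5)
Your proposal is correct and follows exactly the paper's argument: the paper also observes that $b_{pq}/(2\la_1)=c_{pq}$ for all $2\leq p,q\leq n$ and then cites Lemma \ref{kulem1.2} (with $m=k-1$) to conclude. The only difference is that you spell out the determinant scaling and index bookkeeping explicitly, which the paper leaves implicit.
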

\begin{proof}
By our definition of matrix $B,$ we can see that for any $2\leq p, q\leq n,$
\[\frac{b_{pq}}{2\la_1}=c_{pq}.\]
Thus the result follows from Lemma \ref{kulem1.2} directly.
\end{proof}

In next Lemma, we will compute the summation of principal minors of matrix $B.$
\begin{lemm}\label{lemm5}
For any integer $2\leq s\leq k$,  we have
\begin{align*}
\dsum_{I_k\in \mathcal{I}_k}\dsum_{J_s\in \mathcal{J}_s(I_k)}D_B(J_s)
=&\dfrac{s(n-s)!}{(k-s)!(n-k)!}\left(2\la_1\right)^{s-1}\sigma_{n-2}^{s-2}(\la|1)\sigma_{n-(s+1)}(\la|1).
\end{align*}

\end{lemm}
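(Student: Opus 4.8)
The plan is to prove the identity by first reducing the double sum over $\mathcal{I}_k$ and $\mathcal{J}_s(I_k)$ to a single sum over $\mathcal{I}_s$, then applying Lemma \ref{kulem1.2}. First I would observe that a given multiple index $J_s=(j_2,\cdots,j_s)\in\mathcal{I}_s$ arises as an element of $\mathcal{J}_s(I_k)$ exactly when $|J_s|\subset|I_k|$, i.e. when $I_k$ is obtained from $J_s$ by adjoining $k-s$ further indices chosen from the remaining $n-1-s$ available indices in $\{2,\cdots,n\}$. Hence each $D_B(J_s)$ is counted $\binom{n-1-s}{k-s}$ times, and
\[
\dsum_{I_k\in \mathcal{I}_k}\dsum_{J_s\in \mathcal{J}_s(I_k)}D_B(J_s)=\binom{n-1-s}{k-s}\dsum_{J_s\in\mathcal{I}_s}D_B(J_s).
\]

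Next I would evaluate $\dsum_{J_s\in\mathcal{I}_s}D_B(J_s)$. Since $b_{pq}=2\la_1 c_{pq}$ with $c_{pq}$ the entries from Lemma \ref{kulem1.2}, each $s\times s$ principal minor satisfies $D_B(J_s)=(2\la_1)^{s-1}\,D_s(j_2\cdots j_s)=(2\la_1)^{s-1}\sigma_{n-2}^{s-1}(\la|1)\sigma_{n-(s+1)}(\la|1j_2\cdots j_s)$ by \eqref{ku1.1}. Wait — I must be careful with the exponent: \eqref{ku1.1} gives $D_m=\sigma_{n-2}^{m-1}(\la|1)\sigma_{n-(m+2)}(\la|1i_1\cdots i_m)$ for an $m\times m$ minor, so here with $m=s$ we get $\sigma_{n-2}^{s-1}(\la|1)\sigma_{n-(s+2)}(\la|1j_2\cdots j_s)$; combined with the scaling $(2\la_1)^{s-1}$ this matches the claimed factors up to the difference between the stated $\sigma_{n-2}^{s-2}$ and $\sigma_{n-2}^{s-1}$, which I would reconcile by tracking the factor arising from summation (see below). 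Then I would use the summation identity $\dsum_{j_2<\cdots<j_s,\,j_l\in\{2,\cdots,n\}}\sigma_{n-(s+2)}(\la|1j_2\cdots j_s)$, which by repeated application of property (iv) of Section \ref{pre} (the Euler-type relation $\sum_a \kappa_a\sigma_{m-1}(\kappa|a)=m\sigma_m$) or rather its complementary version counting terms of $\sigma_{n-1}(\la|1)$ evaluates to a multiple of $\sigma_{n-(s+1)}(\la|1)$ — more precisely, summing $\sigma_{n-(s+2)}(\la|1 j_2\cdots j_s)$ over all ordered $(s-1)$-tuples from the $n-1$ indices $\ne 1$ produces $\binom{?}{?}\sigma_{n-(s+1)}(\la|1)$ by the elementary fact that each monomial in $\sigma_{n-(s+1)}(\la|1)$ omits $s$ of the $n-1$ indices and is thus reachable in a controlled number of ways.

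The cleanest route is to collapse the two combinatorial factors together: each term of $\sigma_{n-(s+1)}(\la|1)$ is a product of $n-s-1$ of the variables $\la_2,\cdots,\la_n$, hence omits $s$ of these $n-1$ variables; choosing which of those omitted indices serve as $j_2,\cdots,j_s$ can be done in exactly $s$ ways when we also account for the one further omitted index absorbed in the remaining structure of $D_B$, and then choosing the padding to build $I_k$ from $J_s$ contributes the $\binom{n-1-s}{k-s}$ factor. Multiplying out $\binom{n-1-s}{k-s}\cdot s \cdot (2\la_1)^{s-1}\sigma_{n-2}^{s-2}(\la|1)$ and simplifying $\binom{n-1-s}{k-s}=\frac{(n-1-s)!}{(k-s)!(n-1-k)!}$ — hmm, this does not immediately match $\frac{(n-s)!}{(k-s)!(n-k)!}$, so I expect the true bookkeeping has the padding indices drawn from $n-s$ rather than $n-1-s$ available slots (the index $1$ re-enters because $D_B$ lives on indices $2,\cdots,n$ but the symmetric functions $\sigma(\la|1\cdots)$ already exclude $1$). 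Reconciling these counts is exactly where the delicacy lies.

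The main obstacle I anticipate is precisely this double-counting ledger: correctly matching the product of binomial coefficients (one from embedding $J_s$ into $I_k$, one from the combinatorial sum of $\sigma_{n-(s+2)}(\la|1\cdots)$ over subsets) against the closed form $\tfrac{s(n-s)!}{(k-s)!(n-k)!}$, together with pinning down the correct power of $\sigma_{n-2}(\la|1)$. I would handle this by checking the two extreme cases $s=k$ (where $\mathcal{J}_k(I_k)=\{I_k\}$ and the outer sum is trivial, so the identity reduces to summing $D_B(I_k)$ over $\mathcal{I}_k$ via Lemma \ref{lemm4}) and $s=2$ (where $D_B(J_2)=4\la_1^2(\sigma_{n-3}(\la|1j_2)\sigma_{n-3}(\la|1j_3)-\sigma_{n-3}^2(\la|1j_2j_3))$ and the sum can be done by hand), and then run an induction on $k-s$, using the recursion $\sigma_m(\la|1 L)=\sigma_m(\la|1 L j)+\la_j\sigma_{m-1}(\la|1 L j)$ to peel off one index at a time. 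Once the counting is settled, assembling the stated formula is routine.
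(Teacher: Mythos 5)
Your overall strategy---evaluate each $D_B(J_s)$ by the closed formula of Lemma \ref{kulem1.2} (equivalently Lemma \ref{lemm4}) and then count multiplicities---is exactly the paper's, but the bookkeeping you leave ``to be reconciled'' is never reconciled, and as written it is wrong, all for one reason: the multi-index $J_s=(j_2,\cdots,j_s)$ has $s-1$ entries, not $s$. Consequently $D_B(J_s)$ is an $(s-1)\times(s-1)$ minor, so Lemma \ref{kulem1.2} must be applied with $m=s-1$, giving $D_B(J_s)=(2\la_1)^{s-1}\sigma_{n-2}^{s-2}(\la|1)\sigma_{n-(s+1)}(\la|1j_2\cdots j_s)$, which already carries the powers appearing in the statement. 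Your application with $m=s$ produces $\sigma_{n-2}^{s-1}(\la|1)\sigma_{n-(s+2)}(\la|1j_2\cdots j_s)$, and this cannot be repaired ``by tracking the factor arising from summation'': summing over $J_s$ only affects the numerical coefficient and converts $\sigma_{n-(s+1)}(\la|1j_2\cdots j_s)$ into a multiple of $\sigma_{n-(s+1)}(\la|1)$; it can never change the exponent of $\sigma_{n-2}(\la|1)$ or the degree of the second factor. The same off-by-one infects your multiplicity count: a fixed $J_s$ occupies $s-1$ of the $n-1$ admissible indices, so the number of $I_k\in\mathcal{I}_k$ containing it is $\binom{n-s}{k-s}=\frac{(n-s)!}{(k-s)!(n-k)!}$, not $\binom{n-1-s}{k-s}$; the mismatch you noticed has nothing to do with ``the index $1$ re-entering.''

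With these two corrections the argument closes exactly as in the paper: each monomial of $\sigma_{n-(s+1)}(\la|1)$ omits exactly $s$ of the indices $2,\cdots,n$, and $|J_s|$ must be an $(s-1)$-element subset of those omitted indices, so $\sum_{J_s\in\mathcal{I}_s}\sigma_{n-(s+1)}(\la|1j_2\cdots j_s)=s\,\sigma_{n-(s+1)}(\la|1)$; multiplying by the multiplicity $\binom{n-s}{k-s}$ yields the coefficient $\frac{s(n-s)!}{(k-s)!(n-k)!}$. (The paper performs the equivalent count in one step via the quotient $\frac{(n-s)C_{k-1}^{s-1}C_{n-1}^{k-1}}{C_{n-1}^{n-s-1}}$.) Your fallback plan of checking $s=k$ and $s=2$ and inducting on $k-s$ is unnecessary once the index sizes are counted correctly; in its present form, however, your draft does not establish the lemma.
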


\begin{proof} Applying Lemma \ref{lemm4}, we get
\begin{align*}
&\dsum_{I_k\in \mathcal{I}_k}\dsum_{J_s\in \mathcal{J}_s(I_k)}D_B(J_s)\\
=&\dsum_{I_k\in \mathcal{I}_k}\dsum_{J_s\in \mathcal{J}_s(I_k)}(2\la_1)^{s-1}\sigma_{n-2}^{s-2}(\la|1)\sigma_{n-(s+1)}(\la|1 j_2\cdots j_s)\\
=&\dfrac{(n-s)C_{k-1}^{s-1}C_{n-1}^{k-1}}{C_{n-1}^{n-(s+1)}}(2\la_1)^{s-1}\sigma_{n-2}^{s-2}(\la|1)\sigma_{n-(s+1)}(\la|1)\\
=&\dfrac{s(n-s)!}{(k-s)!(n-k)!}(2\la_1)^{s-1}\sigma_{n-2}^{s-2}(\la|1)\sigma_{n-(s+1)}(\la|1).
\end{align*}

\end{proof}

\par
Before we continue with the calculation of principal minors, we need the following Lemma.
\begin{lemm}\label{B}
For any ordered indices $2\leq i_2<i_3<\cdots<i_k\leq n$, we have
\begin{eqnarray}
&&\sigma_{n-1}(\la|1)\sigma_{n-k-1}(\la|1i_2\cdots
i_k)+\sigma_{n-k}(\la|1i_2\cdots
i_k)\dsum_{l=2}^k\sigma_{n-2}(\la|1i_l)\\
&=&\sigma_{n-2}(\la|1)\sigma_{n-k}(\la|1i_2\cdots
i_k).\nonumber
\end{eqnarray}
\end{lemm}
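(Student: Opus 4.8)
The plan is to prove the identity
\[
\sigma_{n-1}(\la|1)\sigma_{n-k-1}(\la|1i_2\cdots i_k)+\sigma_{n-k}(\la|1i_2\cdots i_k)\sum_{l=2}^k\sigma_{n-2}(\la|1i_l)=\sigma_{n-2}(\la|1)\sigma_{n-k}(\la|1i_2\cdots i_k)
\]
by repeated application of the basic recursion (iii) from Section \ref{pre}, namely $\sigma_m(\la)=\la_j\sigma_{m-1}(\la|j)+\sigma_m(\la|j)$, specialized to the various restricted symmetric functions appearing above. The key observation is that each of the three terms can be expanded in the variables $\la_{i_2},\dots,\la_{i_k}$ once we pass to the ``doubly restricted'' function $\sigma_\bullet(\la|1i_2\cdots i_k)$, which involves none of these variables. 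So I would treat $\la_{i_2},\dots,\la_{i_k}$ as the ``active'' variables and everything of the form $\sigma_\bullet(\la|1i_2\cdots i_k)$ as ``constants''.

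First I would rewrite $\sigma_{n-2}(\la|1)$ by successively peeling off $\la_{i_2},\dots,\la_{i_k}$ using (iii): one gets
\[
\sigma_{n-2}(\la|1)=\sum_{j=0}^{k-1} e_j(\la_{i_2},\dots,\la_{i_k})\,\sigma_{n-2-j}(\la|1i_2\cdots i_k),
\]
where $e_j$ denotes the $j$-th elementary symmetric polynomial in the $k-1$ active variables. Similarly $\sigma_{n-1}(\la|1)=\sum_{j=0}^{k-1} e_j(\la_{i_2},\dots,\la_{i_k})\,\sigma_{n-1-j}(\la|1i_2\cdots i_k)$, and for each fixed $l$, $\sigma_{n-2}(\la|1i_l)=\sum_{j=0}^{k-2} e_j(\la_{i_2},\dots,\widehat{\la_{i_l}},\dots,\la_{i_k})\,\sigma_{n-2-j}(\la|1i_2\cdots i_k)$. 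Then I would substitute these expansions into both sides and compare coefficients of each monomial basis element $e_\alpha$ in the active variables, using the standard identity $\sum_{l=2}^k e_j(\la_{i_2},\dots,\widehat{\la_{i_l}},\dots,\la_{i_k})=(k-1-j)\,e_j(\la_{i_2},\dots,\la_{i_k})+e_{j-1}(\la_{i_2},\dots,\la_{i_k})\cdot(\text{something})$ — more precisely the Pascal-type relation $e_j(\text{all but } i_l)=e_j(\text{all})-\la_{i_l}e_{j-1}(\text{all but }i_l)$ summed over $l$, together with $\sum_l \la_{i_l}e_{j-1}(\text{all but }i_l)=j\,e_j(\text{all})$. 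After collecting terms, the left-hand side should telescope to exactly $\sigma_{n-2}(\la|1)\sigma_{n-k}(\la|1i_2\cdots i_k)$.

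Alternatively, and perhaps more cleanly, I would argue by induction on $k$. For $k=1$ there is nothing in the sum and the identity reads $\sigma_{n-1}(\la|1)\sigma_{n-2}(\la|1)+\sigma_{n-1}(\la|1)\cdot 0=\sigma_{n-2}(\la|1)\sigma_{n-1}(\la|1)$, trivially true (one must set up the base case so the sum is empty). For the inductive step from $k-1$ to $k$, I would apply (iii) to split off the variable $\la_{i_k}$ in each of the factors $\sigma_{n-k-1}(\la|1i_2\cdots i_k)$, $\sigma_{n-k}(\la|1i_2\cdots i_k)$, and $\sigma_{n-2}(\la|1i_l)$ for $l<k$, reducing everything to the $(k-1)$-index identity applied to $(i_2,\dots,i_{k-1})$ plus correction terms proportional to $\la_{i_k}$, which should cancel using the $k=2$ case of the identity. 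The main obstacle I anticipate is purely bookkeeping: keeping track of which restricted symmetric functions are being differentiated/restricted and ensuring the correction terms from peeling off $\la_{i_k}$ combine correctly; there is no conceptual difficulty, only the risk of an index or sign slip. I would double-check the final formula in the special cases $k=1,2$ and, say, $n=3,k=2$ before declaring victory.
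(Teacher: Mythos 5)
Both of your routes work, and your second one is essentially the paper's own proof: the paper establishes the identity by exactly the peeling you describe, written as one explicit telescoping chain of equalities based on recursion (iii) together with the fact that the top-degree restricted polynomials are pure products, e.g. $\sigma_{n-1}(\la|1)=\la_{i_k}\sigma_{n-2}(\la|1i_k)$ and $\sigma_{n-2}(\la|1i_s)=\la_{i_k}\sigma_{n-3}(\la|1i_si_k)$ for $s<k$. One correction to your sketch of the inductive step: the terms proportional to $\la_{i_k}$ cancel not because of ``the $k=2$ case'' but because $\sigma_m(\la|1i_2\cdots i_k)=0$ whenever $m>n-k$; concretely, multiplying the $k$-index identity by $\la_{i_k}$ and using $\sigma_{n-k}(\la|1i_2\cdots i_{k-1})=\la_{i_k}\sigma_{n-k-1}(\la|1i_2\cdots i_k)+\sigma_{n-k}(\la|1i_2\cdots i_k)$, $\sigma_{n-k+1}(\la|1i_2\cdots i_{k-1})=\la_{i_k}\sigma_{n-k}(\la|1i_2\cdots i_k)$, and $\sigma_{n-1}(\la|1)=\la_{i_k}\sigma_{n-2}(\la|1i_k)$ turns it verbatim into the $(k-1)$-index identity, so the induction closes (start it at $k=2$, which is a one-line consequence of (iii)). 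Your first route is genuinely different from the paper's and is in fact shorter than your bookkeeping plan suggests: because $\sigma_m(\la|1i_2\cdots i_k)$ involves only $n-k$ variables, only the top terms of your expansions survive, namely $\sigma_{n-1}(\la|1)=e_{k-1}\,\sigma_{n-k}(\la|1i_2\cdots i_k)$, $\sigma_{n-2}(\la|1)=e_{k-2}\,\sigma_{n-k}(\la|1i_2\cdots i_k)+e_{k-1}\,\sigma_{n-k-1}(\la|1i_2\cdots i_k)$, and $\sigma_{n-2}(\la|1i_l)=e_{k-2}(\la_{i_2},\dots,\widehat{\la_{i_l}},\dots,\la_{i_k})\,\sigma_{n-k}(\la|1i_2\cdots i_k)$, where $e_j$ is taken in the variables $\la_{i_2},\dots,\la_{i_k}$; since $\sum_{l=2}^k e_{k-2}(\la_{i_2},\dots,\widehat{\la_{i_l}},\dots,\la_{i_k})=e_{k-2}(\la_{i_2},\dots,\la_{i_k})$, substituting these three lines proves the lemma at once, with no need for the general Pascal-type relation (which, as displayed in your first formula, is stated incorrectly; the correct statement is $\sum_l e_j(\mbox{all but }i_l)=(k-1-j)\,e_j$, and it does follow from the two identities you then quote). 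In short: the peeling argument is the paper's; the $e_j$-expansion buys a shorter, more transparent proof once the degree-vanishing $\sigma_m(\la|1i_2\cdots i_k)=0$ for $m>n-k$ is observed.
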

\par
\begin{proof} An inductive calculation shows,
\begin{align*}
&\sigma_{n-k}(\la|1i_2\cdots i_k)\dsum_{s=2}^{k}\sigma_{n-2}(\la|1i_s)+\sigma_{n-k-1}(\la|1i_2\cdots i_k)\sigma_{n-1}(\la|1)\\
=&\sigma_{n-k}(\la|1i_2\cdots i_k)\dsum_{s=2}^{k}\sigma_{n-2}(\la|1i_s)+\la_{i_k}\sigma_{n-k-1}(\la|1i_2\cdots i_k)\sigma_{n-2}(\la|1i_k)\\
=&\sigma_{n-k}(\la|1i_2\cdots
i_k)\dsum_{s=2}^{k}\sigma_{n-2}(\la|1i_s)+\sigma_{n-k}(\la|1i_2\cdots
i_{k-1})\sigma_{n-2}(\la|1i_k)\\
&-\sigma_{n-k}(\la|1i_2\cdots i_k)\sigma_{n-2}(\la|1i_k)\\
=&\sigma_{n-k}(\la|1i_2\cdots i_k)\dsum_{s=2}^{k-1}\sigma_{n-2}(\la|1i_s)+\sigma_{n-k}(\la|1i_2\cdots i_{k-1})\sigma_{n-2}(\la|1i_k)\\
=&\la_{i_k}\sigma_{n-k}(\la|1i_2\cdots i_k)\dsum_{s=2}^{k-1}\sigma_{n-3}(\la|1i_si_k)+\la_{i_{k-1}}\sigma_{n-k}(\la|1i_2\cdots i_{k-1})\sigma_{n-3}(\la|1i_{k-1}i_k)\\
=&\sigma_{n-k+1}(\la|1i_2\cdots i_{k-1})\dsum_{s=2}^{k-1}\sigma_{n-3}(\la|1i_si_k)+\sigma_{n-k+1}(\la|1i_2\cdots i_{k-2})\sigma_{n-3}(\la|1i_{k-1}i_k)\\
&-\sigma_{n-k+1}(\la|1i_2\cdots i_{k-1})\sigma_{n-3}(\la|1i_{k-1}i_k)
\end{align*}
\begin{align*}
=&\sigma_{n-k+1}(\la|1i_2\cdots i_{k-1})\dsum_{s=2}^{k-2}\sigma_{n-3}(\la|1i_si_k)+\sigma_{n-k+1}(\la|1i_2\cdots i_{k-2})\sigma_{n-3}(\la|1i_{k-1}i_k)\\
=&\cdots\\
=&\sigma_{n-2}(\la|1)\sigma_{n-k}(\la|1i_2\cdots i_k).
\end{align*}

\end{proof}

Now, let's get back to compute some more complicated principal minors.
\begin{lemm} \label{lemm7}For any multiple index $I_k=(i_2,i_3,\cdots,i_k)\in\mathcal{I}_k$, we have
\begin{align*}
&\dsum_{i_l\in |I_k|}D_{B,A}(I_k;i_l)\\
=&(2\la_1)^{k-2}\sigma_{n-2}^{k-2}(\la|1)\left[k(k-1)\sigma_{n-k}(\la|1i_2\cdots i_k) +\dsum_{i_s\in |I_k|}\sigma_{n-k}(\la|1i_2\cdots\hat{i}_s\cdots
i_k)\right].
\end{align*}
Here $\hat{i}_s$ means that the index $i_s$ does not appear.
\end{lemm}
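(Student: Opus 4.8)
The plan is to Laplace-expand each mixed minor $D_{B,A}(I_k;i_l)$ along its single ``$A$-row'' and then exploit two structural facts that are already in hand: the rank-two decomposition $a_{pq}=m_p+m_q$ of the matrix $A$ established in the proof of Lemma \ref{lem2}, where $m_j=\sigma_{n-2}(\la|1j)+\tfrac12\sigma_{n-2}(\la|1)$; and the uniformity contained in Lemma \ref{kulem1.2}, namely that every off-diagonal cofactor of a given principal submatrix of $(c_{pq})$ equals one and the same quantity, regardless of the off-diagonal position chosen.

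First I would fix $I_k$ and $i_l\in|I_k|$, write $B^{(I_k)}$ for the principal submatrix of $B$ on the index set $|I_k|$, and expand $D_{B,A}(I_k;i_l)$ along row $i_l$. Since only that row carries $A$-entries, every cofactor occurring is a cofactor of $B^{(I_k)}$, and because $a_{i_l i_j}=m_{i_l}+m_{i_j}$ the expansion splits into $m_{i_l}\sum_{i_j}\mathrm{Cof}_{i_l i_j}(B^{(I_k)})+\sum_{i_j}m_{i_j}\mathrm{Cof}_{i_l i_j}(B^{(I_k)})$. Using $B^{(I_k)}=2\la_1\,C^{(I_k)}$ and Lemma \ref{kulem1.2}, the diagonal cofactor at $i_l$ is $(2\la_1)^{k-2}\sigma_{n-2}^{k-3}(\la|1)\sigma_{n-k}(\la|1i_2\cdots\hat{i}_l\cdots i_k)$ and each off-diagonal cofactor (signs included) is $(2\la_1)^{k-2}\sigma_{n-2}^{k-3}(\la|1)\sigma_{n-k}(\la|1i_2\cdots i_k)$. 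Abbreviating $P=(2\la_1)^{k-2}\sigma_{n-2}^{k-3}(\la|1)$, $\sigma^*=\sigma_{n-k}(\la|1i_2\cdots i_k)$, $\sigma^*_{\hat l}=\sigma_{n-k}(\la|1i_2\cdots\hat{i}_l\cdots i_k)$, and $M=\sum_{i_l\in|I_k|}m_{i_l}$, substitution and simplification give $D_{B,A}(I_k;i_l)=P\bigl[2m_{i_l}\sigma^*_{\hat l}+\bigl((k-3)m_{i_l}+M\bigr)\sigma^*\bigr]$.

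Next I would sum over the $k-1$ choices of $i_l\in|I_k|$; since $\sum_{i_l}m_{i_l}=M$, the $\sigma^*$-terms collapse and one obtains $\sum_{i_l}D_{B,A}(I_k;i_l)=2P\bigl[\sum_{i_l}m_{i_l}\sigma^*_{\hat l}+(k-2)M\sigma^*\bigr]$. Because $P\,\sigma_{n-2}(\la|1)=(2\la_1)^{k-2}\sigma_{n-2}^{k-2}(\la|1)$, the lemma then reduces to the scalar identity
\[
2\Bigl[\sum_{i_l}m_{i_l}\sigma^*_{\hat l}+(k-2)M\sigma^*\Bigr]=\sigma_{n-2}(\la|1)\Bigl[k(k-1)\sigma^*+\sum_{i_s}\sigma^*_{\hat s}\Bigr].
\]
To prove this I would substitute $2m_{i_l}=2\sigma_{n-2}(\la|1i_l)+\sigma_{n-2}(\la|1)$, so the $\sigma_{n-2}(\la|1)\sum_{i_l}\sigma^*_{\hat l}$ contribution cancels the $\sum_{i_s}\sigma^*_{\hat s}$ term on the right; then apply property (iii) of Section \ref{pre} in the two forms $\sigma^*_{\hat l}=\la_{i_l}\sigma_{n-k-1}(\la|1i_2\cdots i_k)+\sigma^*$ and $\la_{i_l}\sigma_{n-2}(\la|1i_l)=\sigma_{n-1}(\la|1)$; and finally invoke Lemma \ref{B} to eliminate $\sigma_{n-1}(\la|1)\sigma_{n-k-1}(\la|1i_2\cdots i_k)$. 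The terms proportional to $\sum_{i_l}\sigma_{n-2}(\la|1i_l)$ then cancel, leaving exactly $k(k-1)\sigma_{n-2}(\la|1)\sigma^*$.

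I expect the only genuine difficulty to be bookkeeping rather than ideas: one must keep precise track of the power of $\sigma_{n-2}(\la|1)$ emitted by Lemma \ref{kulem1.2} under deletion of a row and a column, and then carry out the cancellations in the scalar identity without sign or index errors. Everything else is forced --- the rank-two structure of $A$ makes the row expansion collapse, and the uniformity of the off-diagonal cofactors of $C^{(I_k)}$ makes the resulting sum assemble into elementary symmetric functions.
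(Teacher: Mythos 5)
Your proposal is correct and follows essentially the same route as the paper: Laplace expansion of $D_{B,A}(I_k;i_l)$ along the single $A$-row, evaluation of the resulting cofactors of the $B$-block via Lemma \ref{kulem1.2}, then the identity $\sigma_{n-k}(\la|1i_2\cdots\hat{i}_l\cdots i_k)=\la_{i_l}\sigma_{n-k-1}(\la|1i_2\cdots i_k)+\sigma_{n-k}(\la|1i_2\cdots i_k)$ together with $\la_{i_l}\sigma_{n-2}(\la|1i_l)=\sigma_{n-1}(\la|1)$ and Lemma \ref{B} to assemble the sum. Your use of the rank-two representation $a_{pq}=m_p+m_q$ is only a notational repackaging of the paper's direct substitution of the entries of $A$, and your intermediate formula and final scalar identity check out.
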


\begin{proof} Given a index $i_l\in |I_k|$, for computing the ``mixed'' principal minor $D_{B,A}(I_k; i_l)$, we expand the determinant according to its $i_l$-th row,
\begin{eqnarray}\label{defi}
D_{B,A}(I_k;i_l)
&=&a_{i_li_l}M_l+\sum_{s\neq l,s=2}^k(-1)^{s-1+l-1}a_{i_li_s}M_s.
\end{eqnarray}
Here the minors $M_s$ for $s=2,3\cdots,k$ are defined by
\begin{align*}
M_s=&\det\left[\ju{ccccccc}{b_{i_2i_2}&b_{i_2i_3}&\cdots&b_{i_2i_{s-1}}&b_{i_2i_{s+1}}&\cdots&b_{i_2i_k}\\
b_{i_3i_2}&b_{i_3i_3}&\cdots&b_{i_3i_{s-1}}&b_{i_3i_{s+1}}&\cdots&b_{i_3i_k}\\
\vdots&\vdots&\ddots&\vdots&\vdots&\ddots&\vdots\\
b_{i_{l-1}i_2}&b_{i_{l-1}i_3}&\cdots&b_{i_{l-1}i_{s-1}}&b_{i_{l-1}i_{s+1}}&\cdots&b_{i_{l-1}i_k}\\
b_{i_{l+1}i_2}&b_{i_{l+1}i_3}&\cdots&b_{i_{l+1}i_{s-1}}&b_{i_{l+1}i_{s+1}}&\cdots&b_{i_{l+1}i_k}\\\vdots&\vdots&\ddots&\vdots&\vdots&\ddots&\vdots\\
b_{i_ki_2}&b_{i_ki_3}&\cdots&b_{i_ki_{s-1}}&b_{i_ki_{s+1}}&\cdots&b_{i_ki_k}}\right].
\end{align*}
By Lemma \ref{kulem1.2} we have,
\be\label{ku1.5}
M_l=(2\la_1)^{k-2}\sigma_{n-2}^{k-3}(\la|1)\sigma_{n-k}(\la|1i_2\cdots\hat{i}_l \cdots i_k);
\ee
and when $s\neq l$
\be\label{ku1.6}
M_s=(-1)^{l-1+s-1}(2\la_1)^{k-2}\sigma_{n-2}^{k-3}(\la|1)\sigma_{n-k}(\la|1i_2\cdots i_k).
\ee

Combing \eqref{ku1.5} and \eqref{ku1.6} with \eqref{defi} we obtain,
\begin{align}\label{5.3}
&\dsum_{i_l\in |I_k|}D_{B,A}(I_k;i_l)\\
=&(2\la_1)^{k-2}\sigma_{n-2}^{k-3}(\la|1)\dsum_{l=2}^k\left(a_{i_li_l}\sigma_{n-k}(\la|1i_2\cdots\hat{i}_l \cdots i_k)+\sigma_{n-k}(\la|1i_2\cdots i_k)\dsum_{s=2,s\neq
l}^ka_{i_li_s}\right).\nonumber
\end{align}
A straightforward calculation gives
\be\label{ku1.7}
\sigma_{n-k}(\la|1i_2\cdots\hat{i}_l \cdots
i_k)=\la_{i_l}\sigma_{n-k-1}(\la|1i_2\cdots
i_k)+\sigma_{n-k}(\la|1i_2\cdots i_k).
\ee
Using the definition of $a_{pq}$, we get
\begin{align}\label{5.4}
&a_{i_li_l}\sigma_{n-k}(\la|1i_2\cdots\hat{i}_l \cdots i_k)
+\sigma_{n-k}(\la|1i_2\cdots i_k)\dsum_{s=2,s\neq l}^ka_{i_li_s} \\
=&a_{i_li_l}\la_{i_l}\sigma_{n-k-1}(\la|1i_2\cdots i_k)+\sigma_{n-k}(\la|1i_2\cdots i_k)\dsum_{s=2}^ka_{i_li_s}\nonumber\\
=&\Big(2\sigma_{n-2}(\la |1i_l)+\sigma_{n-2}(\la |1)\Big)\la_{i_l}\sigma_{n-k-1}(\la |1i_2\cdots i_k)\nonumber\\
&+\sigma_{n-k}(\la|1i_2\cdots i_k)\sum_{s=2}^k\Big((\la_{i_l}+\la_{i_s})\sigma_{n-3}(\la|1i_li_s)+\sigma_{n-2}(\la|1)\Big)\nonumber\\
=&\Big(2\sigma_{n-1}(\la|1)+\la_{i_l}\sigma_{n-2}(\la|1)\Big)\sigma_{n-k-1}(\la|1i_2\cdots i_k)\nonumber\\
&+(k-1)\sigma_{n-2}(\la|1)\sigma_{n-k}(\la|1i_2\cdots i_k)\nonumber\\
&+\sigma_{n-k}(\la|1i_2\cdots
i_k)\dsum_{s=2}^k\Big[\sigma_{n-2}(\la|1i_s)+\sigma_{n-2}(\la|1i_l)\Big].\nonumber
\end{align}
Since
\begin{eqnarray}\label{5.5}
\dsum_{l=2}^k\dsum_{s=2}^k\Big[\sigma_{n-2}(\la|1i_s)+\sigma_{n-2}(\la|1i_l)\Big]=2(k-1)\dsum_{s=2}^k\sigma_{n-2}(\la|1i_s),
\end{eqnarray}
equations \eqref{5.3}, \eqref{5.4}, and \eqref{5.5} yield
\begin{align}
&\dsum_{i_l\in |I_k|}D_{B,A}(I_k;i_l)\\
=&(2\la_1)^{k-2}\sigma_{n-2}^{k-3}(\la|1)\left\{(k-1)^2\sigma_{n-2}(\la|1)\sigma_{n-k}(\la|1i_2\cdots i_k)\right. \nonumber\\
&+\left[2(k-1)\sigma_{n-1}(\la|1)+\sigma_{n-2}(\la|1)\dsum_{l=2}^k\la_{i_l}\right]\sigma_{n-k-1}(\la|1i_2\cdots i_k)\nonumber\\
&\left.+2(k-1)\sigma_{n-k}(\la|1i_2\cdots
i_k)\dsum_{s=2}^k\sigma_{n-2}(\la|1i_s)\right\}.\nonumber
\end{align}
Using Lemma \ref{B} we obtain,
\begin{align*}
&\dsum_{i_l\in |I_k|}D_{B,A}(I_k;i_l)\\
=&(2\la_{1})^{k-2}\sigma_{n-2}^{k-3}(\la|1)\Big\{(k-1)(k+1)\sigma_{n-2}(\la|1)\sigma_{n-k}(\la|1i_2\cdots i_k) \\
&\left.+\sigma_{n-2}(\la|1)\dsum_{l=2}^k\la_{i_l}\sigma_{n-k-1}(\la|1i_2\cdots i_k)\right\}\\
=&(2\la_1)^{k-2}\sigma_{n-2}^{k-2}(\la|1)\left[k(k-1)\sigma_{n-k}(\la|1i_2\cdots i_k)+\dsum_{s=2}^k\sigma_{n-k}(\la|1i_2\cdots\hat{i}_s\cdots i_k)\right].
\end{align*}
\end{proof}

\begin{lemm}\label{lemm8}
For any integer  $2\leq s\leq k$, we have
\begin{align*}
&\sum_{I_k\in \mathcal{I}_k}\dsum_{J_s\in
\mathcal{J}_s(I_k)}\sum_{l=2}^sD_{B,A}(J_s;j_l)\\
=&\frac{(n+1)(s-1)(n-s)!}{(k-s)!(n-k)!}(2\la_1)^{s-2}\sigma_{n-2}^{s-2}(\la|1)\sigma_{n-s}(\la|1).
\end{align*}

\end{lemm}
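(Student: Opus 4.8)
The plan is to reduce everything to Lemma \ref{lemm7} and then perform two combinatorial summations: first over $J_s\in\mathcal{J}_s(I_k)$ and then over $I_k\in\mathcal{I}_k$. Applying Lemma \ref{lemm7} with $I_k$ replaced by the shorter multi-index $J_s$ (so that $k$ becomes $s$) gives
\[\dsum_{l=2}^s D_{B,A}(J_s;j_l)=(2\la_1)^{s-2}\sigma_{n-2}^{s-2}(\la|1)\left[s(s-1)\sigma_{n-s}(\la|1j_2\cdots j_s)+\dsum_{j_t\in |J_s|}\sigma_{n-s}(\la|1j_2\cdots\hat{j}_t\cdots j_s)\right].\]
So it suffices to sum the bracket over all $J_s\in\mathcal{I}_s$ and to keep track of the multiplicity with which each $J_s$ arises in the double sum.

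For the multiplicity, note that in $\dsum_{I_k\in\mathcal{I}_k}\dsum_{J_s\in\mathcal{J}_s(I_k)}$ a fixed $J_s$ --- that is, a fixed $(s-1)$-element subset of $\{2,\ldots,n\}$ --- is counted once for every $(k-1)$-element subset $|I_k|$ of $\{2,\ldots,n\}$ that contains $|J_s|$, and there are exactly $\binom{n-s}{k-s}$ such subsets. Hence the left-hand side equals $\binom{n-s}{k-s}$ times $\dsum_{J_s\in\mathcal{I}_s}\dsum_{l=2}^s D_{B,A}(J_s;j_l)$, and the problem is reduced to evaluating $\dsum_{J_s\in\mathcal{I}_s}$ of the bracket above.

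The two sums inside the bracket are handled by the elementary identity that, for $N$ variables, $\dsum_{|K|=j}\sigma_m(\text{variables outside }K)=\binom{N-m}{j}\sigma_m(\text{all }N\text{ variables})$, since a given degree-$m$ monomial survives precisely when $K$ avoids its $m$ indices. With the $n-1$ variables $\{\la_i:i\neq 1\}$ and $m=n-s$: the first term contributes $\dsum_{J_s\in\mathcal{I}_s}\sigma_{n-s}(\la|1j_2\cdots j_s)=\binom{s-1}{s-1}\sigma_{n-s}(\la|1)=\sigma_{n-s}(\la|1)$; for the second term I reindex by $K=|J_s|\setminus\{j_t\}$ (an $(s-2)$-subset) together with the choice of $j_t$ among the $n-s+1$ indices outside $K\cup\{1\}$, giving $(n-s+1)\binom{s-1}{s-2}\sigma_{n-s}(\la|1)=(n-s+1)(s-1)\sigma_{n-s}(\la|1)$. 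Adding $s(s-1)$ times the first term to the second yields $(s-1)\bigl(s+(n-s+1)\bigr)\sigma_{n-s}(\la|1)=(n+1)(s-1)\sigma_{n-s}(\la|1)$; multiplying by $(2\la_1)^{s-2}\sigma_{n-2}^{s-2}(\la|1)$ and then by $\binom{n-s}{k-s}=\frac{(n-s)!}{(k-s)!(n-k)!}$ gives exactly the asserted formula.

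The main difficulty is purely one of bookkeeping: one must keep careful track of which variables have been deleted from each elementary symmetric polynomial, so that the degrees match and the binomial coefficients $\binom{N-m}{j}$ come out right, and one must correctly count the multiplicity of each $J_s$ in the nested sum over $\mathcal{J}_s(I_k)$ and $\mathcal{I}_k$. It is also worth checking the small case $s=2$, where the relevant minor degenerates to $a_{j_2j_2}=2\sigma_{n-2}(\la|1j_2)+\sigma_{n-2}(\la|1)$ and the $B$-block is empty, to confirm that the normalization --- in particular the exponent $(2\la_1)^{s-2}$ --- is correct.
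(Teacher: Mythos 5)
Your proposal is correct and follows essentially the same route as the paper: apply Lemma \ref{lemm7} with the multi-index $J_s$ in place of $I_k$, and then carry out the combinatorial summation over $J_s\subset I_k$ and $I_k\in\mathcal{I}_k$. The paper simply states the resulting count $\frac{(n+1)(s-1)(n-s)!}{(k-s)!(n-k)!}$ without detail, whereas you supply the bookkeeping (the multiplicity $\binom{n-s}{k-s}$ and the subset-sum identity), and your count agrees with the paper's.
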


\begin{proof}
By Lemma \ref{lemm7}, we have

\begin{align*}
&\dsum_{I_k\in \mathcal{I}_k}\sum_{J_s\in
\mathcal{J}_s(I_k)}\sum_{l=2}^sD_{B,A}(J_s;j_l)\\
=&\dsum_{I_k\in \mathcal{I}_k}\dsum_{J_s\in\mathcal{J}_s
(I_k)}(2\la_1)^{s-2}\sigma_{n-2}^{s-2}(\la|1)\\
&\times \left[s(s-1)\sigma_{n-s}(\la|1j_2\cdots j_s)+\dsum_{l=2}^s\sigma_{n-s}(\la|1j_2\cdots\hat{j}_l\cdots
j_s)\right]\\
=&\frac{(n+1)(s-1)(n-s)!}{(k-s)!(n-k)!}(2\la_1)^{s-2}\sigma_{n-2}^{s-2}(\la|1)\sigma_{n-s}(\la|1).
\end{align*}

\end{proof}

In Lemma \ref{lemm9} and \ref{lemm10}, we are going to compute the ``mixed'' principal minors of type $D_{B, A}(I_k; i_pi_q).$
\begin{lemm}\label{lemm9}
For the multiple index $I_k=(i_2,i_3,\cdots,i_k)\in \mathcal{I}_k$ and $k\geq 3$, we have
\begin{align*}
&D_{B,A}(I_k;i_{p}i_q)\\
=&-(2\la_1)^{k-3}\sigma_{n-2}^{k-3}(\la|1)\left(\la_{i_p}-\la_{i_{q}}\right)^2\sigma_{n-3}(\la|1i_{p}i_q)\sigma_{n-k}(\la|1i_2\cdots
i_k),\nonumber
\end{align*}
where $i_p,i_q\in |I_k|$ and $i_p<i_q$.
\end{lemm}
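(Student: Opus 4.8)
The plan is to collapse the $(k-1)\times(k-1)$ determinant $D_{B,A}(I_k;i_pi_q)$ --- whose $i_p$-th and $i_q$-th rows come from $A$ and whose other $k-3$ rows come from $B$ --- down to an explicit $2\times2$ determinant through a chain of elementary row and column operations, the engine being the rank-two structure of $A$ established in Lemma~\ref{lem2}. Writing $m_j=\sigma_{n-2}(\la|1j)+\sigma_{n-2}(\la|1)/2$ so that $a_{pq}=m_p+m_q$, the $i_p$-th and $i_q$-th rows of the determinant differ only by the vector $(m_{i_p}-m_{i_q})(1,\dots,1)$.

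First I would replace the $i_p$-th row by itself minus the $i_q$-th row; this leaves the determinant unchanged and turns that row into $(m_{i_p}-m_{i_q})(1,\dots,1)$. Since $\sigma_{n-2}(\la|1i_p)-\sigma_{n-2}(\la|1i_q)=(\la_{i_q}-\la_{i_p})\sigma_{n-3}(\la|1i_pi_q)$ --- which follows at once from $\sigma_{n-2}(\la|1i_pi_q)=0$ --- the scalar $m_{i_p}-m_{i_q}=(\la_{i_q}-\la_{i_p})\sigma_{n-3}(\la|1i_pi_q)$ can be pulled out. Now the $i_p$-th row equals $(1,\dots,1)$, so I subtract the appropriate multiple of it from the $i_q$-th row to cancel the constant part, leaving the $i_q$-th row equal to $(\sigma_{n-2}(\la|1i_s))_s=\sigma_{n-1}(\la|1)(1/\la_{i_s})_s$. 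Pulling $\sigma_{n-1}(\la|1)$ out of that row, and pulling $2\la_1$ out of each of the $k-3$ rows coming from $B$ so that they become rows of the matrix $(c_{pq})$ of Lemma~\ref{kulem1.2}, I obtain $D_{B,A}(I_k;i_pi_q)=(\la_{i_q}-\la_{i_p})\sigma_{n-3}(\la|1i_pi_q)(2\la_1)^{k-3}\sigma_{n-1}(\la|1)\det N$, where $N$ (indexed by $|I_k|$) has $i_p$-th row $(1,\dots,1)$, $i_q$-th row $(1/\la_{i_s})_s$, and every other row equal to the corresponding row of $(c_{pq})$.

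Next I would scale the $i_s$-th column of $N$ by $\la_{i_s}$ for every $s$, picking up a factor $\prod_s\la_{i_s}$. The $i_q$-th row becomes $(1,\dots,1)$, the $i_p$-th row becomes $(\la_{i_s})_s$, and for $l\neq p,q$ the identities $\la_{i_s}\sigma_{n-3}(\la|1i_li_s)=\sigma_{n-2}(\la|1i_l)$ (for $s\neq l$) and $\la_{i_l}\sigma_{n-3}(\la|1i_l)=\sigma_{n-2}(\la|1)-\sigma_{n-2}(\la|1i_l)$ turn the $i_l$-th row into $\sigma_{n-2}(\la|1)e_{i_l}-\sigma_{n-2}(\la|1i_l)(1,\dots,1)$. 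Adding $\sigma_{n-2}(\la|1i_l)$ times the (now all-ones) $i_q$-th row back in leaves only the single diagonal entry $\sigma_{n-2}(\la|1)$ in each of those $k-3$ rows, so a Laplace expansion along them pulls out $\sigma_{n-2}^{k-3}(\la|1)$ --- with no sign, since each step deletes a row and a column carrying the same index --- and reduces everything to the $2\times2$ determinant whose $i_p$-th row is $(\la_{i_p},\la_{i_q})$ and whose $i_q$-th row is $(1,1)$, hence equal to $\la_{i_p}-\la_{i_q}$. Collecting the factors and using $\sigma_{n-1}(\la|1)/\prod_s\la_{i_s}=\sigma_{n-k}(\la|1i_2\cdots i_k)$ gives precisely the claimed identity; the case $k=3$, where there are no $B$-rows, is just the degenerate endpoint of this computation.

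The one subtlety is the division by the $\la_{i_s}$ in the column scaling, which is harmless: both sides of the identity are polynomials in $\la$, so it suffices to prove it under the extra hypothesis that $\la_{i_s}\neq0$ for all $s$ and then pass to the general case by density. Accordingly I expect the real work to be purely organizational --- keeping precise track of every scalar factor extracted and checking that the iterated cofactor expansions introduce no signs --- rather than anything conceptual.
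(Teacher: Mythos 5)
Your argument is correct, but it follows a genuinely different route from the paper's. The paper proves Lemma \ref{lemm9} by a double cofactor expansion along the two rows coming from $A$: it evaluates every complementary minor $\det M_{st}$ of the $B$-block via Lemma \ref{kulem1.2}, checks case by case that $\det M_{st}=0$ unless $\{s,t\}$ meets $\{p,q\}$ (two rows of the $c$-matrix become proportional), computes the surviving combinations $a_{i_pi_s}a_{i_qi_t}-a_{i_qi_s}a_{i_pi_t}$ from the explicit form of $a_{pq}$, and finally invokes Lemma \ref{B} to resum everything into $\sigma_{n-2}(\la|1)\sigma_{n-k}(\la|1i_2\cdots i_k)$. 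You instead exploit the rank-two structure $a_{pq}=m_p+m_q$ of Lemma \ref{lem2} through elementary row and column operations that collapse the whole determinant to a $2\times2$ block; the only inputs are expansions of the type $\sigma_{n-2}(\la|1i_p)=\la_{i_q}\sigma_{n-3}(\la|1i_pi_q)$ (valid because $\sigma_{n-2}(\la|1i_pi_q)=0$), the fact that top-degree elementary symmetric functions are products (giving $\sigma_{n-2}(\la|1i_s)=\sigma_{n-1}(\la|1)/\la_{i_s}$ and $\sigma_{n-1}(\la|1)/\prod_s\la_{i_s}=\sigma_{n-k}(\la|1i_2\cdots i_k)$), and the polynomial-identity/density remark to justify dividing by $\la_{i_s}$. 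I verified your bookkeeping: the extracted scalars, the sign-free iterated expansion along rows with a single diagonal entry (the surviving row and column index sets coincide at every step), the final factor $(\la_{i_q}-\la_{i_p})(\la_{i_p}-\la_{i_q})=-(\la_{i_p}-\la_{i_q})^2$, and the degenerate case $k=3$ all check out against the stated formula. Your route buys independence from Lemma \ref{kulem1.2} and Lemma \ref{B} and far less sign-chasing; the paper's route buys uniformity, since the same two lemmas are the workhorses of Lemmas \ref{lemm4}--\ref{lemm8} and \ref{lemm10}, so its proof of Lemma \ref{lemm9} reuses machinery already in place.
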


\begin{proof}
For any $2\leq s<t\leq k$, let's denote
\begin{align*}
&\det M_{st}=\\
&\det\left[\ju{cccccccccc}{b_{i_2i_2}&b_{i_2i_3}&\cdots&b_{i_2i_{s-1}}&b_{i_2i_{s+1}}&\cdots&b_{i_2i_{t-1}}&b_{i_2i_{t+1}}&\cdots&b_{i_2i_k}\\
b_{i_3i_2}&b_{i_3i_3}&\cdots&b_{i_3i_{s-1}}&b_{i_3i_{s+1}}&\cdots&b_{i_3i_{t-1}}&b_{i_3i_{t+1}}&\cdots&b_{i_3i_k}\\
\vdots&\vdots&\ddots&\vdots&\vdots&\ddots&\vdots&\vdots&\ddots&\vdots\\
b_{i_{p-1}i_2}&b_{i_{p-1}i_3}&\cdots&b_{i_{p-1}i_{s-1}}&b_{i_{p-1}i_{s+1}}&\cdots&b_{i_{p-1}i_{t-1}}&b_{i_{p-1}i_{t+1}}&\cdots&b_{i_{p-1}i_k}\\
b_{i_{p+1}i_2}&b_{i_{p+1}i_3}&\cdots&b_{i_{p+1}i_{s-1}}&b_{i_{p+1}i_{s+1}}&\cdots&b_{i_{p+1}i_{t-1}}&b_{i_{p+1}i_{t+1}}&\cdots&b_{i_{p+1}i_k}\\
\vdots&\vdots&\ddots&\vdots&\vdots&\ddots&\vdots&\vdots&\ddots&\vdots\\
b_{i_{q-1}i_2}&b_{i_{q-1}i_3}&\cdots&b_{i_{q-1}i_{s-1}}&b_{i_{q-1}i_{s+1}}&\cdots&b_{i_{q-1}i_{t-1}}&b_{i_{q-1}i_{t+1}}&\cdots&b_{i_{q-1}i_k}\\
b_{i_{q+1}i_2}&b_{i_{q+1}i_3}&\cdots&b_{i_{q+1}i_{s-1}}&b_{i_{q+1}i_{s+1}}&\cdots&b_{i_{q+1}i_{t-1}}&b_{i_{q+1}i_{t+1}}&\cdots&b_{i_{q+1}i_k}\\
\vdots&\vdots&\ddots&\vdots&\vdots&\ddots&\vdots&\vdots&\ddots&\vdots\\
b_{i_ki_2}&b_{i_ki_3}&\cdots&b_{i_ki_{s-1}}&b_{i_ki_{s+1}}&\cdots&b_{i_ki_{t-1}}&b_{i_ki_{t+1}}&\cdots&b_{i_ki_k}}\right].
\end{align*}
Then we have
\begin{align*}
&D_{B,A}(I_k;i_pi_q)\\
=&\sum_{s=2}^{k}(-1)^{q-1+s-1}a_{i_qi_s}\left(\sum_{t< s,2\leq t\leq k}(-1)^{p-1+t-1}a_{i_pi_t}\det M_{ts}\right.\\
&+\left.\sum_{t>s,2\leq t\leq k}(-1)^{p-1+t-2}a_{i_pi_t}\det M_{st}\right).
\end{align*}
Therefore, in order to calculate $D_{B,A}(I_k;i_pi_q),$ we need to figure out the value of $\det M_{st}$ first.

In the following, we will calculate $\det M_{st}$ for different values of $s,t$.

\noindent (1) If $t<p$, we can see that the $(s-1)$-th row and $(t-1)$-th row of $M_{st}$ are
\be\label{5.8}
\begin{aligned}
&(b_{i_si_2},b_{i_si_3},\cdots,b_{i_si_{s-1}},b_{i_si_{s+1}},\cdots,b_{i_si_{t-1}},b_{i_si_{t+1}},\cdots,b_{i_si_k})\\
=&-2\la_1\Big(\sigma_{n-3}(\la|1i_si_2),\sigma_{n-3}(\la|1i_si_3),\cdots,\sigma_{n-3}(\la |1i_s,i_{s-1}),\sigma_{n-3}(\la |1i_si_{s+1}),\cdots,\\
&\sigma_{n-3}(\la|1i_si_{t-1}),\sigma_{n-3}(\la|1i_si_{t+1}),\cdots,\sigma_{n-3}(\la |1i_si_k)\Big),\\
\end{aligned}
\ee
and
\be\label{5.9}
\begin{aligned}
&(b_{i_ti_2},b_{i_ti_3},\cdots,b_{i_ti_{s-1}},b_{i_ti_{s+1}},\cdots,b_{i_ti_{t-1}},b_{i_ti_{t+1}},\cdots,b_{i_ti_k})\\
=&-2\la_1\Big(\sigma_{n-3}(\la|1i_ti_2),\sigma_{n-3}(\la|1i_ti_3),\cdots,\sigma_{n-3}(\la |1i_ti_{s-1}),\sigma_{n-3}(\la |1i_ti_{s+1}),\cdots,\\
&\sigma_{n-3}(\la|1i_ti_{t-1}),\sigma_{n-3}(\la|1i_ti_{t+1}),\cdots,\sigma_{n-3}(\la |1i_ti_k)\Big).
\end{aligned}
\ee
We note that the vector in \eqref{5.8} multiplying by $\la_s$ is equal to the vector in \eqref{5.9} multiplying by $\la_t$.
Thus, the $(s-1)$-th row and the $(t-1)$-th row of $M_{st}$ are linearly dependent, which implies $\det M_{st}=0$.

\noindent (2) If $t=p$, by Lemma \ref{kulem1.2} we have
$$
\det M_{st}=(-1)^{q-2+s-1}(2\la_1)^{k-3}\sigma_{n-2}^{k-4}(\la|1)\sigma_{n-k+1}(\la|1i_2\cdots \hat{i}_p \cdots i_k)\nonumber.
$$

\noindent (3) If $p<t<q$ and $s\neq p$, similar to the case (1), we have $\det M_{st}=0$.

\noindent (4) If $p<t<q$ and $s=p$, similar to case (2), we have
$$
\det M_{st}=(-1)^{q-2+t-2}(2\la_1)^{k-3}\sigma_{n-2}^{k-4}(\la|1)\sigma_{n-k+1}(\la|1i_2\cdots \hat{i}_p \cdots i_k)\nonumber.
$$

\noindent (5) If $t=q$ and $s\neq p$, by Lemma \ref{kulem1.2}, we have
$$
\det M_{st}=(-1)^{p-1+s-1}(2\la_1)^{k-3}\sigma_{n-2}^{k-4}(\la|1)\sigma_{n-k+1}(\la|1i_2\cdots \hat{i}_q \cdots i_k)\nonumber.
$$

\noindent (6) If $t=q$ and $s=p$, by Lemma \ref{kulem1.2}, we have
$$
\det M_{st}=(2\la_1)^{k-3}\sigma_{n-2}^{k-4}(\la|1)\sigma_{n-k+1}(\la|1i_2\cdots\hat{i}_p\cdots \hat{i}_q \cdots i_k),
$$

\noindent (7) If $t>q$ and $s\neq p$ or $q$, similar to the case (1), we have $\det M_{st}=0$.

\noindent (8) If $t>q$ and $s=p$, similar to the case (2), we have
$$
\det M_{st}=(-1)^{q-2+t-2}(2\la_1)^{k-3}\sigma_{n-2}^{k-4}(\la|1)\sigma_{n-k+1}(\la|1i_2\cdots \hat{i}_p \cdots i_k)\nonumber.
$$

\noindent (9) If $t>q$ and $s=q$, similar to the case (5), we have
$$
\det M_{st}=(-1)^{p-1+t-2}(2\la_1)^{k-3}\sigma_{n-2}^{k-4}(\la|1)\sigma_{n-k+1}(\la|1i_2\cdots \hat{i}_q \cdots i_k)\nonumber.
$$

In view of the above calculation, if $\{s,t\}\cap\{i_p, i_q\}=\emptyset,$ we have $\det M_{st}=0$. Therefore, the expansion of $D_{B,A}(I_k; i_pi_q)$ becomes

\begin{eqnarray}\label{5.10}
\\
&&D_{B,A}(I_k;i_pi_q)\nonumber\\
&=&\sum_{s=2}^{k}(-1)^{q-1+s-1}a_{i_qi_s}\left(\sum_{t< s,2\leq t\leq k}(-1)^{p-1+t-1}a_{i_pi_t}\det M_{ts}\right.\nonumber\\
&&\ \ \ \ \ \ \ \ \ \ \ \ \ \ \ \ \ \ \ \ \ \ \ \ \ \ +\left.\sum_{t>s,2\leq t\leq k}(-1)^{p-1+t-2}a_{i_pi_t}\det M_{st}\right)\nonumber\\
&=&(-1)^{p+q}\sum_{s=2}^{k}\left(\sum_{t< s}(-1)^{s+t}a_{i_qi_s}a_{i_pi_t}M_{ts}-\sum_{t>s}(-1)^{s+t}a_{i_qi_s}a_{i_pi_t}\det M_{st}\right)\nonumber\\
&=&(-1)^{p+q}\sum_{s< t, 2\leq s,t\leq k}(-1)^{s+t}\left(a_{i_pi_s}a_{i_qi_t}-a_{i_qi_s}a_{i_pi_t}\right)\det M_{st}\nonumber\\
&=&-\sum_{s< t=p}\left(a_{i_pi_s}a_{i_qi_p}-a_{i_qi_s}a_{i_pi_p}\right)(2\la_1)^{k-3}\sigma_{n-2}^{k-4}(\la|1)\sigma_{n-k+1}(\la|1i_2\cdots \hat{i}_p\cdots i_k)\nonumber\\
&&+\sum_{s\neq p,s<t=q}\left(a_{i_pi_s}a_{i_qi_q}-a_{i_qi_s}a_{i_pi_q}\right)(2\la_1)^{k-3}\sigma_{n-2}^{k-4}(\la|1)\sigma_{n-k+1}(\la|1i_2\cdots \hat{i}_q \cdots i_k)\nonumber\\
&&+\sum_{s=p,t=q}\left(a_{i_pi_p}a_{i_qi_q}-a_{i_qi_p}a_{i_pi_q}\right)(2\la_1)^{k-3}\sigma_{n-2}^{k-4}(\la|1)\sigma_{n-k+1}(\la|1i_2\cdots\hat{i}_p\cdots \hat{i}_q \cdots i_k)\nonumber\\
&&+\sum_{s=p< t,t\neq q}\left(a_{i_pi_p}a_{i_qi_t}-a_{i_qi_p}a_{i_pi_t}\right)(2\la_1)^{k-3}\sigma_{n-2}^{k-4}(\la|1)\sigma_{n-k+1}(\la|1i_2\cdots \hat{i}_p \cdots i_k)\nonumber\\
&&-\sum_{s=q,t>q}\left(a_{i_pi_q}a_{i_qi_t}-a_{i_qi_q}a_{i_pi_t}\right)(2\la_1)^{k-3}\sigma_{n-2}^{k-4}(\la|1)\sigma_{n-k+1}(\la|1i_2\cdots \hat{i}_q \cdots i_k)\nonumber\\
&=&(2\la_1)^{k-3}\sigma_{n-2}^{k-4}(\la|1)\Big\{\sum_{s\neq p,q}\left(a_{i_qi_s}a_{i_pi_p}-a_{i_pi_s}a_{i_qi_p}\right)\sigma_{n-k+1}(\la|1i_2\cdots \hat{i}_p\cdots i_k)\nonumber\\
&&+\sum_{s\neq p,q}\left(a_{i_pi_s}a_{i_qi_q}-a_{i_qi_s}a_{i_pi_q}\right)\sigma_{n-k+1}(\la|1i_2\cdots \hat{i}_q \cdots i_k)\nonumber\\
&&+\left(a_{i_pi_p}a_{i_qi_q}-a_{i_qi_p}a_{i_pi_q}\right)\sigma_{n-k+1}(\la|1i_2\cdots\hat{i}_p\cdots \hat{i}_q \cdots i_k)\Big\}\nonumber.
\end{eqnarray}
Using the definition of $a_{pq}$, for $s\neq p,q$, we have
\be\label{ku1.8}
\begin{aligned}
&a_{i_qi_s}a_{i_pi_p}-a_{i_pi_s}a_{i_qi_p}\\
=&\Big(\sigma_{n-2}(\la|1i_q)+\sigma_{n-2}(\la|1i_s)+\sigma_{n-2}(\la|1)\Big)\Big(2\sigma_{n-2}(\la|1i_p)+\sigma_{n-2}(\la|1)\Big)\\
&-\Big(\sigma_{n-2}(\la|1i_p)+\sigma_{n-2}(\la|1i_s)+\sigma_{n-2}(\la|1)\Big)\\
&\times\Big(\sigma_{n-2}(\la|1i_p)+\sigma_{n-2}(\la|1i_q)+\sigma_{n-2}(\la|1)\Big)\\
=&\Big[\sigma_{n-2}(\la|1i_q)-\sigma_{n-2}(\la|1i_p)\Big]\Big[\sigma_{n-2}(\la|1i_p)-\sigma_{n-2}(\la|1i_s)\Big].
\end{aligned}
\ee
Similarly we can compute,
\be\label{ku1.9}
\begin{aligned}
&a_{i_pi_s}a_{i_qi_q}-a_{i_qi_s}a_{i_pi_q}\\
=&\Big[\sigma_{n-2}(\la|1i_p)-\sigma_{n-2}(\la|1i_q)\Big]\Big[\sigma_{n-2}(\la|1i_q)-\sigma_{n-2}(\la|1i_s)\Big].
\end{aligned}
\ee
Combining equation \eqref{ku1.8} and \eqref{ku1.9} we have,
\begin{eqnarray}\label{5.11}
&&\la_{i_p}\Big[a_{i_qi_s}a_{i_pi_p}-a_{i_pi_s}a_{i_qi_p}\Big]+\la_{i_q}\Big[a_{i_pi_s}a_{i_qi_q}-a_{i_qi_s}a_{i_pi_q}\Big]\\
&=&\sigma_{n-2}(\la|1i_s)(\la_{i_q}-\la_{i_p})\Big[\sigma_{n-2}(\la|1i_q)-\sigma_{n-2}(\la|1i_p)\Big]\nonumber\\
&=&-\sigma_{n-2}(\la|1i_s)(\la_{i_q}-\la_{i_p})^2\sigma_{n-3}(\la|1i_pi_q).\nonumber
\end{eqnarray}
For the case when $s$ is equal to $p$ or $q$ we have
\begin{eqnarray}\label{5.12}
&&a_{i_pi_p}a_{i_qi_q}-a_{i_qi_p}a_{i_pi_q}\\
&=&\Big(2\sigma_{n-2}(\la|1i_p)+\sigma_{n-2}(\la|1)\Big)\Big(2\sigma_{n-2}(\la|1i_q)+\sigma_{n-2}(\la|1)\Big)\nonumber\\
&&-\Big(\sigma_{n-2}(\la|1i_p)+\sigma_{n-2}(\la|1i_q)+\sigma_{n-2}(\la|1)\Big)^2\nonumber\\
&=&-\Big(\sigma_{n-2}(\la|1i_p)-\sigma_{n-2}(\la|1i_q)\Big)^2\nonumber\\
&=&-(\la_{i_q}-\la_{i_p})^2\sigma_{n-3}^2(\la|1i_pi_q)\nonumber.
\end{eqnarray}
Therefore, by \eqref{5.10}, \eqref{5.11} and \eqref{5.12}, we obtain
\begin{align}
&D_{B,A}(I_k;i_pi_q)\nonumber\\
=&(2\la_1)^{k-3}\sigma_{n-2}^{k-4}(\la|1)\Big\{\sigma_{n-k}(\la|1i_2\cdots i_k)\sum_{s\neq p,q}\Big[\la_{i_p}\left(a_{i_qi_s}a_{i_pi_p}-a_{i_pi_s}a_{i_qi_p}\right)\nonumber\\&+\la_{i_q}\left(a_{i_pi_s}a_{i_qi_q}-a_{i_qi_s}a_{i_pi_q}\right)\Big]\nonumber\\
&+\left(a_{i_pi_p}a_{i_qi_q}-a_{i_qi_p}a_{i_pi_q}\right)\left(\la_{i_p}\la_{i_q}\sigma_{n-k-1}(\la|1i_2\cdots i_k)+(\la_{i_p}+\la_{i_q})\sigma_{n-k}(\la|1i_2\cdots i_k)\right)\Big\}\nonumber\\
=&-(2\la_1)^{k-3}\sigma_{n-2}^{k-4}(\la|1)(\la_{i_q}-\la_{i_p})^2\sigma_{n-3}(\la|1i_pi_q)\Big\{\sigma_{n-k}(\la|1i_2\cdots i_k)\dsum_{s\neq p,q}\sigma_{n-2}(\la|1i_s)\nonumber\\
&+\sigma_{n-k-1}(\la|1i_2\cdots i_k)\sigma_{n-1}(\la|1)+\left(\sigma_{n-2}(\la| 1i_p)+\sigma_{n-2}(\la|1i_q)\right)\sigma_{n-k}(\la|1i_2\cdots i_k))\Big\}\nonumber\\
=&-(2\la_1)^{k-3}\sigma_{n-2}^{k-3}(\la|1)(\la_{i_q}-\la_{i_p})^2\sigma_{n-3}(\la|1i_pi_q)\sigma_{n-k}(\la|1i_2\cdots
i_k). \nonumber
\end{align}
Here in the last step, we used  Lemma \ref{B}.
\end{proof}

\begin{lemm}\label{lemm10} For the multiple index $I_k=(i_2,i_3,\cdots,i_k)\in \mathcal{I}_k, \,\,k\geq 3$ and
any integer $3\leq s\leq k$, we have
\begin{align*}
&\dsum_{I_k\in \mathcal{I}_k}\dsum_{J_s\in \mathcal{J}_s(I_k)}\sum_{j_p,j_q\in |J_s|, j_p< j_q}D_{B,A}(J_s;j_pj_q)\\
=&\dfrac{(n-1)(s-1)(n-s)!}{(k-s)!(n-k)!}(2\la_1)^{s-3}\sigma_{n-2}^{s-3}(\la|1)\sigma_{n-1}(\la|1)\sigma_{n-s}(\la|1)\\
&-\dfrac{(n-s+1)!}{(k-s)!(n-k)!}(2\la_1)^{s-3}\sigma_{n-2}^{s-2}(\la|1)\sigma_{n-s+1}(\la|1).
\end{align*}
\end{lemm}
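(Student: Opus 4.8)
The plan is to feed the closed formula of Lemma \ref{lemm9} into the triple sum, collapse the combinatorics into a single binomial factor, and reduce the whole statement to one symmetric--function identity in the variables $\la_2,\dots,\la_n$. Concretely, applying Lemma \ref{lemm9} with $k$ replaced by $s$ and $I_k$ by $J_s$ gives
\[
D_{B,A}(J_s;j_pj_q)=-(2\la_1)^{s-3}\sigma_{n-2}^{s-3}(\la|1)\,(\la_{j_p}-\la_{j_q})^2\sigma_{n-3}(\la|1j_pj_q)\,\sigma_{n-s}(\la|1j_2\cdots j_s),
\]
so the common factor $-(2\la_1)^{s-3}\sigma_{n-2}^{s-3}(\la|1)$ pulls out of every term. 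The remaining summand depends only on the index set $|J_s|$ and on the chosen pair $\{j_p,j_q\}$, and each $(s-1)$--element subset of $\{2,\dots,n\}$ sits inside exactly $\binom{n-s}{k-s}$ of the tuples $I_k\in\mathcal{I}_k$; hence the triple sum equals $-(2\la_1)^{s-3}\sigma_{n-2}^{s-3}(\la|1)\binom{n-s}{k-s}\,\Sigma_s$, where
\[
\Sigma_s:=\sum_{J_s\in\mathcal{I}_s}\sum_{j_p,j_q\in|J_s|,\,j_p<j_q}(\la_{j_p}-\la_{j_q})^2\sigma_{n-3}(\la|1j_pj_q)\,\sigma_{n-s}(\la|1j_2\cdots j_s).
\]
Since $\binom{n-s}{k-s}=\frac{(n-s)!}{(k-s)!(n-k)!}$ and $(n-s+1)\binom{n-s}{k-s}=\frac{(n-s+1)!}{(k-s)!(n-k)!}$, the asserted formula is equivalent to the identity
\[
\Sigma_s=(n-s+1)\sigma_{n-2}(\la|1)\sigma_{n-s+1}(\la|1)-(n-1)(s-1)\sigma_{n-1}(\la|1)\sigma_{n-s}(\la|1),
\]
and the remainder of the argument establishes this.

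To evaluate $\Sigma_s$ I write $T=|J_s|$ and $\nu_T=(\la_c)_{c\in T}$. Using $\sigma_{n-3}(\la|1ab)\,\sigma_{n-s}(\la|1j_2\cdots j_s)=\sigma_{n-s}(\la|1j_2\cdots j_s)^2\prod_{c\in T\setminus\{a,b\}}\la_c$ for $a,b\in T$, the relation $\la_c\sigma_{n-2}(\la|1c)=\sigma_{n-1}(\la|1)$, and the manipulations already carried out in the proof of Lemma \ref{lemm9}, one obtains
\[
\Sigma_s=\sum_{|T|=s-1}\sigma_{n-s}(\la|\{1\}\cup T)^2\Big[\sigma_1(\nu_T)\sigma_{s-2}(\nu_T)-(s-1)^2\sigma_{s-1}(\nu_T)\Big].
\]
Splitting $\sigma_1(\nu_T)\sigma_{s-2}(\nu_T)=(s-1)\sigma_{s-1}(\nu_T)+\sum_{b\in T}\la_b^2\sigma_{s-3}(\nu_T|b)$ and then processing the two resulting pieces separately (using $\la_b^2\sigma_{n-3}(\la|1bc)=\la_b\sigma_{n-2}(\la|1c)$, reindexing $T\mapsto T\setminus\{b\}$, and counting the admissible completions) reduces $\Sigma_s$ to
\[
\Sigma_s=-(s-1)(s-2)\,\Psi'+(n-s+1)\,\Psi,
\]
where $\Psi':=\sum_{|T|=s-1}\sigma_{n-s}(\la|\{1\}\cup T)^2\prod_{c\in T}\la_c$ and $\Psi:=\sum_{|T'|=s-2}\sigma_{n-s+1}(\la|\{1\}\cup T')^2\,\sigma_{s-3}(\nu_{T'})$, the last $\sigma_{s-3}$ being the second--highest elementary symmetric function of the $s-2$ variables $\nu_{T'}$.

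Finally these two aggregated sums are computed directly. For $\Psi'$ the weight telescopes, $\sigma_{n-s}(\la|\{1\}\cup T)^2\prod_{c\in T}\la_c=\sigma_{n-1}(\la|1)\,\sigma_{n-s}(\la|\{1\}\cup T)$, and since the products over complementary index sets add up to $\sigma_{n-s}(\la|1)$ one gets $\Psi'=\sigma_{n-1}(\la|1)\sigma_{n-s}(\la|1)$. For $\Psi$, reorganizing the sum by a distinguished element of $T'$ and passing to the $n-1$ variables $\eta=(\la_c)_{c\neq1}$ turns it into $\Psi=\sum_{b\neq1}\sigma_{n-2}(\la|1b)\,\sigma_{n-s+1}(\la|1b)$; then the identity $\la_b^2\sigma_{n-3}(\eta|b)=\la_b\sigma_{n-2}(\eta)-\sigma_{n-1}(\eta)$ together with the standard symmetric--function identities $\sum_b\la_b\sigma_m(\eta|b)=(m+1)\sigma_{m+1}(\eta)$ and $\sum_b\sigma_m(\eta|b)=(n-1-m)\sigma_m(\eta)$ yields $\Psi=\sigma_{n-2}(\la|1)\sigma_{n-s+1}(\la|1)-(s-1)\sigma_{n-1}(\la|1)\sigma_{n-s}(\la|1)$. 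Substituting these two evaluations into the displayed formula for $\Sigma_s$ and simplifying with $(s-2)+(n-s+1)=n-1$ produces exactly the required expression for $\Sigma_s$, and hence the lemma.

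I expect the main obstacle to be the middle stage: the reduction of $\Sigma_s$ to $\Psi'$ and $\Psi$, and then the evaluation of $\Psi$, both force several changes in the order of summation that shuttle between a set $T$, a sub--index--set $T\setminus\{b\}$, and the complementary index set, and at each step one must keep careful track of whether a symbol $\sigma_m(\la|\cdots)$ is a genuine elementary symmetric function, equals a single monomial, or vanishes because the number of surviving variables has dropped below $m$; the multiplicities that appear in these regroupings are the other place where sign and counting errors are easy to make. Everything else is routine bookkeeping.
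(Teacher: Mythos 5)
Your proposal is correct and follows essentially the same route as the paper's own proof: apply Lemma \ref{lemm9}, expand $(\la_{j_p}-\la_{j_q})^2\sigma_{n-3}(\la|1j_pj_q)$, and reorganize using the fact that the top-degree symmetric functions involved are single monomials, finishing with a combinatorial count. Your extraction of the factor $\binom{n-s}{k-s}$ up front and the evaluation of the two aggregated sums $\Psi'$ and $\Psi$ are just a repackaging of the paper's computations \eqref{ku1.10}--\eqref{ku1.12}, and all the intermediate identities and multiplicities you use check out.
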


\begin{proof}
By a straightforward calculation we get
\be\label{ku1.10}
\begin{aligned}
&\sum_{j_p,j_q\in |J_s|, j_p< j_q}(\la_{j_p}-\la_{j_q})^2\sigma_{n-3}(\la|1j_pj_q)\\
=&\sum_{j_p,j_q\in |J_s|, j_p< j_q}(\la_{j_p}^2+\la_{j_q}^2-2\la_{j_p}\la_{j_q})\sigma_{n-3}(\la|1j_pj_q)\\
=&\dsum_{j_p\in |J_s|}\la_{j_p}\dsum_{j_q\in
|J_s|, j_q\neq j_p}\sigma_{n-2}(\la|1j_q)-\sum_{j_q\neq j_p}\sigma_{n-1}(\la|1)\\
=&\dsum_{j_p\in |J_s|}\la_{j_p}\dsum_{j_q\in
|J_s|}\sigma_{n-2}(\la|1j_q)-(s-1)^2\sigma_{n-1}(\la|1).
\end{aligned}
\ee
Moreover, we have
\be\label{ku1.11}
\begin{aligned}
&\sigma_{n-s}(\la|1j_2\cdots j_s)\dsum_{j_p\in |J_s|}\la_{j_p}\dsum_{j_q\in |J_s|}\sigma_{n-2}(\la|1j_q)\\
=&\dsum_{j_p\in
|J_s|}\sigma_{n-s+1}(\la|1j_2\cdots\hat{j}_p\cdots j_s)\dsum_{j_q\in
|J_s|}\sigma_{n-2}(\la|1j_q)\\
=&\dsum_{j_p\in |J_s|}\sigma_{n-s+1}(\la|1j_2\cdots\hat{j}_p\cdots
j_s)\Big(\sigma_{n-2}(\la|1)-\dsum_{j_q\in
|I_n|\backslash |J_s|}\sigma_{n-2}(\la|1j_q)\Big)\\
=&\sigma_{n-2}(\la|1)\dsum_{j_p\in
|J_s|}\sigma_{n-s+1}(\la|1j_2\cdots\hat{j}_p\cdots j_s)\\
&-\dsum_{j_p\in |J_s|}\dsum_{j_q\in |I_n|\backslash
|J_s|}\sigma_{n-s}(\la|1j_2\cdots\hat{j}_p\cdots j_s j_q)\sigma_{n-1}(\la|1).
\end{aligned}
\ee

Thus, using equation \eqref{ku1.10} and \eqref{ku1.11} we get
\be\label{ku1.12}
\begin{aligned}
&\dsum_{I_k\in \mathcal{I}_k}\dsum_{J_s\in\mathcal{J}_s(I_k)}\sum_{j_p,j_q\in |J_s|, j_p<
j_q}\sigma_{n-s}(\la|1j_2\cdots j_s)(\la_{j_p}-\la_{j_q})^2\sigma_{n-3}(\la|1j_pj_q)\\
=&\dsum_{I_k\in \mathcal{I}_k}\dsum_{J_s\in\mathcal{J}_s
(I_k)}\sigma_{n-2}(\la|1)\dsum_{j_p\in
|J_s|}\sigma_{n-s+1}(\la|1j_2\cdots\hat{j}_p\cdots j_s)\\
&-\dsum_{I_k\in \mathcal{I}_k}\dsum_{J_s\in \mathcal{J}_s(I_k)}\dsum_{j_p\in
|J_s|}\dsum_{j_q\in |I_n|\backslash
|J_s|}\sigma_{n-s}(\la|1j_2\cdots\hat{j}_p\cdots j_s j_q)\sigma_{n-1}(\la|1)\\
&-\dsum_{I_k\in\mathcal{ I}_k}\dsum_{J_s\in
\mathcal{J}_s(I_k)}(s-1)^2\sigma_{n-1}(\la|1)\sigma_{n-s}(\la|1j_2\cdots j_s)\\
=&\dfrac{(s-1)C_{k-1}^{s-1}C_{n-1}^{k-1}}{C_{n-1}^{n-s+1}}\sigma_{n-2}(\la|1)\sigma_{n-s+1}(\la|1)\\
&-\dfrac{(n-s)(s-1)C_{k-1}^{s-1}C_{n-1}^{k-1}}{C_{n-1}^{n-s}}\sigma_{n-1}(\la|1)\sigma_{n-s}(\la|1)\\
&-\dfrac{(s-1)^2C_{k-1}^{s-1}C_{n-1}^{k-1}}{C_{n-1}^{n-s}}\sigma_{n-1}(\la|1)\sigma_{n-s}(\la|1)\\
=&\dfrac{(n-s+1)!}{(k-s)!(n-k)!}\sigma_{n-2}(\la|1)\sigma_{n-s+1}(\la|1)\\
&-\dfrac{(n-1)(s-1)(n-s)!}{(k-s)!(n-k)!}\sigma_{n-1}(\la|1)\sigma_{n-s}(\la|1).
\end{aligned}
\ee
Lemma \ref{lemm10} follows from equation \eqref{ku1.12} and Lemma \ref{lemm9} directly.
\end{proof}

Now, let's come back to the matrix $S$ and prove Lemma \ref{kulem1.1}.
\begin{proof}(Proof of Lemma \ref{kulem1.1})
By basic Linear Algebra we know, given any multiple index $I_k=(i_2,i_3,\cdots,i_k)\in \mathcal{I}_k$ we have,
\begin{eqnarray}\label{5.13}
&&D_S(I_k)\\
&=&D_{A+B}(I_k)+\dsum_{J_{k-1}\in \mathcal{J}_{k-1}(I_k)}
D_{A+B}(J_{k-1})\sigma_{n-2}^{1}(\la|1)\nonumber\\
&&+\dsum_{J_{k-2}\in \mathcal{J}_{k-2}(I_k)}
D_{A+B}(J_{k-2})\sigma_{n-2}^{2}(\la|1)+\cdots+\dsum_{J_{s}\in \mathcal{J}_{s}(I_k)}
D_{A+B}(J_{s})\sigma_{n-2}^{k-s}(\la|1)\nonumber\\
&&+\cdots+\dsum_{J_{2}\in \mathcal{J}_{2}(I_k)}
D_{A+B}(J_{2})\sigma_{n-2}^{k-2}(\la|1)+\sigma_{n-2}^{k-1}(\la|1).\nonumber
\end{eqnarray}
By \eqref{5.1}, for any multiple index $J_s=(j_2,\cdots,j_s)\in\mathcal{J}_s(I_k)$, we have
\begin{align*}
D_{A+B}(J_s)=D_B(J_s)+\sum_{l=2}^{
s}D_{B,A}(J_s;j_l)+\sum_{j_2\leq j_p< j_q\leq
j_s}D_{B,A}(J_s;j_pj_q).
\end{align*}
Thus, for $3\leq s\leq k$, using Lemma \ref{lemm5}, Lemma \ref{lemm8}, and Lemma \ref{lemm10} we obtain,
\begin{eqnarray}\label{5.14}
&&\dsum_{I_k\in \mathcal{I}_k}\dsum_{J_s\in \mathcal{J}_s(I_k)}D_{A+B}(J_s)\\
&=&\dsum_{I_k\in \mathcal{I}_k}\dsum_{J_s\in \mathcal{J}_s(I_k)}D_B(J_s)+\dsum_{I_k\in
\mathcal{I}_k}\dsum_{J_s\in\mathcal{J}_s(I_k)}\sum_{j_l\in |J_s|}D_{
B,A}(J_s;j_l)\nonumber\\
&&+\dsum_{I_k\in \mathcal{I}_k}\dsum_{J_s\in\mathcal{J}_s(I_k)}\sum_{ j_p< j_q, j_pj_q\in |J_s|}D_{B,A}(J_s;j_pj_q)\nonumber\\
&=&\dfrac{s(n-s)!}{(k-s)!(n-k)!}(2\la_1)^{s-1}\sigma_{n-2}^{s-2}(\la|1)\sigma_{n-s-1}(\la|1)\nonumber\\
&&+\frac{(n+1)(s-1)(n-s)!}{(k-s)!(n-k)!}(2\la_1)^{s-2}\sigma_{n-2}^{s-2}(\la|1)\sigma_{n-s}(\la|1)\nonumber\\
&&+\dfrac{(n-1)(s-1)(n-s)!}{(k-s)!(n-k)!}(2\la_1)^{s-3}\sigma_{n-2}^{s-3}(\la|1)\sigma_{n-1}(\la|1)\sigma_{n-s}(\la|1)\nonumber\\
&&-\dfrac{(n-s+1)!}{(k-s)!(n-k)!}(2\la_1)^{s-3}\sigma_{n-2}^{s-2}(\la|1)\sigma_{n-s+1}(\la|1).\nonumber
\end{eqnarray}
For $s=2$ and $3\leq k\leq n,$ the third term of $D_{A+B}(J_s)$ does not appear. Thus, we have
\begin{eqnarray}\label{5.15}
&&\dsum_{I_k\in \mathcal{I}_k}\dsum_{J_s\in \mathcal{J}_s(I_k)}D_{A+B}(J_s)\\
&=&\dsum_{I_k\in \mathcal{I}_k}\dsum_{J_s\in \mathcal{J}_s(I_k)}D_B(J_s)+\dsum_{I_k\in
\mathcal{I}_k}\dsum_{J_s\in\mathcal{J}_s(I_k)}\sum_{j_l\in |J_s|}D_{
B,A}(J_s;j_l)\nonumber\\
&=&\dfrac{s(n-s)!}{(k-s)!(n-k)!}(2\la_1)^{s-1}\sigma_{n-2}^{s-2}(\la|1)\sigma_{n-s-1}(\la|1)\nonumber\\
&&+\frac{(n+1)(s-1)(n-s)!}{(k-s)!(n-k)!}(2\la_1)^{s-2}\sigma_{n-2}^{s-2}(\la|1)\sigma_{n-s}(\la|1)\nonumber.
\end{eqnarray}

We want to rewrite $\sum_{I_k\in\mathcal{I}_k}D_{S}(I_k)$ as a polynomial of the variable $2\la_1$. Let's calculate the coefficient of $(2\la_1)^i$ for $i=0, 1, \cdots, k-1$.

By equation \eqref{5.14}, the coefficient of $(2\la_1)^{s-3}$ for $3\leq s\leq k$ is
\be\label{ku1.13}
\begin{aligned}
&\dfrac{(s-2)(n-s+2)!}{(k-s+2)!(n-k)!}(2\la_1)^{s-3}\sigma_{n-2}^{s-4}(\la|1)\sigma_{n-s+1}(\la|1)\cdot \sigma_{n-2}^{k-s+2}(\la|1)\\
&+\frac{(n+1)(s-2)(n-s+1)!}{(k-s+1)!(n-k)!}(2\la_1)^{s-3}\sigma_{n-2}^{s-3}(\la|1)\sigma_{n-s+1}(\la|1)\cdot \sigma_{n-2}^{k-s+1}(\la|1)\\
&+\dfrac{(n-1)(s-1)(n-s)!}{(k-s)!(n-k)!}(2\la_1)^{s-3}\sigma_{n-2}^{s-3}(\la|1)\sigma_{n-1}(\la|1)\sigma_{n-s}(\la|1)\cdot \sigma_{n-2}^{k-s}(\la|1)\\
&-\dfrac{(n-s+1)!}{(k-s)!(n-k)!}(2\la_1)^{s-3}\sigma_{n-2}^{s-2}(\la|1)\sigma_{n-s+1}(\la|1)\cdot
\sigma_{n-2}^{k-s}(\la|1)\\
=&(2\la_1)^{s-3}\sigma_{n-2}^{k-3}(\la|1)\Big[P(s-3)\sigma_{n-2}(\la|1)\sigma_{n-s+1}(\la|1)+Q(s-3)\sigma_{n-1}(\la|1)\sigma_{n-s}(\la|1)
\Big],
\end{aligned}
\ee
where the functions $P(s-3)$ and $Q(s-3)$ are defined by
$$
P(s-3)=\dfrac{(s-2)(n-s+2)!}{(k-s+2)!(n-k)!}+\frac{(n+1)(s-2)(n-s+1)!}{(k-s+1)!(n-k)!}-\dfrac{(n-s+1)!}{(k-s)!(n-k)!},
$$
and
$$
Q(s-3)=\dfrac{(n-1)(s-1)(n-s)!}{(k-s)!(n-k)!}.
$$
The coefficient of $(2\la_1)^{k-2}$ is
\be\label{ku1.14}
\begin{aligned}
&(k-1)(n-k+1)(2\la_1)^{k-2}\sigma_{n-2}^{k-3}(\la|1)\sigma_{n-k}(\la|1)\sigma_{n-2}(\la|1)\\
&+(n+1)(k-1)(2\la_1)^{k-2}\sigma_{n-2}^{k-2}(\la|1)\sigma_{n-k}(\la|1)\\
=&(k-1)(2n+2-k)(2\la_1)^{k-2}\sigma_{n-2}^{k-2}(\la|1)\sigma_{n-k}(\la|1).
\end{aligned}
\ee
The coefficient of $(2\la_1)^{k-1}$ is
\be\label{ku1.15}
k(2\la_1)^{k-1}\sigma_{n-2}^{k-2}(\la|1)\sigma_{n-k-1}(\la|1).
\ee
We substitute \eqref{ku1.13}, \eqref{ku1.14}, and \eqref{ku1.15} into \eqref{5.13}, then sum over $I_k\in\mathcal{I}_k, k\geq 3,$ and get,
\be\label{5.16}
\begin{aligned}
&\dsum_{I_k\in
\mathcal{I}_k}D_S(I_k)\\
=&\dsum_{s=0}^{k-3}(2\la_1)^{s}\sigma_{n-2}^{k-3}(\la|1)
\Big[P(s)\sigma_{n-2}(\la|1)\sigma_{n-s-2}(\la|1)+Q(s)\sigma_{n-1}(\la|1)\sigma_{n-s-3}(\la|1)\Big]\\
&+(k-1)(2n+2-k)(2\la_1)^{k-2}\sigma_{n-2}^{k-2}(\la|1)\sigma_{n-k}(\la|1)\\
&+k(2\la_1)^{k-1}\sigma_{n-2}^{k-2}(\la|1)\sigma_{n-k-1}(\la|1).
\end{aligned}
\ee

Since we assume $\la_1>0$, the last two terms are non negative. We only need to analyze the first term.
Note that
\begin{eqnarray}
P(s)&\geq& \frac{(n+1)(s+1)(n-s-2)!}{(k-s-2)!(n-k)!}-\frac{(n-s-2)!}{(k-s-3)!(n-k)!}\nonumber\\
&=&\frac{(n-s-2)!}{(k-s-2)!(n-k)!}[(n+1)(s+1)-k+s+2]\nonumber\\
&\geq&0.\nonumber
\end{eqnarray}
If $\sigma_{n-1}(\la|1)\geq 0,$ then we obtain for $k\geq 3$
$$\dsum_{I_k\in\mathcal{I}_k}D_S(I_k)\geq 0. $$
If $\sigma_{n-1}(\la|1)< 0$, using the identity
\begin{align*}
\la_1\sigma_{n-2}(\la|1)=\sigma_{n-1}-\sigma_{n-1}(\la|1)\geq
-\sigma_{n-1}(\la|1)>0,
\end{align*}
\eqref{5.16} becomes
\begin{eqnarray}\label{5.17}
&&\dsum_{I_k\in
\mathcal{I}_k}D_S(I_k)\\
&=&\sigma_{n-2}^{k-3}(\la|1)\left[\dsum_{s=1}^{k-3}2(2\la_1)^{s-1}P(s)\Big(\la_1\sigma_{n-2}(\la|1)\Big)\sigma_{n-s-2}(\la|1)\right.\nonumber\\
&&\left.+\dsum_{s=0}^{k-3}(2\la_1)^{s}Q(s)\sigma_{n-1}(\la|1)\sigma_{n-s-3}(\la|1)\right]\nonumber\\
&&+2(k-1)(2n+2-k)(2\la_1)^{k-3}\sigma_{n-2}^{k-3}(\la|1)\Big(\la_1\sigma_{n-2}(\la|1)\Big)\sigma_{n-k}(\la|1)\nonumber\\
&&+k(2\la_1)^{k-1}\sigma_{n-2}^{k-2}(\la|1)\sigma_{n-k-1}(\la|1)\nonumber\\
&\geq&\sigma_{n-2}^{k-3}(\la|1)\left[\dsum_{s=1}^{k-3}2(2\la_1)^{s-1}P(s)\Big(-\sigma_{n-1}(\la|1)\Big)\sigma_{n-s-2}(\la|1)\right.\nonumber\\
&&\left.+\dsum_{s=0}^{k-3}(2\la_1)^{s}Q(s)\sigma_{n-1}(\la|1)\sigma_{n-s-3}(\la|1)\right]\nonumber\\
&&+2(k-1)(2n+2-k)(2\la_1)^{k-3}\sigma_{n-2}^{k-3}(\la|1)\Big(-\sigma_{n-1}(\la|1)\Big)\sigma_{n-k}(\la|1)\nonumber\\
&&+k(2\la_1)^{k-1}\sigma_{n-2}^{k-2}(\la|1)\sigma_{n-k-1}(\la|1)\nonumber
\end{eqnarray}
\begin{eqnarray}
&\geq&\sigma_{n-2}^{k-3}(\la|1)\dsum_{s=0}^{k-4}(2\la_1)^{s}\Big(2P(s+1)-Q(s)\Big)\Big(-\sigma_{n-1}(\la|1)\Big)\sigma_{n-s-3}(\la|1)\nonumber\\
&&+\Big(2(k-1)(2n+2-k)-Q(k-3)\Big)(2\la_1)^{k-3}\sigma_{n-2}^{k-3}(\la|1)\nonumber\\
&&\times\Big(-\sigma_{n-1}(\la|1)\Big)\sigma_{n-k}(\la|1)\nonumber.
\end{eqnarray}

It's easy to see that
\begin{eqnarray}\label{5.18}
&&2(k-1)(2n+2-k)-Q(k-3)\\
&=&(k-1)\Big(2(2n+2-k)-(n-1)\Big)\nonumber\\
&\geq& 0.\nonumber
\end{eqnarray}
Moreover, for $0\leq s\leq k-4$, we have
\begin{eqnarray}\label{5.19}
&&2P(s+1)-Q(s)\\
&=&\dfrac{(n-s-3)!(s+2)}{(k-s-3)!(n-k)!}\left[2\left(
\dfrac{n-s-2}{k-s-2}+(n+1)-\frac{k-s-3}{s+2}\right) -(n-1)\right]\nonumber\\
&\geq&\dfrac{(n-s-3)!(s+2)}{(k-s-3)!(n-k)!}\left[2(n+1)-\frac{2(k-s-3)}{s+2} -(n-1)\right]\nonumber\\
&\geq&\dfrac{(n-s-3)!(s+2)}{(k-s-3)!(n-k)!}\Big[(n+1)-(k-s-3)\Big]\nonumber\\
&\geq&0.\nonumber
\end{eqnarray}
Combining \eqref{5.17} with \eqref{5.18} and \eqref{5.19}, we obtain if $\sigma_{n-1}(\la|1)\leq 0$ and $k\geq 3,$
 $$\dsum_{I_k\in\mathcal{I}_k}D_S(I_k)\geq 0.$$
Therefore, we have proved for $2\leq m\leq n-1$ the sum of all $m$-th principal minors of
matrix $S$ is nonnegative. When $m=1,$ by the definition of $S,$ we get $\dsum_{I_2\in\mathcal{I}_2}D_S(I_2)=\sum_{p=2}^n s_{pp}>0$ directly. This completes the proof of Lemma \ref{kulem1.1}.
\end{proof}

Lemma \ref{kplem1.1} and Lemma \ref{kulem1.1} proved that the matrix $S$ is a semi-positive matrix. This together with our analysis in Section
\ref{ce} yields Theorem \ref{ceth1.1}.

\bigskip

\section{Bounded principal curvatures implies convexity}
\label{bp}
In this section, we will study the convexity of the admissible hypersurface $\M_u$ with bounded principal curvatures. More precisely, we will prove that
every spacelike hypersurfaces $\M_u$ that satisfies $\la[\M_u]\in\Gamma_{n-1},$ $\sigma_{n-1}(\la[\M_u])=1,$ and
$|\la[\M_u]|<C$ must be convex.

Following, Cheng-Yau \cite{CY}, we first prove the induced metric on $\M_u$ is complete. Due to our assumption on the principal curvatures, the proof here is much easier than it is in Cheng-Yau \cite{CY}. For readers' convenience, we will include it here.

Recall that the Minkowski distance is defined by
$$2z(X)=\|X\|^2=\lt<X, X\rt>^2=\sum_{i=1}^nx_i^2-x_{n+1}^2,\,\,X\in\R^{n, 1}.$$
Cheng-Yau (see Proposition 1 in \cite{CY}) have shown the following: For a spacelike hypersurface $\M$ in $\R^{n, 1}$ which is closed with respect to the Euclidean topology, if the origin $\mathbf{0}\in \M$, then $z$ is a proper function defined on $\M$. Here being ``proper'' means that for any given constant $c>0$, the set $\{X\in \M\subset \R^{n, 1}| z(X)\leq c\}$ is compact. In general, if $\mathbf{0}\not\in \M,$ without loss of generality, we may assume $P=(0,\xi)\in \M$. Then, we can modify the function $z$ to be
$$2z(X)=\|\tilde{X}\|^2=\left\|X-\xi E \right\|^2, $$ and show the set
$\{X\in \M\subset \R^{n,1}| z(X)\leq c\}$ is compact.
Therefore, in the following, we will always assume $\mathbf{0}\in \M.$
\begin{prop}\label{bppro1.1}
Let $\M\in\R^{n, 1}$ be a spacelike hypersurface with bounded principal curvatures, i.e., $\lt|\la[\M]\rt|\leq C_0$. Then there is a constant $C$ only depending
on $C_0$ such that
\begin{eqnarray}\label{6.1}
|\nabla z(X)|^2\leq C(z(X)+1)^2,\,\,\mbox{for $X\in \M$}.
\end{eqnarray}
\end{prop}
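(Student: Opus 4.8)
The plan is to reduce the estimate \eqref{6.1} to a bound on the Minkowski support function $w:=\lt<X,\nu\rt>$ of $\M$, and then to control $w$ by integrating a one–dimensional differential inequality along the gradient curves of $z$. First I would record two elementary identities. Since $z=\tfrac12\lt<X,X\rt>$ has ambient gradient equal to the position vector $X$, its tangential gradient on $\M$ is $\nabla z=X^{\top}$, the projection of $X$ onto $T_X\M$. Writing $X=X^{\top}-w\nu$ and using $\lt<\nu,\nu\rt>=-1$ gives $\lt<X,X\rt>=|X^{\top}|^{2}-w^{2}$, hence $|\nabla z|^{2}=2z+w^{2}$; in particular $\beta:=|\nabla z|^{2}-2z=w^{2}\geq 0$, so it suffices to prove $w^{2}\leq C(z+1)^{2}$. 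Differentiating $\nabla z=X^{\top}$ once more and invoking the Gauss formula $X_{ij}=h_{ij}\nu$ from \eqref{Gauss}, the covariant Hessian of $z$ is $\nabla_i\nabla_j z=\delta_{ij}+w\,h_{ij}$; consequently $\nabla\!\bigl(|\nabla z|^{2}\bigr)=2\bigl(\nabla z+w\,h(\nabla z)\bigr)$ and therefore $\nabla\beta=2w\,h(\nabla z)$.

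Next I would use that $\mathbf 0$ is the only critical point of $z$ on $\M$. Since $\M$ is a spacelike graph through the origin, $2z=|x|^{2}-u(x)^{2}\geq 0$ with equality only at $\mathbf 0$ (because $|u(x)|\leq|x|$, which follows by integrating $|Du|<1$ out from $u(\mathbf 0)=0$), and $w(\mathbf 0)=0$; moreover if $\nabla z(X)=0$ then $X=-w\nu$, so $2z(X)=\lt<X,X\rt>=-w^{2}\leq0$, which forces $z(X)=0$ and hence $X=\mathbf 0$. Now fix an arbitrary $X_{1}\in\M$ with $X_{1}\neq\mathbf 0$ and let $\gamma$ be the maximal unit-speed integral curve of $-\nabla z/|\nabla z|$ starting at $X_{1}$. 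Along $\gamma$ the value of $z$ is strictly decreasing, and by properness of $z$ (Proposition~1 of \cite{CY}) the curve $\gamma$ stays in the compact set $\{z\leq z(X_{1})\}$; since $\mathbf 0$ is the only critical point of $z$, the curve $\gamma$ must converge to $\mathbf 0$.

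I would then parametrize $\gamma$ by the value of $z$ itself (legitimate by strict monotonicity) and combine $\nabla\beta=2w\,h(\nabla z)$ with the hypothesis $|h(\xi,\xi)|\leq C_{0}|\xi|^{2}$ to get, wherever $\beta>0$, the inequality $\bigl|\tfrac{d\beta}{dz}\bigr|=|\lt<\nabla\beta,\nabla z\rt>|/|\nabla z|^{2}\leq 2C_{0}|w|=2C_{0}\sqrt{\beta}$, i.e. $\bigl|\tfrac{d}{dz}\sqrt{\beta}\bigr|\leq C_{0}$. Integrating this from $z=0$, where $\beta=0$, up to $z=z(X_{1})$ yields $w(X_{1})^{2}=\beta(X_{1})\leq C_{0}^{2}\,z(X_{1})^{2}$, and then, using $z\geq 0$, we obtain $|\nabla z|^{2}=2z+w^{2}\leq 2z+C_{0}^{2}z^{2}\leq(1+C_{0}^{2})(z+1)^{2}$, which is exactly \eqref{6.1} with $C=1+C_{0}^{2}$.

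The hard part is the single structural point in the second paragraph: one must know that the downward gradient flow of $z$ really returns to the origin, and this is precisely where properness of $z$ (the cited Cheng--Yau result) is combined with the uniqueness of the critical point. If one prefers to avoid flow lines, the same inequality $\bigl|\tfrac{d\beta}{dz}\bigr|\leq 2C_{0}\sqrt{\beta}$, $\beta=0$ at $z=0$, can be read as a scalar comparison along the level-set foliation of $z$ and still gives $\beta\leq C_{0}^{2}z^{2}$. One minor technical point to address is the behaviour near zeros of $\beta=w^{2}$: there $\nabla\beta$ vanishes as well, so the bound propagates harmlessly across such points, and $\sqrt{\beta}$ stays Lipschitz in the relevant sense.
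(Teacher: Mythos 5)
Your argument is correct in substance, but it is a genuinely different proof from the paper's. The paper runs a maximum-principle argument: on the compact sublevel set $\M_{2c}$ (compact by Cheng--Yau's properness) it maximizes the cutoff quantity $(c-z)^2|\nabla z|^2/(z+1)^2$, uses the first-order conditions together with $z_{ij}=\delta_{ij}\pm h_{ij}\lt<X,\nu\rt>$ and the inequality $\lt<X,\nu\rt>^2\leq|\nabla z|^2$ (which follows from $z\geq 0$), and then lets $c$ be arbitrary. You instead observe that this last inequality is in fact the identity $|\nabla z|^2=2z+\lt<X,\nu\rt>^2$, so the estimate reduces to bounding the support function $w=\lt<X,\nu\rt>$, and you control $w^2=\beta:=|\nabla z|^2-2z$ by the Riccati-type inequality $\lt|d\beta/dz\rt|\leq 2C_0\sqrt{\beta}$ integrated along the normalized downward gradient flow of $z$ from an arbitrary point to the unique critical point $\mathbf{0}$, where $\beta=0$. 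Both proofs rest on the same two external inputs (Cheng--Yau properness of $z$ and $z\geq 0$, i.e.\ the entire-graph normalization $u(\mathbf 0)=0$, $|Du|<1$, which the paper also uses without further comment), but your route trades the cutoff/maximum principle for a dynamical argument; it yields the slightly cleaner pointwise bound $|\nabla z|^2\leq 2z+C_0^2z^2$, at the cost of the one structural step you correctly flag: one must show the unit-speed flow line, trapped in the compact set $\{z\leq z(X_1)\}$, terminates at $\mathbf 0$ (standard, since $z$ is a strict Lyapunov function away from the unique critical point, so either the maximal flow ends in finite time at $\mathbf 0$ or its $\omega$-limit set is $\{\mathbf 0\}$), and one must handle zeros of $\beta$ in the comparison, which you do. Two harmless remarks: your Hessian $z_{ij}=\delta_{ij}+w h_{ij}$ differs in sign from the paper's \eqref{66}, a convention issue that is immaterial since only $|h|\leq C_0$ enters; and your closing alternative (``comparison along the level-set foliation'') is vaguer than the flow-line argument and would need the same work to be made precise, so the flow-line version should be regarded as the actual proof.
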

\begin{proof}
In the following, for any $c>0,$ we denote $\M_c:=\{X\in \M| z(X)\leq \dfrac{c}{2}\}$.
Note that by earlier discussion we know that $\M_{2c}$ is compact. Considering an auxiliary function
$$\phi(X)=(c-z)^2\frac{|\nabla z|^2}{(z+1)^2}.$$
It is obvious that $\phi$ achieves its maximum value at some interior point $P_0\in \M_{2c}$.
Let $\{\tau_1, \cdots, \tau_n\}$ be an orthonormal frame at $P_0.$
Now, we differentiate $\log\phi$ at $P_0$ and get,
\begin{eqnarray}
&&2\frac{-z_i}{c-z}+\frac{2\sum_kz_kz_{ki}}{|\nabla z|^2}-2\frac{z_i}{z+1}=0.\label{6.2}
\end{eqnarray}
By a straightforward calculation we have
 \begin{eqnarray}\label{66}
 z_i=\lt<X, \tau_i\rt>,\ \ \  z_{ij}=\delta_{ij}-h_{ij}\lt<X, \nu\rt>.
 \end{eqnarray}
Moreover, since $z\geq 0$, we obtain
\begin{eqnarray}
\label{bp1.0}
\lt<X, \nu\rt>^2\leq \sum_i\lt<X, \tau_i\rt>^2.
\end{eqnarray}
We may choose an orthonormal coordinate at $P_0$ such that
$$z_1=|\nabla z|, \text{ and  } z_i=0 \text{  for } i\neq 1.$$ We may also rotate $\{\tau_2,\cdots,\tau_n\}$ such that
$$h_{ij}=h_{ii}\delta_{ij} \text{ for } i,j\geq 2.$$
Thus, using \eqref{6.2} we get ,
\be\label{bp1.1}
2\frac{-z_1}{c-z}+\frac{2z_1z_{11}}{|\nabla z|^2}-2\frac{z_1}{z+1}=0
\ee
and
\be\label{bp1.2}
\frac{2z_1z_{1i}}{|\nabla z|^2}=0,\,\,\mbox{ for $i\geq 2$}.
\ee
This implies
\begin{eqnarray}\label{6.5}
z_{11}=\frac{|\nabla z|^2}{c-z}+\frac{|\nabla z|^2}{z+1}.
\end{eqnarray}
Without loss of generality we may assume $z_1=|\nabla z|>1$ at $P_0$.
Since we are working on hypersurfaces with bounded curvatures, using \eqref{66}, we have
\begin{eqnarray}
\frac{|\nabla z|^2}{z+1} \leq z_{11}\leq 1+|h_{11}||\lt<X, \nu\rt>|\leq 1+C|\lt<X, \nu\rt>|.\nonumber
\end{eqnarray}
By \eqref{bp1.0} we know $|\lt<X, \nu\rt>|\leq |\nabla z|,$ thus at $P_0$ we have
\begin{eqnarray}\label{6.7}
\frac{|\nabla z|}{z+1} \leq C.
\end{eqnarray}
This yields that
\[(c-z)^2\frac{|\nabla z|^2}{(z+1)^2}\leq c^2C^2.\]
Therefore, on $M_c$ we have
\[|\nabla z|^2\leq 4C^2|z+1|^2.\]
Since $c>0$ is arbitrary, we proved \eqref{6.1}.
\end{proof}

Now by the same argument as in \cite{CY} and \cite{T}, we have
\begin{coro}\label{bpcor1.1}
Let $\M\in\R^{n, 1}$ be a spacelike hypersurface which is closed with respect to the Euclidean topology. Suppose $\M$ has bounded principal curvatures.
Then, $\M$ is complete with respect to the induced metric.
\end{coro}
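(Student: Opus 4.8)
The plan is to derive completeness directly from the gradient estimate \eqref{6.1} of Proposition \ref{bppro1.1} together with the properness of the Minkowski distance function $z$ on $\M$ recalled above (Cheng--Yau, Proposition 1 in \cite{CY}). Recall that a Riemannian manifold is complete if and only if every divergent path has infinite length, where a path $\gamma\colon[0,1)\to\M$ is \emph{divergent} if it eventually leaves every compact subset of $\M$. Thus it suffices to show that any such $\gamma$ has infinite length, and after a Euclidean translation as in the paragraph preceding Proposition \ref{bppro1.1} we may assume $\mathbf 0\in\M$.

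First I would use that, since $\M$ is closed in the Euclidean topology and $\mathbf 0\in\M$, the function $z$ is proper on $\M$; hence along a divergent path $\gamma$ one must have $z(\gamma(t))\to\infty$ as $t\to1$, for otherwise $\gamma$ would remain in the compact set $\{X\in\M:z(X)\le c\}$ for some $c>0$. Note also $z\ge0$ on $\M$, so $\log(z+1)$ is well defined along $\gamma$.

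Next, using \eqref{6.1}, along $\gamma$ I would estimate
\[
\left|\frac{d}{dt}\log\bigl(z(\gamma(t))+1\bigr)\right|
=\frac{\bigl|\langle\nabla z,\gamma'(t)\rangle\bigr|}{z(\gamma(t))+1}
\le\frac{|\nabla z|}{z+1}\,|\gamma'(t)|
\le\sqrt{C}\,|\gamma'(t)|.
\]
Integrating from $0$ to $t$ gives
\[
\log\bigl(z(\gamma(t))+1\bigr)-\log\bigl(z(\gamma(0))+1\bigr)\le\sqrt{C}\int_0^t|\gamma'(s)|\,ds\le\sqrt{C}\,L(\gamma),
\]
where $L(\gamma)$ denotes the length of $\gamma$. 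Letting $t\to1$, the left-hand side tends to $+\infty$ by the previous step, which forces $L(\gamma)=\infty$. Since every divergent path has infinite length, $\M$ is complete with respect to the induced metric.

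The argument is short because the genuine work is already in place: the bounded–principal–curvature hypothesis enters only through the a priori bound \eqref{6.1}, and the properness of $z$ is exactly Cheng--Yau's theorem. The only point requiring a little care is the reduction to divergent paths and the verification, via properness, that $z$ blows up along any such path; the translation step handling $\mathbf 0\notin\M$ is routine and identical to the one used before Proposition \ref{bppro1.1}.
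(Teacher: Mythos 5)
Your proof is correct and is essentially the argument the paper intends: the paper simply invokes "the same argument as in \cite{CY} and \cite{T}", which is precisely the combination of the properness of $z$ with the gradient bound \eqref{6.1} to show divergent paths have infinite length (equivalently, that metric balls lie in compact sublevel sets of $z$). Your write-up just makes that standard deduction explicit.
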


\begin{rmk} Proposition \ref{bppro1.1} and Corollary \ref{bpcor1.1} give a different proof of the completeness of spacelike hypersurfaces with constant Gauss-Kronecker curvature and bounded principal curvatures (see Proposition 5.2 in \cite{LA}).
\end{rmk}

\begin{lemm}
\label{bplem1.1}
Let $\M$ be an $(n-1)$-convex, spacelike hypersurface with bounded principal curvatures, and $\M$ satisfies equation
\eqref{2.1}. Then $\M$ is convex.
\end{lemm}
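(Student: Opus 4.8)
The plan is to combine Theorem \ref{ceth1.1} with the completeness established in Corollary \ref{bpcor1.1} via a maximum-principle/Omori--Yau argument applied to the function $\sigma_n(\kappa[\M])$, which is the Gauss--Kronecker curvature. Since $\M$ is $(n-1)$-convex, we already have $\kappa_i+\cdots>0$ in the appropriate sense, and the only principal curvature that could fail to be nonnegative is the smallest one; convexity is exactly the statement that $\sigma_n=\kappa_1\cdots\kappa_n\geq 0$ everywhere. So the goal is to show $\sigma_n\geq 0$ on $\M$.

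First I would record that $\sigma_n$ is bounded: the principal curvatures are bounded by hypothesis, so $0\le \sigma_{n-1}=1$ forces (together with $\la\in\Gamma_{n-1}$ and boundedness) control on all the $\sigma_j$, and in particular $|\sigma_n|\le C$. Next, I would introduce the auxiliary function $w=\sigma_n+ \varepsilon$ (or run the argument on $-\sigma_n$) and note that Theorem \ref{ceth1.1} says $\sigma_{n-1}^{ij}(\sigma_n)_{ij}\le \sigma_1\sigma_{n-1}\sigma_n-n^2\sigma_n^2 = \sigma_1\sigma_n-n^2\sigma_n^2$. The operator $L:=\sigma_{n-1}^{ij}\nabla_i\nabla_j$ is elliptic (because $\la\in\Gamma_{n-1}$ makes $\sigma_{n-1}^{ij}=\sigma_{n-2}(\la|i)\delta_{ij}$ positive definite), though possibly not uniformly so. The structure of the right-hand side is the key point: at a would-be interior negative minimum of $\sigma_n$ we would have $L\sigma_n\ge 0$, whereas the right side $\sigma_n(\sigma_1-n^2\sigma_n)$ is negative there (when $\sigma_n<0$, since $\sigma_1\ge 0$ on $\Gamma_{n-1}$ and $-n^2\sigma_n>0$ gives $\sigma_1-n^2\sigma_n>0$, hence the product is $\le 0$, in fact $<0$ unless $\sigma_n=0$). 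So a strict interior minimum with $\sigma_n<0$ is impossible; the content is to promote this to a statement on the complete noncompact manifold.

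To handle noncompactness I would use the completeness from Corollary \ref{bpcor1.1} together with a barrier built from the Minkowski distance function $z$, exactly the function for which Proposition \ref{bppro1.1} gives the gradient bound $|\nabla z|^2\le C(z+1)^2$ and for which $z_{ij}=\delta_{ij}-h_{ij}\langle X,\nu\rangle$ gives a lower bound on $L z$ in terms of bounded quantities. Concretely, suppose for contradiction that $\inf_\M \sigma_n =: -2\delta<0$. Consider $\phi=\sigma_n+\eta\,\psi(z)$ for a small $\eta>0$ and a slowly growing concave function $\psi$ chosen so that $\eta\psi(z)\to 0$ is false — rather, chosen (as in Cheng--Yau \cite{CY} and Treibergs \cite{T}) so that $\phi$ attains an interior minimum on a large compact sublevel set $\M_c$ while still having $\sigma_n<-\delta$ there; at that minimum one computes $0\le L\phi\le \sigma_n(\sigma_1-n^2\sigma_n)+\eta L\psi(z)$ and derives a contradiction for $\eta$ small by controlling $L\psi(z)$ using \eqref{66}, the curvature bound, and $|\langle X,\nu\rangle|\le|\nabla z|$. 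This forces $\inf\sigma_n\ge 0$, i.e. $\M$ is convex. I expect the main obstacle to be exactly this last step: choosing the barrier and the cutoff correctly so that the (possibly degenerate) ellipticity of $\sigma_{n-1}^{ij}$ does not spoil the sign of the terms — that is, making sure the good negative term $-n^2\sigma_n^2$ (of size $\delta^2$) genuinely dominates the error $\eta L\psi(z)$ uniformly, rather than being beaten by a region where $\sigma_{n-1}^{ij}$ is large. One should either exploit that $\sigma_{n-2}(\la|i)$ is itself bounded above here (again from bounded curvatures and $\sigma_{n-1}=1$), which tames $L z$, or follow the Cheng--Yau scheme verbatim using the already-proven gradient estimate \eqref{6.1}.
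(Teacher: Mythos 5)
Your strategy is correct, and its core coincides with the paper's: the convexity is deduced from the differential inequality of Theorem \ref{ceth1.1}, $\sigma_{n-1}^{ij}(\sigma_n)_{ij}\leq\sigma_1\sigma_{n-1}\sigma_n-n^2\sigma_n^2$, whose right-hand side is strictly negative wherever $\sigma_n<0$, combined with a maximum-principle argument on the complete noncompact hypersurface. Where you genuinely diverge is in how the noncompactness is handled. The paper works intrinsically: it takes the geodesic distance $r$ from a point (meaningful thanks to Corollary \ref{bpcor1.1}), forms the Pogorelov-type quantity $\varphi=-(a^2-r^2)^2\sigma_n$ on the ball $\mathcal{B}_a$, controls $\sigma_{n-1}^{ii}r_{ii}$ by the Hessian comparison theorem (using $R_{ijij}=-h_{ii}h_{jj}\geq -C$ from the curvature bound, and the Calabi trick for the non-smoothness of $r$), obtains $\varphi\leq Ca^3$, and lets $a\to\infty$ to force $\sigma_n\geq 0$. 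You instead propose an extrinsic barrier: perturb $\sigma_n$ additively by $\eta\,\psi(z)$ with the Minkowski distance $z$, using its properness to guarantee an attained interior minimum, the formula $z_{ij}=\delta_{ij}-h_{ij}\langle X,\nu\rangle$, the bound $|\langle X,\nu\rangle|\leq|\nabla z|\leq C(z+1)$ from Proposition \ref{bppro1.1}, and the boundedness of $\sigma_{n-1}^{ii}=\sigma_{n-2}(\la|i)$ to tame the error; this avoids both the Hessian comparison theorem and the Calabi trick, at the cost of relying on the gradient estimate \eqref{6.1} and on a correct choice of $\psi$. The only place your sketch is vague is exactly that choice: take, say, $\psi(z)=\log(1+z)$, which is increasing and concave, so that $\sigma_{n-1}^{ij}\nabla_{ij}\psi(z)\leq \psi'(z)\bigl(\sum_i\sigma_{n-2}(\la|i)+(n-1)|\langle X,\nu\rangle|\bigr)\leq C$ uniformly; then, fixing $Y$ with $\sigma_n(Y)<-\delta$ and taking $\eta$ small enough that both $\eta\psi(z(Y))<\delta/2$ and $\eta C<n^2\delta^2/4$, the attained minimum point of $\sigma_n+\eta\psi(z)$ yields the contradiction you describe. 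With that made explicit, your argument closes and gives a slightly more elementary route to the same conclusion.
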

\begin{proof}
Recall Theorem \ref{ceth1.1} we have,
\begin{eqnarray}\label{bp1.3}
\sigma_{n-1}^{ij}(\sigma_n)_{ij}&\leq&\sigma_1\sigma_{n-1}\sigma_n-n^2\sigma_n^2.
\end{eqnarray}
Given a point $P\in\M$, we can define the distance function on $\M$
$$r(X)=d(P, X),$$ where $X\in \M$. By Corollary \ref{bpcor1.1} we know that $\M$ is complete. Therefore, for any $a>0,$ let
$\mathcal{B}_a:=\{X\in \M| r(X)<a\}$ be the geodesic ball centered at $P$ with radius $a$, then $\mathcal{B}_a$ is compact.

Now, we define an open subdomain of $\M$
\[\Omega=\{X\in\M| \sigma_n(\la[\M(X)])<0\}.\]
Without loss of generality we assume $\Omega\neq\emptyset,$ otherwise, we would be done.
Considering the  auxiliary function
$$\varphi=-\eta^2(X)\sigma_n(X)$$ on $\Omega,$ where $\eta=a^2-r^2(X)$ is the cutoff function and $\sigma_n(X)=\sigma(\la[\M(X)])$.
It is obvious that the function $\varphi$ achieves its maximum at an interior point $X_0$ in $\Omega\cap \mathcal{B}_a$. Moreover, use the
same argument as \cite{CY2}, we can assume $\eta$ is differentiable near $X_0.$ Now, we choose a local orthonormal frame near $X_0$ such that
at $X_0,$ $h_{ij}=\la_i\delta_{ij}.$ Differentiating
$\log\varphi$ at $X_0$ twice we get,
\begin{eqnarray}
&&\frac{(\sigma_n)_i}{\sigma_n}+2\frac{\eta_i}{\eta}=0;\label{6.8}\\
&&\frac{(\sigma_n)_{ii}}{\sigma_n}-\frac{(\sigma_n)_i^2}{\sigma_n^2}+2\frac{\eta_{ii}}{\eta}-2\frac{\eta_i^2}{\eta^2}\leq 0.\label{6.9}
 \end{eqnarray}
Contracting \eqref{6.9} with $\sigma_{n-1}^{ii}$ and applying \eqref{6.8} yields,
 \begin{eqnarray}
\frac{\sigma_{n-1}^{ii}(\sigma_n)_{ii}}{\sigma_n}\leq -2\frac{\sigma_{n-1}^{ii}\eta_{ii}}{\eta}+6\frac{\sigma_{n-1}^{ii}\eta_i^2}{\eta^2}.\nonumber
 \end{eqnarray}
 Combining with \eqref{bp1.3}, we have
  \begin{eqnarray}
\eta^2\sigma_1\sigma_{n-1}+n^2\varphi&\leq& -2\eta\sigma_{n-1}^{ii}\eta_{ii}+6\sigma_{n-1}^{ii}\eta_i^2\\
&=&4r\eta \sigma^{ii}_{n-1}r_{ii}+(4\eta+24r^2)\sigma_{n-1}^{ii}r_i^2.\nonumber
 \end{eqnarray}
 Since $|\nabla r|=1,$ by our assumption that $\M$ has bounded principal curvatures, we can see in $\mathcal{B}_a$,
 $$(4\eta+24r^2)\sigma_{n-1}^{ii}r_i^2\leq Ca^2,$$
 where the constant $C$ depends on $\la[\M].$
To deal with the term $r_{ii},$ we will use the Hessian comparison theorem. Since the sectional curvature of $\M$ satisfies
$$ R_{ijij}=-h_{ii}h_{jj}\geq -C, $$
we have
$$r_{ii}\leq \frac{n-1}{r}(1+Cr).$$
This implies in $\mathcal{B}_a,$
$$4r\eta \sigma^{ii}_{n-1}r_{ii}\leq Ca^3, $$
where the constant $C$ depends on $\la[\M].$  Thus, we obtain for any $a>0$ large in $\Omega\cap \mathcal{B}_a,$
\be\label{bp1.4}
\varphi(X)\leq Ca^3.
\ee
Now, for any given point $Y\in \Omega\subset \M$, we can take  $a>0$  sufficiently large such that $Y\in\Omega\cap \mathcal{B}_{a/2}$.
Then, by \eqref{bp1.4} we have
$$-\sigma_n(Y)\leq \frac{C}{a}.$$
Let $a$ go to infinity, we obtain
$$\sigma_n(Y)=0.$$
Hence, we conclude that $\Omega$ is an empty set. This proves Lemma \ref{bplem1.1}.
\end{proof}

Now that we have proved the convexity of $\M$, we are in the position to prove Theorem \ref{intth1.1}
of the introduction.
\begin{proof}(proof of Theorem \ref{intth1.1})
In view of the formula \eqref{bp1.3}, we know that for a convex hypersurface $\M$ satisfying $\sigma_{n-1}(\la[\M])=1$,
if there is a degenerate point on $\M,$ i.e., $\sigma_n=0,$ then $\sigma_n\equiv 0$ on $\M.$ We will show in this case $\M=\M^{n-1}\times\R.$

Let $\tau_1$ be the principal direction corresponding
to the minimum principal curvature $\la_1=0$. Then $\tau_1$ is a smooth vector field on $\M.$ Let $\gamma(s)$ be the integral curve of $\tau_1,$
and $\text{Span}\{\tau_1, \cdots, \tau_n\}=T\M.$ Then we have
\[\lt<\bar{\nabla}_{\tau_1}\nu, \tau_i\rt>=0\,\,\mbox{for $1\leq i\leq n$},\]
where $\nu$ is the timelike unite normal of $\M.$ Therefore, $\nu$ is a constant vector along $\gamma(s).$
This implies that $\gamma(s)$ lies in the hyperplane $\mathbb{P}$ that is perpendicular to $\nu.$

Now we can choose a coordinate such that
\[\mathbb{P}=\{x| x_{n+1} =\lt<X, E\rt>=0\}\]
and $-\lt<X, E\rt>\geq 0$ for any $X\in\M,$
where $E=(0, \cdots, 0, 1).$ We claim $\gamma(s)$ is a straight line. If not, we can choose
$p, q\in\gamma(s).$ Since the straight line connects $p, q$ is in the convex hull of $\M,$ we conclude that
the flat region that is enclosed by the straight line connects $p, q$ and $\gamma(s)$ is part of $\M,$
i.e., $\M$ has a flat side. This leads to a contradiction.

Therefore, $\gamma(s)$ is a straight line. By Cheeger-Gromoll splitting
theorem (see Theorem 2 in \cite{CG}), we complete the proof of Theorem \ref{intth1.1}.
\end{proof}

\bigskip

\section{The Gauss map and Legendre transform}
In this section, we will discuss properties of the Gauss map and the Legendre transform. We will use these properties
in later sections.
\label{gg}
\subsection{The Gauss map}
\label{gm}
Let $\M$ be a spacelike hypersurface, $\nu(X)$ be the timelike unit normal vector to $\M$
at $X.$ It's well known that the hyperbolic space $\mathbb{H}^{n}(-1)$ is canonically embedded in $\R^{n, 1}$
as the hypersurface
\[\lt<X, X\rt>=-1,\,\, x_{n+1}>0.\]
By parallel translating to the origin we can regard $\nu(X)$
as a point in $\mathbb{H}^n(-1).$ In this way, we define the Gauss map:
\[G: \M\rightarrow \mathbb{H}^n(-1);\,\, X\mapsto\nu(X).\]
If we take the hyperplane $\mathbb{P}:=\{X=(x_1, \cdots, x_{n}, x_{n+1}) |\, x_{n+1}=1\}$ and consider the projection of
$\mathbb{H}^n(-1)$ from the origin into $\mathbb{P}.$ Then $\mathbb{H}^n(-1)$ is mapped in
a one-to-one fashion onto an open unit ball $B_1:=\{\xi\in\R^n |\, \sum\xi^2_k<1\}.$ The map
$P$ is given by
\[P: \mathbb{H}^n(-1)\rightarrow B_1;\,\,(x_1, \cdots, x_{n+1})\mapsto (\xi_1, \cdots, \xi_n),\]
where $x_{n+1}=\sqrt{1+x_1^2+\cdots+x_n^2},$ $\xi_i=\frac{x_i}{x_{n+1}}.$
We will call the map $P\circ G: \M\rightarrow B_1$ the Gauss map and denote it
by $G$ for the sake of simplicity.

Next, let's consider the support function of $\M.$ We denote
\[v:=\lt<X, \nu\rt>=\frac{1}{\sqrt{1-|Du|^2}}\lt(\sum_ix_i\frac{\partial u}{\partial x_i}-u\rt).\]
Let $\{e_1, \cdots, e_n\}$ be an orthonormal frame on $\mathbb{H}^n.$ We will also denote
$\{\e_1, \cdots, \e_n\}$ the push-forward of $e_i$ by the Gauss map $G.$ Similar to the convex geometry case,
we denote
\[\Lambda_{ij}=v_{ij}-v\delta_{ij}\]
the hyperbolic Hessian. Here $v_{ij}$ denote the covariant derivatives with respect to the hyperbolic metric.

Let $\bar{\nabla}$ be the connection of the ambient space. Then, we have
 $$v_i=\bar{\nabla}_{e^*_i}X\cdot \nu+X\cdot \bar{\nabla}_{e_i}\nu=X\cdot e_i,$$ this implies
 $$X=\sum_iv_ie_i-v\nu.$$ Note that $\lt<\nu, \nu\rt>=-1,$
thus we have,
\begin{eqnarray}
\bar{\nabla}_{e_j^*}X&=&\sum_k(e_j(v_k)e_k+v_k\bar{\nabla}_{e_j}e_k)-v_j\nu-v\bar{\nabla}_{e_j}\nu \\
&=&\sum_k(e_j(v_k)e_k+v_k\nabla_{e_j}e_k+v_k\delta_{kj}\nu)-v_j\nu-ve_j\nonumber\\
&=&\sum_k\Lambda_{kj}e_k\nonumber,\\
g_{ij}&=&\bar{\nabla}_{e^*_i}X\cdot \bar{\nabla}_{e^*_j}X=\sum_k\Lambda_{ik}\Lambda_{kj},\\
h_{ij}&=&\bar{\nabla}_{e^*_i}X\cdot \bar{\nabla}_{e_j}\nu=\Lambda_{ij}.
 \end{eqnarray}
This implies that the eigenvalues of the hyperbolic Hessian are the curvature radius of $\M$. That is,
if the principal curvatures of $\M$ are $(\la_1, \cdots, \la_n),$ then the eigenvalues of the hyperbolic Hessian
are $\lt(\la_1^{-1}, \cdots, \la_n^{-1}\rt).$

Moreover, it is clear that
\be\label{gg1.1}
\bar{\nabla}_{e_j}\bar{\nabla}_{e_i}\nu=\delta_{ij}\nu,
\ee
this yields, for $k=1,2\cdots,n+1$,
\be\label{gg1.2}
\bar{\nabla}_{e_j}\bar{\nabla}_{e_i}x_k=x_k\delta_{ij},
\ee
where $x_k$ is the coordinate function. These properties will be used in Section \ref{es}.

\subsection{Legendre transform}
\label{lt}
Suppose $\M$ is a complete, noncompact, locally stictly convex, spacelike hypersurface.
Then $\M$ is the graph of a convex function
\[x_{n+1}=-\lt<X, E\rt>=u(x_1, \cdots, x_n),\]
where $E=(0, \cdots, 0, 1).$
Introduce the Legendre transform
\[\xi_i=\frac{\T u}{\T x_i},\,\, u^*=\sum x_i\xi_i-u.\]
From the theory of convex bodies we know that
\[\Omega=\lt\{(\xi_1, \cdots, \xi_n)| \xi_i=\frac{\partial u}{\partial x_i}(x), x\in\R^n\rt\}\]
is a convex domain.

In particular, let $u(x)=\sqrt{1+|x|^2},$ $x\in\R^n,$ be a hyperboloid with principal curvatures being equal to $1$, then it's Legendre transform is
$\us(\xi)=-\sqrt{1-|\xi|^2},$ $\xi\in B_1.$

Next, we calculate the first and the second fundamental forms in terms of $\xi_i$. Since
\[x_i=\frac{\T\us}{\T \xi_i},\,\, u=\sum\xi_i\frac{\T\us}{\T\xi_i}-\us,\]
and it is well known that
$$\left(\frac{\T^2 u}{\T x_i\T x_j}\right)=\left(\frac{\T^2 \us}{\T \xi_i\T \xi_j}\right)^{-1}.$$
We have, using the coordinate $\{\xi_1,\xi_2,\cdots,\xi_n\}$, the first and second fundamental forms can be rewritten as:
$$g_{ij}=\delta_{ij}-\xi_i\xi_j, \text{ and\,\,  } h_{ij}=\frac{u^{* ij}}{\sqrt{1-|\xi|^2}},$$
where $\lt(u^{* ij}\rt)$ denotes the inverse matrix of $(\us_{ij})$ and $|\xi|^2=\sum_i\xi_i^2$. Now, let $W$ denote the Weingarten matrix of $\M,$ then
$$(W^{-1})_{ij}=\sqrt{1-|\xi|^2}g_{ik}\us_{kj}.$$

From the discussion above we can see that if $\M_u=\{(x, u(x)) | x\in\R^n\}$ is a complete, strictly convex, spacelike
hypersurface satisfies $\sigma_{n-1}(\la[\M])=1,$ then the Legendre transform of
$u$ denoted by $\us,$ satisfies $\frac{\sigma_n}{\sigma_1}(\la^*[\w\gas_{ik}\us_{kl}\gas_{lj}])=1.$
Here $\w=\sqrt{1-|\xi|^2}$ and $\gas_{ij}=\delta_{ij}-\frac{\xi_i\xi_j}{1+\w}$ is the square root of the matrix $g_{ij}.$

\bigskip

\section{Construction of $\sigma_{n-1}=1$ convex hypersurfaces}
\label{cs}
Sections \ref{cs}, \ref{es}, \ref{cv}, and \ref{ub} will be devoted to the construction of complete, strictly convex,
spacelike $\sigma_{n-1}=1$ hypersurfaces with bounded principal curvatures. There are a few difficulties we need to conquer in
this construction process. First, we need to make sure the hypersurface we construct is strictly convex. Second, we need to show that
the hypersurface we construct has bounded principal curvatures. In order to overcome these difficulties, we will apply Anmin Li's idea
(see \cite{LA}) to study the Legendre transform of the solution.

Let's recall Theorem 3.1 in \cite{LA}.
\begin{theo}
\label{csth1.1}(Theorem 3.1 in \cite{LA})
Let $\M$ be a closed, noncompact, spacelike, strictly convex hypersurface. If there exists a constant
$d>0$ such that $\la_i\geq d$ for all $i=1, 2, \cdots, n$ everywhere on $\M,$ then\\
1. The Gauss map $G: \M\rightarrow B_1$ is a diffeomorphism;\\
2. $\varphi\in C^0(\partial B_1),$ where $\varphi=\lim_{\xi\rightarrow\partial B_1}\us(\xi).$\\
Here $\us$ is the Legendre transform of the height function of $\M.$
\end{theo}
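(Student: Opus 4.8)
The plan is to extract everything from the Gauss‑map/Legendre picture set up in \S\ref{gm}--\S\ref{lt}. Write $\M$ as the graph of the convex function $u$; then $\nu=(Du,1)/\sqrt{1-|Du|^2}$, so after the projection $P$ the Gauss map is just the gradient map, $G(X)=Du(x)$, and its image $\Omega:=G(\M)=Du(\R^n)$ is an open convex subset of $B_1$: open because $dG$ equals the Weingarten operator (Weingarten formula in \eqref{Gauss}), whose eigenvalues are the $\la_i\geq d>0$ and hence invertible; convex because it is the gradient image of an entire convex function; and contained in $B_1$ by the spacelike condition. Strict convexity of $u$ makes $Du$, hence $G$, injective, so $G\colon\M\to\Omega$ is already a diffeomorphism. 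Therefore item~1 is exactly the assertion $\Omega=B_1$, and item~2 is the assertion that $\us=u^\ast|_\Omega$ (which on $\Omega$ is the honest Legendre transform, with the defining supremum attained) extends to an element of $C^0(\overline{B_1})$.

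For item~1 I would start from a reformulation of the curvature bound. Since $(W^{-1})_{ij}=\w\,g_{ik}\us_{kj}$ has eigenvalues $\la_i^{-1}$, the hypothesis $\la_i\geq d$ is equivalent to $D^2\us\leq\frac1{d\w}g^{-1}$ on $\Omega$; and since a direct computation gives $D^2\bigl(\tfrac1d\sqrt{1-|\xi|^2}\bigr)=-\tfrac1{d\w}g^{-1}$, this is equivalent to
\[
\us(\xi)+\tfrac1d\sqrt{1-|\xi|^2}\ \text{being concave on }\Omega .
\]
Combining this with convexity of $\us$ and with the bound $\us(\xi)=\langle x,Du(x)\rangle-u(x)\geq-u(0)$ (convexity of $u$) gives that $\us$ is bounded on $\Omega$; dualizing then yields the linear growth $u(y)\geq|y|-C_0$. (Equivalently, the normal‑curvature estimate $g''(t)\geq d\sqrt{1-|Du(t\omega)|^2}\,(1-g'(t)^2)$ along each ray $t\mapsto u(t\omega)$ forces $|Du(t\omega)|\to1$ for every $\omega\in S^{n-1}$.) Either way one concludes $\partial\Omega\subseteq\partial B_1$, and an open convex subset of $B_1$ whose boundary lies on $\partial B_1$ must be $B_1$ itself (a point of $B_1$ outside $\overline\Omega$ could be joined by a segment to a point of $\Omega$, producing a boundary point of $\Omega$ strictly inside $B_1$). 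Hence $\Omega=B_1$ and $G\colon\M\to B_1$ is a diffeomorphism. If one grants in addition that $\M$ is complete, there is a softer argument: $G^\ast g_{\mathbb{H}^n}=\mathrm{III}\geq d^2 g_\M$, so $G$ is a length‑nondecreasing local diffeomorphism from a complete manifold onto the simply connected $\mathbb{H}^n$, hence a covering map, hence a diffeomorphism.

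For item~2 I would use the same two facts, now on $B_1=\Omega$: $\us$ is convex and bounded, and $\us+\tfrac1d\sqrt{1-|\xi|^2}$ is concave and bounded. Because $\sqrt{1-|\xi|^2}\to0$ at $\partial B_1$, the lower‑semicontinuous convex extension of $\us$ and the upper‑semicontinuous concave extension of $\us+\tfrac1d\sqrt{1-|\xi|^2}$ to $\overline{B_1}$ have the same trace on $\partial B_1$; squeezed between them, $\us$ extends continuously to $\overline{B_1}$, so $\varphi:=\us|_{\partial B_1}=\lim_{\xi\to\partial B_1}\us(\xi)$ lies in $C^0(\partial B_1)$.

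The main obstacle will be the geometric step in item~1: promoting the pointwise bound $\la_i\geq d$ to the global statement $\partial\Omega\subseteq\partial B_1$ (equivalently, the linear growth of $u$) — this is where the curvature comparison, not merely strict convexity, is genuinely needed — together with the uniformity of the resulting estimates, which is what upgrades pointwise boundary values to $C^0$ boundary values in item~2. The remaining ingredients (invertibility of $dG$, injectivity from strict convexity, the equivalence $\la_i\geq d\Leftrightarrow\us+\tfrac1d\sqrt{1-|\xi|^2}$ concave, and the convex‑analysis facts) are routine.
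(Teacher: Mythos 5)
This statement is not proved in the paper at all: it is quoted as Theorem 3.1 of \cite{LA}, so your proposal can only be measured against the cited source's Legendre-transform strategy, with which it is broadly aligned. Most of your reduction is sound: identifying $G$ with $Du$ on the entire graph, invertibility of $dG$ from $\la_i\geq d$, injectivity from strict convexity, the equivalence ``$\la_i\geq d$ for all $i$ $\iff$ $\us+\tfrac1d\sqrt{1-|\xi|^2}$ is concave'' (your Hessian computation is correct), the resulting two-sided bound on $\us$ over $\Omega$, and the squeeze in item 2 (the lsc convex closure of $\us$ and the usc concave closure of $\us+\tfrac1d\sqrt{1-|\xi|^2}$ have the same radial boundary values because $\sqrt{1-|\xi|^2}\to0$, forcing a continuous extension). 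Item 2 is complete once $\Omega=B_1$ is known.

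The genuine gap is precisely the step you flag as the crux, $\partial\Omega\subseteq\partial B_1$, and neither of your two routes closes it. The dualization is circular: Fenchel's inequality with $\sup_\Omega\us\leq C$ gives only $u(y)\geq\sup_{\xi\in\Omega}y\cdot\xi-C$, and upgrading this to $u(y)\geq|y|-C_0$ requires the support function of $\Omega$ to equal $1$ in every direction, which is essentially the surjectivity being proved. The ray estimate proves less than you state: it shows $t\mapsto Du(t\omega)\cdot\omega$ is nondecreasing and $\limsup_{t\to\infty}|Du(t\omega)|=1$ (there is no monotonicity of $|Du(t\omega)|$ itself), i.e.\ $\sup_{\xi\in\Omega}|\xi|=1$; it does not locate the limiting gradient directions, so it neither excludes $\partial\Omega\cap B_1\neq\emptyset$ nor yields linear growth. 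The ingredients you already assembled do suffice, but via a different closing argument: if $\xi^0\in\partial\Omega\cap B_1$, then $|D\us(\xi)|=|x|\to\infty$ as $\Omega\ni\xi\to\xi^0$ (preimages must escape to infinity since $\Omega$ is open and $u$ is entire); fixing a ball $B_\rho(\eta_0)\subset\Omega$, convexity of the bounded $\us$ gives $D\us(\xi)\cdot(\eta_0-\xi)\leq C-\rho\,|D\us(\xi)|$, while concavity of the bounded $\us+\tfrac1d\sqrt{1-|\xi|^2}$, whose correction has bounded gradient near $\xi^0$ because $|\xi^0|<1$, gives $D\us(\xi)\cdot(\eta_0-\xi)\geq-C'$; together these bound $|D\us(\xi)|$, a contradiction. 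Two smaller caveats: your ``softer'' covering-space argument needs completeness of the induced metric, which is not a hypothesis and does not follow from a lower curvature bound alone (you acknowledge this); and convexity of $\Omega$ should be justified by openness together with $\Omega=\operatorname{int}\operatorname{dom}$ of the conjugate, since gradient images of entire convex functions are not convex in general.
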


From Theorem \ref{csth1.1} and the discussion in Subsection \ref{lt}, we know that for a closed, noncompact, spacelike, strictly convex hypersurface
$\M$ with principal curvatures bounded from below by a positive constant, and satisfies
\[\sigma_{n-1}(\la[\M])=1,\] its Legendre transform $\us$ must satisfy the following equation:
\be\label{cs1.1*}
\left\{
\begin{aligned}
F(\w\gas_{ik}\us_{kl}\gas_{lj})&=1,\,\,\mbox{in $B_1$}\\
\us&=\vp,\,\,\mbox{on $\partial B_1$,}
\end{aligned}
\right.
\ee
where $\vp\in C^0(\partial B_1),$ $\w=\sqrt{1-|\xi|^2},$ $\gas_{ik}=\delta_{ik}-\frac{\xi_i\xi_k}{1+\w},$
$\us_{kl}=\frac{\T^2\us}{\T\xi_k\T\xi_l},$
and $F(\w\gas_{ik}\us_{kl}\gas_{lj})=\lt(\frac{\sigma_n}{\sigma_1}(\la^*[\w\gas_{ik}\us_{kl}\gas_{lj}])\rt)^{\frac{1}{n-1}}.$

Due to technical issues, we cannot solve the Dirichlet problem with $C^0$ boundary data.
In the following, we will study the existence of solutions to the following equation instead:
\be\label{cs1.1}
\left\{
\begin{aligned}
F(\w\gas_{ik}\us_{kl}\gas_{lj})&=1,\,\,\mbox{in $B_1$}\\
\us&=\vp,\,\,\mbox{on $\partial B_1$,}
\end{aligned}
\right.
\ee
where $\vp\in C^2(\partial B_1).$

Notice that equation \eqref{cs1.1} is degenerate on $\partial B_1.$ Therefore, we will consider
the approximate problem:
\be\label{cs1.2}
\left\{
\begin{aligned}
F(\w\gas_{ik}\urs_{kl}\gas_{lj})&=1,\,\,\mbox{in $B_r$}\\
\urs&=\vp,\,\,\mbox{on $\partial B_r$,}
\end{aligned}
\right.
\ee
where $0<r<1.$

\bigskip

\section{Existence of solutions to equation \eqref{cs1.2}}
\label{es}
In this section, we will show that for each $0<r<1,$ there exists a solution to equation \eqref{cs1.2}.
\subsection{$C^0$ estimates}
\label{es0}
Since $\urs$ is a convex function we have
\[\max_{B_r} \urs\leq \max_{\partial B_r}\vp.\]
In order to show that $\urs$ is bounded from below, similar to \cite{LA}, we consider
a special subsolution of \eqref{cs1.1}
\[\lus=-n^{\frac{1}{n-1}}\sqrt{1-|\xi|^2}+a_1\xi_1+ \cdots +a_n\xi_n+c,\]
where $a_1, \cdots, a_n, c$ are constants such that
\[\lus|_{\partial B_1}<\inf_{\partial B_1}\vp.\]
Note that $\lus$ is the linear translation of the Legendre transform of a standard Hyperboloid
whose principal curvatures are equal to $n^{-\frac{1}{n-1}}$.
Then the maximum principle implies $\urs>\lus$ for any $0<r<1.$

\subsection{$C^1$ estimates}
\label{es1}
By Section 2 of \cite{CNS3}, for any $0<r<1,$ we can construct a subsolution
$\lu^{r*}$ such that
\be\label{es1.1}
\left\{
\begin{aligned}
F(\w\gas_{ik}\lu^{r*}_{kl}\gas_{lj})&\geq 1,\,\,\mbox{in $B_r$}\\
\lu^{r*}&=\vp,\,\,\mbox{on $\partial B_r$.}
\end{aligned}
\right.
\ee
Then by the convexity of $\urs$ we have
\[|D\urs|\leq\max_{\partial B_r}|D\lu^{r*}|.\]

\subsection{$C^2$ boundary estimates}
\label{es2}
For our convenience, in this subsection we will use the hyperbolic model (see Subsection \ref{gm}), and
write equation \eqref{cs1.2} as follows:
\be\label{es1.2}
\left\{
\begin{aligned}
F(v_{ij}-v\delta_{ij})&=1,\,\,\mbox{in $U_r$}\\
v&=\frac{\vp}{\sqrt{1-r^2}},\,\,\mbox{on $\partial U_r.$}
\end{aligned}
\right.
\ee
where $U_r=P^{-1}(B_r)\subset\mathbb{H}^n(-1).$ Here we want to point out that $v=\frac{\us}{\sqrt{1-|\xi|^2}}$
and $\partial U_r\subset \mathbb{P}=\{x_{n+1}=\frac{1}{\sqrt{1-r^2}}\}.$

Equation of this type has been studied by Bo Guan in \cite{Guan}. However, our function $F$ is slightly different from functions in \cite{Guan}.
More precisely, our function $F$ doesn't satisfy the assumption (1.7) in \cite{Guan}. Therefore, in order to obtain the
$C^2$ boundary estimates, we need to give
a different proof of Lemma 6.2 in \cite{Guan}.

\begin{lemm}
\label{eslem1.1}
There exist some uniform positive constant $t, \delta, \epsilon$ such that
\[h=(v-\underline{v})+t\lt(\frac{1}{\sqrt{1-r^2}}-x_{n+1}\rt)\]
satisfies
\[\mathfrak{L}h\leq-a(1+\sum_i F^{ii}),\,\,\mbox{in $U_{r\delta},$}\]
and \[h\geq 0,\,\,\mbox{on $\partial U_{r\delta}.$}\]
Here $a>0$ is some positive constant, $\underline{v}$ is a subsolution, $\mathfrak{L}f:=F^{ij}\nabla_{ij}f-f\sum_iF^{ii},$
and
$U_{r\delta}:=\lt\{x\in U_r\left|\frac{1}{\sqrt{1-r^2}}-x_{n+1}<\delta\right.\rt\}.$
\end{lemm}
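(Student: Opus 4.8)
The plan is to follow the barrier argument of \cite{Guan}, with the auxiliary function $h$ transplanted to the hyperbolic model of Subsection \ref{gm}. The boundary inequality $h\geq 0$ on $\partial U_{r\delta}$ is the easy half: on $\partial U_r$ one has $x_{n+1}=1/\sqrt{1-r^2}$ and $v=\underline{v}=\varphi/\sqrt{1-r^2}$, so $h=0$ there, while the remaining part of $\partial U_{r\delta}$ is the cap $\{x\in U_r:\ 1/\sqrt{1-r^2}-x_{n+1}=\delta\}$, on which $h=(v-\underline{v})+t\delta\geq t\delta>0$; here $v\geq\underline{v}$ in $U_r$ by the comparison principle, since the equation $w\mapsto F(w_{ij}-w\delta_{ij})$ is proper (its linearization in the undifferentiated term is $-\sum_iF^{ii}<0$). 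So the substance of the lemma is the differential inequality in $U_{r\delta}$, and I would split $\mathfrak{L}h=\mathfrak{L}(v-\underline{v})+t\,\mathfrak{L}d$, where $d=1/\sqrt{1-r^2}-x_{n+1}$.

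For the $d$-piece I use the hyperbolic identity \eqref{gg1.2}, namely $\nabla_{ij}x_{n+1}=x_{n+1}\delta_{ij}$, which gives $\nabla_{ij}d=-x_{n+1}\delta_{ij}$ and hence
\[
\mathfrak{L}d=F^{ij}\nabla_{ij}d-d\sum_iF^{ii}=-(x_{n+1}+d)\sum_iF^{ii}=-\frac{1}{\sqrt{1-r^2}}\sum_iF^{ii}.
\]
For the $(v-\underline{v})$-piece, write $W=(v_{ij}-v\delta_{ij})$ and $\underline{W}=(\underline{v}_{ij}-\underline{v}\delta_{ij})$; the hyperbolic Hessian of $v-\underline{v}$ is $W-\underline{W}$, so $\mathfrak{L}(v-\underline{v})=F^{ij}(W)(W_{ij}-\underline{W}_{ij})$. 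Since $F=(\sigma_n/\sigma_1)^{1/(n-1)}$ is homogeneous of degree one, Euler's relation gives $F^{ij}(W)W_{ij}=F(W)=1$; since $F$ is concave and $\underline{v}$ is a subsolution, $F^{ij}(W)\underline{W}_{ij}\geq F(\underline{W})\geq 1$. Hence $\mathfrak{L}(v-\underline{v})=1-F^{ij}(W)\underline{W}_{ij}\leq 0$.

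The point at which the argument must depart from \cite{Guan} is the production of the summand $-a$ (not multiplied by $\sum_iF^{ii}$) on the right side. In \cite{Guan} this is free from a structural hypothesis on $F$ that amounts to a uniform lower bound $\sum_iF^{ii}\geq c_0>0$; this fails for $F=(\sigma_n/\sigma_1)^{1/(n-1)}$, whose linearization degenerates wherever a principal curvature of $\M$ tends to zero. I would argue by a dichotomy on the largest eigenvalue $\mu_{\max}$ of $W$ (the largest curvature radius). Writing $\mu=\la[W]$ and using the equation in the form $\sigma_n(\mu)=\sigma_1(\mu)$: if $\mu_{\max}\leq R$ for a fixed large $R$, Maclaurin's inequalities force $n^{n/(n-1)}\leq\sigma_1(\mu)\leq nR$ and $\sigma_{n-1}(\mu)\geq n^2$, so the identity $\sum_iF^{ii}=\dfrac{\sigma_{n-1}(\mu)-n}{(n-1)\,\sigma_1(\mu)}$ gives $\sum_iF^{ii}\geq\dfrac1R=:c_0$; with $\mathfrak{L}(v-\underline{v})\leq 0$ and $t$ taken large this yields $\mathfrak{L}h\leq-\dfrac{t}{2\sqrt{1-r^2}}\big(c_0+\sum_iF^{ii}\big)\leq-a(1+\sum_iF^{ii})$. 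If instead $\mu_{\max}\geq R$, then $W$ has an eigenvalue much larger than every eigenvalue of the fixed subsolution $\underline{W}$, so $W$ and $\underline{W}$ are far from proportional, and the strict concavity of $F$ should give a quantitative improvement $F^{ij}(W)\underline{W}_{ij}\geq F(\underline{W})+\omega_0\geq 1+\omega_0$ with $\omega_0>0$ uniform; then $\mathfrak{L}(v-\underline{v})\leq-\omega_0$, and together with $\mathfrak{L}(td)\leq 0$ and $t\geq a$ one again gets $\mathfrak{L}h\leq-a(1+\sum_iF^{ii})$.

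The main obstacle is the last, quantitative strict-concavity statement: uniformly over the relevant matrices, if $W$ has an eigenvalue $\geq R$ while $\underline{W}$ stays in a fixed compact set of positive definite matrices, then the concavity defect $F^{ij}(W)\underline{W}_{ij}-F(\underline{W})$ is bounded below by a positive constant depending only on $R$, $n$ and the bounds for $\underline{W}$. I would establish this by diagonalizing $W$, using $F^{ii}=\frac{1}{n-1}\big(\mu_i^{-1}-\sigma_1(\mu)^{-1}\big)$, separating the indices for which $\mu_i$ is comparable to $\mu_{\max}$ from those for which $\mu_i$ is bounded, and combining the elementary bound $F^{ij}(W)\underline{W}_{ij}\geq\underline\mu_{\min}\sum_iF^{ii}$ with a further sub-dichotomy on the size of $\sum_iF^{ii}$. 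Once $R$, $\omega_0$ and $c_0$ are fixed, the constants $a$, $t$, $\delta$ (and $\epsilon$) are chosen at the very end so that all of the above inequalities hold simultaneously and uniformly in the boundary point.
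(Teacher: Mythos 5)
Your first half is exactly the paper's argument: $h=0$ on $\partial U_r$ and $h\geq t\delta$ on the cap, the identity \eqref{gg1.2} giving
$\mathfrak{L}\bigl(t(\tfrac{1}{\sqrt{1-r^2}}-x_{n+1})\bigr)=-\tfrac{t}{\sqrt{1-r^2}}\sum_i F^{ii}$, and concavity plus Euler's relation giving $\mathfrak{L}(v-\underline{v})=1-F^{ij}(W)\underline{W}_{ij}\leq 0$; the paper then simply takes $t$ large. The place where you depart from this is where the genuine gap sits: your case $\mu_{\max}\geq R$ rests on the ``quantitative strict-concavity'' claim $F^{ij}(W)\underline{W}_{ij}\geq F(\underline{W})+\omega_0$ with a uniform $\omega_0>0$, which you yourself flag as the main obstacle and only outline (``I would establish this by diagonalizing \dots''). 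As written that branch is an assertion with a plan attached, not a proof, so the proposal is incomplete.

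Moreover, the premise that forces you into the dichotomy is false. Whatever the precise hypothesis (1.7) of \cite{Guan} is, the property you actually need --- a uniform positive lower bound for $\sum_i F^{ii}$ --- does hold for $F=(\sigma_n/\sigma_1)^{1/(n-1)}$ on the admissible cone: since $F$ is concave and homogeneous of degree one on $\mathcal{S}_{\Gamma_n}$, for any positive definite $W$ and $A$ one has $F(A)\leq F(W)+F^{ij}(W)(A_{ij}-W_{ij})=F^{ij}(W)A_{ij}$, and taking $A=I$ gives
\[
\sum_i F^{ii}(W)\;\geq\;F(I)\;=\;n^{-\frac{1}{n-1}}\;>\;0 ,
\]
with no restriction on $\mu_{\max}$. (Equivalently, your own Maclaurin computation in the first branch works without the cutoff $\mu_{\max}\leq R$: on $\{\sigma_n=\sigma_1\}$ one has $\sigma_{n-1}\geq n\,\sigma_1^{(n-1)/n}$, so $\frac{\sigma_{n-1}-n}{(n-1)\sigma_1}$ is bounded below by a dimensional constant; in fact for $n\geq 3$ the trace even blows up as a curvature radius does, so no degeneration occurs.) With this bound, your ``easy'' branch covers every admissible $W$: $\mathfrak{L}(v-\underline{v})\leq 0$ and $\mathfrak{L}h\leq-\tfrac{t}{\sqrt{1-r^2}}\sum_i F^{ii}\leq -a(1+\sum_i F^{ii})$ once $t$ is chosen large, which is precisely the paper's one-line proof. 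So the fix is to delete the dichotomy and the unproved concavity-gap lemma and insert the trace bound above; in its current form, the second branch is a gap.
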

\begin{proof}
When $t$ large, $\delta>0$ small, it's easy to see that we have $h\geq 0$ on $\partial U_{r\delta}.$
Moreover, by equation \eqref{gg1.2} we get
\[\mathfrak{L}h\leq-t\frac{1}{\sqrt{1-r^2}}\sum F^{ii}-C,\]
where $C$ depends on $\underline{v}.$
Therefore, we are done.
\end{proof}
We want to point out that the existence of subsolution $\underline{v}$ has been proved in Theorem 1.2 of \cite{Guan}.
The rest of $C^2$ boundary estimates follows from \cite{Guan} directly.

\subsection{Global $C^2$ estimates}
\label{es22}
Just like before, since we don't have the assumption (1.7) of \cite{Guan}, we cannot apply the global $C^2$ estimates there.
We need another approach to prove the $C^2$ estimate of \eqref{cs1.2}.
In particular, we will study the Legendre transform of $\urs,$ which we will denote
by $\ur.$ We will also denote $\Omega_r=D\urs(B_r).$ Then, it's easy to see that $\ur$ satisfies
\be\label{es1.3}
\sigma_{n-1}(\la[a_{ij}])=1,\,\,\mbox{in $\Omega_r$,}
\ee
where $a_{ij}=\frac{\ga^{ik}\ur_{kl}\ga^{lj}}{w},$
$\ga^{ik}=\delta_{ik}+\frac{\ur_i\ur_k}{w(1+w)},$ and $w=\sqrt{1-|D\ur|^2}.$

Since the principal curvature lower bound of $\la[a_{ij}]$ implies the curvature radius upper bound of
$\la^*[\w\gas_{ik}u^{r*}_{kl}\gas_{lj}].$  We will consider
\be\label{es1.4}
\phi=-\log\sigma_n(\la_1, \cdots, \la_n)-N\lt<\nu, E\rt>.
\ee
If $\phi$ achieves its maximum at an interior point $x_0\in\Omega_r.$ Let $\{\tau_1, \cdots, \tau_n\}$ be the orthonormal frame
such that $h_{ij}=\la_i\delta_{ij}$ at $X_0=(x_0, u^r(x_0)).$ Then at this point we have,
\be\label{es1.5}
\phi_i=-\frac{(\sigma_n)_i}{\sigma_n}-Nh_{im}\lt<\tau_m, E\rt>=0,
\ee
and
\be\label{es1.6}
\begin{aligned}
0\geq\sigma^{ii}_{n-1}\phi_{ii}&=-\frac{\sigma_{n-1}^{ii}(\sigma_n)_{ii}}{\sigma_n}
+\frac{\sigma_{n-1}^{ii}(\sigma_n)^2_i}{\sigma_n^2}-N\sigma_{n-1}^{ii}\la_i^2\lt<\nu, E\rt>\\
&=-\frac{\sigma_{n-1}^{ii}(\sigma_n)_{ii}}{\sigma_n}+\sigma_{n-1}^{ii}N^2\la_i^2\lt<\tau_i, E\rt>^2
-N\sigma_{n-1}^{ii}\la_i^2\lt<\nu, E\rt>\\
&\geq n^2\sigma_n-\sigma_1\sigma_{n-1}-N\sigma_1\sigma_{n-1}\lt<\nu, E\rt>\\
&\geq n^2\sigma_n-\sigma_1\sigma_{n-1}+N\sigma_1\sigma_{n-1}\frac{1}{\sqrt{1-|Du|^2}},
\end{aligned}
\ee
where we have used $\sigma_{n-1}^{ii}h_{iik}=0$ and Theorem \ref{ceth1.1}.

Choosing $N=2$  leads to a contradiction. Therefore, we conclude that $\phi$
achieves its maximum at the boundary $\T\Omega_r.$ Combining with the boundary $C^2$ estimates in Subsection \ref{es2}, we obtain
$\sigma_n(\la^*)$ is bounded from above. Since $\frac{\sigma_n(\la^*)}{\sigma_1(\la^*)}=1,$ we have $\sigma_1(\la^*)$ is bounded from above.
Therefore, we obtain the $C^2$ estimates for $|D^2\urs|.$
By the standard continuity argument, we know that equation \eqref{cs1.2} is solvable for any $0<r<1.$

\bigskip
\section{Convergence of solutions to a strictly convex hypersurface}
\label{cv}
In this section we want to construct the solution to equation \eqref{cs1.1}.

\subsection{Barrier function}
\label{cvb}
First, recall section 4 of \cite{LA} we know that there exists a strictly convex solution
$\uus\in C^{\infty}(B_1)\cap C^0(\bar{B}_1)$ satisfies
\be\label{cvb1.1}
\left\{
\begin{aligned}
\det\lt(\w\gas_{ki}\us_{kl}\gas_{lj}\rt)&=n^{\frac{n}{n-1}},\,\,\mbox{in $B_1$}\\
\us&=\vp,\,\,\mbox{on $\T B_1.$}
\end{aligned}
\right.
\ee
By Maclaurin's inequality, we know that $\uus$ is a supersolution of equation \eqref{cs1.1}.
On the other hand, consider the function
\[\lus_0=-A\sqrt{1-|\xi|^2}+\vp,\]
by a straightforward calculation we can see that, when $A>0$ sufficiently large,
$\lus_0$ is a subsolution of \eqref{cs1.1}.

In this section we will consider the convergence of functions $\urs,$ where $\urs$ satisfies
\be\label{cvb1.2}
\left\{
\begin{aligned}
F(\w\gas_{ik}\urs_{kl}\gas_{lj})&=1,\,\,\mbox{in $B_r$}\\
\urs&=\uus,\,\,\mbox{on $\partial B_r$.}
\end{aligned}
\right.
\ee
Note that the existence of the solution to equation \eqref{cvb1.2} has been proved in Section \ref{es}.
In the following, we will denote $u^r$ as the Legendre transform of $\urs,$ $\lu_0$ as the Legendre transform of $\lus_0$,
and $\uu$ as the Legendre transform of $\uus.$ We will also denote $\Omega_r=D\urs(B_r)$ as the domain of $\ur.$
We will show that there exists a subsequence of $\{\ur\}$ which converges locally smoothly to a strictly convex function $u$. Moreover, $u$ satisfies $\sigma_{n-1}(\la[\M_u])=1$ and $\la[\M_u]\leq C.$

\subsection{Local $C^0$ estimates}
\label{cv0}
By the maximum principle we know that for any $r>0$ we have $\lus_0\leq\urs\leq\uus$ everywhere. Therefore,
\[\lt|\urs\rt|\leq C_0.\]
Moreover, since $\urs$ is convex we have, for any $r>1/2$
\[\lt|D\urs(0)\rt|\leq 2\lt(\max\urs-\min\urs\rt)\leq C_1.\]
Note also that at the point where $\min u^r$ is achieved we have $Du^r=0.$
Thus, $\min\ur$ is achieved in $B_{C_1}(0)\subset\R^n.$
On the other hand, when $r>1/2$ we have,
\[|\min\ur|=\lt|0\cdot D\urs(0)-\urs(0)\rt|\leq C_0.\]
These together with the fact that $|Du^r|<1$ yield in a ball of radius $R>C_1$ we have
\[|u^r|\leq C_0+R,\]
for any $r>r_0>1/2.$ Furthermore, from the discussion above, we know that by a coordinate transformation,
we may always assume $2C_0+1\geq u^r(0)\geq1$ and $D\ur(0)=0.$
\subsection{Local $C^1$ estimates}
\label{cv1}
Before we start to work on the derivative estimates, we need the following Lemma.
\begin{lemm}
\label{cvlem1.7}
Let $\urs$ be the solution of \eqref{cvb1.2} and $\ur$ be the Legendre transform of $\urs.$
Then, $\ur|_{\partial\Omega_r}\rightarrow+\infty$ as $r\rightarrow 1.$
\end{lemm}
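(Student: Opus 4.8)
The plan is to show that as $r\to 1$, the gradient image $\Omega_r=D\urs(B_r)$ exhausts $\R^n$ in a controlled way, and that along the boundary $\partial\Omega_r$ the Legendre transform $\ur$ blows up. Recall that $\ur(x)=\sum_i x_i\xi_i-\urs(\xi)$ where $x=D\urs(\xi)$, equivalently $\xi=D\ur(x)$ and $\urs(\xi)=\sum_i\xi_i(\ur)_i-\ur$. Since the spacelike condition gives $|D\ur|<1$, we have $\Omega_r\subset B_1$ for every $r$; moreover $\partial\Omega_r$ consists precisely of the gradients $D\urs(\xi)$ with $\xi\in\partial B_r$, i.e. $\xi\to\partial B_1$ as $r\to 1$. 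The key geometric fact I would extract from the barrier construction in Subsection \ref{cv0} is a two-sided pinch: since $\lus_0\le\urs\le\uus$ on $\bar B_r$ with equality on $\partial B_r$ (where $\urs=\uus$), and since $\uus\in C^0(\bar B_1)$ while $\lus_0=-A\sqrt{1-|\xi|^2}+\vp$ vanishes in its radical part on $\partial B_1$, one controls $D\urs$ near $\partial B_r$ from below. Concretely, for $\xi_0\in\partial B_r$ the convexity of $\urs$ and the sandwich $\lus_0\le\urs$ force the subgradient of $\urs$ at $\xi_0$ to point outward with a length that grows like $\partial_\nu\uus$ forced against the steepening barrier $\lus_0$; as $r\to 1$ this outward normal component of $D\urs(\xi_0)$ tends to $1$, so $\partial\Omega_r$ approaches $\partial B_1$.

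First I would fix a point $\eta\in\partial B_1$ and a sequence $r_j\to 1$, pick $\xi_j\in\partial B_{r_j}$ with $\xi_j/|\xi_j|\to\eta$, and set $x_j=D\urs(\xi_j)\in\partial\Omega_{r_j}$. Using $|D\ur|<1$ we know $x_j$ need not go to infinity in $\R^n$, but the Legendre value is $\ur(x_j)=\langle x_j,\xi_j\rangle-\urs(\xi_j)$. The point is that $\langle x_j,\xi_j\rangle=\langle x_j, D\ur(x_j)\rangle$ and the convex conjugate identity gives $\ur(x_j)=\sup_{\xi\in B_{r_j}}\big(\langle x_j,\xi\rangle-\urs(\xi)\big)$. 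Now use the subsolution barrier: since $\urs\le\uus$ with $\uus$ bounded on $\bar B_1$ by $C_0$, we get
\[
\ur(x_j)\;\ge\;\langle x_j,\xi\rangle-\uus(\xi)\;\ge\;\langle x_j,\xi\rangle-C_0
\]
for every $\xi\in B_{r_j}$. Choosing $\xi=(1-\varepsilon)\,x_j/|x_j|$ (legitimate once $1-\varepsilon<r_j$) gives $\ur(x_j)\ge(1-\varepsilon)|x_j|-C_0$. So it suffices to prove $|x_j|=|D\urs(\xi_j)|\to\infty$. This is where the lower barrier $\lus_0$ enters: the function $\urs-\lus_0$ is $\ge 0$ on $\bar B_{r_j}$ and $=\uus-\lus_0\ge 0$ at $\xi_j\in\partial B_{r_j}$ with $\xi_j$ near $\partial B_1$; since $\lus_0$ has $|D\lus_0(\xi)|=\dfrac{A|\xi|}{\sqrt{1-|\xi|^2}}+O(1)\to\infty$ as $\xi\to\partial B_1$ and $\urs$ lies above a translate of this profile with matching boundary values $\uus$, a one-sided comparison of difference quotients along the radial direction at $\xi_j$ forces $|D\urs(\xi_j)|\ge |D\lus_0(\xi_j)|-C\to\infty$. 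Combining, $\ur(x_j)\to+\infty$, and since $\eta\in\partial B_1$ and the sequence on $\partial\Omega_{r_j}$ were arbitrary, $\ur|_{\partial\Omega_r}\to+\infty$ uniformly as $r\to 1$.

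I expect the main obstacle to be making the radial comparison $|D\urs(\xi_j)|\ge|D\lus_0(\xi_j)|-C$ rigorous, since $\urs$ and $\lus_0$ agree only on $\partial B_{r_j}$, not on $\partial B_1$, and one must control how $D\urs$ behaves at a boundary point of the \emph{smaller} ball where the equation is still nondegenerate. The clean way around this is to note that $\urs\ge\lus_0$ throughout $B_{r_j}$ together with $\urs(\xi_j)=\uus(\xi_j)\ge\lus_0(\xi_j)$: for a convex function, if $\urs\ge\lus_0$ on a neighborhood and we move from an interior point $(1-\delta)\xi_j/|\xi_j|$ toward $\xi_j$, the slope of $\urs$ in the radial direction is at least $\big(\urs(\xi_j)-\urs((1-\delta)\xi_j/|\xi_j|)\big)/(\text{radial gap})$, and the numerator is bounded below using $\urs\ge\lus_0$ and the explicit blow-up rate of $\lus_0$. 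Pushing $\delta\to 0$ and using convexity of $\urs$ to pass the radial difference quotient to the radial component of $D\urs(\xi_j)$ yields the bound. An alternative, if one prefers to avoid boundary-gradient subtleties entirely, is to instead argue via the hyperbolic model: by Theorem \ref{csth1.1} and Subsection \ref{lt}, the gradient map $D\urs$ corresponds to the Gauss map of $\M_{\ur}$ restricted to $U_r$, and the statement $\ur|_{\partial\Omega_r}\to\infty$ translates to saying the height function of the hypersurface $\mathcal M_{\ur}$ over $\partial\Omega_r$ diverges, which follows because $\mathcal M_{\ur}$ lies between the hyperboloid-type graphs associated to $\uus$ and $\lus_0$, and the lower one already forces divergence of the height near the edge of its domain. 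Either route closes the proof.
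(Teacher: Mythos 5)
There is a genuine gap at the decisive step, namely your argument that $|D\urs(\xi_j)|\to\infty$. The duality reduction before it is fine: since $\urs\le\uus$ and $\uus$ is bounded, indeed $\ur(x_j)\ge(1-\varepsilon)|x_j|-C_0$, so everything hinges on the boundary gradient of $\urs$ blowing up as $r\to1$. But you try to extract this from the \emph{lower} barrier $\lus_0$, and that comparison runs the wrong way. From $\urs\ge\lus_0$ at interior points you only get a lower bound on $\urs$ there, hence an \emph{upper} bound on the radial difference quotient $\bigl(\urs(\xi_j)-\urs(t\xi_j/|\xi_j|)\bigr)/(r_j-t)$; to bound it from below you would need $\urs$ bounded \emph{above} at the interior point. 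More generally, a barrier lying below $\urs$ (even one touching $\urs$ at $\xi_j$) can only cap the outward normal derivative of $\urs$ from above, never force it to be large; note also that $\lus_0$ is bounded in value (only $|D\lus_0|$ blows up), so ``the explicit blow-up rate of $\lus_0$'' cannot control the numerator. The same inversion affects your alternative hyperbolic/Legendre sketch: Legendre duality is order-reversing, so $\urs\ge\lus_0$ only bounds $\ur$ from \emph{above} by a hyperboloid-type graph and gives no divergence.

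The correct mechanism---and the paper's proof---uses the supersolution $\uus$, not the subsolution. Since $\urs\le\uus$ in $B_r$ with $\urs=\uus$ on $\partial B_r$, the outward normal derivative satisfies $\partial_\nu\urs\ge\partial_\nu\uus$ on $\partial B_r$ (Hopf-type comparison, the ``maximum principle'' step), whence $\ur|_{\partial\Omega_r}=[\xi\cdot D\urs-\urs]|_{\partial B_r}\ge[\xi\cdot D\uus-\uus]|_{\partial B_r}$. The quantitative blow-up then comes from Lemma 5.6 of \cite{LA} applied to the constant Gauss curvature solution $\uus$: its Legendre transform $\uu$ satisfies $\uu\sqrt{1-|D\uu|^2}\ge d_1>0$, and on $\partial B_r$ one has $|D\uu|=|\xi|=r$, so $[\xi\cdot D\uus-\uus]|_{\partial B_r}\ge d_1/\sqrt{1-r^2}\to\infty$. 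To repair your write-up, replace the comparison with $\lus_0$ by this boundary comparison with $\uus$ together with the cited lower bound for $\xi\cdot D\uus-\uus$; the subsolution $\lus_0$ plays no role in this lemma.
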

\begin{proof}
By Lemma 5.6 in \cite{LA} and the maximum principle we have
\be\label{cv1.18}
\begin{aligned}
\ur|_{\T\Omega_r}&=[\xi\cdot D\urs(\xi)-\urs(\xi)]|_{\T B_r}\\
&\geq [\xi\cdot D\uus(\xi)-\uus(\xi)]|_{\T B_r}\\
&\geq\frac{d_1}{\sqrt{1-r^2}}.
\end{aligned}
\ee
Therefore, it's easy to see that $\ur|_{\partial\Omega_r}\rightarrow+\infty$ as $r\rightarrow 1.$
\end{proof}

Next, we will prove the local $C^1$ estimates. In particular, let $\urs$ be the solution of \eqref{cvb1.2} for $r\geq \frac{3}{4},$ and let $\ur$ be the Lengendre transform of $\urs$. we will obtain the interior gradient estimates for $\ur.$

Now let $S^{n-1}(r) = \{\xi\in\R^n |\,\, \sum\xi_i^2=r\},$ where $\frac{3}{4}\leq r\leq 1.$ We denote the angular derivative
$\xi_k\frac{\partial}{\partial\xi_l}-\xi_l\frac{\partial}{\partial\xi_k}$ on
$S^{n-1}(r)$ by $\partial_{k, l},$ or simply by $\partial$, when no confusion arises. Then we have following Lemmas.
\begin{lemm}
\label{cvlem1.1}
Let $\urs$ be the solution of equation \eqref{cvb1.2}. Then,
$|\partial\urs|$ is bounded by a constant depends on $|\uus|_{C^1}$. Moreover, this conclusion also holds for $r=1.$
\end{lemm}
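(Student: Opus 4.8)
The plan is to exploit the rotational symmetry of the domain $B_r$ together with the fact that the operator $F(\w\gas_{ik}\urs_{kl}\gas_{lj})$ is invariant under rotations about the origin. Concretely, for a fixed pair of indices $k,l$, let $R_\theta$ denote the rotation in the $\xi_k\xi_l$-plane by angle $\theta$, and set $\urs_\theta(\xi) := \urs(R_\theta\xi)$. Since the coefficient structure $\w = \sqrt{1-|\xi|^2}$ and $\gas_{ik} = \delta_{ik} - \xi_i\xi_k/(1+\w)$ transforms covariantly under $R_\theta$, and $F$ is a symmetric function of the eigenvalues, the rotated function $\urs_\theta$ solves the same equation $F(\w\gas_{ik}(\urs_\theta)_{kl}\gas_{lj}) = 1$ in $B_r$. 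On the boundary $\partial B_r$ we have $\urs_\theta = \uus\circ R_\theta$. The angular derivative $\partial_{k,l}\urs$ is, up to the sign, $\frac{d}{d\theta}\big|_{\theta=0}\urs_\theta$, so the strategy is to compare $\urs$ and $\urs_\theta$ for small $\theta$ via the maximum principle, then divide by $\theta$ and let $\theta\to 0$.

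First I would record that, since $F$ is concave and the equation is uniformly elliptic along any segment joining two solutions (the standard linearization argument for concave fully nonlinear operators), the difference $\urs_\theta - \urs$ satisfies a linear elliptic equation $\mathcal{L}(\urs_\theta - \urs) = 0$ in $B_r$ with no zeroth order term, where $\mathcal{L}$ has the form $a^{ij}(\xi)\partial_{ij}$ with $(a^{ij})$ positive definite (the coefficients $a^{ij}$ come from integrating $F^{pq}$ along the segment, using the chain rule through $\w\gas_{ik}(\cdot)_{kl}\gas_{lj}$). Hence, by the maximum principle, $\max_{\bar B_r}|\urs_\theta - \urs| = \max_{\partial B_r}|\uus\circ R_\theta - \uus|$. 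Next I would bound the right-hand side: $|\uus\circ R_\theta - \uus| \le |\theta|\,\sup_{\partial B_r}|\partial_{k,l}\uus| \le C|\theta|$, where $C$ depends only on $|\uus|_{C^1}$ (note $\partial B_r\subset \bar B_1$, so the bound is uniform in $r\le 1$). Dividing by $|\theta|$ and sending $\theta\to 0$ gives $|\partial_{k,l}\urs| \le C$ throughout $B_r$, with $C$ independent of $r$. The case $r = 1$ follows either by letting $r\to 1$ in this estimate (using local smooth convergence, or interior regularity of $\uus$ as the solution on $B_1$) or by running the identical argument directly on $B_1$ with boundary data $\vp$, noting $|\partial_{k,l}\uus|$ on $\partial B_1$ is controlled by $|\vp|_{C^1}$, hence by $|\uus|_{C^1}$.

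The main obstacle I anticipate is verifying carefully that the rotated coefficient structure really does reproduce the same equation — i.e., that $\w\gas_{ik}(\urs_\theta)_{kl}\gas_{lj}$ at $\xi$ equals $R_\theta^{T}\big[\w\gas(\urs)_{\cdot\cdot}\gas\big]R_\theta$ evaluated at $R_\theta\xi$, so that the two matrices are orthogonally conjugate and share eigenvalues. This is a direct but slightly fussy computation using that $|R_\theta\xi| = |\xi|$ (so $\w$ is unchanged), that $\gas$ is built from $\xi\otimes\xi$ and hence transforms as $\gas(R_\theta\xi) = R_\theta\,\gas(\xi)\,R_\theta^{T}$, and that the Hessian of a composition with a linear map $R_\theta$ is $(\urs_\theta)_{kl} = (R_\theta^{T} (D^2\urs)(R_\theta\xi) R_\theta)_{kl}$; assembling these three facts shows the conjugation and hence the eigenvalue invariance. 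Once that symmetry is in hand, the maximum-principle comparison is routine and the dependence of the constant only on $|\uus|_{C^1}$ is transparent. A minor secondary point is justifying that $\urs$ and $\urs_\theta$ are both admissible (their relevant eigenvalue vectors lie in the cone where $F$ is defined and elliptic) along the whole comparison segment; this follows from convexity of $\urs$ and convexity of the admissible cone.
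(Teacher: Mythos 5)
Your argument is correct, and it reaches the same conclusion by a route that is conceptually the same (rotational invariance of the operator plus the maximum principle) but technically different from the paper's. The paper works infinitesimally: it differentiates the equation $F(\gas_{ik}\us_{kl}\gas_{lj})=\w^{-1}$ along the angular field $\T$ and carries out the term-by-term computation \eqref{cv1.2}--\eqref{cv1.7*} to verify that all first-order contributions from differentiating $\gas$ cancel, so that $\T\urs$ itself satisfies the pure second-order linear equation \eqref{cv1.8}, after which the maximum principle gives $|\T\urs|\leq|\uus|_{C^1}$. You instead integrate the symmetry: the conjugation identities $\gas(R_\theta\xi)=R_\theta\gas(\xi)R_\theta^{T}$ and $D^2\urs_\theta(\xi)=R_\theta^{T}D^2\urs(R_\theta\xi)R_\theta$ show $\urs_\theta$ solves the same equation, the difference $\urs_\theta-\urs$ satisfies $b^{kl}(\urs_\theta-\urs)_{kl}=0$ with $b^{kl}$ positive definite (note that concavity of $F$ is not actually needed here, only ellipticity along the segment, which your convexity remark supplies; also pointwise, rather than uniform, positivity of $b^{kl}$ suffices for the weak maximum principle), and dividing by $\theta$ recovers the angular derivative bound with the same dependence on $|\uus|_{C^1}$. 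What your version buys is that it bypasses the delicate cancellation computation of the paper entirely, replacing it with a clean matrix conjugation argument plus the standard two-solution comparison; what the paper's version buys is that the identity \eqref{cv1.8} (and its second-order analogues) is exactly what is reused in Lemma \ref{cvlem1.2} to bound $\T^2\urs$, so the infinitesimal computation is not wasted there. Your treatment of the $r=1$ case (pass to the limit using the local estimates, or run the argument for the limiting equation) is at the same level of detail as the paper's, which simply asserts it.
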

\begin{proof}
Without loss of generality, we assume that $\partial=\xi_1\frac{\partial}{\partial\xi_2}-\xi_2\frac{\partial}{\partial\xi_1}.$ Also, without causing any confusion, we will drop the supscript $r$.
By our equation \eqref{cvb1.2} we have
\[F(\gas_{ik}\us_{kl}\gas_{lj})=\frac{1}{\w}.\]
Differentiating it with respect to $\partial$ we get
\be\label{cv1.1}
F^{kl}\partial(\gas_{ik}\us_{ij}\gas_{lj})=0.
\ee
Since
\be\label{cv1.2}
\T(\gas_{ki}\us_{ij}\gas_{jl})=(\T\gas_{ki})\us_{ij}\gas_{jl}+\gas_{ki}\us_{ij}(\T\gas_{jl})
+\gas_{ki}(\T\us_{ij})\gas_{jl},
\ee
we will compute these terms one by one.

First, we can see that
\be\label{cv1.3}
\begin{aligned}
(\T\us)_{ij}&=(\xi_1\T_2\us-\xi_2\T_1\us)_{ij}\\
&=\delta_{1i}\us_{2j}+\delta_{1j}\us_{2i}-\delta_{2i}\us_{1j}-\delta_{2j}\us_{1i}+\xi_1\us_{2ij}-\xi_2\us_{1ij}.
\end{aligned}
\ee
Thus, we have
\be\label{cv1.4}
\gas_{ki}(\T\us)_{ij}\gas_{jl}=\gas_{k1}\us_{2j}\gas_{jl}+\gas_{ki}\us_{2i}\gas_{1l}
-\gas_{k2}\us_{1j}\gas_{jl}-\gas_{ki}\us_{1i}\gas_{2l}
+\gas_{ki}\T(\us_{ij})\gas_{jl}.
\ee
Next, we differentiate $\gas_{ki}$ and get
\be\label{cv1.5}
\begin{aligned}
\T\gas_{ki}&=\lt(\xi_1\T_2-\xi_2\T_1\rt)\lt(\delta_{ki}-\frac{\xi_k\xi_i}{1+\w}\rt)\\
&=-\frac{1}{1+\w}(\xi_1\delta_{k2}\xi_i+\xi_1\xi_k\delta_{i2}-\xi_2\delta_{k1}\xi_i-\xi_2\xi_k\delta_{1i})\\
&=\delta_{k2}\gas_{1i}+\delta_{i2}\gas_{1k}-\delta_{k1}\gas_{2i}-\delta_{1i}\gas_{2k}.
\end{aligned}
\ee
Hence, we have
\be\label{cv1.6}
\begin{aligned}
(\T\gas_{ki})\us_{ij}\gas_{jl}&=\delta_{k2}\gas_{1i}\us_{ij}\gas_{jl}+\gas_{1k}\us_{2j}\gas_{jl}\\
&-\delta_{k1}\gas_{2i}\us_{ij}\gas_{jl}-\gas_{jl}\us_{1j}\gas_{2k}.
\end{aligned}
\ee
Similarly, we have
\be\label{cv1.7}
\begin{aligned}
\gas_{ki}\us_{ij}(\T\gas_{jl})=\delta_{l2}\gas_{1j}\us_{ij}\gas_{ik}+\gas_{1l}\us_{2i}\gas_{ik}
-\delta_{l1}\gas_{2j}\us_{ij}\gas_{ik}-\gas_{ik}\us_{1i}\gas_{2l}.
\end{aligned}
\ee
Therefore we conclude that,
\be\label{cv1.7*}
\begin{aligned}
\T(\gas_{ki}\us_{ij}\gas_{jl})&=\delta_{k2}\gas_{1i}\us_{ij}\gas_{jl}-\delta_{k1}\gas_{2i}\us_{ij}\gas_{jl}\\
&+\gas_{ki}(\T\us)_{ij}\gas_{jl}+\delta_{l2}\gas_{1j}\us_{ij}\gas_{ik}-\delta_{1l}\gas_{2j}\us_{ij}\gas_{ik}
\end{aligned}
\ee
Plugging equation \eqref{cv1.7*} into \eqref{cv1.1} we get,
\be\label{cv1.8}
F^{kl}\gas_{li}(\T\us)_{ij}\gas_{jk}=0.
\ee
Using the maximum principle we obtain
\[|\partial\us|\leq |\uus|_{C^1}.\]
\end{proof}

\begin{lemm}
\label{cvlem1.2}
Let $\urs$ be the solution of equation \eqref{cvb1.2}.
Then, $\T^2\urs$ is bounded above by a constant depends on $|\uus|_{C^2}$. Moreover, this conclusion also holds for $r=1.$
\end{lemm}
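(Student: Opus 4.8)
The plan is to reduce the desired bound to a lower bound for the Gauss--Kronecker curvature of the companion hypersurface and then to run the maximum principle already used in Subsection~\ref{es22}, the essential new input being the control of the boundary term by $|\uus|_{C^2}$. Let $\ur$ be the Legendre transform of $\urs$, so that $\M_{\ur}$ is the convex graph of $\ur$ over $\Omega_r=D\urs(B_r)$. By the formulas of Subsection~\ref{lt}, on a compact subset of $B_1$ the eigenvalues of $(\us_{ij})$ are comparable to the curvature radii $\la^{*}_i=\la_i^{-1}$ of $\M_{\ur}$, and since $\frac{\sigma_n}{\sigma_1}(\la^{*})=1$ we have $\sigma_1(\la^{*})=1/\sigma_n(\la[\M_{\ur}])$; hence $\T^2\urs\le C$ is equivalent to $\sigma_n(\la[\M_{\ur}])\ge c>0$. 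So I would estimate $\sigma_n$ from below on $\M_{\ur}$.

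For the interior I would reuse the auxiliary function $\phi=-\log\sigma_n(\la)-N\lt<\nu, E\rt>$ of Subsection~\ref{es22}. Contracting its second derivative inequality with $\sigma_{n-1}^{ii}$, discarding the favourable gradient term, using $\sigma_{n-1}^{ii}h_{iik}=0$, and invoking the convexity estimate of Theorem~\ref{ceth1.1}, one obtains precisely the chain of inequalities \eqref{es1.6}; with $N=2$ this rules out an interior maximum of $\phi$ on $\Omega_r$. Consequently $\sigma_n$ can only become small on $\partial\Omega_r$, which is the Legendre image of $\partial B_r$, where $\urs=\uus$.

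The remaining, and in my view decisive, step is the boundary estimate: bounding $\T^2\urs$ from above on $\partial B_r$ (equivalently $\sigma_n(\la)$ from below on $\partial\Omega_r$) in terms of $|\uus|_{C^2}$. Here one exploits that $\urs$ is squeezed between the barriers $\lus_0\le\urs\le\uus$, all three of which coincide with $\vp$ on $\partial B_r$; thus the tangential second derivatives of $\urs$ along $\partial B_r$ are those of $\uus$, the mixed ones are recovered from the equation, and the pure normal one is controlled by a barrier argument of the type in Subsection~\ref{es2} (following \cite{Guan}), with the barrier $\underline v$ and the cut-off built from $\uus$. Substituting the resulting bound $\sigma_n(\la)\ge c(|\uus|_{C^2})$ on $\partial\Omega_r$ into the previous step gives $\sigma_n(\la)\ge c$ throughout $\Omega_r$, i.e. $\T^2\urs\le C(|\uus|_{C^2})$.

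Finally, once this estimate is available uniformly in $r$, letting $r\to1$ and using the local convergence of the family $\{\urs\}$ obtained in this section passes the bound to the limit, which gives the assertion for $r=1$. I expect the hard part to be the boundary step: since \eqref{cvb1.2} degenerates as $r\to1$ and $\uus$ is only continuous up to $\partial B_1$, the subtlety is to arrange the barriers near $\partial B_r$, simultaneously for all $r<1$, so that the constant is governed by $|\uus|_{C^2}$ alone rather than by the distance of $r$ to $1$.
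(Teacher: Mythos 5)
Your reduction is where the argument breaks. The quantity $\T^2\urs$ in this lemma is the second derivative along the rotation fields $\partial_{k,l}=\xi_k\frac{\partial}{\partial\xi_l}-\xi_l\frac{\partial}{\partial\xi_k}$, and the bound has to be uniform up to $\partial B_r$ and independent of $r$ (it is used later on spheres $S^{n-1}(\td r)$ with $\td r$ arbitrarily close to $1$, and is asserted for $r=1$). The claimed equivalence ``$\T^2\urs\le C$ iff $\sigma_n(\la[\M_{\ur}])\ge c$'' is only valid on a fixed compact subset of $B_1$, with constants depending on that subset: an upper bound on the curvature radii controls the matrix $\w\gas_{ik}\urs_{kl}\gas_{lj}$, and since the radial eigenvalue of $\gas$ is $\w$, this yields $D^2\urs\le C\,\w^{-1}\gas^{-2}$, i.e. only $\T^2\urs\le C/\w$ near the boundary -- not the unweighted bound the lemma asserts. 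Worse, a lower bound $\sigma_n(\la[\M_{\ur}])\ge c>0$ uniform in $r$ is not obtainable and cannot hold in the form you need: by Lemma \ref{cvlem1.7}, $|D\urs|\to\infty$ on $\partial B_r$ as $r\to1$ while $|\urs|\le C_0$ and $|D\urs(0)|\le C_1$, so the full Hessian of $\urs$ necessarily blows up near $\partial B_r$; correspondingly, Theorem \ref{intth1.2} only claims $\la[\M_u]\le C$, never a positive lower bound on the principal curvatures. Thus the boundary step you flag as ``the hard part'' (a boundary $C^2$ estimate with constants governed by $|\uus|_{C^2}$ alone) is not a technicality to be arranged but is false for the full second derivatives, and even the interior transfer via $\phi=-\log\sigma_n-N\lt<\nu,E\rt>$ from Subsection \ref{es22} costs a factor $\max_{\partial\Omega_r}\bigl(-\lt<\nu,E\rt>\bigr)$, which also degenerates as $r\to1$; that argument is only used in the paper for fixed $r$ to get existence.

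The paper's proof avoids any quantitative strict convexity and works with $\T^2\urs$ directly: it differentiates $F(\gas_{ik}\us_{kl}\gas_{lj})=1/\w$ twice along rotation fields, and the computation \eqref{cv1.10}--\eqref{ad8} shows that the commutator terms created by $\T$ acting on $\gas$ cancel exactly because $F$ is rotation invariant, leaving identities involving only $\gas_{ki}(\T\us)_{ij}\gas_{jl}$ and $\gas_{ki}(\T^2\us)_{ij}\gas_{jl}$; concavity of $F$ then gives $F^{kl}\gas_{ki}(\T^2\us)_{ij}\gas_{jl}\ge0$ (equation \eqref{cv1.16}), so $\T^2\urs$ is a subsolution of the linearized operator. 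The maximum principle then pushes its maximum to $\partial B_r$, where $\urs=\uus$ and the rotation fields are tangential, so the boundary value of $\T^2\urs$ is the angular second derivative of $\uus$ and the constant depends only on $|\uus|_{C^2}$, uniformly in $r$ and in the limit $r=1$. If you want to salvage your route you would have to prove an unweighted tangential estimate anyway, which is exactly this argument.
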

\begin{proof}
In this proof, without causing any confusion, we will drop the supscript $r$.
Suppose $\{\xi_1,\cdots,\xi_n\}$ is a local coordinate and $\xi$ is the position vector. We want to show for any angular derivative operator
$$\T=\langle a, \xi\rangle \langle b, D\rangle-\langle b, \xi\rangle \langle a, D\rangle,$$
we have \be\label{add1}
F^{kl}(\T^2a_{kl})+F^{pq, rs}(\T a_{pq})(\T a_{rs})
=F^{kl}\gas_{ki}(\T^2 u)_{ij}\gas_{jl}+F^{pq, rs}(\gas_{pi}(\T \us)_{ij}\gas_{jq})(\gas_{ri}(\T \us)_{ij}\gas_{js}).
\ee
Here $a,b$ are two unit perpendicular vectors and $D$ is the differential operator.
If we denote
$$\p_{i,j}=\xi_i\frac{\p}{\p\xi_j}-\xi_j\frac{\p}{\p \xi_i}=\langle \frac{\p}{\p \xi_i}, \xi\rangle \langle \frac{\p}{\p \xi_j}, D\rangle-\langle \frac{\p}{\p \xi_j}, \xi\rangle \langle \frac{\p}{\p \xi_i}, D\rangle$$ and let
$$a=\sum_{i}a_i\frac{\p}{\p\xi_i}; b=\sum_j b_j\frac{\p}{\p\xi_j}.$$ Then, we obtain
$$\T=\sum_{i,j}a_ib_j\p_{i,j}$$ and $$\T^2=\sum_{i,j,l,k}a_ib_ja_lb_k\p_{i,j}\p_{l,k}.$$
For our convenience, we will also denote $a_{kl}=\gas_{ki}\us_{ij}\gas_{jl}.$
Note that $F$ depends only on the eigenvalues of the matrix $A=(a_{kl}),$ it is invariant under rotation of
coordinates. Therefore, during our calculations we can always assume $A$ is diagonal, i.e., $a_{kl}=\la_k\delta_{kl}.$
In the following, we will denote $\T_0=\xi_1\frac{\partial}{\partial\xi_2}-\xi_2\frac{\partial}{\partial\xi_1},$
$\T_1=\xi_1\frac{\partial}{\partial\xi_3}-\xi_3\frac{\partial}{\partial\xi_1},$
and $\T_2=\xi_3\frac{\partial}{\partial\xi_4}-\xi_4\frac{\partial}{\partial\xi_3}.$ We will also denote
$\ta_{kl}=\gas_{ki}(\T_0\us)_{ij}\gas_{jl},$ $\ba_{kl}=\gas_{ki}(\T_1\us)_{ij}\gas_{jl},$ and $\ha_{kl}=\gas_{ki}(\T_2\us)_{ij}\gas_{jl}$
It's easy to see that, in order to prove \eqref{add1} we only need to show
\[F^{kl}(\p_0^2a_{kl})+F^{pq, rs}(\p_0 a_{pq})(\p_0 a_{rs})
=F^{kl}\gas_{ki}(\p_0^2 u)_{ij}\gas_{jl}+F^{pq, rs}(\gas_{pi}(\p_0 \us)_{ij}\gas_{jq})(\gas_{ri}(\p_0 \us)_{ij}\gas_{js}),\]
\[F^{kl}(\T_1\p_0 a_{kl})+F^{pq, rs}(\T_1 a_{pq})(\p_0 a_{rs})
=F^{kl}\gas_{ki}(\T_1\p_0 u)_{ij}\gas_{jl}+F^{pq, rs}(\gas_{pi}(\T_1 \us)_{ij}\gas_{jq})(\gas_{ri}(\p_0 \us)_{ij}\gas_{js}),\]
and
\[F^{kl}(\T_2\p_0 a_{kl})+F^{pq, rs}(\T_2 a_{pq})(\p_0 a_{rs})
=F^{kl}\gas_{ki}(\T_2\p_0 u)_{ij}\gas_{jl}+F^{pq, rs}(\gas_{pi}(\T_2 \us)_{ij}\gas_{jq})(\gas_{ri}(\p_0 \us)_{ij}\gas_{js}).\]

Following the notation in section 2, we define
\[f(\la[A])=F(A).\]
First, differentiating \eqref{cv1.1} with respect to $\T_0$ twice we get
\be\label{cv1.9}
F^{kl}(\T^2_0a_{kl})+F^{pq, rs}(\T_0 a_{pq})(\T_0 a_{rs})=0.
\ee
By \eqref{cv1.7*} we get
\be\label{cv1.10}
\T_0 a_{kl}=\delta_{k2}a_{1l}-\delta_{k1}a_{2l}+\delta_{l2}a_{1k}-\delta_{l1}a_{2k}+\ta_{kl},
\ee
and
\be\label{cv1.11}
\begin{aligned}
\T^2_0a_{kl}&=\delta_{k2}(\T_0 a_{1l})-\delta_{k1}(\T_0 a_{2l})+\delta_{l2}(\T_0 a_{1k})-\delta_{l1}(\T_0 a_{2k})+\T_0\ta_{kl}\\
&=\delta_{k2}(\T_0 a_{1l})-\delta_{k1}(\T_0 a_{2l})+\delta_{l2}(\T_0 a_{1k})-\delta_{l1}(\T_0 a_{2k})\\
&+\delta_{k2}\ta_{1l}-\delta_{k1}\ta_{2l}+\delta_{l2}\ta_{1k}-\delta_{l1}\ta_{2k}+\gas_{ki}(\T_0^2\us)_{ij}\gas_{jl}.
\end{aligned}
\ee

Contracting \eqref{cv1.11} with $F^{kl}$ we obtain,
\be\label{cv1.13}
\begin{aligned}
F^{kl}(\T_0^2a_{kl})&=2\lt(-F^{22}a_{22}+F^{22}a_{11}-F^{11}a_{11}+F^{11}a_{22}+F^{22}\ta_{12}-F^{11}\ta_{21}\rt)\\
&+2\lt(F^{22}\ta_{12}-F^{11}\ta_{21}\rt)+F^{kl}\gas_{ki}(\T_0^2\us)_{ij}\gas_{jl}\\
&=-2(f_2-f_1)(\la_2-\la_1)+4(f_2-f_1)\ta_{12}+F^{kl}\gas_{ki}(\T_0^2\us)_{ij}\gas_{jl}.
\end{aligned}
\ee
On the other hand by Lemma \ref{prelm1.1} we have,
\be\label{cv1.14}
\begin{aligned}
&F^{pq, rs}(\T_0 a_{pq})(\T_0 a_{rs})\\
&=F^{pq, rs}\lt(\delta_{p2}a_{1q}-\delta_{p1}a_{2q}+\delta_{q2}a_{1p}-\delta_{q1}a_{2p}+\ta_{pq}\rt)\\
&\times\lt(\delta_{r2}a_{1s}-\delta_{r1}a_{2s}+\delta_{s2}a_{1r}-\delta_{s1}a_{2r}+\ta_{sr}\rt)\\
&=F^{pp, rr}\lt(\delta_{p2}a_{1p}-\delta_{p1}a_{2p}+\delta_{p2}a_{1p}-\delta_{p1}a_{2p}+\ta_{pp}\rt)\\
&\times\lt(\delta_{r2}a_{1r}-\delta_{r1}a_{2r}+\delta_{r2}a_{1r}-\delta_{r1}a_{2r}+\ta_{rr}\rt)\\
&+\sum_{p\neq r}\frac{f_p-f_r}{\la_p-\la_r}\lt(\delta_{p2}a_{1r}-\delta_{p1}a_{2r}+\delta_{r2}a_{1p}-\delta_{r1}a_{2p}+\ta_{pr}\rt)^2\\
&=F^{pq, rs}\ta_{pq}\ta_{rs}+2\frac{f_2-f_1}{\la_2-\la_1}\la_1^2+2\frac{f_1-f_2}{\la_1-\la_2}\la^2_2\\
&-4\frac{f_2-f_1}{\la_2-\la_1}\la_1\la_2+4\frac{f_2-f_1}{\la_2-\la_1}\la_1\ta_{21}-4\frac{f_1-f_2}{\la_1-\la_2}\la_2\ta_{12}\\
&=2(f_2-f_1)(\la_2-\la_1)-4(f_2-f_1)\ta_{12}+F^{pq, rs}\ta_{pq}\ta_{rs}.
\end{aligned}
\ee
Equations \eqref{cv1.13} and \eqref{cv1.14} yields
\be\label{cv1.15}
0=F^{kl}(\T_0^2a_{kl})+F^{pq, rs}(\T_0 a_{pq})(\T_0 a_{rs})=F^{kl}\gas_{ki}(\T^2_0u)_{ij}\gas_{jl}+F^{pq, rs}\ta_{pq}\ta_{rs}.
\ee

Next, we differentiate \eqref{cv1.1} with respect to $\T_1\T_0,$ this gives
\be\label{ad2}
F^{kl}(\T_1\T_0a_{kl})+F^{pq, rs}(\T_1 a_{pq})(\T_0 a_{rs})=0.
\ee
By a straightforward calculation we get
\be\label{ad3}
\begin{aligned}
F^{kl}\T_1\T_0a_{kl}&=F^{kl}\lt[\delta_{k2}(-a_{3l}+\delta_{l3}a_{11}+\ba_{1l})\right.\\
&-\delta_{k1}\ba_{2l}+\delta_{l2}(-a_{3k}+\delta_{k3}a_{11}+\ba_{1k})-\delta_{l1}\ba_{2k}\\
&\left.+\delta_{k3}\ta_{1l}-\delta_{k1}\ta_{3l}+\delta_{l3}\ta_{1k}-\delta_{l1}\ta_{3k}+\gas_{ki}(\T_1\T_0\us)_{ij}\gas_{jl}\rt]\\
&=2(f_2-f_1)\ba_{12}+2(f_3-f_1)\ta_{13}+F^{kl}\gas_{ki}(\T_1\T_0\us)_{ij}\gas_{jl}.
\end{aligned}
\ee
Moreover, we have
\be\label{ad4}
\begin{aligned}
&F^{pq, rs}(\T_1a_{pq})(\T_0a_{rs})\\
&=F^{pq, rs}(\delta_{p3}a_{1q}-\delta_{p1}a_{3q}+\delta_{q3}a_{1p}-\delta_{q1}a_{3p}+\ba_{pq})\\
&\times(\delta_{r2}a_{1s}-\delta_{r1}a_{2s}+\delta_{s2}a_{1r}-\delta_{s1}a_{2r}+\ta_{sr})\\
&=F^{pp, rr}\ba_{pp}\ta_{rr}+\sum\limits_{p\neq r}\frac{f_p-f_r}{\la_p-\la_r}(\delta_{p3}a_{1r}\ta_{pr}-\delta_{p1}a_{3r}\ta_{pr}\\
&+\delta_{r3}a_{1p}\ta_{pr}-\delta_{r1}a_{3p}\ta_{pr}+\delta_{r2}a_{1p}\ba_{pr}-\delta_{r1}a_{2p}\ba_{pr}\\
&+\delta_{p2}a_{1r}\ba_{pr}-\delta_{p1}a_{2r}\ba_{pr}+\ta_{pr}\ba_{pr})\\
&=F^{pp, rr}\ba_{pp}\ta_{rr}+2\frac{f_3-f_1}{\la_3-\la_1}\ta_{13}(\la_1-\la_3)\\
&+2\frac{f_2-f_1}{\la_2-\la_1}\ba_{12}(\la_1-\la_2)+\sum\limits_{p\neq r}\frac{f_p-f_r}{\la_p-\la_r}\ta_{pr}\ba_{pr}.
\end{aligned}
\ee
From \eqref{ad3} and \eqref{ad4} we obtain
\be\label{ad5}
0=F^{kl}(\T_1\T_0a_{kl})+F^{pq, rs}(\T_1 a_{pq})(\T_0 a_{rs})=F^{kl}\gas_{ki}(\T_1\T_0u)_{ij}\gas_{jl}+F^{pq, rs}\ba_{pq}\ta_{rs}.
\ee
Similarly, we can get
\be\label{ad6}
F^{kl}\T_2\T_0a_{kl}=2(f_2-f_1)\ha_{12}+2(f_4-f_3)\ta_{34}+F^{kl}\gas_{ki}(\T_2\T_0\us)_{ij}\gas_{jl},
\ee
and
\be\label{ad7}
F^{pq, rs}(\T_2a_{pq})(\T a_{rs})=F^{pq,rs}\ha_{pq}\ta_{rs}+2\frac{f_4-f_3}{\la_4-\la_3}\ta_{34}(\la_3-\la_4)
+2\frac{f_1-f_2}{\la_1-\la_2}\ha_{12}(\la_1-\la_2).
\ee
This yields
\be\label{ad8}
0=F^{kl}(\T_2\T_0a_{kl})+F^{pq, rs}(\T_2 a_{pq})(\T_0 a_{rs})=F^{kl}\gas_{ki}(\T_2\T_0u)_{ij}\gas_{jl}+F^{pq, rs}\ha_{pq}\ta_{rs}.
\ee
Combining \eqref{cv1.15}, \eqref{ad5}, \eqref{ad8} with the concavity of $F$ we conclude
\be\label{cv1.16}
F^{kl}\gas_{ki}(\T^2u)_{ij}\gas_{jl}\geq 0.
\ee
By the maximum principle, we prove this Lemma.
\end{proof}

Following \cite{LA}, using Lemma \ref{cvlem1.1} and Lemma \ref{cvlem1.2} we can also prove the following Lemmas. Since the proof of these Lemmas are almost identical to \cite{LA} (see Lemma 5.3-Lemma 5.6), we will omit them here.
\begin{lemm}
\label{cvlem1.3}
Let $\urs$ be the solution of equation \eqref{cvb1.2}. Then, there is a positive constant $b$ such that
\[\sqrt{1-|\xi|^2}\lt|\T^2\us\rt|<b.\] Moreover, this conclusion also holds for $r=1.$
\end{lemm}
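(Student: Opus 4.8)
The plan is to follow the proof of Lemma~5.3 in \cite{LA}, adapted to our operator $F=\lt(\frac{\sigma_n}{\sigma_1}\rt)^{\frac{1}{n-1}}$. The weight $\sqrt{1-|\xi|^2}$ serves two purposes: it compensates for the degeneracy of equation \eqref{cvb1.2} along $\partial B_r$, and it makes the constant $b$ independent of $r$, so that once the estimate is proved for every $0<r<1$ the case $r=1$ follows by letting $r\to 1$. It is convenient to run the computation in the hyperbolic model of Subsection \ref{gm}: writing $v=\urs/\w$, equation \eqref{cvb1.2} becomes $F(v_{ij}-v\delta_{ij})=1$, the eigenvalues of $\Lambda_{ij}=v_{ij}-v\delta_{ij}$ are the curvature radii of $\M_{\ur}$, and the identities \eqref{gg1.1}, \eqref{gg1.2} streamline the second differentiation. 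In this language, controlling $\sqrt{1-|\xi|^2}\,|\T^2\urs|$ is equivalent to an a priori bound on the second-order quantity $\Lambda$ in terms of $|\uus|_{C^2}$, hence, through the correspondence of Subsection \ref{lt}, in terms of $|\vp|_{C^2}$.

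First I would introduce a test function of the same type as \eqref{es1.4} and Lemma \ref{eslem1.1}, namely the logarithm of the largest eigenvalue of the relevant Hessian plus an auxiliary barrier term built from $\lus_0$, $\uus$ and $\lt<\nu,E\rt>$ (equivalently from the coordinate $x_{n+1}$). If this function attains an interior maximum, I would differentiate it once and twice there, contract the second-order inequality with $F^{ij}$ --- that is, apply the operator $\mathfrak{L}f=F^{ij}\nabla_{ij}f-f\sum_iF^{ii}$ of Lemma \ref{eslem1.1} --- and use the concavity of $F$ together with Lemma \ref{prelm1.1} to discard the third-order terms, including the ones with distinct eigenvalues $\la^*_p\neq\la^*_q$, exactly as for $\sigma_{n-1}$-type operators. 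The leading second-order term is then treated by Theorem \ref{ceth1.1}: after commuting covariant derivatives via \eqref{gg1.1}--\eqref{gg1.2} it reduces to $\sigma_{n-1}^{ij}(\sigma_n)_{ij}\leq\sigma_1\sigma_{n-1}\sigma_n-n^2\sigma_n^2$, which yields the favorable sign precisely as in the global $C^2$ estimate of Subsection \ref{es22} (the size of the constant multiplying $\lt<\nu,E\rt>$ is dictated by this step, just as $N=2$ was chosen there).

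The remaining work, and the step I expect to be the main obstacle, is to control the ``bad'' first-order terms that arise when $\mathfrak{L}$ differentiates the metric coefficient $\gas$ and the weight $\w$ (in the $\us$-picture), or the coordinate functions $x_k$ (in the $v$-picture). These are precisely the terms that threaten to blow up as $r\to 1$, and they must be absorbed uniformly in $r$ up to $\partial B_1$; this is where one uses the gradient bound $|D\urs|\leq\max_{\partial B_r}|D\lu^{r*}|$ from Subsection \ref{es1} together with the angular estimates of Lemma \ref{cvlem1.1} and Lemma \ref{cvlem1.2}. Granting that, the test function is bounded at its maximum by a constant depending only on $|\uus|_{C^2}$, which gives $\sqrt{1-|\xi|^2}\,|\T^2\urs|<b$ with $b$ independent of $r$, and then $r\to 1$ settles the remaining case. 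What makes the argument of \cite{LA} applicable verbatim is exactly that, although our $F$ fails assumption (1.7) of \cite{Guan}, it satisfies the convexity estimate \eqref{ce1.1} of Theorem \ref{ceth1.1}, which is what is needed to close the estimate in the second step.
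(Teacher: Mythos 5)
There is a genuine gap, and the approach itself is off-target. The paper's (omitted) proof is the one in \cite{LA}, Lemma 5.3: the upper bound for the angular second derivative is already Lemma \ref{cvlem1.2}, and the only new content of Lemma \ref{cvlem1.3} is the complementary \emph{lower} bound after multiplying by $\w$, which comes from the convexity of $\us$ along the orbits of the rotation fields $\T$ (the second derivative of $\us$ along a circle differs from $\T^2\us$ by first-order radial terms) together with a weighted gradient bound supplied by the barriers and Lemma \ref{cvlem1.1}. No maximum principle, no Pogorelov-type test function, and no use of Theorem \ref{ceth1.1} enters at this stage. Note in particular that $\T$ here is the angular operator of Lemmas \ref{cvlem1.1}--\ref{cvlem1.2}, not the full Euclidean Hessian: if the statement were about all second derivatives, it would already fail for the model $\us=-\w$, since $\w\,\us_{\rho\rho}=(1-|\xi|^2)^{-1}$ is unbounded. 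Your proposal does not engage with this distinction.

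Your central reduction is also not correct: controlling $\w\,|\T^2\urs|$ is not equivalent to an a priori bound on $\Lambda_{ij}=v_{ij}-v\delta_{ij}$. Since $\gas\xi=\w\,\xi$, the eigenvalues of $\Lambda$ (the curvature radii, i.e.\ of $\w\gas\urs_{kl}\gas$) weight the radial second derivative by $\w^{3}$, so a bound on $\Lambda$ is weaker than a weighted Euclidean Hessian bound in the radial direction; conversely, an upper bound for the eigenvalues of $\Lambda$ is a uniform strict-convexity estimate for $\M_{\ur}$, which is neither available at this point in the paper (only the opposite bound $\la[\M_u]\leq C$ is eventually proved in Section \ref{ub}) nor needed for the lemma. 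Finally, even within your framework the decisive difficulty---absorbing, uniformly as $r\to1$, the first-order terms produced when $\mathfrak{L}$ hits $\w$ and $\gas$---is exactly the step you defer (``Granting that\ldots''), and the gradient bound from Subsection \ref{es1} that you invoke for it is not uniform in $r$. As written, the proposal is a plan whose key step is assumed, aimed at a quantity different from the one the lemma actually bounds.
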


\begin{lemm}
\label{cvlem1.4}
Let $\urs$ be the solution of equation \eqref{cvb1.2}, $\frac{1}{2}<\td{r}<r,$ and $S^{n-1}(\td{r})=\{\xi\in\R^n|\sum\xi_i^2=\td{r}^2\}.$
For any point $\hat{\xi}\in S^{n-1}(\td{r}),$ there is a function
\[\overline{u}^*=-n^{\frac{1}{n-1}}\sqrt{1-|\xi|^2}+b_1\xi_1+ \cdots +b_n\xi_n+b\]
such that
\[\overline{u}^*(\hat{\xi})=\us(\hat{\xi}),\]
and\[\overline{u}^*(\hat{\xi})>\us(\xi),\,\,\mbox{for any $\xi\in S^{n-1}(\td{r})\setminus\{\hat{\xi}\}$}.\]
Here $b_1, \cdots, b_n$ are constants depending on $\hat{\xi},$ and $b>0$ is a positive constant satisfying $b>C$ for some positive constant
$C$ depending only on $|\bar{u}_0^*|_{C^2}.$ Moreover, this conclusion also holds for $r=1.$
\end{lemm}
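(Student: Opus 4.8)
The plan is to exploit two structural features of the proposed barrier. First, for \emph{every} choice of the constants $b_1,\dots,b_n,b$, the function $\overline{u}^*=-n^{\frac{1}{n-1}}\sqrt{1-|\xi|^2}+b_1\xi_1+\cdots+b_n\xi_n+b$ is an \emph{exact} solution of $F(\w\gas_{ik}\overline{u}^*_{kl}\gas_{lj})=1$ in $B_1$: the leading term is, up to an affine summand, the Legendre transform of the hyperboloid all of whose principal curvatures equal $n^{-\frac{1}{n-1}}$ (and which therefore has $\sigma_{n-1}\equiv 1$), and adding an affine function leaves the Hessian $\overline{u}^*_{kl}$ unchanged. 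Second, on the sphere $S^{n-1}(\td r)$ the term $-n^{\frac{1}{n-1}}\sqrt{1-|\xi|^2}$ equals the \emph{constant} $-n^{\frac{1}{n-1}}\sqrt{1-\td r^2}$, so $\overline{u}^*|_{S^{n-1}(\td r)}$ is nothing but an affine function of $\xi$. Hence the whole statement reduces to exhibiting an affine function on $\R^n$ that dominates $\us$ on $S^{n-1}(\td r)$, agrees with it exactly at $\hat\xi$, and has a large constant term.

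First I would read this affine function off from the first derivative of $\us$ at $\hat\xi$. Writing $N=\hat\xi/\td r$ for the outward unit normal of $S^{n-1}(\td r)$ at $\hat\xi$, I would take $(b_1,\dots,b_n)=D\us(\hat\xi)-tN$ for a large parameter $t>0$, and then choose $b$ so that $\overline{u}^*(\hat\xi)=\us(\hat\xi)$; since $\langle N,\hat\xi\rangle=\td r$ this forces $b=\us(\hat\xi)+n^{\frac{1}{n-1}}\sqrt{1-\td r^2}-\langle D\us(\hat\xi),\hat\xi\rangle+t\td r$. For $\xi\in S^{n-1}(\td r)$ the elementary identity $\langle\hat\xi,\xi-\hat\xi\rangle=-\tfrac12|\xi-\hat\xi|^2$ then yields
\[
\overline{u}^*(\xi)-\us(\xi)=-\Big(\us(\xi)-\us(\hat\xi)-\langle D\us(\hat\xi),\xi-\hat\xi\rangle\Big)+\frac{t}{2\td r}\,|\xi-\hat\xi|^2.
\]
The bracketed term is nonnegative by convexity of $\us$ and, by the $C^2$ bound of Lemma \ref{cvlem1.3} together with the local $C^0$ and $C^1$ estimates of Subsections \ref{cv0}--\ref{cv1}, it is bounded on $S^{n-1}(\td r)$ by $K|\xi-\hat\xi|^2$ with $K$ depending only on $n$, $\td r$, and $|\uus|_{C^2}$. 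Taking $t>K\td r$ makes $\overline{u}^*-\us$ strictly positive on $S^{n-1}(\td r)\setminus\{\hat\xi\}$, while $\overline{u}^*(\hat\xi)=\us(\hat\xi)$ by construction; and because $\us(\hat\xi)$ and $\langle D\us(\hat\xi),\hat\xi\rangle$ are themselves controlled by $|\uus|_{C^2}$, enlarging $t$ further yields $b>C$ for any prescribed $C=C(|\uus|_{C^2})$.

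For $r=1$ I would run the same construction on the limiting solution obtained in Section \ref{cv}, which is strictly convex and, by the remarks accompanying Lemmas \ref{cvlem1.1}--\ref{cvlem1.3}, satisfies the same local $C^0$, $C^1$, and $C^2$ estimates on $S^{n-1}(\td r)$ for any fixed $\td r<1$. The step I expect to be the main obstacle is making every constant ($K$, and the $C^0$ and $C^1$ bounds for $\us$ along $S^{n-1}(\td r)$) uniform in $r$ and traceable to $|\uus|_{C^2}$ alone: a merely bounded convex function need not have a bounded second-order remainder, so the quadratic gain $\frac{t}{2\td r}|\xi-\hat\xi|^2$ coming from the normal direction could otherwise fail to dominate, and it is exactly the $C^2$ estimate of Lemma \ref{cvlem1.3} that excludes this. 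Once that uniform quadratic bound is in hand, the choice of $t$—hence of $b_1,\dots,b_n$ and $b$—is immediate.
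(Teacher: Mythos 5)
Your overall skeleton is the right one and matches the argument the paper imports from \cite{LA}: the barrier restricted to $S^{n-1}(\tilde r)$ is an affine function plus a constant, and the identity $\langle\hat\xi,\xi-\hat\xi\rangle=-\tfrac12|\xi-\hat\xi|^2$ converts a radial tilt $-tN$ into a quadratic gain $\tfrac{t}{2\tilde r}|\xi-\hat\xi|^2$. The gap is in how you pay for the quadratic loss. You write the loss as the full second-order Taylor remainder of $u^*$ along the chord from $\hat\xi$ to $\xi$ and claim it is at most $K|\xi-\hat\xi|^2$ with $K=K(n,\tilde r,|\bar u_0^*|_{C^2})$, citing Lemma \ref{cvlem1.3} and the local $C^0$, $C^1$ estimates. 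Those estimates do not control that remainder: Lemmas \ref{cvlem1.1}--\ref{cvlem1.3} bound only the rotational (angular) derivatives $\partial u^*$ and $\partial^2 u^*$ (the latter either one-sidedly, or two-sidedly but with the degenerate weight $\sqrt{1-|\xi|^2}$), whereas the chordwise remainder involves the radial--radial entries of $D^2u^{r*}$, which are nowhere estimated in terms of $|\bar u_0^*|_{C^2}$ and genuinely blow up (like $(1-|\xi|^2)^{-3/2}$) as the sphere approaches $\partial B_1$. The same defect enters your choice of $b_1,\dots,b_n$: you use the \emph{full} gradient $Du^*(\hat\xi)$, whose radial part is not controlled by $|\bar u_0^*|_{C^1}$ and grows like $(1-\tilde r^2)^{-1/2}$, so both $t$ and your formula $b=u^*(\hat\xi)+n^{\frac1{n-1}}\sqrt{1-\tilde r^2}-\langle Du^*(\hat\xi),\hat\xi\rangle+t\tilde r$ end up depending on interior Hessian bounds of $u^{r*}$ on $\bar B_{\tilde r}$ that degenerate as $\tilde r\to1$ (and are not uniform in $r$). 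Taking $t$ ``as large as needed'' does not repair this: the content of the lemma is precisely that the constant controlling $b$ depends only on $|\bar u_0^*|_{C^2}$, uniformly in $\hat\xi$, $\tilde r>\tfrac12$ and $r$ (including $r=1$); without that uniformity the lemma cannot feed into Lemma \ref{cvlem1.6}, where the bounds must be independent of $r$ and of the point.

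The intended proof (Lemmas 5.3--5.6 of \cite{LA}, which the paper says it follows using Lemmas \ref{cvlem1.1} and \ref{cvlem1.2}) avoids the radial direction altogether. Take the vector $(b_1,\dots,b_n)$ to match only the \emph{tangential} part of $Du^*(\hat\xi)$ — this is exactly the angular gradient, bounded by $|\bar u_0^*|_{C^1}$ by Lemma \ref{cvlem1.1} — minus $tN$. Then along any great circle $\xi(\theta)$ of $S^{n-1}(\tilde r)$ through $\hat\xi$, the one-sided bound $\partial^2u^*\le C(|\bar u_0^*|_{C^2})$ of Lemma \ref{cvlem1.2}, integrated twice in $\theta$, gives the quadratic upper bound you need for $u^*(\xi)-u^*(\hat\xi)-\langle (Du^*(\hat\xi))^{\mathrm{tan}},\xi-\hat\xi\rangle$, with a constant depending only on $|\bar u_0^*|_{C^2}$ (uniformly in $\tilde r$, $r$, and for $r=1$, since those lemmas hold up to $r=1$). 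Your identity then yields strict domination once $t$ exceeds this uniform constant, and because now $\langle (b_1,\dots,b_n),\hat\xi\rangle=-t\tilde r$, the touching condition gives $b=u^*(\hat\xi)+n^{\frac1{n-1}}\sqrt{1-\tilde r^2}+t\tilde r$, which is controlled above and below purely in terms of $|\bar u_0^*|_{C^2}$, as the statement (and its later use) requires. So the missing idea is to run the Taylor comparison intrinsically on the sphere with the angular estimates, not along ambient chords with a full Hessian bound that is unavailable.
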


Similarly we have
\begin{lemm}
\label{cvlem1.5}
Let $\urs$ be the solution of equation \eqref{cvb1.2}, $\frac{1}{2}<\td{r}<r,$ and $S^{n-1}(\td{r})=\{\xi\in\R^n|\sum\xi_i^2=\td{r}^2\}.$
For any point $\hat{\xi}\in S^{n-1}(\td{r}),$ there is a function
\[\lus=-n^{\frac{1}{n-1}}\sqrt{1-|\xi|^2}+a_1\xi_1+ \cdots +a_n\xi_n-a\]
such that
\[\lus(\hat{\xi})=\us(\hat{\xi}),\]
and\[\lus(\hat{\xi})<\us(\xi),\,\,\mbox{for any $\xi\in S^{n-1}(\td{r})\setminus\{\hat{\xi}\}$}.\]
Here $a_1, \cdots, a_n,$ $a$ are constants depending on $\hat{\xi},$ and $a>0,$ $a\sqrt{1-\sum\hat{\xi}_k^2}<C$
for some positive constant $C$ depending only on $|\uus|_{C^2}.$
  Moreover, this conclusion also holds for $r=1.$
\end{lemm}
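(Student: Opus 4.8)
The plan is, in the spirit of the proof of Lemma~\ref{cvlem1.4} (cf.\ \cite{LA}), to construct $\lus$ by a supporting--hyperplane argument. Since the statement only constrains the values of $\lus$ on the fixed sphere $S^{n-1}(\td r)$, the decisive observation is that the profile $\psi(\xi):=-n^{\frac1{n-1}}\sqrt{1-|\xi|^2}$ is radial, hence \emph{constant} on $S^{n-1}(\td r)$, equal there to $-n^{\frac1{n-1}}\sqrt{1-\td r^{2}}$. Therefore, no matter how the linear coefficients $a_1,\dots,a_n$ are chosen, the restriction $\lus\big|_{S^{n-1}(\td r)}$ is an affine function plus a constant, and the only thing to arrange is that this affine function supports the convex function $\us$ from below at $\hat\xi$. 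The profile $\psi$ is kept inside $\lus$ because it is needed later when the barrier is applied: $\psi$ is a strictly convex exact solution of $F(\w\gas_{ik}\psi_{kl}\gas_{lj})=1$ in $B_1$ --- it is the Legendre transform of the hyperboloid all of whose principal curvatures equal $n^{-1/(n-1)}$, for which $\sigma_{n-1}\equiv n\,\bigl(n^{-1/(n-1)}\bigr)^{n-1}=1$ --- and adding an affine term leaves its Hessian, hence the equation, unchanged, so $\lus$ is a genuine subsolution on $B_{\td r}$.

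For a given $\hat\xi\in S^{n-1}(\td r)$ I would take the linear part $(a_1,\dots,a_n)=D\us(\hat\xi)+\mu\,\hat\xi$ with
\[
\mu=\td r^{-2}\Bigl(n^{\frac1{n-1}}\sqrt{1-\td r^{2}}+1\Bigr)>0,
\]
and fix the constant $a$ by the normalization $\lus(\hat\xi)=\us(\hat\xi)$. Since $\us$ is smooth and strictly convex on $B_r$ (the equation forces $(\us_{kl})$ to be positive definite) and $\bar B_{\td r}\subset B_r$, for every $\xi\in S^{n-1}(\td r)$,
\[
\us(\xi)-\lus(\xi)=\bigl[\us(\xi)-\us(\hat\xi)-D\us(\hat\xi)\cdot(\xi-\hat\xi)\bigr]+\mu\bigl(\td r^{2}-\hat\xi\cdot\xi\bigr),
\]
where the bracket is $\ge 0$ by convexity and $>0$ for $\xi\neq\hat\xi$ by strict convexity, while $\td r^{2}-\hat\xi\cdot\xi\ge0$ on the sphere, with equality only at $\xi=\hat\xi$. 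Since $\mu>0$ this gives $\lus(\hat\xi)=\us(\hat\xi)$ and $\lus(\xi)<\us(\xi)$ for all $\xi\in S^{n-1}(\td r)\setminus\{\hat\xi\}$, which is the assertion of the lemma (the second $\hat\xi$ in the displayed inequality there should read $\xi$).

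It remains to verify the quantitative claims. Legendre duality gives $D\us(\hat\xi)\cdot\hat\xi-\us(\hat\xi)=\ur\bigl(D\us(\hat\xi)\bigr)$, and by the normalization of Subsection~\ref{cv0} one has $\ur\ge\ur(0)=\min\ur\ge 1$; hence, with the above choice of $\mu$, a direct computation yields $a=1+\ur\bigl(D\us(\hat\xi)\bigr)\ge 2>0$ and
\[
a\sqrt{1-\td r^{2}}=\sqrt{1-\td r^{2}}\,\Bigl(1+D\us(\hat\xi)\cdot\hat\xi-\us(\hat\xi)\Bigr),
\]
which is bounded by a constant depending only on $|\uus|_{C^2}$ once one knows the $C^0$ bound $|\us|\le C_0$ (from $\lus_0\le\us\le\uus$, Subsection~\ref{cv0}) and the radial gradient bound $|D\us(\xi)|\le C\,(1-|\xi|^2)^{-1/2}$. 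The case $r=1$ runs identically, with $\us$ the solution on $B_1$. The main obstacle is precisely keeping this constant uniform in $r$ up to $r=1$: it comes down to the radial gradient estimate $|D\us|\le C\,(1-|\xi|^2)^{-1/2}$, which I would establish, following \cite{LA}, by comparing $\us$ with the explicit sub- and supersolutions $\lus_0=-A\sqrt{1-|\xi|^2}+\vp$ and $\uus$ of Subsection~\ref{cvb}, together with the interior estimates of Lemmas~\ref{cvlem1.1}--\ref{cvlem1.3}. Granting that estimate, the barrier lemma itself is the elementary argument above.
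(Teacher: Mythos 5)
Your construction of the touching function is correct as far as it goes: with $a_i=D_i\us(\hat\xi)+\mu\hat\xi_i$ and $\mu>0$, the identity $\us(\xi)-\lus(\xi)=\bigl[\us(\xi)-\us(\hat\xi)-D\us(\hat\xi)\cdot(\xi-\hat\xi)\bigr]+\mu\bigl(\td{r}^2-\hat\xi\cdot\xi\bigr)$ on $S^{n-1}(\td{r})$ gives equality at $\hat\xi$ and strict inequality elsewhere (even without invoking strict convexity, since $\hat\xi\cdot\xi<\td{r}^2$ for $\xi\neq\hat\xi$ on the sphere), and keeping the profile $-n^{\frac{1}{n-1}}\sqrt{1-|\xi|^2}$ is the right thing to do. The gap is in the only quantitative assertion of the lemma, $a\sqrt{1-|\hat\xi|^2}<C$ with $C$ controlled by $|\uus|_{C^2}$. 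With your choice, $a=1+\hat\xi\cdot D\us(\hat\xi)-\us(\hat\xi)=1+\ur\bigl(D\us(\hat\xi)\bigr)$, so the required bound is precisely the uniform upper bound of $\ur\sqrt{1-|D\ur|^2}$ at points where $|D\ur|=\td{r}$, i.e.\ a radial estimate $\sqrt{1-|\xi|^2}\,|\xi\cdot D\urs(\xi)|\leq C$ uniform in $r$. You do not prove this; you defer it to a ``radial gradient estimate'' said to follow from comparison with $\lus_0$, $\uus$ and Lemmas \ref{cvlem1.1}--\ref{cvlem1.3}. But those lemmas control only \emph{angular} derivatives, and the soft convexity/comparison argument yields only $|D\urs(\hat\xi)|\leq 2C_0/(r-\td{r})$, which degenerates as $\td{r}\to r$; obtaining the $(1-|\xi|^2)^{-1/2}$ rate requires a genuine boundary growth or gradient estimate for $\uus$ (or for $\urs$ near $\partial B_r$), which you do not supply. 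Worse, in the paper's architecture the estimate you need is exactly (the upper half of) Lemma \ref{cvlem1.6}, which is deduced \emph{from} Lemmas \ref{cvlem1.4} and \ref{cvlem1.5}; so as written your argument is circular at the decisive step.

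The proof the paper has in mind (omitted, following \cite{LA}, and explicitly advertised as resting on Lemmas \ref{cvlem1.1} and \ref{cvlem1.2}) avoids the radial derivative of $\us$ altogether: one takes the linear part to consist of the tangential gradient of $\us$ along $S^{n-1}(\td{r})$, bounded by Lemma \ref{cvlem1.1}, plus a purely radial tilt $a_\rho\hat\xi/\td{r}$ whose size, of order $(1-\td{r}^2)^{-1/2}$, is dictated by the angular second-derivative bound $\sqrt{1-|\xi|^2}\,|\T^2\us|<b$ of Lemma \ref{cvlem1.3} together with the identity $\td{r}^2-\hat\xi\cdot\xi=\tfrac12|\xi-\hat\xi|^2$ on the sphere; the constant $a$ is then fixed by the touching condition and $a\sqrt{1-\td{r}^2}$ is controlled by $b$, $|\T\us|$ and $C_0$, hence by the $|\uus|_{C^2}$ data, with no circularity. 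To repair your proposal, either modify the choice of $a_1,\dots,a_n$ in this way, or provide an independent proof of the radial estimate $\sqrt{1-|\xi|^2}\,|\xi\cdot D\urs|\leq C$ (uniform in $r$) before using it.
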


Using Lemma \ref{cvlem1.4} and Lemma \ref{cvlem1.5} we can show
\begin{lemm}
\label{cvlem1.6}
Let $\urs$ be the solution of equation \eqref{cvb1.2} and $\ur$ be the Legendre transform of $\urs.$
There are positive constants $d_2>d_1$ that are independent of $r,$ such that
\be\label{cv1.17}
0<d_1\leq \ur\sqrt{1-|D\ur|^2}\leq d_2,
\ee
where $d_2$ depends on $|\ur|_{C^0(\Omega)}$ and $\Omega=\{x\in\R^n | |D\ur(x)|\leq \frac{1}{2}\}.$  
Moreover, this conclusion also holds for $r=1.$
\end{lemm}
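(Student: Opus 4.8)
\medskip
\noindent The plan is to reduce \eqref{cv1.17} to an explicit computation on the umbilic hyperboloids furnished by Lemmas \ref{cvlem1.4} and \ref{cvlem1.5}. The starting point is the elementary identity coming from the Legendre transform: if $x=D\urs(\xi)$ with $\xi\in B_r$, then $D\ur(x)=\xi$ and $\ur(x)=\langle D\urs(\xi),\xi\rangle-\urs(\xi)$, so
\be\label{cv1.17a}
\ur(x)\sqrt{1-|D\ur(x)|^2}=\abz{\langle D\urs(\xi),\xi\rangle-\urs(\xi)}\sqrt{1-|\xi|^2}.
\ee
Geometrically the left side is the (Lorentzian) distance, measured along the unit normal $\nu$, from $X=(x,\ur(x))\in\M_{\ur}$ to the hyperplane $\{x_{n+1}=0\}$; for a shifted umbilic hyperboloid $\ur=\sqrt{A^2+|x-p|^2}+c$, whose Legendre transform is $-A\sqrt{1-|\xi|^2}+\langle p,\xi\rangle+c$, a one-line computation shows this quantity equals $A+c\sqrt{1-|D\ur|^2}$, hence exactly $A=n^{1/(n-1)}$ when $c=0$. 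I would then split $\Omega_r$ into $\{|D\ur|\leq\frac{1}{2}\}$ and $\{|D\ur|>\frac{1}{2}\}$. On the first set, after the normalization $\ur(0)\geq1$, $D\ur(0)=0$ of Subsection \ref{cv0} one has $\ur\geq1$ and $\sqrt{1-|D\ur|^2}\geq\sqrt{3}/2$, so $\sqrt{3}/2\leq\ur\sqrt{1-|D\ur|^2}\leq|\ur|_{C^0(\Omega)}$, which settles both inequalities there.

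For a point $x$ with $\tilde r:=|D\ur(x)|\in(\frac{1}{2},r)$ I would set $\hat\xi:=D\ur(x)\in S^{n-1}(\tilde r)$, so that $x=D\urs(\hat\xi)$. Lemma \ref{cvlem1.4} provides an umbilic hyperboloid $\overline{u}^*$ with $\overline{u}^*\geq\urs$ on $S^{n-1}(\tilde r)$ and $\overline{u}^*(\hat\xi)=\urs(\hat\xi)$; since $\overline{u}^*$, being the Legendre transform of an umbilic hyperboloid with $\sigma_{n-1}\equiv1$, is itself an exact solution of $F(\w\gas_{ik}\overline{u}^*_{kl}\gas_{lj})=1$, while $\urs$ solves \eqref{cvb1.2} in $B_r\supset B_{\tilde r}$, the comparison principle for the concave, degenerate-elliptic operator $F$ yields $\overline{u}^*\geq\urs$ in $B_{\tilde r}$. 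Because $\hat\xi$ is a boundary touching point, the boundary-point lemma gives $D\urs(\hat\xi)=D\overline{u}^*(\hat\xi)+c\,\hat\xi/\tilde r$ with $c\geq0$; substituting this into \eqref{cv1.17a} and using the explicit form of $\overline{u}^*$ gives
\be\label{cv1.17b}
\ur(x)\sqrt{1-|D\ur(x)|^2}\ \geq\ n^{\frac{1}{n-1}}-b\,\sqrt{1-\tilde r^2},
\ee
where $b$ is the constant of Lemma \ref{cvlem1.4}. The symmetric argument with the lower barrier of Lemma \ref{cvlem1.5} (where now $c\leq0$) gives $\ur(x)\sqrt{1-|D\ur(x)|^2}\leq n^{1/(n-1)}+a\sqrt{1-\tilde r^2}$. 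Since Lemmas \ref{cvlem1.4} and \ref{cvlem1.5} bound $b\sqrt{1-|\hat\xi|^2}$ and $a\sqrt{1-|\hat\xi|^2}$ by a constant $C$ depending only on $|\uus|_{C^2}$, one concludes $d_1\leq\ur\sqrt{1-|D\ur|^2}\leq d_2$ on all of $\Omega_r$ with $d_1=\min\{\sqrt{3}/2,\ n^{1/(n-1)}-C\}$ and $d_2=\max\{|\ur|_{C^0(\Omega)},\ n^{1/(n-1)}+C\}$, uniformly in $r$; the case $r=1$ follows verbatim since Lemmas \ref{cvlem1.3}, \ref{cvlem1.4}, \ref{cvlem1.5} and the normalization of Subsection \ref{cv0} are all valid for $r=1$.

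The main obstacle is the positivity of $d_1$, i.e.\ ensuring that the downward shift $b\sqrt{1-\tilde r^2}$ in \eqref{cv1.17b} stays strictly below $n^{1/(n-1)}$ uniformly as $\tilde r\to1$; this is exactly where the construction of the barrier in Lemma \ref{cvlem1.4} enters, the point being that the bound on $\w\,|\T^2\urs|$ from Lemma \ref{cvlem1.3} limits the curving of $\urs$ and thereby allows the osculating hyperboloid at $\hat\xi$ to serve as an upper barrier with an arbitrarily small downward shift. A secondary technical issue is justifying the boundary-point (Hopf) lemma at the corner touching point $\hat\xi\in\partial B_{\tilde r}$, which is handled via the strong comparison principle for $F$ together with the smoothness of $\overline{u}^*$ and of $\urs$ near $S^{n-1}(\tilde r)$ (recall $\tilde r<r$, so $\urs\in C^\infty$ there).
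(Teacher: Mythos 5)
Your overall framework is the one the paper intends (the proof is deferred to Lemmas \ref{cvlem1.4} and \ref{cvlem1.5}, following Li's Lemma 5.6 in \cite{LA}): the Legendre identity, the easy region $\{|D\ur|\leq\frac12\}$ after the normalization of Subsection \ref{cv0}, and the step ``barrier on $S^{n-1}(\td r)$ $\Rightarrow$ barrier on $B_{\td r}$ by the comparison principle (both are exact solutions of $F=1$, since the umbilic dual hyperboloid with $A=n^{1/(n-1)}$ satisfies $\sigma_{n-1}=1$) $\Rightarrow$ sign of the radial derivative at the touching point $\hat\xi$'' are all sound. In particular the upper bound is complete: the lower barrier of Lemma \ref{cvlem1.5} gives $\ur\sqrt{1-|D\ur|^2}\leq n^{\frac{1}{n-1}}+a\sqrt{1-\td r^{\,2}}$, and Lemma \ref{cvlem1.5} supplies exactly the control $a\sqrt{1-|\hat\xi|^2}<C$ that this requires.

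The lower bound, however, is not closed, and this is a genuine gap rather than a technicality. Your argument yields only $\ur\sqrt{1-|D\ur|^2}\geq n^{\frac{1}{n-1}}-b\sqrt{1-\td r^{\,2}}$, so positivity of $d_1$ requires the quantitative smallness $b\sqrt{1-\td r^{\,2}}\leq n^{\frac{1}{n-1}}-d_1$ uniformly in $\hat\xi$ and $\td r$. Lemma \ref{cvlem1.4} asserts no such thing: it controls $b$ from \emph{below} ($b>C$), not from above, so your candidate $d_1=\min\{\sqrt{3}/2,\,n^{1/(n-1)}-C\}$ may well be nonpositive. Nor does your appeal to Lemma \ref{cvlem1.3} rescue this: since $-n^{1/(n-1)}\sqrt{1-|\xi|^2}$ is constant on $S^{n-1}(\td r)$, the barrier restricted to the sphere is just an affine function of $\xi$, and the natural construction of an affine function dominating $\urs|_{S^{n-1}(\td r)}$ and touching it at $\hat\xi$ forces a radial slope of size comparable to the angular Hessian bound of Lemma \ref{cvlem1.3}, i.e. of order $b_0/\sqrt{1-\td r^{\,2}}$; this produces $b\sqrt{1-\td r^{\,2}}$ comparable to $b_0$, a constant depending on $|\uus|_{C^2}$ which has no reason to be smaller than $n^{1/(n-1)}$. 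So the claim that the osculating hyperboloid can be taken ``with an arbitrarily small downward shift'' is unsupported, and the lower-bound half needs a genuinely different ingredient (for instance, transferring the boundary information $\urs=\uus$, $\xi\cdot D\urs\geq\xi\cdot D\uus$ on $\T B_r$ together with the known bound for the Monge--Amp\`ere barrier $\uus$ from \cite{LA} into the interior), rather than the touching estimate as you have quoted it.
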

Note that, we do not need $d_2$ to obtain the local $C^1$ estimates. We only need the upper bound of $\ur\sqrt{1-|D\ur|^2}$ for $r=1$ in Section
\ref{ub}.

Combining Lemma \ref{cvlem1.7} with Lemma \ref{cvlem1.6} we conclude
\begin{coro}
\label{c2cor}
For any $s>2C_0+1$, there exists $r_s>0$ such that when $r>r_s,$ $\ur|_{\partial\Omega_r}>s.$
Moreover, in the domain $\{x\in\Omega_r |\, \ur(x)<s\}$ we have
\be\label{c2.2}
\sqrt{1-|D\ur|^2}\geq\frac{C_4}{s}.
\ee
\end{coro}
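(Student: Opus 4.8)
The plan is to deduce both assertions directly from the two previously established lemmas, \ref{cvlem1.7} and \ref{cvlem1.6}, together with the normalization set up in Subsection \ref{cv0}; no new estimate is needed. For the first assertion I would invoke Lemma \ref{cvlem1.7}, which gives $\ur|_{\partial\Omega_r}\to+\infty$ as $r\to 1$. Hence for the given $s>2C_0+1$ there is a radius $r_s\in(3/4,1)$ — chosen close enough to $1$ that the divergence kicks in, and still large enough (say $r_s>r_0>1/2$) that all the a priori estimates of Section \ref{cv}, in particular Lemmas \ref{cvlem1.1}--\ref{cvlem1.6} and the $C^0$/$C^1$ bounds of Subsections \ref{cv0}--\ref{cv1}, remain valid — such that $\ur|_{\partial\Omega_r}>s$ for every $r\in(r_s,1)$. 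The hypothesis $s>2C_0+1$ is used only to match the normalization $2C_0+1\ge\ur(0)\ge1$ from Subsection \ref{cv0}: it guarantees $\ur(0)<s$, so the sub-level set $\{x\in\Omega_r\mid \ur(x)<s\}$ is nonempty and contains the origin.

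For the second assertion I would use Lemma \ref{cvlem1.6}, which furnishes an $r$-independent constant $d_1>0$ with
\[
0<d_1\leq \ur\sqrt{1-|D\ur|^2}\quad\text{on }\Omega_r .
\]
Since $D\ur$ takes values in $B_r$, we have $|D\ur|\le r<1$, so $\sqrt{1-|D\ur|^2}>0$; combined with the displayed inequality this forces $\ur>0$ throughout $\Omega_r$. Therefore on the sub-level set $\{x\in\Omega_r\mid \ur(x)<s\}$ one may divide by $\ur$ and estimate
\[
\sqrt{1-|D\ur|^2}\ \geq\ \frac{d_1}{\ur}\ >\ \frac{d_1}{s},
\]
so that \eqref{c2.2} holds with $C_4:=d_1$. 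Note that $C_4$ is independent of $r$ (and of $s$), which is exactly what is needed for the application in Section \ref{ub}.

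The statement is essentially bookkeeping — the substantive work has already been done in Lemmas \ref{cvlem1.7} and \ref{cvlem1.6} — so there is no real analytic obstacle here. The only point that deserves a moment's care is the simultaneous choice of the threshold $r_s$: it must be near enough to $1$ that $\ur|_{\partial\Omega_r}>s$, and at the same time lie in the range where the uniform estimates (hence the uniform constant $d_1$) are available. Both requirements are satisfied as soon as $r_s$ is taken sufficiently close to $1$, so the two parts fit together without tension.
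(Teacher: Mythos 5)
Your proposal is correct and follows essentially the same route as the paper, which simply combines Lemma \ref{cvlem1.7} (boundary values of $\ur$ blow up as $r\to1$, giving the threshold $r_s$) with Lemma \ref{cvlem1.6} (the $r$-independent bound $d_1\leq \ur\sqrt{1-|D\ur|^2}$, yielding \eqref{c2.2} with $C_4=d_1$ on the sublevel set $\{\ur<s\}$). No gaps.
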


\subsection{Local $C^2$ estimates}
\label{cv2}
\begin{lemm}
\label{cvlem1.8}
Let $\urs$ be the solution of \eqref{cvb1.2} and $\ur$ be the Legendre transform of $\urs.$
For any given $s>2C_0+1,$ let $r_s>0$ be a positive number such that when $r>r_s,$
$\ur|_{\partial\Omega_r}>s.$ Let $\la_{\max}(x)$ be the largest principal curvature of $\M_{\ur}$ at $x,$
where $\M_{\ur}=\{(x, \ur(x)) |\,x\in\Omega_r\}.$ Then, for $r>r_s$ we have
\[\max_{\M_{\ur}}(s-\ur)\la_{\max}\leq C_5.\]
Here, $C_5$ depends on the local $C^1$ estimates of $\ur$ and $s.$
\end{lemm}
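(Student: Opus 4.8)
The plan is to prove this interior curvature bound by applying the maximum principle on $\M_{\ur}$, in the spirit of Anmin Li \cite{LA} for the Gauss curvature equation, with the estimates specific to the operator $\sigma_{n-1}$ imported from Ren--Wang \cite{RW}; the factor $s-\ur$ serves as the localizing cutoff. First, wherever $\ur\geq s$ one has $(s-\ur)\la_{\max}\leq 0$, so it suffices to bound $(s-\ur)\la_{\max}$ on $\Omega'_r:=\{x\in\Omega_r:\ur(x)<s\}$, which is compactly contained in $\Omega_r$ because $\ur|_{\partial\Omega_r}>s$. Over $\Omega'_r$ the hypersurface $\M_{\ur}$ is strictly convex (by construction $\la^*[\w\gas_{ik}\urs_{kl}\gas_{lj}]\in\Gamma_n$, hence $\la_i>0$) and satisfies $\sigma_{n-1}(\la[\M_{\ur}])=1$ by \eqref{es1.3}, so the algebraic identities of Section \ref{pre} are available. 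Writing $\ur=-\lt<X,E\rt>$ for the height function, $\nu$ for the timelike unit normal, and $W:=-\lt<\nu,E\rt>=(1-|D\ur|^2)^{-1/2}$, which satisfies $1\leq W\leq C_4^{-1}s$ on $\Omega'_r$ by Corollary \ref{c2cor}, I would run the maximum principle on
\[\Phi=\log\la_{\max}+\log(s-\ur)+N\psi,\]
where $N$ is a large constant and $\psi$ is a bounded auxiliary term of the kind used in \cite{LA} (built from $\lt<\nu,E\rt>$ or $\lt<X,\nu\rt>$) whose role is to contribute a favourable lower-order term. Since $\log(s-\ur)\to-\infty$ on $\partial\Omega'_r$, $\Phi$ attains an interior maximum at some $X_0\in\M_{\ur}$; by the usual perturbation argument (or by using $p^{-1}\log\sum_i\la_i^p$ in place of $\log\la_{\max}$ and letting $p\to\infty$) I may assume $\la_1:=\la_{\max}$ is a simple eigenvalue at $X_0$, and I choose an orthonormal frame there with $h_{ij}=\la_i\delta_{ij}$.

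I would then write out $\Phi_i=0$ and $\sigma_{n-1}^{ii}\Phi_{ii}\leq 0$ at $X_0$. The geometric ingredients are routine: \eqref{Gauss} gives $\ur_i=-\lt<\tau_i,E\rt>$, $\ur_{ij}=\la_iW\delta_{ij}$, $\sum_i\ur_i^2=W^2-1$; differentiating $\sigma_{n-1}(h)\equiv1$ gives $\sigma_{n-1}^{ii}h_{iik}=0$ (in particular the constraint $\sum_p\sigma_{n-2}(\la|p)h_{pp1}=0$) and $\sigma_{n-1}^{ii}h_{ii11}=-\sigma_{n-1}^{pq,rs}h_{pq1}h_{rs1}$; and the Minkowski Ricci identity \eqref{a1.2} gives $h_{11ii}=h_{ii11}+\la_1\la_i(\la_i-\la_1)$, so that $\sum_i\sigma_{n-1}^{ii}\la_1\la_i(\la_i-\la_1)\leq0$ since $\la_i>0$ and $\la_1$ is the largest. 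Feeding in the identities $\sum_i\sigma_{n-1}^{ii}\la_i=n-1$, $\sum_i\sigma_{n-1}^{ii}\la_i^2=\sigma_1-n\sigma_n$ of Section \ref{pre} and the Maclaurin bound $\sigma_n\leq C(n)$, one reduces $\sigma_{n-1}^{ii}\Phi_{ii}\leq0$ to an inequality whose decisive terms are the third-order quantity $-\sigma_{n-1}^{pq,rs}h_{pq1}h_{rs1}/\la_1$, the nonnegative second-variation term $\frac{2}{\la_1}\sum_{i,\,p\neq1}\sigma_{n-1}^{ii}h_{1pi}^2/(\la_1-\la_p)$ (the second variation of the simple eigenvalue $\la_{\max}$), terms of order $\la_1$, and the gradient term $\sigma_{n-1}^{ii}\ur_i^2/(s-\ur)^2$ — everything else being bounded by a constant depending only on $n$, $s$ and the local $C^1$ data.

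The heart of the matter is to estimate the third-order quantity $-\sigma_{n-1}^{pq,rs}h_{pq1}h_{rs1}$, and I expect this to be the main obstacle. In contrast to the Gauss-curvature case — where, after using the constraint, $-\sigma_n^{pq,rs}h_{pq1}h_{rs1}=\sigma_n\sum_{p,q}h_{pq1}^2/(\la_p\la_q)\geq0$ automatically — for $\sigma_{n-1}$ this expression is not sign-definite. The plan is to follow \cite{RW}: use identity (v) of Section \ref{pre} together with the first-order relation $\Phi_i=0$ to substitute for the off-diagonal third derivatives $h_{11p}$, thereby cancelling the most singular part of $-\sigma_{n-1}^{pq,rs}h_{pq1}h_{rs1}$ against $\sigma_{n-1}^{ii}\ur_i^2/(s-\ur)^2$ up to remainders controlled by $\sigma_{n-2}(\la|1)\leq1/\la_1$ and $W\leq C_4^{-1}s$; what is left is a quadratic form in $(h_{pp1})_{p\neq1}$ and $(h_{pq1})_{p,q\neq1}$ subject to $\sum_p\sigma_{n-2}(\la|p)h_{pp1}=0$, which must be shown to be bounded below by $-C(n)\la_1$. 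This last bound is precisely a partial-concavity estimate for $\sigma_{n-1}$ of the type proved by Ren--Wang (Lemmas 9 and 10 of \cite{RW}, the same combinatorial device driving Sections \ref{kp} and \ref{ku} of the present paper), obtained by a case analysis on the relative sizes of $\la_1,\dots,\la_n$ via the G\r{a}rding-cone characterization \eqref{Gamma} and the inequalities (vi)--(vii) of Section \ref{pre}.

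Once that lower bound is in hand, choosing $N$ large depending only on $n$ (so that the favourable coefficient of $\la_1$ in the resulting inequality strictly exceeds $n-1$ plus the loss $C(n)$ from the third-order estimate — here $W\geq1$ is used) forces $\la_1(X_0)$, and then $(s-\ur)\la_{\max}|_{X_0}$, to satisfy a quadratic inequality with coefficients depending only on $n$, $s$ and $C_4$; hence $(s-\ur)\la_{\max}|_{X_0}\leq C_5$. Since $X_0$ is a maximum point of $\Phi$ and $\psi$ is bounded, this yields $(s-\ur)\la_{\max}\leq C_5$ on all of $\M_{\ur}$, with $C_5$ depending only on the local $C^1$ estimates of $\ur$ and $s$, which is the assertion. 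In summary, the differential-geometric set-up is standard and the single genuine difficulty is the lower bound for the residual third-order quadratic form, supplied by the $\sigma_{n-1}$-specific arguments of \cite{RW}.
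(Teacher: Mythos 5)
Your proposal follows essentially the same route as the paper's proof: a Pogorelov-type quantity combining $\log(s-\ur)$, a high power $P_m=\sum_j\la_j^m$ of the principal curvatures (the paper uses this rather than $\log\la_{\max}$, as your $p\to\infty$ fallback allows), and a weight in $\lt<\nu,E\rt>$, with the first-derivative identity used to cancel the $(s-\ur)^{-2}$ gradient term and the troublesome third-order quadratic form handled by quoting the Ren--Wang-type interior-estimate machinery together with the dichotomy $\la_{n-1}\leq\delta\la_1$ versus $\sigma_{n-1}=1\geq\delta^{n-2}\la_1^{n-1}$. The only slip is bibliographic: the lemmas actually invoked for that key claim are Lemmas 8--9 (and Corollary 10) of Li--Ren--Wang \cite{LRW}, which are formulated precisely for $P_m$ with $m$ large, not Lemmas 9--10 of \cite{RW} (those enter the convexity Sections \ref{kp}--\ref{ku} instead).
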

\begin{proof}
For our convenience, in this proof we will omit the superscript $r.$ Let's consider
\[\vp=m\log(s-u)+\log P_m-mN\lt<\nu, E\rt>,\]
where $P_m=\sum_j\la_j^m$ and $N, m$ are some constants to be determined.
Suppose that the function $\vp$ achieves its maximum on $\M$ at some point $x_0,$ we may choose a local orthonormal frame
$\{\tau_1, \cdots, \tau_n\}$ such that at $x_0,$ $h_{ij}=\la_i\delta_{ij}$ and $\la_1\geq\la_2\geq\cdots\geq\la_n.$
Differentiating $\vp$ twice at $x_0$ we get
\be\label{cv1.19}
\frac{\sum_j\la_j^{m-1}h_{jji}}{P_m}-Nh_{ii}\lt<X_i, E\rt>+\frac{\lt<X_i, E\rt>}{s-u}=0
\ee
and
\be\label{cv1.20}
\begin{aligned}
0&\geq\frac{1}{P_m}\lt[\sum_j\la_j^{m-1}h_{jjii}+(m-1)\sum_j\la_j^{m-2}h^2_{jji}+\sum_{p\neq q}\frac{\la_p^{m-1}-\la_q^{m-1}}{\la_p-\la_q}h^2_{pqi}\rt]\\
&-\frac{m}{P^2_m}\lt(\sum_j\la_j^{m-1}h_{jji}\rt)^2-Nh_{iji}\lt<X_j, E\rt>-Nh^2_{ii}\lt<\nu, E\rt>\\
&+\frac{h_{ii}\lt<\nu, E\rt>}{s-u}-\frac{u^2_i}{(s-u)^2},
\end{aligned}
\ee
where $X_i=\nabla_{\tau_i}X=\tau_i,$ for $1\leq i\leq n.$
Now, let's differentiate equation \eqref{2.1} twice
\be\label{cv1.21}
\sigma_{n-1}^{ii}h_{iij}=0,\,\,\mbox{and $\sigma_{n-1}^{ii}h_{iijj}+\sigma_{n-1}^{pq, rs}h_{pqj}h_{rsj}=0.$}
\ee
Recall that in Minkowski space we have
\be\label{cv1.21*}
h_{jjii}=h_{iijj}+h_{ii}^2h_{jj}-h_{ii}h^2_{jj}.
\ee
By \eqref{cv1.20} - \eqref{cv1.21*} we find
\be\label{cv1.22}
\begin{aligned}
0&\geq\dfrac{1}{P_m}\lt\{\dsum_j\kappa_j^{m-1}\lt[-(n-1)h_{jj}^2+K(\sigma_{n-1})_j^2-\sigma_{n-1}^{pq,rs}h_{pqj}h_{rsj}\rt]\right. +(m-1)\sigma_{n-1}^{ii}\dsum_j\kappa_j^{m-2}h_{jji}^2\\
&\left.+\sigma_{n-1}^{ii}\dsum_{p\neq q}\dfrac{\kappa_p^{m-1}-\kappa_q^{m-1}}{\kappa_p-\kappa_q}h_{pqi}^2\rt\}
-\dfrac{m\sigma_{n-1}^{ii}}{P_m^2}\lt(\dsum_j \kappa_j^{m-1}h_{jji}\rt)^2\\
&-N \sigma_{n-1}^{ii}\kappa_i^2\langle\nu,E\rangle
+\dfrac{(n-1)\lt< \nu,E\rt>}{s-u}-\dfrac{\sigma_{n-1}^{ii}u_i^2}{(s-u)^2},
\end{aligned}
\ee
where $K$ is a constant.
We denote,
$$A_i=\dfrac{\kappa_i^{m-1}}{P_m}[K(\sigma_{n-1})_i^2-\dsum_{p,q}\sigma_{n-1}^{pp,qq}h_{ppi}h_{qqi}],$$
$$B_i=\dfrac{2\kappa_j^{m-1}}{P_m}\dsum_j\sigma_{n-1}^{jj,ii}h_{jji}^2,$$
$$C_i=\dfrac{m-1}{P_m}\sigma_{n-1}^{ii}\dsum_j\kappa_j^{m-2}h_{jji}^2,$$
$$D_i=\dfrac{2\sigma_{n-1}^{jj}}{P_m}\dsum_{j\neq i}\dfrac{\kappa_j^{m-1}-\kappa_i^{m-1}}{\kappa_j-\kappa_i}h_{jji}^2,$$
and
$$E_i=\dfrac{m\sigma_{n-1}^{ii}}{P_m^2}\lt(\sum_j\kappa_j^{m-1}h_{jji}\rt)^2.$$
Since
$$
-\sigma_{n-1}^{pq,rs}h_{pql}h_{rsl}=-\sigma_{n-1}^{pp,qq}h_{ppl}h_{qql}+\sigma_{n-1}^{pp,qq}h_{pql}^2,
$$
equation \eqref{cv1.22} can be written as
\be\label{cv1.23}
\begin{aligned}
0&\geq \sum_i(A_i+B_i+C_i+D_i-E_i)-\frac{(n-1)\sum\la_j^{m+1}}{P_m}\\
&-N\sigma_{n-1}^{ii}\la_i^2\lt<\nu, E\rt>+\frac{(n-1)\lt<\nu, E\rt>}{s-u}-\frac{\sigma^{ii}_{n-1}u_i^2}{(s-u)^2}.
\end{aligned}
\ee
By Lemma 8 and 9 in \cite{LRW} we can assume the following claim holds.
\begin{claim}
\label{cvlem1.9}
There exists some small positive constant $\delta>0$ such that, if $\kappa_{n-1}\leq \delta\kappa_1,$
we have
\be\label{cv1.24}
\sum_{i}(A_i+B_i+C_i+D_i)-E_1+\lt(1+\frac{1}{m}\rt)\sum_{i\geq 2}E_i\geq 0.
\ee
Here $m>0$ sufficiently large.
\end{claim}
We note that, by Lemma 8 of \cite{LRW}, for $i=2, 3, \cdots, n,$ the inequality
\[A_i+B_i+C_i+D_i-(1+\frac{1}{m})E_i\geq 0\]
always holds.
For $i=1,$ if $A_1+B_1+C_1+D_1-E_1\geq 0$ doesn't hold, there would exist a $\delta>0$ small such that $\la_{n-1}\geq\delta\la_1$. Since
$$\sigma_{n-1}=1\geq\delta^{n-2}\la_1^{n-1},$$ we would obtain an upper bound for
$\la_1$ at $x_0$ directly, then we would be done.

Therefore equation \eqref{cv1.23} and \eqref{cv1.24} implies
\be\label{cv1.25}
\begin{aligned}
0&\geq\frac{1}{P_m}\sum_j\la_j^{m-1}(-C\la_1^2)+\sum_{i=2}^n\frac{\sigma_{n-1}^{ii}}{P_m^2}\lt(\sum_j\la_j^{m-1}h_{jji}\rt)^2\\
&-N\sigma_{n-1}^{ii}\la_i^2\lt<\nu, E\rt>+\frac{(n-1)\lt<\nu, E\rt>}{s-u}-\frac{\sigma_{n-1}^{ii}u_i^2}{(s-u)^2}.
\end{aligned}
\ee
Recall equation \eqref{cv1.19} we obtain for $i\geq 2$,
\be\label{cv1.26}
-\frac{\sigma_{n-1}^{ii}u_i^2}{(s-u)^2}=-\frac{\sigma_{n-1}^{ii}}{P_m^2}\lt(\sum_j\la_j^{m-1}h_{jji}\rt)^2
+N^2\sigma_{n-1}^{ii}\la_i^2u_i^2-2N\frac{\sigma_{n-1}^{ii}\la_iu_i^2}{s-u}.
\ee
Hence, \eqref{cv1.25} becomes
\be\label{cv1.27}
\begin{aligned}
0&\geq-C\la_1+\sum_{i=2}^n\lt(N^2\sigma_{n-1}^{ii}u_i^2\la_i^2-2N\frac{\sigma_{n-1}^{ii}u_i^2\la_i}{s-u}\rt)\\
&-N\sum_{i=1}^n\sigma_{n-1}^{ii}\la_i^2\lt<\nu, E\rt>+\frac{(n-1)\lt<\nu, E\rt>}{s-u}-\frac{\sigma_{n-1}^{11}u_1^2}{(s-u)^2}.
\end{aligned}
\ee
It's easy to see that there exists some positive constant $c_0$ such that $\sigma_{n-1}^{11}\la_1\geq c_0.$
Moreover, for any fixed $i=1, \dots, n$ we have $\sigma_{n-1}\geq\sigma_{n-1}^{ii}\la_i$ (no summation here). Thus, we get
\be\label{cv1.28}
\begin{aligned}
0&\geq\lt(-\frac{c_0N\lt<\nu, E\rt>}{2}-C\rt)\la_1-2N\sum_{i=2}^n\frac{\sigma_{n-1}^{ii}u_i^2\la_i}{s-u}\\
&-\frac{N}{2}\sigma_{n-1}^{11}\la_1^2\lt<\nu, E\rt>+\frac{(n-1)\lt<\nu, E\rt>}{s-u}-\frac{\sigma_{n-1}^{11}u_1^2}{(s-u)^2}.
\end{aligned}
\ee
Note that
\[\sum_{i=1}^nu_i^2=\sum_{i=1}^n\lt<X_i, E\rt>^2=\lt<\nu, E\rt>^2-1\leq \lt<\nu, E\rt>^2 .\]
When we choose $N>0$ large enough such that $-N\lt<\nu, E\rt>\geq\frac{4C}{c_0},$ \eqref{cv1.28} yields
\be\label{cv1.29}
\frac{c_0N}{4}\la_1(-\lt<\nu, E\rt>)+\frac{N}{2}\sigma_{n-1}^{11}\la_1^2(-\lt<\nu, E\rt>)
\leq\frac{3N}{s-u}\lt<\nu, E\rt>^2+\frac{\sigma_{n-1}^{11}}{(s-u)^2}\lt<\nu, E\rt>^2.
\ee
If at $x_0$ we have $s-u\geq \sigma_{n-1}^{11},$ then \eqref{cv1.29} implies
\[(s-u)\la_1\leq C_5.\]
If at $x_0$ we have $s-u<\sigma_{n-1}^{11},$ then from \eqref{cv1.29} we get
\[\frac{N}{2}\sigma_{n-1}^{11}\la_1^2(s-u)^2\leq- 3N(s-u)\lt<\nu, E\rt>-\sigma_{n-1}^{11}\lt<\nu, E\rt>,\]
this gives
\[(s-u)\la_1\leq C_5.\]
Therefore, we obtain the desired Pogorelov type $C^2$ interior estimates.
\end{proof}

\subsection{Convergence of $u^r$ to the solution $u$}
\label{cv3}
Combining estimates in Subsections \ref{cv0} - \ref{cv2} with the classic regularity theorem, we know that
there exists a subsequence of $\{\ur\},$ which we will denote by $\{\uri\},$ $r_i\rightarrow 1$ as $i\rightarrow\infty,$ converging locally smoothly to
a convex function $u$ defined over $\R^n,$ and $u$ satisfies
\[\sigma_{n-1}(\la[\M_u])=1.\]
\begin{lemm}
\label{cvlem1.10}
$u$ is a strictly convex function and $Du(\R^n)=B_1$.
\end{lemm}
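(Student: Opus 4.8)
The plan is to carry everything through the Legendre transform, deducing the two assertions for $u$ from the corresponding facts for $\us$. First I would argue that $u^{r_i*}\to\us$ in $C^\infty_{\mathrm{loc}}(B_1)$ with $\us$ smooth and strictly convex in $B_1$. On every compact subset $K$ of $B_1$ the local $C^0$ bound of Subsection~\ref{cv0}, the angular $C^1$ bound of Lemma~\ref{cvlem1.1}, and the bound $\sqrt{1-|\xi|^2}\,|\T^2\urs|<b$ of Lemma~\ref{cvlem1.3} give uniform $C^2$ estimates for $u^{r_i*}$ on $K$; together with the curvature--radius upper bound coming from the global estimate of Subsection~\ref{es22}, the eigenvalues of $(\w\gas_{ik}u^{r_i*}_{kl}\gas_{lj})$ are bounded above on $K$, and since $\frac{\sigma_n}{\sigma_1}=1$ they are then bounded below by some $c(K)>0$ as well. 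Hence \eqref{cvb1.2} is uniformly elliptic on $K$; as $F$ is concave, Evans--Krylov and standard Schauder theory yield (along a subsequence) $u^{r_i*}\to\us$ in $C^\infty_{\mathrm{loc}}(B_1)$ with $\us$ smooth, strictly convex, and $F(\w\gas_{ik}\us_{kl}\gas_{lj})=1$ in $B_1$. Since $u^{r_i}$ is the Legendre transform of $u^{r_i*}$ and both sequences converge locally smoothly, the correspondence $x=D\us(\xi)\Leftrightarrow\xi=Du(x)$ passes to the limit, so the function $u$ of Subsection~\ref{cv3} is exactly the Legendre transform of $\us$.

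Set $\Omega:=D\us(B_1)$. Because $\us$ is smooth and strictly convex on $B_1$, $D\us\colon B_1\to\Omega$ is a diffeomorphism, $\Omega$ is open, and on $\Omega$ the Legendre transform $u$ is smooth and strictly convex, with $D^2u\bigl(D\us(\xi)\bigr)=\bigl(D^2\us(\xi)\bigr)^{-1}>0$ and $Du|_{\Omega}=(D\us)^{-1}$. It therefore remains to prove $\Omega=\R^n$, equivalently $Du(\R^n)=B_1$. The inclusion $Du(\R^n)\subseteq B_1$ is immediate from Lemma~\ref{cvlem1.6} in the case $r=1$: $u\,\sqrt{1-|Du|^2}\geq d_1>0$ forces $|Du|<1$ everywhere.

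For the reverse inclusion I fix $\xi\in B_1$ with $|\xi|=\rho<1$ and take $i$ large enough that $r_i>\tfrac{1+\rho}{2}$. Since $Du^{r_i}\colon\Omega_{r_i}\to B_{r_i}$ is a bijection and $\xi\in B_{r_i}$, there is $x_i=Du^{r_i*}(\xi)\in\Omega_{r_i}$ with $Du^{r_i}(x_i)=\xi$, and $x_i$ minimizes $u^{r_i}(x)-\xi\cdot x$ over $\R^n$. Comparing the values at $x_i$ and at $0$ gives $u^{r_i}(x_i)\leq u^{r_i}(0)+\rho|x_i|\leq 2C_0+1+\rho|x_i|$, while the comparison $u^{r_i*}\leq\uus$ on $\bar B_{r_i}$ (Subsection~\ref{cv0}) yields
\[
u^{r_i}(x)=\sup_{|\eta|\leq r_i}\bigl(x\cdot\eta-u^{r_i*}(\eta)\bigr)\geq r_i|x|-\max_{\bar B_1}|\uus|=r_i|x|-M
\]
for all $x$. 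Hence $(r_i-\rho)|x_i|\leq 2C_0+1+M$, so $\tfrac{1-\rho}{2}|x_i|\leq 2C_0+1+M$ and the $x_i$ lie in a fixed compact set. Since $u^{r_i}\to u$ in $C^1_{\mathrm{loc}}(\R^n)$, a subsequence of $\{x_i\}$ converges to some $x_\infty$ with $Du(x_\infty)=\lim_i Du^{r_i}(x_i)=\xi$. Thus $Du(\R^n)=B_1$. Finally, if some $x\notin\Omega$ existed, then $Du(x)=Du(x')$ for some $x'\in\Omega$, $x'\neq x$; this would force $u$ to be affine along $[x,x']$ and hence $D^2u$ to degenerate at points of $\Omega$, contradicting the strict convexity established there. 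Therefore $\Omega=\R^n$, $u$ is strictly convex on all of $\R^n$, and $Du(\R^n)=B_1$.

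The main obstacle is the first paragraph: showing that \eqref{cvb1.2} is uniformly elliptic on compact subsets of $B_1$, uniformly in $r$ --- i.e.\ the two--sided eigenvalue bounds for $(\w\gas_{ik}u^{r*}_{kl}\gas_{lj})$ away from $\partial B_1$. The upper bound is precisely the output of the global $C^2$ estimate of Subsection~\ref{es22} (which itself rested on Theorem~\ref{ceth1.1}), and one must check that it, like Lemmas~\ref{cvlem1.1}--\ref{cvlem1.6}, is uniform in $r$ and hence survives the limit $r_i\to1$; the rest of the argument is then soft.
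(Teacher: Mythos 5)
Your proof reaches both conclusions, but by a genuinely different route for the strict--convexity half. The paper does not re-derive interior regularity of the dual problem at all: it invokes Theorem \ref{intth1.1} to reduce the whole lemma to the single statement that $\us$ is defined on all of $B_1$, i.e.\ $Du(\R^n)=B_1$ (if the convex limit $u$ were not strictly convex, $\M_u$ would split as $\M^{n-1}\times\R$ and its Gauss image could not fill an $n$-dimensional ball), and then proves surjectivity essentially as in your third paragraph --- the only difference being that it bounds $|Du^{r_i*}(\xi_0)|\leq 2C_0/\mathrm{dist}(\xi_0,\partial B_{r_i})$ directly from convexity of $u^{r_i*}$ and the uniform $C^0$ bound, which plays the same role as your Legendre growth estimate $u^{r_i}(x)\geq r_i|x|-M$, and then concludes $Du(x_0)=\xi_0$ from the local $C^1$ convergence of $u^{r_i}$. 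Your route instead gets strict convexity of $\us$ on $B_1$ from uniform interior ellipticity plus Evans--Krylov and transports it to $u$ by duality; what this buys is independence from the splitting theorem of Sections 3--6, at the price of needing uniform-in-$r$ two-sided eigenvalue bounds on compact subsets of $B_1$. Note also that your identification of $u$ with the Legendre transform of $\us$ is what the paper records only afterwards, as Corollary \ref{cvlem1.11}, so it should be justified (as you sketch) from the local smooth convergence on both sides rather than quoted.

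Concerning the step you flag as the main obstacle: the appeal to the global estimate of Subsection \ref{es22} is indeed the wrong source --- that bound is completed by the boundary $C^2$ estimates of Subsection \ref{es2} on $\partial B_r$, where the rescaled data $\vp/\sqrt{1-r^2}$ blows up, so nothing there is claimed to be uniform as $r\to 1$. But the obstacle is not genuine, because the interior bounds you already cite suffice: Lemmas \ref{cvlem1.1}--\ref{cvlem1.3} are stated with constants depending only on $|\uus|_{C^2}$ and are asserted to hold also for $r=1$, and on a compact $K\subset B_1$ (where $\w$ is bounded below) they bound $D^2u^{r*}$, hence the eigenvalues of $(\w\gas_{ik}u^{r*}_{kl}\gas_{lj})$, from above, making the es22 input superfluous. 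The lower eigenvalue bound then follows from the equation as you say: $\sigma_n(\la^*)=\sigma_1(\la^*)$ together with the arithmetic--geometric mean inequality forces $\sigma_1(\la^*)\geq n^{n/(n-1)}$, and dividing by the product of the remaining eigenvalues gives $\la^*_{\min}\geq c(K)>0$. With that repair your argument is complete and correct, though noticeably longer than the paper's, which leverages Theorem \ref{intth1.1} to avoid the dual-side ellipticity discussion entirely.
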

\begin{proof}
We will denote $\us=x\cdot Du(x)-u(x).$ By Theorem \ref{intth1.1} we know that
to prove Lemma \ref{cvlem1.10} is equivalent to prove $\us$ is defined on $B_1.$

For any $\xi_0\in B_1$ and $i\geq i_0,$ there exists a point $x_0^{r_i}$
such that
\[D\uri(x_0^{r_i})=\xi_0.\]
In other words, we have $Du^{r_i*}(\xi_0)=x_0^{r_i}.$
Denote $\eta_{r_i}=dist(\xi_0, \T B_{r_i}),$ since $u^{r_i*}$ is convex, we get
\[|Du^{r_i*}(\xi_0)|\leq\frac{1}{\eta_{r_i}}\lt(\max u^{r_i*}-\min u^{r_i*}\rt)\leq \frac{2C_0}{\eta_{r_i}}.\]
This implies, when $i>i_0$ we have $x_0^{r_i}\in \bar{B}_{C_6}(0)\subset\R^n,$ where $C_6=\frac{2C_0}{\eta_{r_{i_0}}}$
is a constant. Therefore, there exists a subsequence of $\{x_0^{r_i}\}$ which we still denote as $\{x_0^{r_{i}}\}$
such that
\[\lim_{i\rightarrow\infty}x_0^{r_i}=x_0\in\bar{B}_{C_6}(0).\]
Moreover, by the local $C^1$ estimates we conclude that
\[Du(x_0)=\xi_0.\]
Since $\xi_0\in B_1$ is arbitrary, we proved this Lemma.
\end{proof}
An immediate consequence of Lemma \ref{cvlem1.10} is the following
\begin{coro}
\label{cvlem1.11}
Let $\{u^{r_i*}\}$ be the Legendre transform of $\{\uri\},$ where $\{\uri\}$ is the same convergent sequence we chose earlier.
Then, $\us=\lim_{i\rightarrow \infty}u^{r_i*}$ solves equation \eqref{cs1.1}. Moreover, $u^*$ is the Legendre transform of
$u=\lim_{i\rightarrow\infty}\uri.$
\end{coro}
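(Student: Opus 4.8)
The plan is to obtain $\us$ as a local smooth limit of the solutions $u^{r_i*}$ of the approximating problems \eqref{cvb1.2}, and then to identify this limit with the Legendre transform of $u=\lim_i\uri$. First I would fix $\rho<1$ and work on $B_\rho$, which is contained in $B_{r_i}$ for $i$ large. There the $u^{r_i*}$ are uniformly bounded (by $\lus_0\leq u^{r_i*}\leq\uus$, from the $C^0$ estimates of Subsection \ref{cv0}), uniformly Lipschitz (being convex and bounded), and their Hessians are bounded above by Lemma \ref{cvlem1.3}. The point that needs care is a matching \emph{lower} bound on $(u^{r_i*}_{kl})$, i.e. uniform ellipticity of \eqref{cvb1.2} on $B_\rho$: since $\tfrac{\sigma_n}{\sigma_1}(\la^*)=1$ with $\la^*\in\Gamma_n$, ordering $\la^*_1\geq\cdots\geq\la^*_n>0$ gives $\prod_{j=1}^n\la^*_j=\sigma_n=\sigma_1\geq\la^*_1$, hence $\prod_{j=2}^n\la^*_j\geq1$ and therefore $\la^*_n\geq(\la^*_1)^{-(n-2)}$, which is bounded below once $\la^*_1=\la^*_{\max}$ is bounded above (as it is on $B_\rho$ by Lemma \ref{cvlem1.3}). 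Since $\w$ and the eigenvalues of $\gas$ are bounded below on $B_\rho$, this transfers to a two-sided bound for $(u^{r_i*}_{kl})$ on $B_\rho$. With the equation uniformly elliptic and $F$ concave, Evans--Krylov gives interior $C^{2,\al}$ bounds and Schauder bootstrapping gives interior $C^k$ bounds for every $k$; passing to a subsequence, $u^{r_i*}\to\us$ in $C^\infty_{\mathrm{loc}}(B_1)$, $\us$ is strictly convex, and $F(\w\gas_{ik}\us_{kl}\gas_{lj})=1$ holds in $B_1$.

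Second, I would check the boundary condition. Letting $i\to\infty$ in $\lus_0\leq u^{r_i*}\leq\uus$ yields $\lus_0\leq\us\leq\uus$ on $B_1$. Both $\lus_0=-A\sqrt{1-|\xi|^2}+\vp$ and $\uus$ are continuous on $\bar B_1$ and equal $\vp$ on $\partial B_1$, so a squeeze shows $\us$ extends continuously to $\bar B_1$ with $\us|_{\partial B_1}=\vp$. Hence $\us\in C^\infty(B_1)\cap C^0(\bar B_1)$ solves \eqref{cs1.1}.

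Third, to identify $\us$ with the Legendre transform of $u$, fix $\xi_0\in B_1$. For $i$ large we have $\xi_0\in B_{r_i}=D\uri(\Omega_{r_i})$, so there is $x_0^{r_i}\in\Omega_{r_i}$ with $D\uri(x_0^{r_i})=\xi_0$, equivalently $Du^{r_i*}(\xi_0)=x_0^{r_i}$, and then $u^{r_i*}(\xi_0)=\xi_0\cdot x_0^{r_i}-\uri(x_0^{r_i})$. By the argument in the proof of Lemma \ref{cvlem1.10}, the points $x_0^{r_i}$ lie in a fixed compact subset of $\R^n$ and, along a further subsequence, $x_0^{r_i}\to x_0$ with $Du(x_0)=\xi_0$. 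Since $\uri\to u$ locally uniformly and $u^{r_i*}\to\us$ locally uniformly, letting $i\to\infty$ gives $\us(\xi_0)=\xi_0\cdot x_0-u(x_0)$, which by $Du(x_0)=\xi_0$ is exactly the value at $\xi_0$ of the Legendre transform of $u$. As $\xi_0$ was arbitrary and $B_1=Du(\R^n)$ is precisely the domain of that transform (Lemma \ref{cvlem1.10}), $\us$ is the Legendre transform of $u$; this also shows the limit is independent of the chosen subsequence.

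The main obstacle is the first step: to run the standard compactness/regularity machinery one must know that \eqref{cvb1.2} does not degenerate on compact subsets of $B_1$, i.e. that the Legendre Hessians $(u^{r_i*}_{kl})$ stay uniformly bounded away from both $0$ and $\infty$ there. The upper bound is Lemma \ref{cvlem1.3}, but the lower bound is not automatic and is exactly where one must exploit the homogeneity of $\sigma_n/\sigma_1$ (the elementary eigenvalue inequality above) together with that upper bound. Once uniform ellipticity is secured, the remaining ingredients — the barriers $\lus_0,\uus$ for the boundary value and the pointwise identification of the Legendre transform — are routine limiting arguments resting on Lemmas \ref{cvlem1.7} and \ref{cvlem1.10}.
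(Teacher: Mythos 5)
Your second and third paragraphs are essentially the paper's argument (the corollary is stated there as an immediate consequence of Lemma \ref{cvlem1.10}: one identifies $\lim_i u^{r_i*}$ pointwise with the Legendre transform of $u$ exactly as you do, gets the PDE for $\us$ from $\sigma_{n-1}(\la[\M_u])=1$ and strict convexity via the duality computation of Subsection \ref{lt}, and the boundary value from the barriers $\lus_0\leq u^{r_i*}\leq\uus$). The problem is your first step. You claim uniform two-sided bounds for $(u^{r_i*}_{kl})$ on $B_\rho$, citing Lemma \ref{cvlem1.3} for the upper bound; but Lemma \ref{cvlem1.3} (like Lemma \ref{cvlem1.2}) only bounds the second \emph{angular} derivatives $\T^2\us$ (with the weight $\sqrt{1-|\xi|^2}$), where $\T$ is the rotational operator $\xi_k\partial_{\xi_l}-\xi_l\partial_{\xi_k}$; it says nothing about the radial--radial or radial--angular components of $D^2\urs$, so it does not bound $\la^*_{\max}$. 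The bound you need is equivalent to a uniform \emph{positive lower} bound for the principal curvatures of $\M_{u^{r_i}}$ over the Gauss-map preimage of $\bar B_\rho$, and no such estimate is proved anywhere in the paper: the Pogorelov-type Lemma \ref{cvlem1.8} (and Proposition \ref{ubprop1}) bound the curvatures from \emph{above}, which corresponds to a lower bound for $D^2\urs$, i.e.\ the opposite inequality. Your algebraic step ($\sigma_n(\la^*)=\sigma_1(\la^*)$ plus an upper bound on $\la^*_1$ forces $\la^*_n\geq(\la^*_1)^{-(n-2)}$) is correct, but it only converts an upper Hessian bound into ellipticity; the upper bound itself is the missing (and nontrivial) ingredient, so the Evans--Krylov/Schauder compactness on the $\xi$-side does not get off the ground as written.

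The repair is to run the compactness where the paper does, on the $x$-side: Subsection \ref{cv3} already gives, from the local $C^0$, $C^1$ estimates and Lemma \ref{cvlem1.8}, that $\uri\to u$ in $C^\infty_{\mathrm{loc}}(\R^n)$ with $\sigma_{n-1}(\la[\M_u])=1$, and Lemma \ref{cvlem1.10} gives strict convexity and $Du(\R^n)=B_1$. Then your third paragraph shows $u^{r_i*}\to\us$ pointwise (hence locally uniformly, by convexity) on $B_1$ and that $\us$ is the Legendre transform of $u$; smoothness of $\us$ and the equation $F(\w\gas_{ik}\us_{kl}\gas_{lj})=1$ then follow from the smoothness and strict convexity of $u$ through the formulas of Subsection \ref{lt}, with no need for a dual interior $C^2$ estimate, and your barrier squeeze supplies $\us=\vp$ on $\partial B_1$. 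With the first paragraph replaced in this way the proof is complete and coincides with the paper's.
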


\section{A uniform bound for $\la[\M_u]$}
\label{ub}
In this section, we will show that the function $u$ we obtained in Section \ref{cv} satisfies $\la[\M_u]\leq C.$
In other words, the noncompact, strictly convex, spacelike hypersurface we constructed in Section \ref{cv} has bounded principal curvatures.

\begin{prop}
\label{ubprop1}
Let $\us$ be the solution of equation \eqref{cs1.1} and $u$ be the Legendre transform of $\us.$
Then the hypersurface $\M_u=\{(x, u(x)) |\,x\in\R^n\}$ has bounded principal curvatures.
\end{prop}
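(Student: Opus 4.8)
The plan is to transfer the interior $C^2$ (Pogorelov-type) estimate obtained for the approximating solutions $u^r$ in Lemma \ref{cvlem1.8} to the limit function $u$, and to upgrade it from a bound on $(s-u^r)\la_{\max}$ on the compact pieces $\{u^r<s\}$ to a genuine global bound on $\la[\M_u]$. The key point is that in Section \ref{cv} we already proved, via Corollary \ref{c2cor}, that along the domain $\{x\in\Omega_r\mid u^r(x)<s\}$ one has the lower bound $\sqrt{1-|Du^r|^2}\geq C_4/s$, i.e. $\lt<\nu,E\rt>$ is controlled from below by a constant depending only on $s$. Combining this with Lemma \ref{cvlem1.8}, for each fixed $s>2C_0+1$ and all $r$ close enough to $1$, one gets $\la_{\max}(x)\leq \frac{C_5}{s-u^r(x)}$ on $\{u^r<s\}$, with $C_5$ depending only on $s$ and the local $C^1$ estimates (which are $r$-independent by Lemma \ref{cvlem1.6} and Corollary \ref{c2cor}).

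The next step is to pass to the limit. Fix any compact set $L\subset\R^n$. By the local $C^0$ estimates of Subsection \ref{cv0}, $u$ is bounded on $L$, say $u\leq s_0$ on $L$ for some $s_0>2C_0+1$; since $u^{r_i}\to u$ locally uniformly, $u^{r_i}<s_0+1$ on $L$ for $i$ large. Applying the previous paragraph with $s=s_0+1$ gives a uniform bound $\la_{\max}(\M_{u^{r_i}})\leq \frac{C_5(s_0+1)}{(s_0+1)-u^{r_i}}\leq C(L)$ on $L$ for all large $i$. Since $u^{r_i}\to u$ in $C^\infty_{\mathrm{loc}}$ (Subsection \ref{cv3}), the principal curvatures converge, so $\la[\M_u]\leq C(L)$ on $L$. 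To remove the dependence on $L$ and obtain a single constant, I would use the \emph{global} $C^2$ estimate of Subsection \ref{es22}: the argument there bounds $\sigma_n(\la^*)$ — hence $\la^*=\la^{-1}$ from above, i.e. $\la$ from below — in terms of $|\vp|_{C^2}$ only, but what is actually needed here is the reverse (upper bound on $\la$). The clean way is to revisit Lemma \ref{cvlem1.8}: the constant $C_5$ there depends on $s$ and on the local $C^1$ bounds of $u^r$, and by Corollary \ref{c2cor} the relevant $C^1$ data on $\{u^r<s\}$ are controlled purely in terms of $s$ and $|\vp|_{C^2}$; taking $s\to\infty$ along with the relation $\la_{\max}\leq C_5(s)/(s-u)$ and the explicit growth of $C_5$ in $s$ coming from the proof yields a bound independent of the point, hence a global bound $\la[\M_u]\leq C$ with $C=C(|\vp|_{C^2})$.

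I expect the main obstacle to be precisely this last point: making the dependence of $C_5$ on $s$ explicit enough that $C_5(s)/(s-u(x))$ stays bounded as $s\to\infty$ uniformly in $x$. Tracking through the proof of Lemma \ref{cvlem1.8}, the constant $N$ is chosen so that $-N\lt<\nu,E\rt>\geq 4C/c_0$, and on $\{u^r<s\}$ one has $-\lt<\nu,E\rt>=(1-|Du^r|^2)^{-1/2}\geq C_4/s$, forcing $N\sim s$; the resulting estimate is of the form $(s-u)\la_1\leq C N^{?}$, so one must check that the power of $N$ (equivalently $s$) produced is compatible with dividing by $(s-u)$ and letting $s\to\infty$. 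An alternative, perhaps safer, route avoiding this bookkeeping is to argue locally as in the second paragraph to get $\la[\M_u]$ bounded on every compact set, then invoke Theorem \ref{intth1.1} — $\M_u$ is convex — together with the completeness from Corollary \ref{bpcor1.1} and a contradiction/compactness argument: if $\la_{\max}$ were unbounded on $\M_u$, one could rescale around a sequence of points with $\la_{\max}\to\infty$ and extract a limiting nonflat, complete, convex entire solution of $\sigma_{n-1}=0$ (a limit equation), which is impossible since $\sigma_{n-1}=0$ together with $(n-1)$-convexity forces degeneracy incompatible with the strict convexity preserved under the rescaling. The first route is more elementary and I would present that, relegating the rescaling argument to a remark if the explicit constant chase proves delicate.
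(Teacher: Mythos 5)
Your proposal has a genuine gap at exactly the point you flag as ``the main obstacle,'' and the obstacle is real: the scheme of quoting Lemma \ref{cvlem1.8} and then letting $s\to\infty$ does not close. In the proof of Lemma \ref{cvlem1.8} the constant $C_5$ absorbs factors of $\lt<\nu,E\rt>^2=(1-|Du^r|^2)^{-1}$, and on $\{u^r<s\}$ the only available gradient control is Corollary \ref{c2cor}, $\sqrt{1-|Du^r|^2}\geq C_4/s$; so $C_5(s)$ grows at least like $s^2$, and $C_5(s)/(s-u(x))$ blows up linearly in $s$ for fixed $x$. Hence your first route only yields $\la[\M_u]$ bounded on each compact set (which is fine, and follows from the second paragraph of your argument), not the global bound that the Proposition asserts; and your fallback rescaling argument (``extract a limit solving $\sigma_{n-1}=0$'') is not substantiated — it is unclear what the rescaled limit equation and admissibility class are in the Lorentzian setting, and no contradiction is actually derived.

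What the paper does instead is prove a \emph{new} Pogorelov-type estimate directly on the limit hypersurface $\M_u$, with the test function
\begin{equation*}
\phi=e^{-\frac{ms}{s-u}}\Big(\frac{-\lt<\nu,E\rt>}{u}\Big)^{mN}P_m ,
\end{equation*}
on $U_s=\{u<s\}$, rather than reusing the cutoff $(s-u)^m$ from Lemma \ref{cvlem1.8}. The decisive new input is Lemma \ref{cvlem1.6} (valid for $r=1$): $d_1\leq u\sqrt{1-|Du|^2}\leq d_2$, equivalently $-\lt<\nu,E\rt>\leq u/d_1$, which controls the gradient factor pointwise by $u$ itself rather than by $s$; this is why the gradient weight in $\phi$ is normalized by $u$. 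With this choice the maximum-point computation gives $\kappa_1\leq C(N,d_1)\,s^2/(s-u)^2$ at the maximum, and since $e^{-s/(s-t)}s^2/(s-t)^2\leq 4e^{-2}$ uniformly in $s$, one gets $\phi^{1/m}\leq C(N,d_1)$ with a constant independent of $s$; choosing, for any given $x$, an $s$ with $x\in U_{s/2}$ then yields $\la_1(x)\leq C(N,d_1,d_2)$ globally. Your write-up is missing precisely this mechanism (the $u$-normalized gradient weight plus the exponential cutoff) that makes the constant $s$-independent, so as it stands the proposal does not prove the stated global curvature bound.
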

\begin{proof}
We will use the idea of \cite{LA} to obtain a Pogorelov type interior estimate.
For any $s>0,$ consider
$$\phi=e^{-\frac{ms}{s-u}}\left(\frac{-\langle\nu,E\rangle}{u}\right)^{mN}P_m,$$ where $m, N>0$ are constants to be determined later
and $u\geq 1$ (see Subsection \ref{cv0}).
 It's easy to see that $\phi$ achieves its local maximum at an interior point of
 $U_s=\{x\in\R^n |\, u(x)<s\},$ we will assume this point is $x_0.$ We can choose a local orthonormal frame
 $\{\tau_1, \cdots, \tau_n\}$ such that at $x_0,$
 $h_{ij}=\la_i\delta_{ij}$ and $\la_1\geq\la_2\geq\cdots\geq\la_n.$

 Differentiating $\log\phi$ at $x_0$ we get,
\begin{equation}\label{12.2}
\frac{\phi_i}{\phi}=\dfrac{\dsum_j\kappa_j^{m-1}h_{jji}}{P_m}+Nh_{ii}\frac{\langle \tau_i,E\rangle}{\langle\nu,E\rangle}-N\frac{u_i}{u}-\frac{ su_i}{(s-u)^2}=0,
\end{equation}
and
\begin{eqnarray}\label{ub1.0}
\\
\frac{\phi_{ii}}{\phi}-\frac{\phi_i^2}{\phi^2}&=&\frac{1}{P_m}[\dsum_j\kappa_j^{m-1}h_{jjii}+(m-1)\dsum_j\kappa_j^{m-2}h_{jji}^2
+\dsum_{p\neq q}\dfrac{\kappa_p^{m-1}-\kappa_q^{m-1}}{\kappa_p-\kappa_q}h_{pqi}^2] \nonumber\\
&&-\dfrac{m}{P_m^2}(\dsum_j\kappa_j^{m-1}h_{jji})^2 +Nh_{imi}\frac{\langle \tau_m,E\rangle}{\langle\nu,E\rangle}
+Nh_{ii}^2-Nh_{ii}^2\frac{u_i^2}{\langle\nu,E\rangle^2}\nonumber\\
&&+N\frac{h_{ii}\langle\nu,E\rangle}{u}+N\frac{u_i^2}{u^2}+ s\frac{h_{ii}\langle \nu,E\rangle}{(s-u)^2}-2s\frac{u_i^2}{(S-u)^3}\leq 0.\nonumber
\end{eqnarray}

Following the same arguments as Lemma \ref{cvlem1.8}, we immediately derive
\be\label{ub1.2}
\begin{aligned}
0\geq \sigma_{n-1}^{ii}\frac{\phi_{ii}}{\phi}&=\frac{\sigma_{n-1}^{ii}}{P_m}[\dsum_j\kappa_j^{m-1}h_{jjii}+(m-1)\dsum_j\kappa_j^{m-2}h_{jji}^2
+\dsum_{p\neq q}\dfrac{\kappa_p^{m-1}-\kappa_q^{m-1}}{\kappa_p-\kappa_q}h_{pqi}^2]\\
&-\dfrac{m\sigma_{n-1}^{ii}}{P_m^2}(\dsum_j\kappa_j^{m-1}h_{jji})^2 +N\sigma_{n-1}^{ii}h_{imi}\frac{\langle \tau_m,E\rangle}{\langle\nu,E\rangle}
+N\sigma_{n-1}^{ii}h_{ii}^2-N\sigma_{n-1}^{ii}h_{ii}^2\frac{u_i^2}{\langle\nu,E\rangle^2}\\
&+N\sigma_{n-1}^{ii}\frac{h_{ii}\langle\nu,E\rangle}{u}+N\sigma_{n-1}^{ii}\frac{u_i^2}{u^2}+ s\frac{\sigma_{n-1}^{ii}h_{ii}\langle \nu,E\rangle}{(s-u)^2}-2 s\frac{\sigma_{n-1}^{ii}u_i^2}{(s-u)^3}\\
&\geq -C\kappa_1+\sum_i(A_i+B_i+C_i+D_i-E_i)+N\sigma_k^{ii}\kappa_i^2-N\sigma_{n-1}^{ii}h_{ii}^2\frac{u_i^2}{\langle\nu,E\rangle^2}\\
&+N\sigma_{n-1}^{ii}\frac{h_{ii}\langle\nu,E\rangle}{u}+N\sigma_{n-1}^{ii}\frac{u_i^2}{u^2}+ s\frac{\sigma_{n-1}^{ii}h_{ii}\langle \nu,E\rangle}{(s-u)^2}-2 s\frac{\sigma_{n-1}^{ii}u_i^2}{(s-u)^3},
\end{aligned}
\ee
where $A_i,B_i,C_i,D_i,E_i$ are defined in the argument of Lemma \ref{cvlem1.8}. By Lemma 8, Lemma 9 and Corollary 10 in \cite{LRW} we can assume the following claim holds.
\begin{claim}
\label{cvlem1.900}
There exists two small positive constants $\delta$ and $\eta<1$. If $\kappa_k\leq \delta\kappa_1$,
we have
\be\label{cv1.240}
\sum_{i}(A_i+B_i+C_i+D_i-\lt(1+\frac{\eta}{m}\rt)E_i)\geq 0,
\ee
where $m>0$ sufficiently large.
\end{claim}
If \eqref{cv1.240} doesn't hold, which implies $\kappa_k> \delta\kappa_1$, using the equation $\sigma_{n-1}=1$, we would obtain an upper bound for
$\la_1$ at $x_0$ directly, then we would be done. Otherwise, we assume \eqref{cv1.240} holds. Thus, using \eqref{ub1.2}, we have

\be
\begin{aligned}\label{neweq}
0&\geq-C\kappa_1+\eta\frac{\sigma_{n-1}^{ii}}{P_m^2}
(\dsum_j\kappa_j^{m-1}h_{jji})^2+N\sigma_k^{ii}\kappa_i^2-N\sigma_{n-1}^{ii}h_{ii}^2\frac{u_i^2}{\langle\nu,E\rangle^2}\\
&+N\sigma_{n-1}^{ii}\frac{h_{ii}\langle\nu,E\rangle}{u}+N\sigma_{n-1}^{ii}\frac{u_i^2}{u^2}+ s\frac{\sigma_{n-1}^{ii}h_{ii}\langle \nu,E\rangle}{(s-u)^2}-2 s\frac{\sigma_{n-1}^{ii}u_i^2}{(s-u)^3}.
\end{aligned}
\ee
From equation \eqref{12.2} we obtain
\be\label{ub1.1}
\begin{aligned}
\left(\dfrac{\dsum_j\kappa_j^{m-1}h_{jji}}{P_m}\right)^2&=N^2\frac{\kappa_i^2u_i^2}{\langle\nu,E\rangle^2}+N^2\frac{u_i^2}{u^2}+\frac{s^2u_i^2}{(s-u)^4}\\
&+2N^2\frac{\kappa_iu_i^2}{u\langle \nu,E\rangle}+2N s\frac{\kappa_iu_i^2}{\langle\nu,E\rangle(s-u)^2}+2N s\frac{u_i^2}{u(s-u)^2}.
\end{aligned}
\ee
Inserting \eqref{ub1.1} into \eqref{neweq}, we have
\begin{eqnarray}
0&\geq&-C\kappa_1+N\sigma_{n-1}^{ii}\kappa_i^2+\eta\frac{s^2\sigma_{n-1}^{ii}u_i^2}{(s-u)^4}+N(N\eta-1)\sigma_{n-1}^{ii}\kappa_i^2\frac{u_i^2}{\langle\nu,E\rangle^2}\nonumber\\
&&+2N^2\eta\frac{\sigma_{n-1}^{ii}\kappa_iu_i^2}{u\langle \nu,E\rangle}+2N s\eta\frac{\sigma_{n-1}^{ii}\kappa_iu_i^2}{\langle\nu,E\rangle(s-u)^2}+2N s\eta\frac{\sigma_{n-1}^{ii}u_i^2}{u(s-u)^2}\nonumber\\
&&+N\sigma_{n-1}^{ii}\frac{h_{ii}\langle\nu,E\rangle}{u}+N(\eta N+1)\sigma_{n-1}^{ii}\frac{u_i^2}{u^2}+ s\frac{\sigma_{n-1}^{ii}h_{ii}\langle \nu,E\rangle}{(s-u)^2}-2 s\frac{\sigma_{n-1}^{ii}u_i^2}{(s-u)^3}\nonumber.
\end{eqnarray}
By Lemma \ref{cvlem1.6} we get
$$|\nabla u|=\frac{|Du|}{\sqrt{1-|Du|^2}}<-\langle\nu,E\rangle\leq \frac{u}{d_1}.$$
Moreover, notice that $\sigma_{n-1}^{ii}\kappa_i\leq 1$ (no summation), by a simple calculation we can see
\[\eta\frac{s^2\sigma_{n-1}^{ii}u_i^2}{(s-u)^4}+2Ns\eta\frac{\sigma_{n-1}^{ii}u_i^2}{u(s-u)^2}-2s\frac{\sigma_{n-1}^{ii}u_i^2}{(s-u)^3}\geq 0,\]
if we let $N>1/\eta^2$.
Thus, for $N>1$ large, we have
\begin{eqnarray}
0&\geq&-C\kappa_1+N\sigma_{n-1}^{ii}\kappa_i^2-\frac{3N^2}{d_1}-2N s\frac{u}{d_1(s-u)^2}- s\frac{(n-1)u}{d_1(s-u)^2}\nonumber.
\end{eqnarray}
This yields, at $x_0$
$$\kappa_1\leq C(N,d_1)\frac{s^2}{(s-u)^2}.$$
Therefore, in $U_s$ we have
$$\phi^{\frac{1}{m}}\leq C(N, d_1)e^{-\frac{s}{s-u}}\frac{s^2}{(s-u)^2}.$$
Now note that for $t\in[0,s],$
$$\varphi(t)=e^{-\frac{s}{s-t}}\frac{s^2}{(s-t)^2}\leq 4e^{-2}.$$
Hence, we obtain that at any point $x\in U_s$,
\be\label{ub1.3}
\phi^{\frac{1}{m}}\leq C(N,d_1).
\ee
Now, for any $x\in\R^n$, we can choose $s$ large such that $x\in U_{s/2}$. Then by \eqref{ub1.3} and Lemma \ref{cvlem1.6}, we conclude
\[\la_1(x)\leq C(N, d_1, d_2).\]
Since $x$ is arbitrary, we finish proving Proposition \ref{ubprop1}.
\end{proof}

We conclude that Theorem \ref{intth1.2} is proved. More specifically, in Section \ref{cs} we showed that in order to find a strictly
convex, spacelike hypersurface with constant $\sigma_{n-1}$ curvature, we only need to show there exists a solution to equation \eqref{cs1.1}.
Since equation \eqref{cs1.1} is degenerate, we considered the solvability of the approximate problem \eqref{cs1.2} in Section \ref{es} instead.
In Section \ref{cv}, we proved that there exists a subsequence of solutions to \eqref{cs1.2} that converges to the solution of \eqref{cs1.1}.
Finally, in Section \ref{ub} we proved that the solution we obtained indeed has bounded principal curvatures.

\bigskip

\end{document}